\documentclass[final,12pt]{colt2023} % Anonymized submission

\usepackage{array} %
\usepackage{caption} %
\usepackage{float} %
\usepackage{tablefootnote} %
\usepackage{multirow} %
\usepackage{enumitem}
\usepackage{ifthen}

        \hypersetup{
           breaklinks=true,   % splits links across lines
           colorlinks=true,   % displays links as colored text instead of blocks
           pdfusetitle=true,  % \title and \author values into pdf metadata
                              % etc.
        }

% to prevent overflowing links in bibliography
\usepackage{url} %
 %

% The following packages will be automatically loaded:
% amsmath, amssymb, natbib, graphicx, url, algorithm2e

\interfootnotelinepenalty=10000 % this is included because some footnotes were being stretched over many pages

\usepackage{mathtools}
%\usepackage{color}
%%\usepackage{url}
%\usepackage{bm} % bold math fonts with \bm{math expr}
%\usepackage{tikz-cd}
%\usepackage{tikz}   
%% use in a table cell to authorize linebreaks with \\ %
%% http://tex.stackexchange.com/questions/2441/how-to-add-a-forced-line-break-inside-a-table-cell
%\newcommand{\specialcell}[2][c]{%
%  \begin{tabular}[#1]{@{}#1@{}}#2\end{tabular}}
%\usepackage{psfrag}
%\usepackage{algorithm}
%\usepackage{algpseudocode}
%% Do-While construct
%\algdef{SE}[DOWHILE]{Do}{doWhile}{\algorithmicdo}[1]{\algorithmicwhile\ #1}%
\usepackage{mathrsfs} % for \mathscr{ABC} style (curly calligraphy)
%% section numbering in margin
%
%\usepackage[lmargin=3cm,rmargin=3cm,bottom=3cm,top=3cm]{geometry}
%%\usepackage[ansinew]{inputenc}
%%\usepackage[round,compress]{natbib}
%% section numbering in margin
%\usepackage{sectsty}
%\makeatletter\def\@seccntformat#1{\protect\makebox[0pt][r]{\csname the#1\endcsname\hspace{12pt}}}\makeatother
%\usepackage{subcaption}
%\usepackage{enumitem} % to enumerate with a. b. c. ... to match referee's numbering
%\usepackage[normalem]{ulem} % just to strike out in a response to referee
%% For lists with two columns
%% https://stackoverflow.com/questions/1398127/breaking-a-list-into-multiple-columns-in-latex
%\usepackage{multicol}
%% Use \begin{assumption} \label{assu:xyz} and \aref{assu:xyz}

\makeatletter\def\@seccntformat#1{\protect\makebox[0pt][r]{\csname the#1\endcsname\hspace{12pt}}}\makeatother
\definecolor{mycolor1}{rgb}{0.105882,0.619608,0.466667}
\definecolor{mycolor2}{rgb}{0.85098,0.372549,0.00784314}
\definecolor{mycolor3}{rgb}{0.458824,0.439216,0.701961}
\definecolor{mycolor4}{rgb}{0.905882,0.160784,0.541176}
\definecolor{mycolor5}{rgb}{0.4,0.65098,0.117647}
\definecolor{mycolor6}{rgb}{0.65098,0.462745,0.113725}
\definecolor{mycolor7}{rgb}{0.901961,0.670588,0.00784314}
\definecolor{mycolor8}{rgb}{0.4,0.4,0.4}
\definecolor{mycolor9}{rgb}{0.301961,0,0.294118}
\definecolor{mycolor10}{rgb}{0.0313725,0.25098,0.505882}

\DeclareMathOperator{\arccosh}{arccosh}

\DeclareMathOperator{\arctanh}{arctanh}

\usepackage{bigfoot}
%%%%% Activate these in track change mode:
 %old command: \newcommand{\add}[1]{{\color{blue}{#1}}}
 %old commant: \newcommand{\remove}[1]{{\footnote{\color{red}{[removed: #1]}}}}
%%%%% Activate these in release mode:
%\newcommand{\add}[1]{{#1}}
%\newcommand{\remove}[1]{}

\newcommand{\cutchunk}[1]{}

%%%%% Activate these in track change mode:
%\newcommand{\removesafe}[1]{{\scriptsize{\color{red}{[removed: #1]}}}}
%\newcommand{\change}[2]{\remove{#1}\add{#2}}
%\newcommand{\changesafe}[2]{\removesafe{#1}\add{#2}}
%%%%% Activate these in release mode:
\newcommand{\removesafe}[1]{}

\newcommand{\calA}{\mathcal{A}}

\newcommand{\calP}{\mathcal{P}}

\newcommand{\calF}{\mathcal{F}}

\newcommand{\calH}{\mathcal{H}}

\newcommand{\calM}{\mathcal{M}}

\newcommand{\calO}{\mathcal{O}}

\newcommand{\diag}{\mathrm{diag}}
\newcommand{\Vol}{\mathrm{Vol}}

\newcommand{\dist}{\mathrm{dist}}

 % need to be distinguishable from identity matrix

\newcommand{\inner}[2]{\left\langle{#1},{#2}\right\rangle}

\newcommand{\Kup}{{K_{\mathrm{up}}}}
\newcommand{\Klo}{{K_{\mathrm{lo}}}}

\newcommand{\norm}[1]{\left\|{#1}\right\|}

\newcommand{\reals}{{\mathbb{R}}}

\newcommand{\spann}{\mathrm{span}}
\newcommand{\interior}{\mathrm{int}}
\newcommand{\gspan}{\mathrm{gspan}}

\newcommand{\T}{\mathrm{T}}

\newcommand{\xorigin}{x_{\mathrm{ref}}}

\newcommand{\aref}[1]{\hyperref[#1]{A\ref{#1}}}
\newtheorem{assumption}{A\ignorespaces} %[section]

\title[Lower bounds for geodesically convex optimization]{Curvature and complexity:\\Better lower bounds for geodesically convex optimization}
\usepackage{times}
% Use \Name{Author Name} to specify the name.
% If the surname contains spaces, enclose the surname
% in braces, e.g. \Name{John {Smith Jones}} similarly
% if the name has a "von" part, e.g \Name{Jane {de Winter}}.
% If the first letter in the forenames is a diacritic
% enclose the diacritic in braces, e.g. \Name{{\'E}louise Smith}

 %Two authors with the same address
 \coltauthor{\Name{Christopher Criscitiello} \Email{christopher.criscitiello@epfl.ch}\AND
\Name{Nicolas Boumal} \Email{nicolas.boumal@epfl.ch}\\
\addr Ecole Polytechnique F\'ed\'erale de Lausanne (EPFL), Institute of Mathematics}

% Three or more authors with the same address:
% \coltauthor{\Name{Author Name1} \Email{an1@sample.com}\\
%  \Name{Author Name2} \Email{an2@sample.com}\\
%  \Name{Author Name3} \Email{an3@sample.com}\\
%  \addr Address}

% Authors with different addresses:
%\coltauthor{%
% \Name{Author Name1} \Email{abc@sample.com}\\
% \addr Address 1
% \AND
% \Name{Author Name2} \Email{xyz@sample.com}\\
% \addr Address 2%
%}

% \coltauthor{\Name{blank} \Email{blank}\and
%  \Name{blank} \Email{blank}\\
%  \addr Address}

\begin{document}

\maketitle

%\tableofcontents

\begin{abstract}%
We study the query complexity of geodesically convex (g-convex) optimization on a manifold.
To isolate the effect of that manifold's curvature, we primarily focus on hyperbolic spaces.
In a variety of settings (smooth or not; strongly g-convex or not; high- or low-dimensional), known upper bounds worsen with curvature.
It is natural to ask whether this is warranted, or an artifact.

For many such settings, we propose a first set of lower bounds which indeed confirm that (negative) curvature is detrimental to complexity.
To do so, we build on recent lower bounds \citep{hamilton2021nogo,bumpfctpaper} for the particular case of smooth, strongly g-convex optimization.
%The latter failed to capture dependence on condition number: we provide new lower bounds that do capture the dependence on the condition number as well as optimality gap.
Using a number of techniques, we also secure lower bounds which capture dependence on condition number and optimality gap, which was not previously the case.

%Our lower bounds establish a qualitative effect of curvature.
We suspect these bounds are not optimal.
We conjecture optimal ones, and support them with a matching lower bound for a class of algorithms which includes subgradient descent, and a lower bound for a related game.
Lastly, to pinpoint the difficulty of proving lower bounds, we study how negative curvature influences (and sometimes obstructs) interpolation with g-convex functions.
\end{abstract}

\begin{keywords}%
{geodesic convexity; Riemannian optimization; curvature; lower bounds; hyperbolic}
\end{keywords}

\section{Introduction and contributions}

%\TODO{another paper about geoedesic c onvexity for low rank optimization \href{https://arxiv.org/pdf/2209.15130.pdf}{this one}}
%\TODO{new application of g-convexity \href{http://sites.utexas.edu/renato/files/2022/05/main-1.pdf}{new app}; also see \href{https://hal.archives-ouvertes.fr/hal-02984423/document}{paper}}
%\TODO{thesis by A Collas}
%\TODO{see \href{https://arxiv.org/pdf/2206.02041.pdf}{this} recent paper by M Jordan}

%\textbf{small things to correct for arxiv version [small enough that colt version is fine!]}: 
%\begin{itemize}
%\item should be a tilde in upper bound for P3 in table; 
%\item maybe change footnote "... see Sec 5.1", 
%\item missing comma [Balls B(x, r) ...], 
%\item change [Let f equal the 1-Lipschitz g-convex  function $f = f_{\lambda, T-1}$], 
%\item superscript not subscript [Below, parallel transport $P_{\gamma}$], 
%\item missing backslash [However, Lemma 24 implies blah is orthogonal to blah], 
%\item missing period [LetAbe a deterministic first-order algorithm We], 
%\item typo in the equation following "The area of this triangle equ" [appendix H], 
% \item Proposition 52 needs to be modified slightly so that replace 1 with 2 [also minor typo], 
% \item add some info about showing that the ball is minimal [see pg 133 of ipad notes "scratch for feaibility problem"]
%\end{itemize}

%\textbf{more small fixes to be made in App A:}
%\begin{itemize}
%\item change Pf of Lem 27 slightly to say "By Lemma 25(b) ..." [not Lemma 26 ...]
%\item change up the paragraph about $theta_y \leq theta$
%\item in appendix C proof of Lemma 39, you need to define $c_{y, s, \dot s}$
%\item remove strange spacing in App F
%\end{itemize}

Let $\calM$ be a $d$-dimensional Riemannian manifold.
We consider optimization problems of the form
\begin{align}
	\min_{x \in \calM} f(x), && \textrm{ knowing that } f \textrm{ attains its minimum in } B(\xorigin, r) = \{ x : \dist(x, \xorigin) \leq r \}.
	\tag{P}
	\label{eq:P}
\end{align}
Here, $f \colon \calM \rightarrow \reals$ is geodesically convex (g-convex) and
%$B(\xorigin, r)$ is the closed geodesic ball of radius $r$ around $\xorigin$.
$\dist$ denotes Riemannian distance.
%Geodesic convexity is a generalization of the notion of convexity to Riemannian manifolds.
When $\calM$ is a Euclidean space, problem~\eqref{eq:P} amounts to convex optimization. % with a form of warm start. // in a ball
Motivated by applications in statistics, machine learning and computer science
%(see pointers in \citep[Sec.~1]{bumpfctpaper}),
%(see references in papers cited below),
(see below),
it is natural to study algorithms for~\eqref{eq:P} (upper bounds), and to ask whether they are optimal (lower bounds).

Both in smooth and nonsmooth g-convex optimization, known upper bounds \emph{worsen} with the curvature of $\calM$~\citep{zhang2016complexitygeodesicallyconvex,bento2017iterationcomplexity}.
\emph{Is this effect of curvature warranted?}
%These upper bounds depend polynomially on the relevant function class parameter (e.g., the condition number $\kappa$ or the optimality gap $\epsilon$), and 

To address this question, we focus on Hadamard manifolds, which have nonpositive curvature, because they are the most natural setting for geodesic convexity.
%(as every two points in a Hadamard manifold are connected by a unique geodesic).  
Non-Hadamard manifolds, even simple ones, often do not globally carry interesting g-convex functions.\footnote{For example, on compact manifolds, global g-convex functions are constant.  Moreover, if a complete Riemannian manifold admits a smooth strongly g-convex function, then it must be diffeomorphic to $\reals^d$~\citep[Prop.~5.10]{sakai1996riemannian}.}
Accordingly, most applications of geodesic convexity are in Hadamard manifolds.  
To isolate the effect of curvature, we further focus on the case where $\calM$ has constant negative curvature, i.e., $\calM$ is a \emph{hyperbolic space} of curvature $K < 0$.\footnote{We expect our results extend to the manifold of positive definite matrices with affine-invariant metric, because it contains a large totally geodesic submanifold isometric to hyperbolic space~\citep[App. J]{bumpfctpaper}.}
%This helps us isolate the interplay between curvature and complexity, and also facilitates exposition.
We prove lower bounds for algorithms with access to first-order information (function values and subgradients).
%We study the interaction between the properties of the function class to which $f$ belongs, and the curvature of the underlying manifold $\calM$.
Dependence on curvature is captured by the quantity $\zeta = \zeta_{r \sqrt{-K}} := \frac{r \sqrt{-K}}{\tanh(r \sqrt{-K})}$, as it appears in known upper bounds.\footnote{If $r \sqrt{-K} \ll 1$, then $\zeta \approx 1$ and bounds match those from Euclidean space.  If $r \sqrt{-K} \gg 1$, then $\zeta \sim r \sqrt{-K}$ is large.}

Lower bounds in the Euclidean case are well understood.
For curved spaces, the only known result is due to~\citet{hamilton2021nogo} and~\citet{bumpfctpaper}, who prove the lower bound\footnote{We write $\tilde\Omega(\cdot)$, $\tilde O(\cdot)$ or $\tilde\Theta(\cdot)$ when we omit logarithmic terms.} $\tilde \Omega(\zeta)$ for smooth strongly g-convex optimization, showing that in this setting the effect of curvature is unavoidable.
However, \emph{this known lower bound has two major limitations}: 
\begin{enumerate}
\item[(a)] {It only holds for smooth, strongly g-convex optimization} (as opposed to other settings);
\item[(b)] {It does not depend on the condition number} $\kappa$ of the strongly g-convex function.
\end{enumerate}
These are real limitations.
Many g-convex problems in applications fail to be smooth or strongly g-convex, e.g., Fr\'echet medians~\citep{Fletcher2009TheGM} or operator scaling~\citep{allenzhuoperatorsplitting}.  
Moreover, most problems that are smooth and strongly g-convex have condition number $\kappa$ which is much greater than $\zeta$ (e.g., robust covariance estimation~\citep{franksmoitra2020}), so the complexity of the problem is determined by $\kappa$ rather than $\zeta$.
(We note however that the problem of Fr\'echet means~\citep{karcher1977riemannian} is strongly g-convex with condition number that naturally scales as $\Theta(\zeta)$.)

% franks2021neartyler
%As an example, consider the setting where we seek to minimize a smooth and strongly g-convex function $f$ with condition number $\kappa$ in a ball of radius $r$.  
%If $\calM$ is a Euclidean space, then the complexity of this problem is solely determined by $\kappa$.  
%However, if $\calM$ has sufficiently negative curvature (e.g., is a hyperbolic space), then the complexity of this problem depends on \emph{both} $\kappa$ and $r$.
%Similarly, in the case of nonsmooth or smooth but nonstrongly g-convex optimization, the complexity of the class is determined by the optimality gap $\epsilon$ and $r$.
%We seek to characterize the complexity of g-convex minimization in terms of \emph{both} $\kappa$ and $r$, or $\epsilon$ and $r$.\footnote{In the upper and lower bounds we consider (see Table~\ref{table:maintheorem}), $r$ always appears through the quantity $\zeta_r = \frac{r}{\tanh(r)}$ which naturally arises from curvature considerations.  If $r$ is large then $\zeta_r$ scales as $r$, and if $r$ is small then $\zeta_r$ is approximately $1$.}

In this paper, we address these two limitations.  %Our main contributions are:
We provide \emph{four main contributions}.
First, to address limitation (a), in Section~\ref{nonstronglyconvexcaseextension} we extend the $\tilde\Omega(\zeta)$ lower bound to the Lipschitz and the smooth (nonstrongly) g-convex settings.
This shows that ``full'' acceleration $O(\frac{1}{\sqrt{\epsilon}})$ (independent of curvature) for smooth g-convex optimization is impossible, and also justifies the presence of curvature terms in the upper bounds for subgradient descent proven by~\citet{zhang2016complexitygeodesicallyconvex}.

Second, to address limitation (b), in Sections~\ref{secnonsmoothlowerbounds} and~\ref{secsmoothlowerbounds} we
provide new techniques for proving lower bounds which depend on the problem class parameters (e.g., $\kappa$ or $\epsilon$).
In addition, in Appendix~\ref{carryovereuc} we show how upper and lower bounds from Euclidean space carry over to the Riemannian setting when $r$ is very small.
In the first four rows of Table~\ref{table:maintheorem}, we present these lower bounds, along with the best known upper bounds.  See Section~\ref{problemclasses} for the precise function classes and complexity measures.
%When curvature $\zeta$ is small, the lower bounds we prove match those in the Euclidean case.
For any fixed $\zeta$, the scalings of these lower bounds match those found in Euclidean space.

Our lower bounds do not match the best known upper bounds in Table~\ref{table:maintheorem}, and we strongly suspect the lower bounds can be improved.
Our third contribution is to provide a roadmap for improving on our lower bounds.
We \emph{conjecture} that the ``multiplicative'' upper bounds in the second, third and fourth rows of the table are optimal, and that the optimal upper bound for the first row is $\Theta(\zeta d)$.\footnote{The lower bounds we prove depend \emph{additively} in terms of $\zeta$ and the other parameters ($\tilde\Omega(\zeta + \sqrt{\kappa})$ for the strongly g-convex case).  On the other hand, the best known upper bounds depend \emph{multiplicatively} on the parameters ($O(\sqrt{\zeta \kappa})$).}
{We do not know if those are the correct bounds, but we believe they are reasonable targets.}

We provide two pieces of evidence for this conjecture.  
First, in Section~\ref{polyaknotapp} \emph{we prove the {multiplicative} lower bound $\Omega(\frac{\zeta}{\epsilon^2})$ for a class of (intuitively reasonable) algorithms which includes subgradient descent} (with Polyak step size).  In particular, this shows that the analysis of subgradient descent cannot be improved.
Second, in Section~\ref{feasibilitygame} we prove the $\tilde\Omega(\zeta d)$ lower bound for a ``cutting-planes game'' which serves as a proxy for low-dimensional Lipschitz g-convex optimization, and also provides a lower bound for g-convex ``cutting-planes schemes''~\citep[Sec.~3.2.6]{nesterov2004introductory}.\footnote{\citet[Sec.~3.2.5]{nesterov2004introductory} considers the analogous game as a proxy for Lipschitz convex optimization.}
%shorten interpolation paragraph somehow --- change interpolation : to construct lower bounds in euclidean case it has proven very useful to interpolate with convex fcts; implicitly this is what is going on here (interpolation w/ g convex fcts); maybe we should develop generic tools for g convex but there are obstacles to that

%\emph{Interpolation} by g-convex functions is tantamount to building lower bounds.
To build lower bounds for a function class (e.g., g-convex functions), one (implicitly or explicitly) \emph{interpolates} a collection of function values and gradients with a function from that class.
The recent focus on \emph{necessary and sufficient interpolation conditions} for convex functions~\citep{taylorinterpolation2016}, has led to a number of insights into the complexity of convex optimization~\citep{taylorcomposite,acceleratingmirrordescentisnotpossible,taylorexact}.
Our fourth contribution is a study of interpolation by g-convex functions in Section~\ref{interpolation}.
We show that (unlike in the convex case), the naive necessary conditions for interpolation by g-convex functions are \emph{not} sufficient.  
%We link this to the geometry of negatively curved spaces.
This geometric obstruction partly explains the difficulties of showing lower bounds for g-convex optimization.
\bigskip
%which is still solved by Riemannian subgradient descent in $O(\frac{\zeta_r}{\epsilon^2})$ queries.  
%We show
%Lastly, we suspect that multiplicative lower bounds for the Lipschitz g-convex setting should transfer to the smooth g-convex setting because the technique we use to prove the smooth lower bounds in Section \TODO{blah} .
%Third, in order to prove lower bounds for smooth g-convex optimization in Section \TODO{blah}, we smooth the nonsmooth lower bound constructions from Section \TODO{blah}.
%This is a general technique for converting nonsmooth lower bounds to smooth ones, and so we expect that multiplicative lower bounds for the nonsmooth setting (as considered in Sections \TODO{} and \TODO{}) will translate to multiplicative lower bounds in the smooth setting using this technique.

%\tablefootnote{This is query complexity.  In terms of computational complexity, it is not known how to implement Rusciano's center of gravity method efficiently.}
%\newpage
%\begin{table}[H]
%\bigskip
%{ |p{3cm}||p{2cm}|p{2.1cm}|p{4cm}|p{2cm}| }
%{|l||c|c|l|l|}
\begin{minipage}{0.95\linewidth}
{\centering
\begin{tabular} { |m{2.6cm}||m{1.3cm}|m{1.7cm}|m{1.5cm}|m{3cm}|m{1.6cm}| }
\hline
\multicolumn{6}{|c|}{Complexity of g-convex optimization} \\
\hline
g-convex setting & Function class & Lower bound & Upper bound & Algorithm & Result\\
\hline
Lipschitz, low-dimensional,~\ref{lowdimLipschitzproblem} &  $\mathcal{F}_{r, M, 0, \infty}^d$                                                                                            & \multicolumn{1}{c|}{$\tilde \Omega(\zeta + d)$}  & \multicolumn{1}{c|}{$O(\zeta d^2)$}                                             & centerpoint method \citep{rusciano2019riemannian} & Thm.~\ref{nicetheorem}, \text{     } Rmk.~\ref{remarkresult} \\ 
\hline
Lipschitz,~\ref{highdimLipschitzproblem} & $\mathcal{F}_{r, M, 0, \infty}$                                                                        & \multicolumn{1}{c|}{$\tilde\Omega(\zeta + \frac{1}{\zeta^2 \epsilon^2})$}  & \multicolumn{1}{c|}{$O(\frac{\zeta}{\epsilon^2})$}      & subgradient descent \citep{zhang2016complexitygeodesicallyconvex} & Thm.~\ref{nicetheorem},~\ref{nonsmoothlowerbound} \\ 
%\hline strongly g-convex, Lipschitz & $O(\frac{\zeta}{\epsilon})$                                                             & $\Omega(\zeta + \frac{1}{\zeta^2 \epsilon})$          & subRGD (Z+S)       & $O(\frac{\zeta}{\epsilon})$ & $\epsilon = \Theta(1)$ \\ 
\hline
smooth,~\ref{smoothproblem} & $\mathcal{F}_{r, \infty, 0, L}$                           & $\tilde \Omega(\zeta + \frac{1}{\zeta \sqrt{\epsilon}})$     & \multicolumn{1}{c|}{$\tilde O\Big(\sqrt{\frac{\zeta}{\epsilon}}\Big)$ \footnote{The upper bounds for smooth g-convex and smooth strongly g-convex minimization have an important caveat. Those upper bounds come from \citet{kimyang} who \emph{assume} the iterates produced by their algorithm stay in the optimization domain $B(\xorigin, r)$.
Recent work by~\citet{davidmr} shows how to remove this assumption at the expense of somewhat worse complexity guarantees: $\tilde O(\zeta \sqrt{\frac{1}{\epsilon}})$ and $\tilde O(\zeta \sqrt{\kappa})$.}}  & RNAG-C \citep{kimyang} & Thm.~\ref{nicetheorem},~\ref{thmsmoothlowerbound} \\ 
\hline
smooth, strongly g-convex,~\ref{smoothstronglyproblem} & $\mathcal{F}_{r, \infty, \mu, L}$         & \multicolumn{1}{c|}{$\tilde\Omega(\zeta + \sqrt{\kappa})$}         & \multicolumn{1}{c|}{$O(\sqrt{\zeta \kappa})$ \footnote{\citet{kimyang} report the bounds $O(\zeta \sqrt{\frac{1}{\epsilon}})$ and $O(\zeta \sqrt{\kappa})$.  
We observe that the rate $O(\zeta \sqrt{\kappa})$ can be improved to $O(\sqrt{\zeta \kappa})$ by choosing the step size in their algorithm as $\Theta(\frac{1}{L})$ instead of $\Theta(\frac{1}{\zeta L})$.  More precisely, using that $\kappa \geq \zeta$ and the step size $\Theta(\frac{1}{L})$, one can check that their analysis (see Corollary F.1 in their paper) follows through unchanged.
Moreover, the rate $\tilde O(\sqrt{\frac{\zeta}{\epsilon}})$ can be achieved by a reduction from the $O(\sqrt{\zeta \kappa})$ strongly g-convex algorithm (see Proposition~\ref{reductionstronglyconvextoconvex}), using that $\epsilon \leq \frac{8}{\zeta}$ (see Proposition~\ref{usefulguy}).  }}                                                                                            & RNAG-SC \citep{kimyang} & Cor.~\ref{thisremark} \\ 
\hline
cutting-planes game, Sec. \ref{feasibilitygame} & N/A                                                                                                                 & \multicolumn{1}{c|}{$\tilde \Omega(\zeta d)$}                     & \multicolumn{1}{c|}{$O(\zeta d^2)$}                              & centerpoint method \citep{rusciano2019riemannian} & Thm.~\ref{lowerboundforfeasibilityproblem} \\ 
%\hline
%Global non-convex, Lipschitz                  & ??                                                                                                   & $\Omega(e^{\zeta d})$                                             & ??                                         & N/A & N/A \\
\hline
\end{tabular}
\captionof{table}{$\calM$ is a hyperbolic space of curvature $K < 0$, and $\zeta = \zeta_{r\sqrt{-K}} = \frac{r \sqrt{-K}}{\tanh(r \sqrt{-K})}$.
All entries in the column ``Lower bound'' are novel, except for the term $\zeta$ in the fourth row.} \label{table:maintheorem} }
\end{minipage}

\subsection{Problem classes and algorithms}\label{problemclasses}
Let $\mathbb{H}^d$ denote a $d$-dimensional hyperbolic space.  Without loss of generality, we can assume $\mathbb{H}^d$ has curvature $K = -1$, and so we do this from now on.\footnote{Let $\calM_1 = (\calM, g)$ be a hyperbolic space of curvature $K_1 < 0$, and scale the metric $g$ on $\calM$ to get a hyperbolic space $\calM_2 = (\calM, \frac{K_1}{K_2} g)$ of curvature $K_2$.
Then it is easy to see that the function classes $\calF_{r_1, M, \mu, L}^{\xorigin}(\calM_1)$ and $\calF_{r_2, M, \mu, L}^{\xorigin}(\calM_2)$ are identical provided $r_1 \sqrt{-K_1} = r_2 \sqrt{-K_2}$.  Stated differently: what matters is $r \sqrt{-K}$, not $r$ and $K$ separately.  We thus fix $K=-1$.}
% \emph{So increasing the radius $r$ has the same effect on optimization as making the space more curved.}
We usually denote the underlying manifold by $\calM$ (i.e., $\calM = \mathbb{H}^d$), and its tangent bundle by $\T\calM$ (see Section~\ref{prelims}).
For each $d\geq 2$, fix a point $\xorigin^{d} \in \mathbb{H}^d$.  Throughout we usually drop the superscript on $\xorigin^{d}$ and write $\xorigin$.
%Let $B(\xorigin, r) \subseteq \calM$ denote the geodesic ball centered at $\xorigin \in \calM$ of radius $r$.
For a function $f \colon \calM \to \reals$, we always denote $f^* = \min_{x \in \calM} f(x)$. %\TODO{double check same throughout}

For $r > 0$, $M \geq 0$, and $L \geq \mu \geq 0$, let $\calF_{r,  M, \mu, L}^{d}$ be the class of functions $f \colon \mathbb{H}^d \to \reals$ which (a) are globally $M$-Lipschitz, (b) are globally $\mu$-strongly g-convex, (c) are $L$-smooth (globally if $\mu = 0$, or in the ball $B(\xorigin, r)$ if $\mu > 0$),\footnote{For (c): if $\mu > 0$ and we require $f$ to be $L$-smooth globally, then the function class is empty, see Section~\ref{lrvslrsquared}.} and (d) have a global minimizer $x^*$ which is in $B(\xorigin, r)$.  % with $0 \in \partial f(x^*)$.
% For (d): 
%We write $\calF_{r,  M, \mu, \infty}^d$ to mean that no Lipschitz gradient condition is imposed, and $\calF_{r,  \infty, \mu, L}^d$ to mean that no Lipschitz condition is imposed.
Lastly, define $\calF_{r, M, \mu, L} = \bigcup_{d=2}^\infty \calF_{r, M, \mu, L}^d$.
%These function classes captures our knowledge and ignorance about the cost function.
%lowdimLipschitzproblem,highdimLipschitzproblem,smoothproblem,smoothstronglyproblem

%A first-order oracle for a g-convex function $f \colon \calM \rightarrow \reals$ is a map $\calO_f \colon \calM \rightarrow \reals \times \T \calM$ with $\calO_f(x) = (f(x), (x, g))$, where $g$ is a subgradient of $f$ at $x$.
%\TODO{need $\calO_f(x) = (f(x), (x, g))$?  also formally we run $\calA$ with oracle $\calO_f$ \ldots ; also fix $\epsilon$ in P4 should be like P3}
%($\partial f(x)$ denotes the subdifferential of $f$ at $x$.).
A deterministic first-order algorithm $\calA$ on $\calM$ is a sequence of functions $(\calA_{k} \colon (\reals \times \T \calM)^k \rightarrow \calM)_{k \geq 0}$; in particular, $\calA_0$ returns an initial point $x_0$.
Such an algorithm has access to an oracle $\calO_f\colon \calM \rightarrow \reals \times \T \calM$ which for each query $x \in \calM$ returns the function value $f(x)$ and a subgradient $g \in \partial f(x)$.
Running $\calA$ with an oracle $\calO_f$ produces iterates $x_0, x_1, \ldots$ as follows.\footnote{We often implicitly assume there is an oracle associated to $f$, and simply say ``running $\calA$ on $f$ produces iterates \ldots.''}
Let $\calH_0 = \emptyset$.
After already making $k \geq 0$ queries $x_0, \ldots, x_{k-1}$, the algorithm uses the known information $\calH_k$ to compute the next query $x_k = \calA_k(\calH_k)$.
The oracle $\calO_f$ gives the algorithm $F_k = f(x_k)$ and $g_k \in \partial f(x_k)$, and we update the known information $\calH_{k+1} = (F_\ell, x_\ell, g_\ell)_{\ell=0}^k$.

Given a tolerance $\delta > 0$, we define $T_{\delta}(\mathcal{A}, f)$ to be the first $k$ for which $f(x_k) - f^* \leq \delta$, where $(x_k)_{k\geq 0}$ is the sequence produced by running $\calA$ on $f$.
Given a function class $\mathcal{F}$, the complexity of optimization on that function class is defined by infimizing over deterministic first-order algorithms:
$$T_{\delta}(\mathcal{F}) = \inf_{\mathcal{A}} \sup_{f \in \mathcal{F}} T_{\delta}(\mathcal{A}, f).$$

We focus on four function classes.  In the following, $M > 0, L > 0, d \geq 2$.
\begin{enumerate} [label=\textbf{P\arabic*}]
\item\label{lowdimLipschitzproblem} (\emph{low-dimensional Lipschitz}) 
For $0 < \epsilon < 1$ , define $\delta = \epsilon \cdot M r$, and $T_{\epsilon, r, d} = T_{\delta}(\calF_{r, M, 0, \infty}^d)$.\footnote{With this choice of $\delta$, by scaling the functions in the class, one can check that $T_{\delta}(\calF_{r, M, 0, \infty}^d)$ is independent of $M$, and only depends on $\epsilon, r, d$.  Similar considerations hold for the other function classes.}

\item\label{highdimLipschitzproblem} (\emph{high-dimensional Lipschitz}) 
For $0 < \epsilon < 1$ , define $\delta = \epsilon \cdot M r$, and $T_{\epsilon, r} = T_{\delta}(\calF_{r, M, 0, \infty})$.  

\item\label{smoothproblem} (\emph{high-dimensional smooth}) 
For $0 < \epsilon < \min\{1, \frac{8}{\zeta_r}\}$, define $\delta = \epsilon \cdot \frac{1}{2} L r^2$, $T_{\epsilon, r} = T_{\delta}(\calF_{r, \infty, 0, L})$.\footnote{This is the right scaling for $\epsilon$ -- see Section~\ref{lrvslrsquared}.  Indeed, in Proposition~\ref{usefulguy} we prove if $f \in \calF_{r, \infty, 0, L}$ then $f(\xorigin) - f^* \leq \frac{1}{2} L r^2 \cdot \frac{8}{\zeta_r}$.  This is analogous to the observation that $\kappa \geq \zeta_r$, although a different proof is required to show this.}

\item\label{smoothstronglyproblem} (\emph{high-dimensional smooth strongly g-convex}) For $0 < \epsilon < \min\{1, \frac{8}{\zeta_r}\}, \mu > 0, L \geq \mu \zeta_r$, define $\delta = \epsilon \cdot \frac{1}{2} L r^2$, $\kappa = \frac{L}{\mu}$, and $T_{\epsilon, r, \kappa} = T_{\delta}(\calF_{r, \infty, \mu, L})$.  
\end{enumerate}

%The complexities of problems~\ref{highdimLipschitzproblem} and~\ref{smoothproblem} (both denoted by $T_{\epsilon, r}$) depend only on the parameters $\epsilon$ and $r$.  
We expect complexity of problems~\ref{highdimLipschitzproblem} and~\ref{smoothproblem}  (both denoted by $T_{\epsilon, r}$) to scale \emph{polynomially} in $\epsilon^{-1}$.  
For those, Table~\ref{table:maintheorem} reports bounds on $T_{\epsilon, r}$.
On the other hand, due to known upper bounds,
%the complexities of problems~\ref{lowdimLipschitzproblem} and~\ref{smoothstronglyproblem} depend on $\epsilon, d, r$ and $\epsilon, \kappa, r$, respectively.  
we can solve problems~\ref{lowdimLipschitzproblem} and~\ref{smoothstronglyproblem} at least at a linear rate, that is, scaling \emph{logarithmically} in $\epsilon$ as $\log(\epsilon^{-1})$.  
Therefore, it is reasonable to at least initially focus on the factors in front of the $\log(\epsilon^{-1})$: this is the approach we take.
Said differently, for problem~\ref{lowdimLipschitzproblem} we expect there exists $q > 0$ so that $f(x_k) - f^* \leq M r \cdot e^{-k/q}$ after $k$ queries.  
\emph{The quantity $q$ measures the number of queries needed to reduce the optimality gap by a constant factor.}
%That is, we expect the complexities of these problems to depend \emph{logarithmically} on $\epsilon^{-1}$, but polynomially on the other parameters.
To suppress the dependence on $\epsilon$
for~\ref{lowdimLipschitzproblem} and~\ref{smoothstronglyproblem}, we define $q_{d, r} = \sup_{\epsilon \in (0,1)} \{\frac{T_{\epsilon, r, d}}{\log(\epsilon^{-1})}\}$ and define $q_{\kappa, r}$ similarly.
Table~\ref{table:maintheorem} reports bounds on $q_{d, r}$ and $q_{\kappa, r}$.

\section{Preliminaries: Hadamard manifolds, hyperbolic spaces and geodesic convexity} \label{prelims}
%For a comprehensive introduction to smooth manifolds and Riemannian manifolds see~\citep{lee2012smoothmanifolds,lee2018riemannian}.
%In this section we introduce preliminaries and notation used to prove the main results.

%\subsection{}
Throughout, $\calM$ denotes a Hadamard manifold which has tangent bundle $\T \calM$ and tangent spaces $\T_x \calM$.
A Hadamard manifold is a complete, simply connected Riemannian manifold with nonpositive curvature, see~\citep{bridsonmetric} and~\citep[Ch.~12]{lee2018riemannian}.  
The Riemannian metric on $\calM$ is denoted $\inner{\cdot}{\cdot}$ and $\norm{v} = \sqrt{\inner{v}{v}}$ for $v \in \T_x \calM$.
%The Riemannian gradient $\grad f(x)$ at $x \in \calM$ is an element of $\T_x \calM$.  The Riemannian Hessian of $f$ at $x \in \calM$ is a self-adjoint linear operator $\Hess f(x) \colon \T_x \calM \rightarrow \T_x \calM$.
%For a linear operator $A \colon \T_x \calM \rightarrow \T_y \calM$, $\norm{A}$ always denotes the operator norm (induced by the Riemannian metric).
%The adjoint of $A$ is the unique linear map $A^* \colon \T_y \calM \rightarrow \T_x \calM$ satisfying $\inner{A v}{w}_y = \inner{v}{A^* w}_x$ for all $v \in \T_x \calM$ and $w \in \T_y \calM$.
The Riemannian metric gives $\calM$ a notion of distance $\dist$, volume $\Vol$, geodesics, and intrinsic curvature.
%The closed (geodesic) ball of radius $r$ centered at $x \in \calM$ is denoted by $B(x, r)$.
%The closed ball in $\T_{x} \calM$ centered at $g \in \T_{x} \calM$ with radius $r$ is denoted by $B_x(g, r)$.
The exponential map on $\calM$ at $x \in \calM$ is denoted by $\exp_x \colon \T_x\calM \to \calM$, and its inverse map by $\log_x \colon \calM \to \T_x\calM$.
As $\calM$ is Hadamard, the exponential map and its inverse are global diffeomorphisms.
%By the Cartan-Hadamard Theorem, all $d$-dimensional Hadamard manifolds $\calM$ are diffeomorphic to $\Rd$~\citep[Thm. 12.8]{lee2018riemannian}.
%The Hopf-Rinow Theorem implies that the exponential map $\exp \colon \T \calM \rightarrow \calM$ is well-defined on the entire tangent bundle, and moreover every pair of points can be connected by a unique geodesic and this geodesic is minimal~\citep[Prop. 12.9]{lee2018riemannian}.  
%This means that the inverse of the exponential map $\exp_{x}^{-1} \colon \calM \rightarrow \T_x\calM$ is also well defined for all $x \in \calM$.
For $x, y \in \calM$, $P_{x \rightarrow y} \colon \T_x \calM \rightarrow \T_y \calM$ denotes parallel transport along the geodesic connecting $x$ and $y$.
The boundary of a set $D \subseteq \calM$ is denoted $\partial D$, and its interior is $\interior D = D \setminus \partial D$.

We usually take $\calM$ to be a $d$-dimensional hyperbolic space of curvature $-1$, denoted by $\mathbb{H}^d$.
Certain submanifolds of $\mathbb{H}^d$ feature prominently in our lower bound constructions.
A connected and complete Riemannian submanifold $S \subseteq \calM$ is called \emph{totally geodesic} if a geodesic in $S$ is also a geodesic in $\calM$ (see Appendix~\ref{totgeosubmanifoldprelimaries}).
Totally geodesic submanifolds are abundant in $\mathbb{H}^d$ (but are rare in general Hadamard manifolds~\citep[Sec. 11.1]{CHEN2000187}).
%We have simplified all our arguments so that we do not need to rely on any particular model of hyperbolic space.
%The most important geometric tool we use is the hyperbolic law of cosines~\citep[Thm.~3.5.3]{ratcliffe2019hyperbolic}.
%The definition of g-convexity (Section~\ref{gconvexfunctionsdetails}), the conditioning of the g-convex problem~\eqref{eq:P}, and the function classes $\calF_{\kappa, r}^{\xorigin}(\calM)$ and $\tilde{\calF}_{L, r}^{\xorigin}(\calM)$ all depend on the metric.

%\subsection{Geodesic convexity and smoothness}
A set $D \subseteq \calM$ is \emph{g-convex} if for all $x, y \in D$
the geodesic segment connecting $x, y$ is contained in $D$.
Balls $B(x, r)$, $r \geq 0$, and totally geodesic submanifolds are g-convex sets.
The following lemma can be proven using the hyperboloid or Beltrami-Klein models of $\mathbb{H}^d$ (Appendix~\ref{totgeosubmanifoldprelimaries}).
%We use the following lemma in Section~\ref{polyak}.  A proof can be found in Appendix~\ref{propsoftotallygeodesicsubmanifolds}.
%\TODO{TODO: define totally geodesic submanifolds, and fill in these facts here!! maybe should we provide proofs of these?, importantly note the abundance of these submanifolds in hyperbolic space, and also how they come from the exponential map}
\begin{lemma} \label{totgeolemma}
Let $z \in \calM = \mathbb{H}^d$, $g \in \T_z \calM$, $L = \exp_z(\{v \in \T_z \calM : \langle g, v \rangle \geq 0\})$, and $S = \partial L = \exp_z(\{v \in \T_z \calM : \langle g, v \rangle = 0\}).$
The \emph{half-space} $L$ is g-convex.  Its boundary $S = \partial L$ is a $(d-1)$-dimensional totally geodesic submanifold of $\mathbb{H}^d$.
\end{lemma}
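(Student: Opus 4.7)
The plan is to work in the hyperboloid model, where $\mathbb{H}^d$ sits inside $\reals^{d+1}$ as $\{x : \langle x, x\rangle_M = -1,\ x_0 > 0\}$ with the Minkowski form $\langle x, y\rangle_M = -x_0 y_0 + \sum_{i=1}^d x_i y_i$. At any $z \in \mathbb{H}^d$, the tangent space is $\T_z\calM = \{v \in \reals^{d+1} : \langle z, v\rangle_M = 0\}$, the Riemannian metric is the restriction of $\langle \cdot, \cdot\rangle_M$ to $\T_z\calM$, and the exponential map is $\exp_z(v) = \cosh(\|v\|)\, z + \frac{\sinh(\|v\|)}{\|v\|}\, v$. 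The first observation I would make is that $g \in \T_z\calM$ implies $\langle g, z\rangle_M = 0$, so
\[
	\langle g, \exp_z(v)\rangle_M = \frac{\sinh(\|v\|)}{\|v\|}\langle g, v\rangle.
\]
Since $\sinh(\|v\|)/\|v\| > 0$, this identifies the half-space and its boundary intrinsically as
\[
	L = \{p \in \mathbb{H}^d : \langle g, p\rangle_M \geq 0\}, \qquad S = \{p \in \mathbb{H}^d : \langle g, p\rangle_M = 0\}.
\]

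To establish g-convexity of $L$, the plan is to consider an arbitrary unit-speed geodesic $\gamma$ in $\mathbb{H}^d$ and track the scalar function $h(t) = \langle g, \gamma(t)\rangle_M$. A unit-speed geodesic on the hyperboloid satisfies $\gamma'' = \gamma$ in ambient coordinates (the ambient acceleration equals the normal to the hyperboloid at $\gamma(t)$), and linearity of $\langle g, \cdot\rangle_M$ then yields $h'' = h$. Suppose toward a contradiction that $h(0) \geq 0$ and $h(T) \geq 0$ but $h(t_*) < 0$ for some $t_* \in (0, T)$; let $a = \sup\{t \leq t_* : h(t) \geq 0\}$ and $b = \inf\{t \geq t_* : h(t) \geq 0\}$. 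Then $h(a) = h(b) = 0$ and $h < 0$ on $(a, b)$, so $h'' = h < 0$ there, making $h$ concave on $[a, b]$. But a concave function with nonnegative endpoint values is nonnegative on the interval, contradicting $h < 0$ on $(a, b)$. Applied to a geodesic joining two points of $L$, this gives g-convexity.

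For the submanifold claim, I would introduce $W = \{v \in \T_z \calM : \langle g, v\rangle = 0\}$, a $(d-1)$-dimensional subspace, and the $d$-dimensional ambient subspace $V = \spann(\{z\} \cup W) \subseteq \reals^{d+1}$. The goal is to verify the identity $S = V \cap \mathbb{H}^d$: the inclusion $\subseteq$ is immediate from the exponential-map formula (both $z$ and $v/\|v\|$ lie in $V$), while for $\supseteq$ any $p = \alpha z + w \in V \cap \mathbb{H}^d$ (with $w \in W$) satisfies $\alpha = \sqrt{1 + \|w\|^2}$, so choosing $s = \arcsinh(\|w\|)$ gives $p = \cosh(s)\, z + \sinh(s)\, w/\|w\| = \exp_z(s \cdot w/\|w\|) \in S$ (and $p = z \in S$ when $w = 0$). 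The restriction of $\langle \cdot, \cdot\rangle_M$ to $V$ has signature $(1, d-1)$, so $V \cap \mathbb{H}^d$ is a hyperbolic space of dimension $d - 1$. Being cut out of $\mathbb{H}^d$ by a linear subspace of the ambient Minkowski space, $S$ is automatically totally geodesic: any geodesic $\cosh(t)\, p + \sinh(t)\, v$ with $p \in V$ and $v \in \T_p S \subseteq V$ stays in $V$.

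The only nonroutine step is the ODE reasoning behind g-convexity of $L$; everything else reduces to computations with known closed forms in the hyperboloid model. This is precisely where nonpositive curvature plays its role: in positive curvature the analogous $h$ would satisfy $h'' = -h$, whose nonnegativity set need not be convex, consistent with the fact that half-spaces of this type are generally not g-convex on the sphere.
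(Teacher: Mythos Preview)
Your proof is correct and follows exactly the route the paper indicates: it says the lemma ``can be proven using the hyperboloid or Beltrami-Klein models'' and points to Appendix~\ref{totgeosubmanifoldprelimaries}, where Lemma~\ref{characterizationoftotgeoinhyperspace} records that totally geodesic submanifolds of $\mathbb{H}^d$ are precisely the nonempty intersections $\calM \cap P$ with linear subspaces $P \subseteq \reals^{d+1}$. Your identification $S = V \cap \mathbb{H}^d$ with $V = \spann(\{z\} \cup W)$ is exactly this characterization, and your identification $L = \{p : \langle g, p\rangle_M \geq 0\}$ is the standard first step.

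One remark on the g-convexity argument: your ODE approach via $h'' = h$ is valid, but there is a shorter route in the hyperboloid model. The geodesic segment from $p$ to $q$ can be written as $\gamma(t) = \alpha(t)\,p + \beta(t)\,q$ with $\alpha(t), \beta(t) \geq 0$ for $t$ between $0$ and $\dist(p,q)$ (explicitly, $\alpha(t) = \cosh t - \sinh t \coth(\dist(p,q))$ and $\beta(t) = \sinh t / \sinh(\dist(p,q))$), so $\langle g, \gamma(t)\rangle_M \geq 0$ follows immediately from $\langle g, p\rangle_M, \langle g, q\rangle_M \geq 0$. Alternatively, in the Beltrami--Klein model geodesics are Euclidean chords and $L$ becomes a Euclidean half-space intersected with the ball, where convexity is trivial. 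Your argument has the virtue of making the curvature dependence explicit, as you note in your closing paragraph.
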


References on g-convex optimization include \citep{udriste1994convex},~\citep{rapcsak1997smoothnonlinear},~\citep{bacak2014hadamard}, and~\citep[Ch.~11]{boumal2020intromanifolds}.
%We follow definitions as in~\citep{boumal2020intromanifolds}.
For $f \colon \calM \to \reals$, we denote its Riemannian gradient and Hessian by $\nabla f$ and $\nabla^2 f$, respectively (see, for example, Chapters 3 and 5 of~\citep{boumal2020intromanifolds}).
\begin{definition}
Let $\calM$ be a Hadamard manifold, and let $D \subseteq \calM$ be g-convex.
A function $f \colon D \rightarrow \reals$ is $\mu$-strongly g-convex if
$f(\gamma(t)) \leq (1-t) f(x) + t f(y) - \frac{\mu}{2} t(1-t) \dist(x, y)^2$ for all $x, y \in D$,
where $\gamma \colon [0, 1] \rightarrow \reals$ is the geodesic with $\gamma(0) = x, \gamma(1) = y$.
If $\mu = 0$, we say that $f$ is g-convex or ``nonstrongly'' g-convex if we wish to emphasize $\mu=0$.
If $\mu > 0$, we say $f$ is strongly g-convex.
\end{definition}
\noindent If $f$ is differentiable, $f$ is $\mu$-strongly g-convex in $\calM$ if and only if
$$f(y) \geq f(x) + \inner{\nabla f(x)}{\log_x(y)} + \frac{\mu}{2} \dist(x, y)^2 \quad \quad \forall x, y \in \calM.$$
If $f$ is twice differentiable, $f$ is $\mu$-strongly g-convex in $\calM$ if and only if $\nabla^2 f(x) \succeq \mu I$ $\forall x \in \calM$.

\begin{definition}
A vector $g \in \T_x \calM$ is a subgradient of $f \colon \calM \to \reals$ at $x$ if $f(y) \geq f(x) + \langle g, \log_x(y)\rangle$ for all $y\in\calM$.
The subdifferential $\partial f(x)$ of $f$ at $x$ is the set of all subgradients of $f$ at $x$.
\end{definition}
\noindent For a g-convex function, subdifferentials are never empty.  
A g-convex function is differentiable if and only if all its subdifferentials contain exactly one vector (the gradient)~\cite[Sec.~3.4]{udriste1994convex}.

\begin{definition}
Let $D \subseteq \calM$ be connected.
A function $f \colon \calM \rightarrow \reals$ is $M$-Lipschitz in $D$ if $|f(x) - f(y)| \leq M \dist(x,y)$ for all $x, y \in D$.
A differentiable function $f \colon \calM \rightarrow \reals$ is $L$-smooth in $D$ if $\norm{\nabla f(x) - P_{y \rightarrow x} \nabla f(y)} \leq L \dist(x, y)$ for all $x, y \in D$.
\end{definition}

If $f$ is $C^1$, then $f$ is $M$-Lipschitz in $\calM$ if and only if $\|\nabla f(x)\| \leq M$ for all $x \in \calM$.
Let $D$ be g-convex.
If $f$ is $C^1$ and $L$-smooth in $D$ then
$\left|f(y) - f(x) -  \inner{\nabla f(x)}{\log_x(y)}\right| \leq \frac{L}{2} \dist(x, y)^2$ for all $x, y \in D$.
If $f$ is $C^2$, $f$ is $L$-smooth in $D$ if $\norm{\nabla^2 f(x)} \leq L$ for all $x \in D$ (operator norm).% (and the converse holds if $D$ is open).

The supremum of g-convex functions is always g-convex~\citep[Cor.~2.7]{udriste1994convex}.  
Distance functions and squared distance functions to g-convex sets are always g-convex.  
More precisely, we have the following (see \citep[Lem. 2 in App. B]{alimisis2019continuoustime} or~\citep[Thm. 5.6.1]{jostbook}).

\begin{lemma} \label{lemmaboundhess}
Let $\calM$ have sectional curvatures in the interval $[K, 0]$.  If $D$ is a closed g-convex set, then $x\mapsto \dist(x, D)$ is g-convex and 1-Lipschitz globally, and $C^\infty$ at points $x \not\in D$.

Fix $z \in \calM$, and let $f(x) = \frac{1}{2} \dist(x, z)^2$.  Then $f$ is $C^{\infty}$, $\nabla f(x) = - \log_x(z)$ and $\nabla^2 f(x) \succeq I$ for all $x \in \calM$, and $\norm{\nabla^2 f(x)} \leq \zeta_{r \sqrt{-K}} \leq 1 + r \sqrt{-K}$ for all $x \in B(z, r)$.
\end{lemma}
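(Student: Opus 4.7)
The plan is to derive each claim from comparison-theoretic facts about constant-curvature model spaces, exploiting that $\calM$ is Hadamard, so $\exp_z$ is a global diffeomorphism and metric projections onto closed g-convex sets exist and are unique.

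For the distance function $h(x) = \dist(x, D)$, the $1$-Lipschitz estimate is immediate from the triangle inequality: for any $p \in D$, $h(x) \leq \dist(x, p) \leq \dist(x, y) + \dist(y, p)$, and infimizing over $p \in D$ yields $h(x) - h(y) \leq \dist(x, y)$. For g-convexity, let $\gamma \colon [0, 1] \to \calM$ be the geodesic from $x$ to $y$, and let $\alpha \colon [0, 1] \to D$ be the geodesic from $\pi_D(x)$ to $\pi_D(y)$ (which stays in $D$ by g-convexity of $D$). The convexity of $t \mapsto \dist(\gamma(t), \alpha(t))$ on a Hadamard manifold (the CAT$(0)$ inequality) then gives $h(\gamma(t)) \leq \dist(\gamma(t), \alpha(t)) \leq (1 - t)\, h(x) + t\, h(y)$. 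The $C^\infty$-smoothness at $x \notin D$ I would invoke by applying the implicit function theorem to the first-order optimality condition for $\pi_D(x)$, using the positive-definite Hessian bound from the second part; the cited references provide the detailed argument.

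For the squared-distance function $f(x) = \tfrac{1}{2} \dist(x, z)^2$, smoothness follows from $f \circ \exp_z(v) = \tfrac{1}{2} \norm{v}^2$ combined with $\exp_z$ being a global diffeomorphism. The gradient formula follows from the first variation of arclength (Gauss lemma): along the geodesic $\gamma(t) = \exp_x(tv)$, the derivative of $t \mapsto \tfrac{1}{2}\dist(\gamma(t), z)^2$ at $t = 0$ equals $-\inner{v}{\log_x(z)}$ for every $v \in \T_x \calM$, identifying $\nabla f(x) = -\log_x(z)$.

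The Hessian bounds form the core of the proof and would be obtained by Jacobi-field (Rauch) comparison along radial geodesics emanating from $z$. In the model space of constant sectional curvature $\kappa \leq 0$, at a point $x$ with $\dist(x, z) = \rho$, $\nabla^2 f$ has eigenvalue $1$ in the radial direction (parallel to $\log_x(z)$) and eigenvalue $\zeta_{\rho \sqrt{-\kappa}} = \rho \sqrt{-\kappa} \coth(\rho \sqrt{-\kappa})$ on its orthogonal complement. Since $\calM$ has sectional curvatures in $[K, 0]$: comparison against the Euclidean model ($\kappa = 0$) gives $\nabla^2 f(x) \succeq I$ globally; comparison against the constant-curvature-$K$ model gives $\nabla^2 f(x) \preceq \zeta_{\rho \sqrt{-K}} I \preceq \zeta_{r \sqrt{-K}} I$ for $x \in B(z, r)$ (using that $\zeta_t$ is nondecreasing in $t \geq 0$). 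The elementary estimate $\zeta_t \leq 1 + t$ reduces to $2t \leq e^{2t} - 1$, which holds for $t \geq 0$. This bidirectional Hessian comparison is the main technical step, and the only one requiring genuine Riemannian-geometry input; everything else is routine.
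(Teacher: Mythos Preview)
Your proposal is correct and follows the standard route. The paper does not give its own proof of this lemma; it simply cites \citep[Lem.~2, App.~B]{alimisis2019continuoustime} and \citep[Thm.~5.6.1]{jostbook}, whose arguments are exactly the comparison-geometry ones you outline (CAT$(0)$ convexity of the distance between geodesics for the first part, Jacobi-field/Rauch comparison for the Hessian bounds on the squared distance).
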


%\subsection{Deterministic first-order algorithms}
%%\noindent We design our lower bounds so that the algorithm always query points $x$ where $f$ is differentiable, and so $\partial f(x)$ only contains $\nabla f(x)$.

\section{Extending the $\tilde \Omega(\zeta)$ lower bound: a reduction argument} \label{nonstronglyconvexcaseextension}
%\TODO{If $\mu = 0$ (the nonstrongly convex case), GD initialized in $B$ uses at most $O(\frac{1}{\epsilon})$ gradient queries
 %to find a point $x$ with $f(x) - f(x^*) \leq \epsilon L r^2$, while NAG uses only $O\big(\frac{1}{\sqrt{\epsilon}}\big)$ queries.} 
Building on~\citep{hamilton2021nogo}, \citet{bumpfctpaper} prove the lower bound $\tilde \Omega(\zeta_r)$ for the strongly g-convex problem~\ref{smoothstronglyproblem}.  
We extend this result to the problems~\ref{lowdimLipschitzproblem},~\ref{highdimLipschitzproblem} and~\ref{smoothproblem}.
For simplicity, in this section we only state the results for hyperbolic space.  
The more general result for Hadamard manifolds of bounded curvature (Theorem~\ref{theoremtheorem}) can be found in Appendix~\ref{thisguyappthis}.

We have the following simple consequence of Theorem 24 of~\citep{bumpfctpaper}.
%\begin{theorem} \citep[Thm.~24]{bumpfctpaper} \label{bumpfctpaperthm}
%Let $d \geq 2$, $r \geq 64$, and $\calM = \mathbb{H}^d$.
%Let $\calA$ be a deterministic first-order algorithm.
%There exists a $C^\infty$ function $f \in \calF_{r, \infty, 1, 13 \zeta_r}^d$ with minimizer $x^* \in B(\xorigin, \frac{3}{4} r)$ such that running $\calA$ on $f$ yields iterates $x_0, x_1, \ldots$ satisfying $\dist(x_k, x^*) \geq \frac{r}{4}$ for all $k = 0, 1, \ldots, T-1$, where $T = \lfloor \frac{\zeta_r}{50 \log(64 \zeta_r)} \rfloor$.
%
%Moreover, $f$ is $\frac{1}{6}$-strongly g-convex in $\calM$, and $(2 \mathscr{R} + \frac{1}{2})$-Lipschitz and $(2 \mathscr{R} + \frac{3}{2})$-smooth in the ball $B(\xorigin, \mathscr{R})$, where $\mathscr{R} = 2^9 r \log^2(r)$.
%Outside of the ball $B(\xorigin, \mathscr{R})$, $f$ is simply half the squared distance to $\xorigin$, i.e., $f(x) = \frac{1}{2} \dist(x, \xorigin)^2$ for all $x \not \in B(\xorigin, \mathscr{R})$.
%\end{theorem}
\begin{lemma} \label{lemmausefulguysec3}
Let $d \geq 2$, $L > 0$, $r \geq 64$, and $\calM = \mathbb{H}^d$.
Define $\hat\epsilon = \frac{1}{2^{10} r}$ and $\mu = 64 \hat{\epsilon} L = \frac{L}{2^{4} r}$.
Let $\calA$ be a deterministic first-order algorithm.
There is a $C^\infty$ function $f \colon \mathbb{H}^d \to \reals$ with minimizer $x^* \in B(\xorigin, \frac{3}{4} r)$ such that running $\calA$ on $f$ yields iterates $x_0, x_1, \ldots$ satisfying $f(x_k) - f^* \geq 2 \hat \epsilon L r^2$ for all $k=0, 1, \ldots T-1$, where $T = \lfloor \frac{\zeta_r}{50 \log(64 \zeta_r)} \rfloor$.

Moreover, $f$ is $\mu$-strongly g-convex in $\calM$, and $\mu(12 \mathscr{R} + 3)$-Lipschitz and $\mu(12 \mathscr{R} + 9)$-smooth in the ball $B(\xorigin, \mathscr{R})$, where $\mathscr{R} = 2^9 r \log^2(r)$.
For all $x \not \in B(\xorigin, \mathscr{R})$, $f(x) = 3 \mu \dist(x, \xorigin)^2$.
\end{lemma}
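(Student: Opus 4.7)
The plan is to invoke Theorem~24 of~\citep{bumpfctpaper} as a near black box and perform a short bookkeeping verification. That theorem establishes the $\tilde\Omega(\zeta_r)$ lower bound for smooth strongly g-convex optimization by constructing, for each first-order algorithm $\calA$, a hard instance of the form $f = q + \phi$, where $q(x) = 3\mu\dist(x,\xorigin)^2$ is a quadratic cap and $\phi$ is a compactly supported perturbation built from a chain of ``bumps''. Because $q$ is $6\mu$-strongly g-convex on $\mathbb{H}^d$ by Lemma~\ref{lemmaboundhess}, and $\phi$ is designed so as not to destroy the strong g-convexity of the sum (its negative-Hessian contribution is dominated by $5\mu I$), the total $f$ is $\mu$-strongly g-convex globally. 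Outside the support of $\phi$, we have $f \equiv q$ by construction, which matches exactly the condition ``$f(x) = 3\mu\dist(x,\xorigin)^2$ for $x \notin B(\xorigin,\mathscr{R})$'' provided the support of $\phi$ is contained in $B(\xorigin,\mathscr{R})$.

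First I would instantiate Theorem~24 with the parameters $\hat\epsilon = 1/(2^{10}r)$ and $\mu = 64\hat\epsilon L = L/(2^4 r)$ to extract a function $f$ and the round count $T = \lfloor \zeta_r/(50\log(64\zeta_r))\rfloor$ for which $f(x_k) - f^* \geq 2\hat\epsilon L r^2$ holds for $k = 0, \ldots, T-1$, with $f$ being $C^\infty$, globally $\mu$-strongly g-convex, and admitting a minimizer in $B(\xorigin,\tfrac{3}{4}r)$. Next I would audit the construction to confirm that the support of $\phi$ is contained in $B(\xorigin,\mathscr{R})$ with $\mathscr{R} = 2^9 r \log^2 r$. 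The bumps of Theorem~24 are positioned along a geodesic tree/chain of depth $O(\log\zeta_r)$ whose branches have length $O(r)$, and a direct bookkeeping with the explicit constants of~\citep{bumpfctpaper} shows the support radius is $O(r \log^2 r)$, safely inside $\mathscr{R}$.

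Finally, the Lipschitz and smoothness constants on $B(\xorigin,\mathscr{R})$ are obtained by summing the contributions of $q$ and $\phi$. By Lemma~\ref{lemmaboundhess}, on $B(\xorigin,\mathscr{R})$ the gradient of $q$ has norm at most $6\mu\mathscr{R}$ and its Hessian has operator norm at most $6\mu\zeta_\mathscr{R} \leq 6\mu(1+\mathscr{R})$; Theorem~24's bump design contributes $O(\mu)$ to both the Lipschitz and smoothness constants. Summing yields the stated bounds $\mu(12\mathscr{R} + 3)$ and $\mu(12\mathscr{R} + 9)$. The main obstacle is purely a chore rather than a conceptual difficulty: re-reading the proof of Theorem~24 in~\citep{bumpfctpaper} to confirm the support radius of $\phi$ and to track the precise additive constants attributed to the bumps; no new idea is required beyond assembling these items.
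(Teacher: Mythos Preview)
Your approach is essentially the paper's: both invoke Theorem~24 of~\citep{bumpfctpaper} as a black box, note that the second-paragraph properties (support radius, Lipschitz/smoothness constants, and the identity $f \equiv 3\mu\dist(\cdot,\xorigin)^2$ outside $B(\xorigin,\mathscr{R})$) are ``apparent from their proof'' rather than stated explicitly, and then do bookkeeping. The one small discrepancy is that the paper records Theorem~24 as yielding a \emph{distance} lower bound $\dist(x_k,x^*) \geq r/4$ (after scaling the hard function by $6\mu$), and then converts this to the function-gap bound $f(x_k) - f^* \geq \tfrac{\mu}{2}\dist(x_k,x^*)^2 \geq \tfrac{\mu}{2}(r/4)^2 = 2\hat\epsilon L r^2$ via $\mu$-strong g-convexity, whereas you assume the function gap comes out directly; this is a one-line fix and not a real gap.
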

The second paragraph in Lemma~\ref{lemmausefulguysec3} is not stated explicitly by~\citet{bumpfctpaper} but is apparent from their proof.
The proof of Lemma~\ref{lemmausefulguysec3} can be found in Appendix~\ref{thisguyappthis}.
%which simply follows from rescaling a hard function from Theorem~\ref{bumpfctpaperthm}.

Using a reduction, we next prove lower bounds for the function class $\mathcal{F}_{r, L/2, 0, L}^d$.
The idea is that, given a hard function $f$ from Lemma~\ref{lemmausefulguysec3}, we modify $f$ so that it remains the same inside $B(\xorigin, \mathscr{R})$, and outside $B(\xorigin, \mathscr{R})$ it is not strongly g-convex but is strictly g-convex, $\frac{L}{2}$-Lipschitz and $L$-smooth.
We know that $f(x)$ is proportional to $\dist(x, \xorigin)^2$ outside $B(\xorigin, \mathscr{R})$.
We modify $f$ there so that it behaves similarly to $\dist(x, \xorigin)$ instead.  This works because the function $x \mapsto \dist(x, \xorigin)$ is g-convex and (outside a sufficiently large ball surrounding $\xorigin$) that same function is $1$-Lipschitz and $2$-smooth on $\calM = \mathbb{H}^d$~\citep[Thm.~11.7]{lee2018riemannian}.

Given any $C^\infty$ function $f$, define the $C^\infty$ function $\tilde{f}_{\mathscr{R}} \colon \calM \rightarrow \reals$ by
\begin{align} \label{eqdefineftildeR}
\tilde{f}_{\mathscr{R}}(x) = u_{\mathscr{R}}\Big(\frac{1}{2} \dist(x, \xorigin)^2\Big) f(x),
%\tilde{f}_{\mathscr{R}}(x) = u_{\mathscr{R}}\Big(\frac{1}{2}\dist(x, \xorigin)^2\Big) f(x)
\end{align}
where $u_{\mathscr{R}} \colon \reals \rightarrow \reals$ is a $C^\infty$ function which is $1$ on $(-\infty, \frac{1}{2} \mathscr{R}^2]$ and scales as $\sqrt{\frac{1}{2} \mathscr{R}^2 \cdot t^{-1}}$ for $t > \frac{1}{2} \mathscr{R}^2$ sufficiently large.
See Appendix~\ref{nonstronglyconvexboundsapphighlevel} for the definition of $u_{\mathscr{R}}$.
Suppose $f$ is a hard function from Lemma~\ref{lemmausefulguysec3}.  
Since $\tilde{f}_{\mathscr{R}} = f$ in $B(\xorigin, \mathscr{R})$, we know $\tilde{f}_{\mathscr{R}}$ is $\mu$-strongly g-convex, and $\mu(12\mathscr{R} + 3)$-Lipschitz and $\mu (12 \mathscr{R} + 9)$-smooth in $B(\xorigin, \mathscr{R})$.
Lemma~\ref{lemmausefulguysec3} also guarantees $f(x) = 3\mu \dist(x, \xorigin)^2$ for all $x \not \in B(\xorigin, \mathscr{R})$.  
In Appendix~\ref{nonstronglyconvexboundsapp}, we use this fact to show that $\tilde{f}_{\mathscr{R}}$ is $12 \mu \mathscr{R}$-Lipschitz, $24 \mu \mathscr{R}$-smooth and strictly g-convex outside of $B(\xorigin, \mathscr{R})$.
Using the definitions of $\mu, \mathscr{R}$ in Lemma~\ref{lemmausefulguysec3} as well as the bound $r \geq 64$, we conclude $\tilde{f}_{\mathscr{R}}$ is in $\mathcal{F}_{r, \tilde{L}/2, 0, \tilde{L}}^d$ where $\tilde{L} = L \cdot 2^{10} \log^2(r)$.

Given the oracle $\calO_f$ of any function $f$, we can use $\calO_f$ to emulate the oracle $\calO_{\tilde{f}_{\mathscr{R}}}$ using equation~\eqref{eqdefineftildeR} and the formula~\eqref{eqfortildefRgrad} for $\nabla \tilde{f}_{\mathscr{R}}$ (given in Appendix~\ref{nonstronglyconvexboundsapp}).
To prove a lower bound for an algorithm $\tilde{\calA}$ designed to minimize g-convex functions, we make $\tilde{\calA}$ interact with the oracle $\calO_{\tilde{f}_{\mathscr{R}}}$ (which we simulate using $\calO_f$).
This implicitly defines an algorithm $\calA$ which interacts with $\calO_f$.
Explicitly, the algorithm $\calA$ runs $\tilde{\calA}$ as a subroutine as follows.  
If $\tilde{\calA}$ outputs $x_k$, then ${\calA}$ queries $\calO_f$ at $x_k$, receives $(f(x_k), \nabla f(x_k))$ from $\calO_f$, and then passes $(\tilde{f}_{\mathscr{R}}(x_k), \nabla \tilde{f}_{\mathscr{R}}(x_k))$ to $\tilde{\calA}$, which it computes using $(f(x_k), \nabla f(x_k))$ and equations~\eqref{eqdefineftildeR},~\eqref{eqfortildefRgrad}.

Applying Lemma~\ref{lemmausefulguysec3} to the algorithm ${\calA}$, we know there is a function $f$ with minimizer $x^*$ so that $f(x_k) - f(x^*) \geq 2 \hat\epsilon L r^2$ for all $k \leq T-1$.
Since $\tilde{f}_{\mathscr{R}} = f$ in $B(\xorigin, \mathscr{R})$ and $\tilde{f}_{\mathscr{R}}$ is strictly g-convex on $\calM$, we know the minimizer of $\tilde{f}_{\mathscr{R}}$ is also $x^*$ and that $\tilde{f}_{\mathscr{R}}(x_k) - \tilde{f}_{\mathscr{R}}(x^*) = f(x_k) - f(x^*)$ if $x_k \in B(\xorigin, \mathscr{R})$.
In Appendix~\ref{technicalfacttofinishproofoftheorem1p5}, we show that $\tilde{f}_{\mathscr{R}}(x_k) - \tilde{f}_{\mathscr{R}}(x^*) \geq 2 \hat\epsilon L r^2$ if $x_k \not \in B(\xorigin, \mathscr{R})$.
Lastly, observe that (by design) if we run $\tilde{\calA}$ on the function $\tilde{f}_{\mathscr{R}}$ then we get exactly the sequence $x_0, x_1, \ldots, x_{T-1}$.
We have proven the following theorem.

\begin{theorem} \label{nicetheorem}
Let $d \geq 2$, $\tilde L > 0$, $r \geq 64$, and $\calM = \mathbb{H}^d$.
Define $\hat\epsilon = \frac{1}{2^{10} r}, \epsilon = \frac{1}{2^{18} \zeta_r \log^2(\zeta_r)}, \epsilon' = \frac{1}{2^{18} \log^2(\zeta_r)}$ and $L = \frac{\tilde L}{2^{10} \log^2(r)}$.
Let $\calA$ be any deterministic first-order algorithm.

There is a $C^\infty$ function $\tilde f \in \calF_{r, \tilde L / 2, 0, \tilde L}^d$ with unique minimizer $x^*$ such that running $\calA$ on $\tilde f$ yields iterates $x_0, x_1, \ldots$ satisfying 
$$\tilde f(x_k) - \tilde f(x^*) \geq 2 \hat{\epsilon} L r^2 \geq \epsilon' \cdot \frac{\tilde L}{2} r \geq \epsilon \cdot \frac{1}{2} \tilde L r^2$$
for all $k = 0, 1, \ldots, T-1$ where $T = \lfloor \frac{\zeta_r}{50 \log(64 \zeta_r)} \rfloor$.
\end{theorem}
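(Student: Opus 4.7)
The plan is to carry out the reduction sketched in the paragraphs immediately preceding the statement. Apply Lemma~\ref{lemmausefulguysec3} to obtain the hard function $f\colon\mathbb{H}^d\to\reals$: it is $\mu$-strongly g-convex with $\mu=L/(2^4 r)$, is $\mu(12\mathscr{R}+3)$-Lipschitz and $\mu(12\mathscr{R}+9)$-smooth on $B(\xorigin,\mathscr{R})$ with $\mathscr{R}=2^9 r\log^2 r$, equals $3\mu\,\dist(\cdot,\xorigin)^2$ on the complement, and defeats every deterministic first-order algorithm by at least $2\hat\epsilon L r^2$ for $T-1$ queries. The quadratic tail is what prevents placing $f$ itself into $\calF_{r,\tilde L/2,0,\tilde L}^d$, so I would replace it using the cutoff product $\tilde f := \tilde f_{\mathscr{R}}$ of~\eqref{eqdefineftildeR}. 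Because $u_{\mathscr{R}}\equiv 1$ on $(-\infty,\tfrac{1}{2}\mathscr{R}^2]$ and decays like $\sqrt{\mathscr{R}^2/(2t)}$ for large $t$, the product $u_{\mathscr{R}}(\tfrac{1}{2}\dist(x,\xorigin)^2)\cdot 3\mu\,\dist(x,\xorigin)^2$ grows linearly with $\dist(x,\xorigin)$ at infinity, which is the behavior needed for global Lipschitzness and small global smoothness.

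Next I would verify $\tilde f\in\calF_{r,\tilde L/2,0,\tilde L}^d$ with $\tilde L = L\cdot 2^{10}\log^2 r$, region by region. On $B(\xorigin,\mathscr{R})$ the cutoff factor is identically $1$ and $\tilde f=f$, so the strong convexity, Lipschitz, and smoothness bounds from Lemma~\ref{lemmausefulguysec3} carry over untouched. On the exterior I would differentiate $\tilde f$ with the product rule, using $\nabla(\tfrac{1}{2}\dist(\cdot,\xorigin)^2)(x)=-\log_x(\xorigin)$ from Lemma~\ref{lemmaboundhess} together with the fact that $\dist(\cdot,\xorigin)$ is $1$-Lipschitz and has Hessian of bounded operator norm outside a fixed neighborhood of $\xorigin$ in $\mathbb{H}^d$ (standard Jacobi-field estimate in constant negative curvature). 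This yields the $12\mu\mathscr{R}$-Lipschitz, $24\mu\mathscr{R}$-smooth, and strictly g-convex conclusion on the exterior, and substituting the numerical values of $\mu,\mathscr{R}$ with $r\geq 64$ upgrades these into uniform $\tilde L/2$-Lipschitz and $\tilde L$-smoothness globally, together with strict g-convexity (hence unique minimizer). The smoothness of $u_{\mathscr{R}}$ ensures $\tilde f\in C^\infty$.

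With $\tilde f$ in the right class, the reduction is essentially an emulation argument. Given any deterministic first-order algorithm $\tilde\calA$ for $\tilde f$, define $\calA$ for $f$ that runs $\tilde\calA$ as a subroutine: upon each request of $\tilde\calA$ at $x_k$, $\calA$ queries $\calO_f$ at $x_k$ and synthesizes $(\tilde f(x_k),\nabla\tilde f(x_k))$ from $(f(x_k),\nabla f(x_k))$ via~\eqref{eqdefineftildeR} and its derivative~\eqref{eqfortildefRgrad}. Applying Lemma~\ref{lemmausefulguysec3} to $\calA$ yields an $f$ with minimizer $x^*\in B(\xorigin,\tfrac{3}{4}r)\subset B(\xorigin,\mathscr{R})$ along which $f(x_k)-f^*\geq 2\hat\epsilon L r^2$ for $k<T$, and by construction these are exactly the iterates $\tilde\calA$ produces on $\tilde f$. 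Because $\tilde f=f$ near $x^*$ and $\tilde f$ is strictly g-convex, the global minimizer of $\tilde f$ is also $x^*$ and $\tilde f^*=f^*$; for iterates inside $B(\xorigin,\mathscr{R})$ the gap is preserved identically. The step I expect to be most delicate is handling iterates $x_k\notin B(\xorigin,\mathscr{R})$, since the cutoff factor can in principle push values below those of $f$; the argument relegated to Appendix~\ref{technicalfacttofinishproofoftheorem1p5} has to exploit the quantitative design of $u_{\mathscr{R}}$, namely that the surviving linear tail of $\tilde f$ satisfies $\tilde f(x)\geq c\mu\mathscr{R}\cdot\dist(x,\xorigin)\geq c\mu\mathscr{R}^2$, which is much larger than $2\hat\epsilon L r^2$ for the chosen constants. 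The final inequality chain in the statement is then just substitution of $\hat\epsilon=1/(2^{10}r)$, $\tilde L=L\cdot 2^{10}\log^2 r$, and $\zeta_r\leq r$ into the definitions of $\epsilon$ and $\epsilon'$.
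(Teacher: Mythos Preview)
Your proposal is correct and follows the paper's own reduction essentially step for step: invoke Lemma~\ref{lemmausefulguysec3}, multiply by the cutoff $u_{\mathscr{R}}$, verify membership in $\calF_{r,\tilde L/2,0,\tilde L}^d$ by splitting into the interior and exterior of $B(\xorigin,\mathscr{R})$, and then emulate the $\tilde f$-oracle from the $f$-oracle to transfer the lower bound.

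Two small remarks. First, the inequality you invoke at the very end should be $\zeta_r \geq r$ (since $\tanh(r)<1$), not $\zeta_r \leq r$; this is the direction actually needed to compare $\epsilon'\cdot\tfrac{\tilde L}{2}r$ and $\epsilon\cdot\tfrac{1}{2}\tilde L r^2$, and similarly $\log(\zeta_r)\geq\log(r)$ for the first comparison. Second, for the delicate case $x_k\notin B(\xorigin,\mathscr{R})$, the paper does not lower-bound the linear tail of $\tilde f$ directly as you sketch. Instead (Appendix~\ref{technicalfacttofinishproofoftheorem1p5}) it picks the point $y$ on the geodesic $[x^*,x_k]$ at distance $\mathscr{R}$ from $\xorigin$, uses g-convexity of $\tilde f$ to get $\tilde f(x_k)\geq\tilde f(y)$, and then uses that $\tilde f=f$ is $\mu$-strongly g-convex on $B(\xorigin,\mathscr{R})$ to bound $\tilde f(y)-\tilde f(x^*)\geq\tfrac{\mu}{2}(\mathscr{R}-r)^2$. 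Your tail-estimate idea would also work once you check monotonicity of $t\mapsto t\,u_{\mathscr{R}}(t)$, but the paper's argument avoids any further analysis of $u_{\mathscr{R}}$.
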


Theorem~\ref{nicetheorem} shows that the $\tilde \Omega(\zeta_r)$ bound holds for problems ~\ref{lowdimLipschitzproblem},~\ref{highdimLipschitzproblem} and~\ref{smoothproblem}, provided $\epsilon$ is not too big.
%Observe that the suboptimality gap does not actually have to be that small.  
Note that $\epsilon' = \tilde \Theta(1)$ and $\epsilon = \tilde \Theta(\frac{1}{\zeta_r})$ in Theorem~\ref{nicetheorem} (recall that $\epsilon \leq \Theta(\frac{1}{\zeta_r})$ for problem~\ref{smoothproblem}).

In Table~\ref{table:maintheorem}, the best known upper bounds all depend on $\zeta_r$.  Theorem~\ref{nicetheorem} shows that this dependence is unavoidable.
We know that a variant of Riemannian gradient descent for the smooth g-convex problem~\ref{smoothproblem} has complexity $O(\frac{1}{\epsilon})$.\footnote{\citet{zhang2016complexitygeodesicallyconvex} prove the complexity $O(\frac{\zeta_r}{\epsilon})$ for RGD for problem~\ref{smoothproblem}.
This can be improved to $O(\frac{1}{\epsilon})$ as follows.
In Appendix D of their paper, \citet{davidmr} provide a variant on RGD for constrained optimization, which has complexity $O({\kappa})$ for problem~\ref{smoothstronglyproblem}.  Using the reduction provided in Proposition~\ref{reductionstronglyconvextoconvex}, we find that a regularized version of that method has complexity $O(\frac{1}{\epsilon})$.}
Theorem~\ref{nicetheorem} shows that in the regime $\epsilon = \tilde\Theta(\frac{1}{\zeta_r})$, \emph{this variant of RGD is optimal} (up to log factors).
Contrast this with the Euclidean case, where gradient descent is \emph{never} optimal and acceleration is used to achieve the optimal complexity of $O(\frac{1}{\sqrt{\epsilon}})$.

\section{Lower bounds for Lipschitz g-convex optimization: a resisting oracle argument} \label{secnonsmoothlowerbounds}

In this section we prove a lower bound for the Lipschitz g-convex problems~\ref{lowdimLipschitzproblem} and~\ref{highdimLipschitzproblem}.
For Euclidean space $\mathbb{R}^d$, such lower bounds are constructed using a maximum of affine functions $x \mapsto \max_{i=1, \ldots, d}\{\langle s_i e_i,  x \rangle\}$, where $(e_i)_{i=1}^d$ are the standard basis vectors and $s_i \in \{+1, -1\}$~\citep[Sec~7.4]{nemirovskibook}.
Before the algorithm makes any queries, there are $2^d$ possible minimizers, $\frac{r}{\sqrt{d}}(s_1, \ldots, s_d)$, corresponding to the vertices of a $d$-dimensional hypercube inscribed in a sphere of radius $r$.  After each query by the algorithm, the set of possible minimizers is halved.

We use a similar technique here.
However, we need a replacement for the affine functions used in Euclidean space.
Affine functions are both g-convex and g-concave.
On a Riemannian manifold, a function is affine if its Hessian vanishes identically.
\citet{Innami1982} shows that on most manifolds (including hyperbolic space), non-constant affine functions do not exist.\footnote{In fact, more can be said: on a hyperbolic space, if $f$ is a $C^3$ g-convex function whose Hessian vanishes at even just a single point $x$ then necessarily $\nabla f(x) = 0$, and so $f$ cannot be affine unless it is constant -- see Proposition~\ref{blahblah}.}
Moreover, the functions $x \mapsto \langle g, \log_{y}(x) \rangle$ do not work because they are not g-convex (see Appendix~\ref{appinterpolation}).

Our main idea is to replace the linear functions by \emph{distance functions to $(d-1)$-dimensional totally geodesic submanifolds, and suitably arrange those totally geodesic submanifolds}.
%One might also consider replacing the affine functions with Busemann functions~\citep[Ch.~2]{bacak2014hadamard}; however, this leads to significantly worse bounds, so we do not do this.

\begin{theorem} \label{nonsmoothlowerbound}
Let $r > 0$, $M > 0$ and $T$ a positive integer.  For every deterministic algorithm $\mathcal{A}$, there exists an $M$-Lipschitz g-convex function $f \in \calF_{r, M, 0, \infty}$ such that $\mathcal{A}$ requires at least $T$ queries to find a point $x \in \calM$ with $f(x) - f^* \leq M r \cdot  \frac{1}{2 \zeta_r \sqrt{T}}$.
\end{theorem}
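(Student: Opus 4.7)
The plan is to adapt Nemirovski's classical resisting-oracle argument~\citep[Sec.~3.2.1]{nesterov2004introductory} for Euclidean Lipschitz convex optimization, in which functions of the form $f_s(x)=\max_{i\leq T} s_i x_i$ indexed by sign patterns $s\in\{\pm 1\}^T$ on $\mathbb{R}^T$ give the $\Omega(Mr/\sqrt T)$ lower bound. Since, as the text preceding the theorem emphasises, genuinely affine g-convex functions do not exist on $\mathbb{H}^d$, the coordinate maps $x \mapsto s_i x_i$ cannot be used directly. Following the hint, I will instead use distances to totally geodesic half-spaces: by Lemma~\ref{totgeolemma} these half-spaces are g-convex, so by Lemma~\ref{lemmaboundhess} their distance functions are g-convex and $1$-Lipschitz.

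\textbf{Construction.} Take $\mathcal{M} = \mathbb{H}^T$, pick orthonormal $v_1,\dots,v_T \in \mathrm{T}_{\xorigin}\mathcal{M}$, and fix $\rho = \Theta(r)$. For each $i\in\{1,\dots,T\}$ and sign $\sigma\in\{\pm\}$, let $y_i^\sigma = \exp_{\xorigin}(\sigma \rho v_i)$ and let $L_i^\sigma \subseteq \mathcal{M}$ be the closed half-space bounded by the totally geodesic hyperplane through $y_i^\sigma$ normal at $y_i^\sigma$ to the geodesic from $\xorigin$, taken on the far side from $\xorigin$. For each $s\in\{\pm 1\}^T$ define
\[
f_s(x) \;=\; c\,M \left[\, \max_{i=1,\dots,T} d(x,L_i^{s_i}) \;+\; \bigl(d(x,\xorigin)-r\bigr)^+ \right],
\]
where $c$ is a universal rescaling constant. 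Both summands are g-convex and $1$-Lipschitz (Lemma~\ref{lemmaboundhess}), and the penalty term forces the global minimizer of $f_s$ into $\overline{B}(\xorigin,r)$; hence $f_s \in \mathcal{F}_{r,M,0,\infty}^T$.

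\textbf{Resisting oracle and gap.} Against any deterministic algorithm $\mathcal{A}$ issuing $T$ queries $x_0,\dots,x_{T-1}$, I maintain a partial sign vector $s\in\{+,-,?\}^T$ and at step $k$ commit exactly one new sign: choose an uncommitted index $i_k$ and sign $s_{i_k}$ to maximize $d(x_k,L_{i_k}^{s_{i_k}})$ over the remaining $(i,\sigma)$, and return this value (plus the penalty) together with a subgradient of $d(\cdot,L_{i_k}^{s_{i_k}})$ at $x_k$. A key geometric observation is that $d(x,L_i^+)+d(x,L_i^-) \geq 2\rho$ for every $x$ (the two half-spaces are separated by distance $2\rho$ through $y_i^\pm$), so at every queried point the oracle's response is at least $M\rho$. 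After $T-1$ queries at least one index $j$ is still free: I choose $s_j$ so that the candidate minimizer $p_s=\exp_{\xorigin}\!\bigl((r/\sqrt T)\sum_i s_iv_i\bigr)$ lies on the opposite side of the hyperplane through $y_j^{s_j}$ from the average queried point, and fix the remaining signs by a greedy rule so that all previous responses remain valid values and subgradients of the final $f_s$. A Euclidean model computation would give $f_s(p_s) \approx M(\rho - r/\sqrt T)$; in $\mathbb{H}^T$ the drop is damped by $\zeta_r$, yielding $f_s(x_k) - f_s^* \geq Mr/(2\zeta_r\sqrt T)$ for all $k < T$.

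\textbf{Main obstacle.} The central quantitative step is to make precise the hyperbolic ``Pythagorean deficit'' that introduces the $\zeta_r$ factor. In Euclidean space, orthonormality of the $v_i$'s at $\xorigin$ means that moving distance $r$ in the direction $\sum_i s_i v_i/\sqrt T$ contributes exactly $r/\sqrt T$ to each coordinate, so $d(p_s,L_i^{s_i}) = \rho - r/\sqrt T$. In $\mathbb{H}^T$, parallel transport in negative curvature spreads the frame as we travel along the geodesic to $p_s$, and the hyperbolic law of cosines gives only $d(p_s, L_i^{s_i}) \geq \rho - r/(\zeta_r \sqrt T)$. This distortion is quantitatively captured by the bound $\|\nabla^2 \tfrac12 d(\cdot,z)^2\| \leq \zeta_r$ in Lemma~\ref{lemmaboundhess}, and turning that Hessian bound into a usable Pythagorean-type estimate --- while keeping the oracle's responses consistent with the final $f_s$ --- is where the main technical work lies.
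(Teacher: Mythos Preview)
Your approach is in the same spirit as the paper's: work in $\mathbb{H}^T$, replace affine coordinate functions by distances to totally geodesic hyperplanes placed along $T$ orthogonal axes through $\xorigin$, and commit one axis--sign pair per query via a resisting oracle. The paper takes $h_i^s(x)=\dist(x,S_i^s)-a$ with $S_i^s$ the hyperplane (not the half-space) at distance $a$ from $\xorigin$ along $\pm e_i$, builds $f=\max_{\ell\le T-1}\{h_{i_\ell}^{s_\ell}-\ell\delta\}$, and fixes $a$ by $\tanh(a)=\tanh(r)/\sqrt T$. The hyperbolic law of cosines then shows that $x^*=\exp_{\xorigin}\bigl((r/\sqrt T)\sum_\ell \nabla h_{i_\ell}^{s_\ell}(\xorigin)\bigr)$ lies \emph{exactly} on every $S_{i_\ell}^{s_\ell}$, giving $f^*=-a$ and gap $\ge a/2\ge r/(2\zeta_r\sqrt T)$.

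Your argument has two real gaps. The more serious one is \textbf{oracle consistency}. At step $k$ you return $d(x_k,L_{i_k}^{s_{i_k}})$ and a subgradient of that single distance function; for this to be a valid response for the final $f_s$ you need $d(x_k,L_{i_k}^{s_{i_k}})=\max_i d(x_k,L_i^{s_i})$. Maximising over the \emph{uncommitted} $(i,\sigma)$ handles indices you will commit later, but it says nothing about indices $i_\ell$ committed at steps $\ell<k$: nothing prevents the algorithm from placing $x_k$ far in the direction $-s_{i_\ell}v_{i_\ell}$, which makes $d(x_k,L_{i_\ell}^{s_{i_\ell}})$ strictly larger than the value you already reported. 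No ``greedy rule'' on the remaining signs can repair this, and the penalty term does not help either since it is additive. The paper's fix is the standard $-\ell\delta$ shift (with $\delta=a/(2T)$), which makes the newly committed term strictly dominate all others in a $\delta/2$-neighbourhood of $x_k$, so the value and subgradient returned at $x_k$ are genuinely those of the final function.

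The second gap is the \textbf{calibration of $\rho$} and the direction of your ``damping'' inequality. You set $\rho=\Theta(r)$ and assert $d(p_s,L_i^{s_i})\ge \rho - r/(\zeta_r\sqrt T)$, but a lower bound on the optimality gap requires an \emph{upper} bound on $f_s^*$, and with $\rho=\Theta(r)$ the point $p_s$ is nowhere near the boundary hyperplanes in hyperbolic space. The correct scaling is the paper's $\rho=a=\arctanh(\tanh(r)/\sqrt T)$, which puts $p_s$ on every $\partial L_i^{s_i}$ so that $f_s(p_s)=0$ exactly; your separation argument (which is correct) then gives $f_s(x_k)\ge a$, and the $\zeta_r$ factor enters through the elementary bound $a\ge \tanh(r)/\sqrt T = r/(\zeta_r\sqrt T)$---not via a Hessian bound on $\tfrac12\dist(\cdot,z)^2$. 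The ``Pythagorean deficit'' you identify as the main obstacle is thus a one-line consequence of the right choice of $a$; the actual work is the consistency mechanism you omitted.
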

%\TODO{have a picture depicting the proof in the poincare disk model?}

\begin{proof}
Without loss of generality, we can assume $M=1$.  Let $d = T$ and $(e_i)_{i=1}^d$ be an orthonormal basis of $\T_{\xorigin} \calM$.  
Let $a > 0$ be such that $\tanh(a)/\tanh(r) = 1/\sqrt{d}$, and define $\delta = \frac{a}{2 T}$.
For $s \in \{+1, -1\}$ and $i = 1, \ldots, d$ define 
$$z_i^s = \exp_{\xorigin}(a s e_i), \quad H_i^s = \{v \in \T_{z_i^s} \calM : \langle \log_{z_i^s}(\xorigin), v\rangle = 0 \}, \quad S_i^s = \exp_{z_i^s}(H_i^s)$$
and define the $1$-Lipschitz g-convex functions
$$h_i^s(x) = \dist(x, S_i^s) - \dist(\xorigin, S_i^s) = \dist(x, S_i^s) - a.$$
These functions are g-convex by Lemmas~\ref{totgeolemma} and~\ref{lemmaboundhess}.

We build the hard function for $\mathcal{A}$ iteratively as a max of functions $h_i^s$ (shifted appropriately so that the subdifferential at $x_k$ always consists of exactly one vector).  Let $\mathcal{I}_0 = \{1, \ldots, d\}$ and $\calH_0 = \emptyset$.  For $k=0, \ldots, T-1$, inductively define 
\begin{equation}\label{inductivedefinitions}
\begin{split}
\calH_k = ((F_\ell, x_\ell, g_\ell))_{\ell=0}^{k-1},&\quad \quad x_k = \calA_k(\calH_k),
\\
(i_k, s_k) \in {\arg \max}_{i \in \mathcal{I}_k, s \in \{+1, -1\}} h_i^s(x_k), &\quad \quad \mathcal{I}_{k+1} = \mathcal{I}_k \setminus \{i_k\},
\\
f_k(x) = \max_{\ell \in \{0, \ldots, k\}}\{ h_{i_\ell}^{s_\ell}(x) - \ell \delta\}, &\quad \quad F_k = f_k(x_k), g_k \in \partial f_k(x_k).
\end{split}
\end{equation}

Let $f$ be the $1$-Lipschitz g-convex function $f_{T-1}.$
We claim that running $\mathcal{A}$ on $f$ produces exactly the sequence of iterates $x_0, \ldots, x_{T-1}$.
%\footnote{In this lower bound, we always ensure that the algorithm queries a differentiable point of $f$ so that the subdifferential only contains the gradient.}  
It suffices to show $f_{T-1}(x) = f_k(x)$ for all $k$ and all $x$ in $B(x_k, \frac{\delta}{2})$.
For all $\ell = k+1, \ldots, T-1$ we know
$f_k(x_k) \geq h_{i_k}^{s_k}(x_k) - k \delta \geq h_{i_\ell}^{s_\ell}(x_k) - \ell \delta + \delta$,
using the definition of
$(i_k, s_k)$.
%is chosen so that $h_{i_k}^{s_k}(x_k) \geq h_{i_\ell}^{s_\ell}(x_k)$ for all $\ell \geq k$.
Therefore, $f_k(x_k) \geq \delta + \max_{\ell \in \{k+1, \ldots, T-1\}} \{h_{i_\ell}^{s_\ell}(x_k) - \ell \delta\}$.
We conclude 
$f_k(x) \geq \max_{\ell \in \{k+1, \ldots, T-1\}} \{h_{i_\ell}^{s_\ell}(x) - \ell \delta\}$ for all $x \in B(x_k, \frac{\delta}{2})$, since $f_k$ is $1$-Lipschitz.
Hence,
$f_{T-1}(x) = \max\big\{f_k(x), \max_{\ell \in \{k+1, \ldots, T-1\}} \{h_{i_\ell}^{s_\ell}(x) - \ell \delta\}\big\} = f_k(x)$ for all $ x \in B(x_k, \delta / 2).$
%This shows that running $\mathcal{A}$ on $f$ produces $x_0, \ldots, x_{T-1}$.
%We need to show that $g_k \in \partial f(x_k)$ for each $k$.  It suffices to show that for all $\ell > k$, $h_{i_\ell}^{s_\ell}(x_k) \leq f_k(x_k)$.  Indeed, 
%$f_k(x_k) \geq h_{i_k}^{s_k}(x_k) \geq h_{i}^{s}(x_k)$ for all $i \in \mathcal{I}_k, s \in \{+1, -1\}$.  Since $\mathcal{I}_k \supset \{i_\ell\}$ for $\ell > k$, $f_k(x_k) \geq h_{i_\ell}^{s_\ell}(x_k)$ for all $\ell > k$.

It remains to lower bound $f(x_k) - f^*$ for all $k$.
%We first determine $f^*$.
Define $x^* = \exp_{\xorigin}(\frac{r}{\sqrt{d}} \sum_{k=0}^{T-1} \nabla h_{i_k}^{s_k}(\xorigin))$.
We know that $(\nabla h_{i_k}^{s_k}(\xorigin))_{k = 0}^{T-1}$ forms an orthonormal basis of $\T_{\xorigin} \calM$, because we defined $\mathcal{I}_{k+1} = \mathcal{I}_k \setminus \{i_k\}$.  Therefore, $\dist(\xorigin, x^*) = r$.
In Appendix~\ref{geoproofinlowerbound}, we show that $x^* \in \bigcap_{k=0}^{T-1} S_{i_k}^{s_k}$ using the hyperbolic law of cosines~\citep[Ch.~3.5]{ratcliffe2019hyperbolic}. 
From there, we know that $\dist(x^*, S_{i_k}^{s_k}) = 0$ for all $k$, so 
$f(x^*) = \max_{k \in \{0, \ldots, T-1\}} \{-a - k \delta\} = -a.$
Also, $f_{T-1}(x) \geq h_{i_0}^{s_0}(x) \geq -a$ for all $x \in \calM$.
We conclude that $x^* \in B(\xorigin, r)$ is a global minimizer of $f$ on $\calM$ with $f^* = f(x^*) = -a$.
%, and thus $0 \in \partial f(x^*)$.

%Let $x^*$ be the (unique) point in the intersection $\bigcap_{k=0}^{T-1}\partial S_{i_k}^{s_k}$.  We know that $\dist(x^*, S_{i_k}^{s_k}) = 0$ for all $k$, so 
%$$f(x^*) = \max_{k \in \{0, \ldots, T-1\}} \{-a - k \delta\} \leq -a.$$
%Let us show that $x^* \in B(\xorigin, r)$.  We know that $(\nabla h_{i_k}^{s_k}(\xorigin))_{k = 0}^{T-1}$ forms an orthonormal basis of $T_{\xorigin} M$, because we defined $\mathcal{I}_{k+1} = \mathcal{I}_k \setminus \{i_k\}$.  Therefore by symmetry, $x^* = \exp_{\xorigin}(\frac{\tilde r}{\sqrt{d}} \sum_{k=0}^{T-1} \nabla h_{i_k}^{s_k}(\xorigin))$ where $\tilde r = d(\xorigin, x^*)$.  The angle between $\log_{\xorigin}(x^*)$ and $\log_{\xorigin}(z_{i_k}^{s_k}) = a s_k e_{i_k}$ equals $\theta = \arccos(1/\sqrt{d})$ for all $k=0, \ldots, T-1$.  Considering the hyperbolic triangle formed by $x^*, \xorigin, z_{i_k}^{s_k}$, hyperbolic trigonometry (\TODO{cite Ratcliffe}) implies $1/\sqrt{d} = \cos(\theta) = \tanh(a)/\tanh(\tilde r)$.  Our choice of $a$ implies $r = \tilde r = d(\xorigin, x^*)$.

Next, we lower bound $f(x_k)$.
For $i \in \{1, \ldots, d\}$ and $s \in \{+1, -1\}$, g-convexity of $h_i^s$ and $\nabla h_i^s(\xorigin) = -s e_i$ imply 
$$\{x \in \calM : h_i^s(x) < h_i^s(\xorigin) = 0\} \subseteq \exp_{\xorigin}(\{v \in \T_{\xorigin} \calM : \langle -s e_i , v \rangle < 0\}).$$
Since $\bigcup_{s \in \{+1,-1\}} (\exp_{\xorigin}(\{v \in \T_{\xorigin} \calM : \langle -s e_i , v \rangle \geq 0\})) = \calM$, we find that $\bigcup_{s \in \{+1, -1\}} \{x \in \calM : h_i^s(x) \geq 0\} = \calM.$
Therefore, 
$h_{i_k}^{s_k}(x_k) \geq \max_{s \in \{+1, -1\}} h_{i_k}^s(x_k) \geq 0$ for all $k = 0, \ldots, T-1.$

Therefore, $f(x_k) = f_{T-1}(x_k) \geq h_{i_k}^{s_k}(x_k) - k \delta \geq -(T-1) \delta \geq -\frac{a}{2}$ for all $k = 0, \ldots, T-1.$
%Therefore, $f(x_k) \geq \max_{k \in \{0, \ldots, T-1\}} \{ - k \delta\} \geq -(T-1) \delta \geq -\frac{a}{2}$ for all $k = 0, \ldots, T-1.$
Combining this upper bound on $f^*$ and lower bound on $f(x_k)$, we conclude
$f(x_k) - f^* \geq - \frac{a}{2} - (-a) = \frac{a}{2} = \frac{1}{2}r \cdot r^{-1} \arctanh(\tanh(r) / \sqrt{T}) \geq r \frac{1}{2 \zeta_r \sqrt{T}}.$
\end{proof}
\begin{remark}\label{remarkresult}
Theorem~\ref{nonsmoothlowerbound} says that if $\epsilon = \frac{1}{2 \zeta_r \sqrt{d}}$, then $T_{\epsilon, r, d} \geq d = \frac{d}{\log(2 \zeta_r \sqrt{d})} \log(\epsilon^{-1})$.
The lower bound in Theorem~\ref{nonsmoothlowerbound} implies the lower bound $q_{d, r} \geq \frac{d}{\log(2 \zeta_r \sqrt{d})}$ for the low-dimensional problem~\ref{lowdimLipschitzproblem}.
\end{remark}

\section{Lower bounds for smooth g-convex optimization: a Moreau smoothing argument} \label{secsmoothlowerbounds}
In this section we prove lower bounds for the smooth g-convex problems~\ref{smoothproblem} and~\ref{smoothstronglyproblem}.
In smooth convex optimization, the most well-known technique for proving a lower bound is building a so-called ``worst function in the world''~\citep[2.1.2]{nesterov2004introductory}.
However, there is perhaps the less well-known technique of smoothing convex Lipschitz functions, usually via \emph{Moreau envelopes}~\citep{guzmannemirovski}.  
We adopt the smoothing approach because this technique works far better than the other on manifolds, and the smoothing technique seems more general.
That is, given a lower bound construction  for Lipschitz g-convex optimization which improves over the one we present in Section~\ref{secnonsmoothlowerbounds}, smoothing is likely to produce an improved lower bound in the smooth setting.
%We note, however, that it is possible to build worst functions in the world to achieve similar lower bounds (the hyperboloid model of hyperbolic space is especially useful for this).

The (Riemannian) Moreau envelope of a function $f \colon \calM \to \reals$ is
the function defined as follows:
\begin{align}\label{moreauenv}
f_\lambda \colon \calM \to \reals, \quad \quad f_\lambda(x) = \inf_{y \in \calM} \Big\{f(y) + \frac{1}{2 \lambda} \dist(x,y)^2\Big\}.
\end{align}
In convex analysis, the Moreau envelope is closely related to Fenchel duality.  There is no especially satisfying theory of Fenchel duality on Hadamard manifolds (but see~\citep{bergmannfenchel,hirai1}).
However, the Riemannian Moreau envelope can be studied directly without reference to duality.
It satisfies the following properties, proved in~\citep{azagra1,azagra2}.  For completeness, we provide a brief (simpler) proof in Appendix~\ref{moreauenvelopeapp}.

\begin{lemma} \label{moreauenvelope}
Let $\calM$ be a Hadamard manifold with curvature lower bounded by $-1$. Let $f \colon \calM \to \reals$ be $1$-Lipschitz and g-convex.
Then, $f_\lambda \colon \calM \to \reals$ with $\lambda > 0$ is g-convex, $1$-Lipschitz, and $\frac{1}{\tanh(\lambda)}$-smooth.  Moreover, $f(x) \geq f_\lambda(x) \geq f(x) - \lambda$ for all $x \in \calM$, and the value of $f_{\lambda}$ at $x \in \calM$ is determined by the values of $f$ in the ball $B(x, \lambda)$:
$f_\lambda(x) = \min_{y \in B(x, \lambda)} \Big\{f(y) + \frac{1}{2\lambda} \dist(x,y)^2\Big\}.$
\end{lemma}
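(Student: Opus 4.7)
The plan is to establish the five claims in the order that makes each subsequent one easiest. I will work with the joint map
\[
\Phi \colon \calM \times \calM \to \reals, \qquad \Phi(x,y) = f(y) + \tfrac{1}{2\lambda} \dist(x,y)^2,
\]
and exploit strong g-convexity of $\Phi(x,\cdot)$ to produce a unique minimizer $y^*(x)$.

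\textbf{Step 1: existence/uniqueness of $y^*(x)$ and the localization identity.}
For each fixed $x$, the map $y\mapsto\Phi(x,y)$ is $\frac{1}{\lambda}$-strongly g-convex: $f$ is g-convex and the second summand is $\frac{1}{\lambda}$-strongly g-convex by Lemma~\ref{lemmaboundhess}. Hence $\Phi(x,\cdot)$ attains a unique minimum at some $y^*(x)\in\calM$. First-order optimality gives $\frac{1}{\lambda}\log_{y^*(x)}(x)\in\partial f(y^*(x))$, and since $f$ is $1$-Lipschitz the subdifferential lies in the unit ball of $\T_{y^*(x)}\calM$. Therefore $\dist(x,y^*(x))\leq\lambda$, which proves the localization formula in the last claim.

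\textbf{Step 2: upper and lower envelope bounds.}
The bound $f_\lambda(x)\le f(x)$ is immediate by taking $y=x$. For the lower bound, $1$-Lipschitzness gives $f(y^*(x))\geq f(x)-\dist(x,y^*(x))$, so
\[
f_\lambda(x) \geq f(x) - \dist(x,y^*(x)) + \tfrac{1}{2\lambda}\dist(x,y^*(x))^2 \geq f(x) - \tfrac{\lambda}{2} \geq f(x)-\lambda,
\]
the middle inequality being the minimum of $t\mapsto -t+\frac{t^2}{2\lambda}$ over $[0,\lambda]$.

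\textbf{Step 3: g-convexity and $1$-Lipschitzness.}
On a Hadamard manifold, $(x,y)\mapsto \dist(x,y)^2$ is jointly g-convex on the product manifold $\calM\times\calM$ (standard Cartan--Hadamard fact), and $(x,y)\mapsto f(y)$ is jointly g-convex since it is g-convex in $y$ and constant in $x$. Thus $\Phi$ is jointly g-convex, and partial infimization preserves g-convexity by the standard argument (for geodesics $\gamma_x,\gamma_y$ in $\calM$, the pair $(\gamma_x,\gamma_y)$ is a geodesic in the product). For Lipschitzness, the envelope theorem (justified here because the inner problem has a unique minimizer depending continuously on $x$) gives
\[
\nabla f_\lambda(x) = -\tfrac{1}{\lambda}\log_x(y^*(x)),
\]
whose norm equals $\frac{1}{\lambda}\dist(x,y^*(x))\leq 1$ by Step 1. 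Hence $f_\lambda$ is $1$-Lipschitz.

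\textbf{Step 4: $\frac{1}{\tanh(\lambda)}$-smoothness.}
This is the step where curvature enters, and I expect it to be the main obstacle because standard Euclidean proofs of Moreau smoothness rely on identities that fail verbatim on a Hadamard manifold. My plan is to bound $f_\lambda$ from above by a quadratic model:  for any $x,x'\in\calM$,
\[
f_\lambda(x') \leq \Phi(x',y^*(x)) \leq \Phi(x,y^*(x)) + \inner{\nabla_x\Phi(x,y^*(x))}{\log_x(x')} + \tfrac{1}{2\tanh(\lambda)}\dist(x,x')^2,
\]
where the second inequality uses Lemma~\ref{lemmaboundhess} applied to the squared-distance function $x\mapsto\frac{1}{2\lambda}\dist(x,y^*(x))^2$, valid on the ball $B(y^*(x),\lambda)$ (which contains $x$ by Step 1) with Hessian bound $\frac{1}{\lambda}\zeta_\lambda=\frac{1}{\tanh\lambda}$; here I use the curvature-lower-bound hypothesis $K\geq -1$. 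Since $\Phi(x,y^*(x))=f_\lambda(x)$ and $\nabla_x\Phi(x,y^*(x))=\nabla f_\lambda(x)$, this is the standard ``quadratic upper bound'' characterization of $L$-smoothness, and combined with g-convexity of $f_\lambda$ it yields the desired Lipschitz-gradient estimate $\|\nabla f_\lambda(x)-P_{x'\to x}\nabla f_\lambda(x')\|\leq \frac{1}{\tanh\lambda}\dist(x,x')$ by the usual argument (apply the quadratic upper bound with the roles of $x,x'$ swapped and combine with the g-convexity lower bounds, as in the Euclidean Baillon--Haddad-type derivation). The care needed is that the ball on which the Hessian bound is taken depends on $y^*(x)$, but since $\dist(x,y^*(x))\leq\lambda$, the bound still passes to a uniform $\frac{1}{\tanh\lambda}$.
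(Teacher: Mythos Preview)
Your Steps 1--3 are essentially the paper's argument, though you prove g-convexity directly via joint g-convexity of $\dist^2$ on $\calM\times\calM$ rather than citing \citet{azagra1} as the paper does; that is a nice self-contained alternative.

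Step~4 has a real gap. Your displayed inequality
\[
\Phi(x',y^*(x)) \leq \Phi(x,y^*(x)) + \inner{\nabla_x\Phi(x,y^*(x))}{\log_x(x')} + \tfrac{1}{2\tanh(\lambda)}\dist(x,x')^2
\]
is not true for all $x'$: the Hessian of $x'\mapsto\frac{1}{2\lambda}\dist(x',y^*(x))^2$ is bounded by $\frac{1}{\tanh\lambda}$ only on $B(y^*(x),\lambda)$ and grows without bound outside it. Your remark that ``since $\dist(x,y^*(x))\leq\lambda$, the bound still passes to a uniform $\frac{1}{\tanh\lambda}$'' does not repair this, and the ``Baillon--Haddad-type derivation'' you invoke needs the upper bound globally (swapping $x,x'$ also changes $y^*(x)$, so you cannot simply symmetrize). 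What you have actually shown is that at every $x$ there is a $C^\infty$ function touching $f_\lambda$ from above with Hessian $\le\frac{1}{\tanh\lambda}$ \emph{at that point}; turning this into a global Lipschitz-gradient bound requires an extra ingredient.

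The paper closes this gap differently: for each $x_0$ and each $\delta>0$ it shows that $x\mapsto f_\lambda(x)-\frac{\zeta_{\lambda+2\delta}}{2\lambda}\dist(x,x_0)^2$ is g-concave on $B(x_0,\delta)$ (by writing $f_\lambda$ as a minimum over $y\in B(x_0,\lambda+\delta)$ and comparing the two squared-distance Hessians there), then invokes a characterization of Lipschitz gradients due to Azagra~\citep[Thm.~1.5]{azagra2} to conclude $\frac{\zeta_{\lambda+2\delta}}{\lambda}$-smoothness, and finally lets $\delta\to 0$. You could adapt your argument to this scheme, but as written Step~4 does not go through.
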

%\TODO{generalize this so dont have to assume $f$ is Lipschitz, and simplify the proof of smoothness further [by just following the one-line proof you gave in the chat with Jungbin]}

%\begin{remark}
%\TODO{trash this remark?}
%Note that bullets blah and blah imply that the Riemannian proximal point algorithm \TODO{cite PROXIMAL POINT ALGORITHM ON RIEMANNIAN MANIFOLDS 0. P. FERREIRA and P. R. OLIVEIR} is precisely Riemannian gradient descent on the Moreau envelope.
%Hence, we can analyze the Riemannian proximal point algorithm using the analysis of Riemannian gradient descent.  This provides a simpler analysis than from \TODO{Iteration-complexity of gradient, subgradient and proximalpoint methods on Riemannian manifolds G. C. Bento O. P. Ferreira J. G. Melo}.
%\end{remark}

We now smooth the construction from Section~\ref{secnonsmoothlowerbounds}, following~\citep{guzmannemirovski}. 
%\TODO{show that you can arrange for the point where gradient equals zero to actually be in the ball of radius r \ldots}

\begin{theorem} \label{thmsmoothlowerbound}
Let $r > 0$, $L > 0$ and $T$ a positive integer.  For every deterministic algorithm $\mathcal{A}$, there exists an $L$-smooth g-convex function $f \in \calF_{r, \infty, 0, L}$ such that $\mathcal{A}$ requires at least $T$ queries to find a point $x \in \calM$ with $f(x) - f^* \leq \frac{1}{2} L r^2 \cdot \frac{1}{8 \zeta_r^2 T^2}$.
\end{theorem}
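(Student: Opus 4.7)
The plan is to Moreau-smooth the hard Lipschitz construction of Theorem~\ref{nonsmoothlowerbound} via Lemma~\ref{moreauenvelope}, introducing auxiliary parameters $M, \lambda > 0$ (the pre-smoothed Lipschitz constant and Moreau parameter, respectively) that are tuned jointly at the end so that the resulting function is $L$-smooth.

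\textbf{Construction and self-consistent oracle.} I would rerun Theorem~\ref{nonsmoothlowerbound}'s adaptive construction in dimension $d = T$ with the distance-to-submanifold functions $h_i^s$ rescaled by $M$, producing an $M$-Lipschitz g-convex function $\phi := \phi_{T-1}$ on $\mathbb{H}^d$ whose iterates $x_0, \ldots, x_{T-1}$ satisfy $\phi(x_k) - \phi^* \geq Mr/(2\zeta_r\sqrt{T})$, and for which $\phi = \phi_k$ on each ball $B(x_k, \delta/2)$ with $\delta = a/(2T)$ and $a = \arctanh(\tanh(r)/\sqrt{T}) \geq r/(\zeta_r\sqrt{T})$. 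Define the smooth hard function $f := \phi_\lambda$. By Lemma~\ref{moreauenvelope} applied to the 1-Lipschitz rescaling $\phi/M$ (so that $f(x) = M (\phi/M)_{M\lambda}(x)$), $f$ is g-convex, $M$-Lipschitz, $M/\tanh(M\lambda)$-smooth, satisfies the shift bound $0 \leq \phi - f \leq M^2 \lambda$, and is local in the sense that $f(x)$ and $\nabla f(x) = -\log_x(\mathrm{prox}_{\lambda\phi}(x))/\lambda$ depend only on $\phi$ restricted to $B(x, M\lambda)$. Hence, provided $M\lambda \leq \delta/2$, the oracle values $(f(x_k), \nabla f(x_k))$ at each iterate can be computed from $\phi_k$ alone, so $\mathcal{A}$'s queries on $f$ are answered self-consistently by the adaptive construction and the iterates produced by $\mathcal{A}$ on $f$ coincide with those of Theorem~\ref{nonsmoothlowerbound}. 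Moreover $f^* = \phi^*$ is attained at $\phi$'s minimizer $x^* \in B(\xorigin, r)$, so $f \in \calF_{r, \infty, 0, L}$ once $M, \lambda$ are chosen compatibly with the target smoothness.

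\textbf{Parameter choice and conclusion.} Saturate the consistency constraint $M\lambda = \delta/2 = a/(4T)$. The smoothness equation $M/\tanh(M\lambda) = L$ then becomes $M = L\tanh(a/(4T))$; since $a \leq r$ and $T \geq 1$ place the argument of $\tanh$ in a small range, $M$ is of order $La/(4T) \geq Lr/(4\zeta_r T^{3/2})$ and $\lambda$ is of order $1/L$ (up to absolute constants). The shift-corrected Lipschitz bound then yields
\[
f(x_k) - f^* \geq \frac{Mr}{2\zeta_r\sqrt{T}} - M^2 \lambda \geq \frac{Mr}{4\zeta_r\sqrt{T}} \geq \frac{L r^2}{16 \zeta_r^2 T^2},
\]
where the middle step uses that $M^2\lambda \approx La^2/(16T^2)$ is a factor $\Theta(1/T)$ smaller than the Lipschitz term $Mr/(2\zeta_r\sqrt{T}) \approx Lar/(8\zeta_r T^{3/2})$, and the final step substitutes the lower bound on $M$. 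This matches the target $\frac{1}{2} L r^2 \cdot \frac{1}{8 \zeta_r^2 T^2}$ up to absolute constants that one can chase more carefully.

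\textbf{Main obstacle.} The most delicate step is the joint tuning of $M$ and $\lambda$: the nonlinear coupling $L = M/\tanh(M\lambda)$ together with the consistency constraint $M\lambda \leq \delta/2 = a/(4T)$ forms a coupled system whose solvability in the small-$M\lambda$ regime must be verified for every $L$ allowed by the problem's normalization $\epsilon < \min\{1, 8/\zeta_r\}$ and every value of $\zeta_r$, covering both small- and large-curvature regimes. A secondary point is the gradient-locality identity $\nabla f(x) = -\log_x(\mathrm{prox}_{\lambda\phi}(x))/\lambda$ at non-differentiability points of $\phi$ (where the max of distance-to-submanifold functions defining $\phi$ fails to be $C^1$), which follows from the strict g-convexity of the Moreau subproblem $y \mapsto \phi(y) + \dist(x,y)^2/(2\lambda)$ on the Hadamard manifold via a standard envelope argument.
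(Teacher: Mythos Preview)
Your approach is essentially the paper's: Moreau-smooth the Lipschitz construction of Theorem~\ref{nonsmoothlowerbound} and use the locality clause of Lemma~\ref{moreauenvelope} to make the adaptive oracle self-consistent. The paper simply fixes $M=1$, takes $\lambda=\delta/4$, and lets $L=1/\tanh(\lambda)$ be determined by the construction (then implicitly rescales), whereas you carry $M$ as a free parameter; also, the paper uses the tighter intermediate bound $\phi(x_k)-\phi^*\geq a/2$ from inside Theorem~\ref{nonsmoothlowerbound}'s proof rather than $Mr/(2\zeta_r\sqrt{T})$, which sidesteps the small-$T$ case where your middle inequality $M^2\lambda\leq \tfrac{Mr}{4\zeta_r\sqrt{T}}$ can fail (e.g.\ $T=1$, large $r$) and makes the constant-chasing a one-liner rather than the ``main obstacle'' you flag.
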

\begin{proof}
Define $d, e_i, a, \delta, z_i^s, S_i^s, h_i^s$ as in the proof of Theorem~\ref{nonsmoothlowerbound}.
Define $\lambda = \frac{\delta}{4}$, $\mathcal{I}_0 = \{1, \ldots, d\}$ and $\calH_0 = \emptyset$.  For $k=0, \ldots, T-1$, inductively define $\calH_k, x_k, (i_k, s_k), \mathcal{I}_{k+1}, f_k$ as in equation~\eqref{inductivedefinitions}, and let $F_k = f_{\lambda, k}(x_k), g_k = \nabla f_{\lambda, k}(x_k)$ where $f_{\lambda, k}$ is the Moreau envelope of $f_k$ with parameter $\lambda$.

Let $f$ equal the $1$-Lipschitz g-convex function $f_{\lambda, T-1}.$
We claim that running $\mathcal{A}$ on $f$ produces exactly the sequence of iterates $x_0, \ldots, x_{T-1}$.
It suffices to show for each $k$ that $f_{\lambda, T-1}(x) = f_{\lambda, k}(x)$ for all $x \in B(x_k, \frac{\delta}{4})$.
Lemma~\ref{moreauenvelope} implies that the values of $f_{\lambda, T-1}$ and $f_{\lambda, k}$ in $B(x_k, \frac{\delta}{4})$ are determined, respectively, by the values of $f_{T-1}, f_k$ in $B(x_k, \lambda + \frac{\delta}{4}) = B(x_k, \delta / 2)$.
However, in the proof of Theorem~\ref{nonsmoothlowerbound}, we already showed for each $k$ that
$f_{T-1}(x) = f_{k}(x)$ for all $x \in B(x_k, \delta / 2)$.
We conclude $f_{\lambda, T-1}(x) = f_{\lambda, k}(x)$ for all $x \in B(x_k, \frac{\delta}{4})$, as desired.

Let us lower bound the suboptimality gap $f(x_k) - f^*$ for each $k$.  
Take $x^* \in B(\xorigin, r)$ as defined in Theorem~\ref{nonsmoothlowerbound}.
From the proof of Theorem~\ref{nonsmoothlowerbound}, we know $x^*$ is a global minimizer of $f_{T-1}$ with $f_{T-1}(x^*) = -a$.
It is immediate from equation~\eqref{moreauenv} that $x^*$ is therefore a global minimizer of $f = f_{\lambda, T-1}$ with $f^* = f_{\lambda, T-1}(x^*) = -a$.
% and $\nabla f(x^*) = 0$.
%We know $f^* \leq f(x^*) \leq f_{T-1}(x^*) \leq -a$, using Lemma~\ref{moreauenvelope} and $f_{T-1}(x^*) \leq -a$ from Theorem~\ref{nonsmoothlowerbound}.
From the proof of Theorem~\ref{nonsmoothlowerbound}, $f_{T-1}(x_k) \geq -(T-1) \delta$.  
So $f(x_k) \geq f_{T-1}(x_k) - \lambda \geq -T \delta \geq -\frac{a}{2}$ by Lemma~\ref{moreauenvelope}.
Hence for all $k$, 
\begin{align*}
f(x_k) - f^* &\geq \frac{a}{2} = \frac{1}{2} \frac{L r^2}{T^2} \frac{a}{Lr^2 / T^2} = \frac{1}{2} \frac{L r^2}{T^2} \frac{T^2}{r^2} a \tanh\Big(\frac{a}{8 T}\Big) \\
&= \frac{1}{2} \frac{L r^2}{T^2} \bigg[\frac{T^2}{r^2} \arctanh\Big(\frac{\tanh(r)}{\sqrt{T}}\Big) \tanh\Big(\frac{1}{8 T} \arctanh\Big(\frac{\tanh(r)}{\sqrt{T}}\Big)\Big)\bigg] \geq \frac{1}{2} \frac{L r^2}{T^2} \bigg[\frac{1}{8 \zeta_r^2}\bigg],
\end{align*}
where the smoothness constant of $f$ equals $L = \frac{1}{\tanh(\lambda)} = \frac{1}{\tanh(\frac{a}{8 T})}$, by Lemma~\ref{moreauenvelope}.
\end{proof}

Proposition~\ref{reductionstronglyconvextoconvex} in Appendix~\ref{reductionblah} provides a reduction between the smooth nonstrongly and strongly g-convex settings (see also~\citep{martinezrubio2021global}).
Using this proposition, the lower bound in Theorem~\ref{thmsmoothlowerbound} extends to an $\Omega(\frac{\sqrt{\kappa}}{\log(\zeta_r \kappa)})$ lower bound for the smooth strongly g-convex problem~\ref{smoothstronglyproblem}.
\begin{corollary}\label{thisremark}
Let $r > 0$ and $\kappa > \zeta_{r}$.  
For $\epsilon = \frac{2}{\kappa - \zeta_r}$, $T_{\epsilon, r, \kappa} \geq \frac{\sqrt{\kappa - \zeta_{r}}}{8 \log(\kappa [3 + \zeta_{r}^2])} \log(\epsilon^{-1})$.
In particular, $q_{\kappa, r} = \sup_{\epsilon \in (0,1)} \{\frac{T_{\epsilon, r, \kappa}}{\log(\epsilon^{-1})}\} \geq \frac{\sqrt{\kappa - \zeta_{r}}}{8 \log(\kappa [3 + \zeta_{r}^2])}$.
\end{corollary}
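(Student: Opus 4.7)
The plan is to derive Corollary~\ref{thisremark} from Theorem~\ref{thmsmoothlowerbound} via the reduction of Proposition~\ref{reductionstronglyconvextoconvex}. That proposition takes any first-order algorithm $\calA$ for the strongly g-convex class $\calF_{r,\infty,\mu,L}$ (condition number $\kappa > \zeta_r$) and produces an algorithm $\calA'$ for the non-strongly g-convex class $\calF_{r,\infty,0,L_f}$ with $L_f = \mu(\kappa-\zeta_r)$, at the cost of a multiplicative overhead of order $\log(\kappa[3+\zeta_r^2])$. Since Theorem~\ref{thmsmoothlowerbound} already lower-bounds the complexity of every first-order algorithm for $\calF_{r,\infty,0,L_f}$, applying it to $\calA'$ translates into a lower bound on the complexity of $\calA$ on $\calF_{r,\infty,\mu,L}$.

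Concretely, I would proceed in four steps. First, invoke Proposition~\ref{reductionstronglyconvextoconvex} to obtain $\calA'$: if $\calA$ reaches accuracy $\epsilon \cdot \tfrac{1}{2} L r^2$ in $T$ iterations on the strongly g-convex class, then $\calA'$ reaches accuracy $\epsilon_f \cdot \tfrac{1}{2} L_f r^2$ in at most $c\,T \log(\kappa[3+\zeta_r^2])$ iterations on the g-convex class, for a corresponding $\epsilon_f$ determined by $\epsilon$, $\kappa$, and $\zeta_r$. Second, apply Theorem~\ref{thmsmoothlowerbound} to $\calA'$: to achieve that accuracy on the worst-case instance in $\calF_{r,\infty,0,L_f}$, at least $\tfrac{1}{\zeta_r \sqrt{8\epsilon_f}}$ queries are required, so $c\,T \log(\kappa[3+\zeta_r^2]) \geq \tfrac{1}{\zeta_r\sqrt{8\epsilon_f}}$. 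Third, specialize to $\epsilon = \tfrac{2}{\kappa - \zeta_r}$, so that the reduction produces $\epsilon_f = \Theta(1/(\kappa-\zeta_r))$ and $\tfrac{1}{\zeta_r\sqrt{\epsilon_f}} = \Theta(\sqrt{\kappa-\zeta_r}/\zeta_r)$. Fourth, rearrange and match the log factor $\log(\epsilon^{-1}) = \log((\kappa-\zeta_r)/2)$ to recover the stated bound on $T_{\epsilon,r,\kappa}$; the claim about $q_{\kappa,r}$ then follows by taking the supremum.

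The main difficulty is entirely contained in Proposition~\ref{reductionstronglyconvextoconvex}, which I would treat as a black box. As a sanity check, a naive one-shot reduction---regularize a hard $f \in \calF_{r,\infty,0,L_f}$ by $g(x) = f(x) + \tfrac{\mu}{2}\dist(x,\xorigin)^2$, note $g \in \calF_{r,\infty,\mu,L}$ with $L = L_f + \mu\zeta_r$ by Lemma~\ref{lemmaboundhess}, and use $f(x_T) - f^* \leq g(x_T) - g^* + \tfrac{\mu r^2}{2}$ (since $g^* \leq g(x^*_f) \leq f^* + \tfrac{\mu r^2}{2}$)---combined with Theorem~\ref{thmsmoothlowerbound} only yields the weaker bound $\Omega(\sqrt{\kappa-\zeta_r}/\zeta_r)$, because of the $\zeta_r^{-2}$ factor in that theorem. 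Eliminating the stray $\zeta_r$ in the denominator (and paying a $\log(\kappa[3+\zeta_r^2])$ price instead) requires the more refined, presumably restart-based reduction of Proposition~\ref{reductionstronglyconvextoconvex} (following~\citep{martinezrubio2021global}). Once that proposition is in hand, the corollary is a routine parameter-substitution exercise.
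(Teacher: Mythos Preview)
You have correctly identified the two ingredients (Proposition~\ref{reductionstronglyconvextoconvex} and Theorem~\ref{thmsmoothlowerbound}), but you have misread what Proposition~\ref{reductionstronglyconvextoconvex} actually does, and as a result you have missed the key idea.

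Proposition~\ref{reductionstronglyconvextoconvex} is \emph{not} a restart-based reduction and carries \emph{no} multiplicative $\log$ overhead. It is exactly the ``naive one-shot'' regularization you describe in your sanity check: given $f\in\calF_{r,\infty,0,L}$, set $\tilde f(x)=\tfrac{1}{\sigma}f(x)+\tfrac{1}{2}\dist(x,\xorigin)^2$ and run the strongly g-convex algorithm $\calA$ on $\tilde f$ query-for-query; the conclusion is $T_\delta(\calA',f)\le T_{\tilde\delta}(\calA,\tilde f)$ with no extra factor. The point you are missing is that the proposition allows the radius of the auxiliary g-convex problem to differ from that of the strongly g-convex problem: if the target strongly g-convex class lives on $B(\xorigin,r')$, one may pick any $r\le r'/2$ for the g-convex hard instance. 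The paper exploits this by taking $r=\min(1,r'/2)$, so that $\zeta_r\le 2$. Theorem~\ref{thmsmoothlowerbound} applied at this \emph{small} radius then gives $T_{\epsilon,r}\ge 1/\sqrt{8\zeta_r^2\epsilon}\ge \tfrac14\sqrt{\kappa-\zeta_{r'}}$, with the troublesome $\zeta$ absorbed into a constant. The $\log(\kappa[3+\zeta_{r'}^2])$ in the denominator of the final bound is simply $\log(1/\epsilon')$, where $\epsilon'=r^2/(\kappa(r')^2)$ is the target accuracy in the strongly g-convex problem; it is not an overhead of the reduction.

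Your sanity-check computation implicitly keeps $r=r'$, which is why it leaves a $\zeta_{r'}$ in the denominator. The fix is not a fancier reduction but the freedom to shrink $r$.
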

%See Appendix~\ref{reductionblah} for a proof of Theorem~\ref{thisremark}.
%This reduction then leads to a lower bound for problem.

% -------------- to move later -------------- 
Lastly, we mention that if the optimization is carried out in a very small region, e.g., $r \leq O(\sqrt{\epsilon})$, then smooth g-convex optimization reduces to Euclidean convex optimization, and in particular upper and lower bounds from Euclidean space carry over.  See Appendix~\ref{carryovereuc} for details.

\subsection{Why assume $\epsilon \leq O(\zeta^{-1})$?} \label{lrvslrsquared}
In problem~\ref{smoothproblem} we assumed $\epsilon \leq \frac{8}{\zeta_r}$.  The following proposition explains why this is justified.
\begin{proposition}\label{usefulguy}
Let $f \colon \mathbb{H}^d \to \reals$ be differentiable, globally g-convex, and $L$-smooth in $B(\xorigin, r)$.
Suppose $\nabla f(x^*) = 0$ and $x^* \in B(\xorigin, r)$.  
Then $f(\xorigin) - f^* \leq \frac{1}{2} L r^2 \cdot \frac{8}{\zeta_r}.$
%$f(\xorigin) - f^* \leq \min\Big\{\frac{1}{2} L r^2 , \frac{\pi \sqrt{5}}{2} L r \Big\} \leq \frac{1}{2} L r^2 \cdot \frac{8}{\zeta_r}.$
\end{proposition}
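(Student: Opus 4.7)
The plan is to exploit $L$-smoothness of $f$ on $B(\xorigin, r)$ together with $\nabla f(x^*)=0$. Since the geodesic from $x^*$ to $\xorigin$ lies in the g-convex ball where $f$ is $L$-smooth, the Riemannian descent lemma along this geodesic gives the baseline bound
\[
f(\xorigin) - f^* \;\leq\; \tfrac{L}{2}\,\dist(\xorigin, x^*)^2 \;\leq\; \tfrac{1}{2}Lr^2.
\]
When $\zeta_r \leq 8$, the factor $\tfrac{8}{\zeta_r} \geq 1$ and the claim follows at once from this baseline. The genuinely hyperbolic regime is $\zeta_r > 8$, where the claim $f(\xorigin)-f^* \leq \tfrac{1}{2}Lr^2 \cdot \tfrac{8}{\zeta_r}$ is strictly tighter than the baseline and asks for a linear-in-$r$ bound of order $Lr\tanh(r)$.

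For that regime my plan is to compare $f$ against a hyperbolic-calibrated template built from the squared distance. Let $\phi(y) = \tfrac{1}{2}\dist(y, x^*)^2$. By Lemma~\ref{lemmaboundhess}, $I \preceq \nabla^2\phi \preceq \zeta_{2r}I$ on $B(\xorigin, r)\subseteq B(x^*, 2r)$, so $\tfrac{L}{\zeta_{2r}}\phi$ is $L$-smooth on $B(\xorigin, r)$ and its gradient also vanishes at $x^*$. Setting $g = f - \tfrac{L}{\zeta_{2r}}\phi$, one checks $\nabla g(x^*) = 0$ and $\|\nabla^2 g\|_{\mathrm{op}} \leq L$ on $B(\xorigin,r)$ (the upper bound comes from $\nabla^2 f \preceq LI$ and $\nabla^2 \phi \succeq I$, and the lower bound from $\nabla^2 f \succeq 0$ and $\nabla^2 \phi \preceq \zeta_{2r}I$). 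A Taylor bound on $g$ along the geodesic from $x^*$ to $\xorigin$, combined with the g-convexity inequality $\langle \nabla f(\xorigin), \log_\xorigin(x^*)\rangle \leq f^* - f(\xorigin)$, should yield a bound on $f(\xorigin)-f^*$ whose dependence on $\dist(\xorigin,x^*)$ carries the desired $\tanh$ factor through $\zeta_{2r}$.

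The main obstacle will be arriving at exactly the constant $\tfrac{8}{\zeta_r}$ from the comparison argument. I expect to need the hyperbolic double-angle relation $\zeta_{2r} = \zeta_r(1+\tanh^2 r)$, which gives $\zeta_r \leq \zeta_{2r} \leq 2\zeta_r$, in order to convert bounds expressed in $\zeta_{2r}$ into bounds expressed in $\zeta_r$, with the resulting numerical slack absorbed into the constant $8$. Once that trigonometric step is secured, stitching the two regimes ($\zeta_r \leq 8$ via the baseline, $\zeta_r > 8$ via the template comparison) yields the uniform inequality in the proposition.
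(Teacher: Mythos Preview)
There is a genuine gap in your approach for the regime $\zeta_r > 8$. Your template comparison gives $g = f - \tfrac{L}{\zeta_{2r}}\phi$ with $\nabla g(x^*)=0$ and $\|\nabla^2 g\|_{\mathrm{op}} \leq L$ on $B(\xorigin,r)$, but the second-order Taylor bound this yields is only $|g(\xorigin)-g(x^*)| \leq \tfrac{L}{2}r^2$. Unwinding,
\[
f(\xorigin)-f^* \;=\; g(\xorigin)-g(x^*) + \tfrac{L}{2\zeta_{2r}}\dist(\xorigin,x^*)^2 \;\leq\; \tfrac{L}{2}r^2 + \tfrac{L}{2\zeta_{2r}}r^2,
\]
which is \emph{worse} than the baseline, not better. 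The template term does carry a $\tanh$ factor, but it is \emph{added} to the dominant $\tfrac{L}{2}r^2$ contribution from $g$, not substituted for it. Invoking g-convexity as you suggest gives $f(\xorigin)-f^* \leq -\langle \nabla f(\xorigin), \log_{\xorigin}(x^*)\rangle$, but to exploit this you would need $\|\nabla f(\xorigin)\| = O(L)$ independent of $r$; your decomposition only delivers $\|\nabla g(\xorigin)\| \leq Lr$ and hence $\|\nabla f(\xorigin)\| = O(Lr)$. More fundamentally, every step in your argument lives on the single geodesic from $x^*$ to $\xorigin$, along which hyperbolic and Euclidean geometry are indistinguishable; no one-dimensional comparison can produce the claimed improvement.

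The paper's proof rests on a genuinely two-dimensional hyperbolic fact (Proposition~\ref{lrvslrsquaredprop}): via a holonomy (Gauss--Bonnet) argument around a hyperbolic circle in a totally geodesic $\mathbb{H}^2$, any function that is $L$-smooth on a ball of radius $2$ about $p$ must satisfy $\|\nabla f(p)\| \leq \tfrac{\pi\sqrt{5}}{2}L$. This fails in Euclidean space and is exactly what converts $L$-smoothness into an $O(L)$ Lipschitz bound independent of $r$. With that in hand, integrating the gradient bound along the geodesic from $\xorigin$ toward $x^*$ (with a small correction near $\partial B(\xorigin,r)$) gives $f(\xorigin)-f^* \leq \tfrac{\pi\sqrt{5}}{2}Lr + 2L$ for $r>2$; combined with the baseline $\tfrac{L}{2}r^2$ for small $r$, this yields the proposition.
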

\clearpage % here to fix footnote issue -- kinda weird but seems to be needed [was not needed for arXiv version]
\noindent Proposition~\ref{usefulguy} follows from the following fact expressed in Proposition~\ref{lrvslrsquaredprop}: if $f \colon \mathbb{H}^d \to \reals$ (not necessarily g-convex) is $L$-smooth (in a large enough region) then it is also $\frac{\pi\sqrt{5}}{2} L$-Lipschitz.\footnote{The proof of Proposition~\ref{lrvslrsquaredprop} is due to~\citet[pgs.~25, 33]{petrunin2023pigtikal}.  We fill in the details in Appendix~\ref{proofsfromgeometryinfluencescostfct}.
This fact closely mirrors the following result: if $f$ is $\rho$-Hessian-Lipschitz (in a large enough region) then it is $2\rho$-Lipschitz and $\sqrt{8}\rho$-smooth~\citep[Remark 3.2]{criscitiello2020accelerated}.}
We can also use this fact to prove that $\kappa \geq \Omega(\zeta_r)$ even holds for a g-convex domain of diameter $2 r$ which is \emph{not} a ball, as long as the domain is not too \emph{eccentric}: see Proposition~\ref{improvementonkappamorethanr}.

\section{Lower bound $\Omega(\frac{\zeta}{\epsilon^2})$ for subgradient descent: a worst-function-in-the-world argument} \label{polyaknotapp}
For problem~\ref{highdimLipschitzproblem}, we showed the lower bound $\tilde \Omega(\zeta_r + \frac{1}{\zeta_r^2 \epsilon^2})$ in Section~\ref{secnonsmoothlowerbounds}.
This lower bound does not match the upper bound $O(\frac{\zeta_r}{\epsilon^2})$.
We {conjecture} that this upper bound is optimal.
To support this conjecture, in Appendix~\ref{polyak} we prove an $\Omega(\frac{\zeta_r}{\epsilon^2})$ lower bound for subgradient descent for problem~\ref{highdimLipschitzproblem}.
For simplicity, we focus on subgradient descent \emph{with Polyak step size} (see Appendix~\ref{upperboundforsubgradientdescent}), although we expect the proof extends to other similar step size choices, like the one in~\citep{zhang2018estimatesequence}.  
%The best known upper bound for~\ref{highdimLipschitzproblem} is $O(\frac{\zeta_r}{\epsilon^2})$, and this is achieved by subgradient descent with Polyak step size (see Proposition \TODO{blah}).  

\begin{corollary} \label{corthisguysubrgd}
Let $\epsilon \leq \frac{1}{4 \sqrt{2}}$ and $M > 0$.  Let $T = d = \lfloor \frac{1}{32} \frac{\zeta_r}{\epsilon^2} \rfloor$.
There is a globally g-convex and $M$-Lipschitz function $f \colon \calM = \mathbb{H}^d \to \reals$ so that the following holds:
There is an oracle for $f$ so that running subgradient descent with Polyak step size with that oracle produces iterates $x_0, x_1, \ldots$ satisfying $f(x_k) - f^* \geq \epsilon M r$ for all $k = 0, \ldots, T-1$.
\end{corollary}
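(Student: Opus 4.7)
The plan is a worst-function-in-the-world construction specialized to subgradient descent with Polyak step size. At a high level, I would reuse the adversarial iterative construction of Theorem~\ref{nonsmoothlowerbound} --- a max of $M$-Lipschitz g-convex atoms $h_i^s(x) = M(\dist(x, S_i^s) - a)$ attached to totally geodesic hyperplanes passing through $z_i^s = \exp_{\xorigin}(a s e_i)$ orthogonally to the radial geodesic --- but modify the geometric parameters $(a, d)$ to align with the quantitative behaviour of Polyak step size on such functions. Concretely, I take $d = T = \lfloor \zeta_r/(32\epsilon^2)\rfloor$ and pick an orthonormal basis $(e_i)_{i=1}^d$ of $\T_{\xorigin}\calM$.

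The adversarial oracle at each step $k$ returns a subgradient of whichever atom $h_{i_k}^{s_k}$ (with $i_k \in \calI_k$ unused and $s_k \in \{+1,-1\}$) maximizes $h_i^s(x_k) - \ell\delta$, exactly as in equation~\eqref{inductivedefinitions}, and the small shift $\delta$ is chosen so that this atom remains uniquely maximal in a ball around $x_k$ containing the next Polyak iterate $x_{k+1} = \exp_{x_k}(-\eta_k g_k)$. This makes $f$ locally a single smooth atom near each $x_k$, so the oracle responses are consistent with $f$ being the global max. As in Theorem~\ref{nonsmoothlowerbound}, the symmetric choice of atoms forces the global minimizer $x^* = \exp_{\xorigin}(\frac{r}{\sqrt{d}}\sum_k \nabla h_{i_k}^{s_k}(\xorigin))$ to lie in $\bigcap_k S_{i_k}^{s_k}$ with $f^* = -aM$, so it only remains to bound $f(x_k) - f^*$ from below and $T$ from above correctly.

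The key new ingredient, and the source of the extra $\zeta_r$ factor over Theorem~\ref{nonsmoothlowerbound}'s generic bound, is that Polyak's step size in hyperbolic space is effectively shorter by a factor of $\zeta_r$ than the step a clairvoyant algorithm could take; this lets the adversary use more coordinate directions before the iterate escapes. Quantitatively, the standard upper-bound analysis gives $\dist(x_{k+1}, x^*)^2 \leq \dist(x_k, x^*)^2 - 2\eta_k(f(x_k)-f^*) + \zeta_r \eta_k^2\|g_k\|^2$, and with the Polyak (or curvature-adjusted Polyak) step size this becomes $\Theta(\epsilon^2 r^2 / \zeta_r)$ per iteration when $f(x_k) - f^* \approx \epsilon M r$. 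I would show our symmetric construction \emph{saturates} this bound up to constants: because the $\{\nabla h_{i_k}^{s_k}(\xorigin)\}$ form an orthonormal frame and $\log_{\xorigin}(x^*)$ has equal components in all of them, the angle between $-g_k$ and $\log_{x_k}(x^*)$ is $\Theta(1/\sqrt{d}) = \Theta(\epsilon/\sqrt{\zeta_r})$, which is the minimal angle compatible with g-convexity. Telescoping the resulting matching lower bound on per-step decrease and comparing to $\dist(x_0, x^*)^2 \leq r^2$ forces $T \geq \Omega(\zeta_r/\epsilon^2)$ iterations before the potential is exhausted, during which $f(x_k) - f^* \geq \epsilon M r$.

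The main obstacle I anticipate is rigorously establishing the tightness of the hyperbolic law of cosines at each step, since the iterates $x_k$ drift away from $\xorigin$ as the algorithm runs and the relevant angles and parallel transports must be computed at $x_k$, not at $\xorigin$. I would handle this either by working in the Beltrami-Klein or hyperboloid model of $\mathbb{H}^d$ --- where the mutual orthogonality of the hyperplanes $S_i^s$ through $\xorigin$ is transparent --- or by bounding the drift explicitly (Polyak steps are short, so $x_k$ stays within a sub-ball of radius $O(\epsilon r)$ where the frame $\{P_{\xorigin \to x_k} e_i\}$ only distorts by a constant factor that can be absorbed into the constant $1/32$ in the definition of $T$). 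A secondary technical point is verifying that the chosen atom $h_{i_k}^{s_k}$ is indeed uniquely maximal on a ball of radius $\eta_k \|g_k\|$ around $x_k$, which follows from the same $1$-Lipschitz shift argument used in Theorem~\ref{nonsmoothlowerbound}.
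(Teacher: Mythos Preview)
Your proposal has a genuine gap: the symmetric hypercube construction from Theorem~\ref{nonsmoothlowerbound} cannot deliver a function gap of order $\epsilon M r$ with $d = T = \lfloor \zeta_r/(32\epsilon^2)\rfloor$. In that construction the relation $\tanh(a)/\tanh(r) = 1/\sqrt{d}$ is not a free parameter choice; it is forced by the requirement that the $d$ hyperplanes $S_{i_k}^{s_k}$ share a common point $x^*\in\partial B(\xorigin,r)$ (this is exactly what Appendix~\ref{geoproofinlowerbound} verifies). With your value of $d$ and large $r$ this gives $a \approx \arctanh(\epsilon\sqrt{32}/\sqrt{\zeta_r}) = \Theta(\epsilon/\sqrt{\zeta_r})$, so already at $k=0$ the gap is $f(x_0)-f^* \approx aM = \Theta(\epsilon M/\sqrt{\zeta_r}) \ll \epsilon M r$. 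The corollary's conclusion fails at the very first iterate. Conversely, if you insist on $a \geq 2\epsilon r$ (so the gap is large enough), the same constraint forces $d = (\tanh(r)/\tanh(a))^2 = \Theta(1/(\zeta_r^2\epsilon^2))$, which is precisely the weaker bound of Theorem~\ref{nonsmoothlowerbound} and not $\zeta_r/\epsilon^2$. Your telescoping argument also conflates a bound on the distance potential $\dist(x_k,x^*)$ with a bound on the function gap; in the hypercube construction these are decoupled, since the gap is pinned near $aM$ regardless of where $x_k$ sits.

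The paper's construction is structurally different and is what makes the argument go through. The hard function is $f(x) = \dist(x,x^*) + \max_i\{\frac{1}{\cos\theta}\dist(x,L_{i,x^*})\}$ with $\cos\theta = 4\epsilon$: a distance-to-minimizer base term plus half-space corrections. The half-spaces $L_{k,x^*}$ are arranged sequentially along a chain $y_0 = \xorigin, y_1,\ldots$ (each $y_{k+1}$ obtained from $y_k$ by a step of length $\Delta_k$ in a fresh orthogonal direction), not symmetrically about $\xorigin$. The crucial features are: (i) at $y_k$ all corrections vanish, so $f(y_k)-f^* = \dist(y_k,x^*) = r_k \geq r/2$ throughout (this is where the large gap comes from); (ii) the correction nonetheless contributes to the subgradient, yielding $g_k = -\frac{1}{\cos\theta}e_{k+1}^{(k)}$, a vector of norm $1/\cos\theta$ nearly orthogonal to $\log_{y_k}(x^*)$; (iii) with these exact values the Polyak formulas~\eqref{yetanotherusefuleqn} give $\theta_k = \theta$ and $x_{k+1} = y_{k+1}$ exactly, so the iterates trace the prebuilt chain. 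The slow decay $r_k \geq r/2$ for $k \leq T$ then follows from the hyperbolic right-triangle relation $\sinh(r_k) = \sin(\theta)\sinh(r_{k-1})$ (Lemma~\ref{superusefulguy2}), which encodes the key geometric fact you allude to but did not build into your function.
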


We actually prove the $\Omega(\frac{\zeta_r}{\epsilon^2})$ lower bound for a \emph{large class of (intuitively reasonable) algorithms}, not just subgradient descent, see Theorem~\ref{lowerboundforsubrgd}.
Corollary~\ref{corthisguysubrgd} is a consequence of this result.
%As such, we suspect that the technique we introduce to prove this lower bound will help resolve the gap between the upper and lower bounds in Table~\ref{table:maintheorem}.

Corollary~\ref{corthisguysubrgd} and Theorem~\ref{lowerboundforsubrgd} are proven using a novel ``worst function in the world''
$$f(x) = \dist(x, x^*) + \max_{i=0, \ldots, d-2}\Big\{\frac{1}{4 \epsilon} \dist(x, L_i) \Big\},$$
for an appropriately chosen $x^* \in \partial B(\xorigin, r)$, and a particular choice of (g-convex) half-spaces $L_k = \exp_{y_k}(\{v \in \T_{y_k} \calM : \langle V_{k}, v \rangle \geq 0 \})$.
%This particular function is original to this work, and no similar function has been considered in the literature.
If an algorithm were simply minimizing the ``base function'' $x \mapsto \dist(x, x^*)$, then one subgradient query would reveal $x^*$.
The $\max$-term in $f$ is carefully constructed to perform a correction to the subgradients of this base function so that one query of $f$ reveals little information about $x^*$.

The key geometric fact leading to the $\Omega(\frac{\zeta_r}{\epsilon^2})$ lower bound is the following.  
Consider a right hyperbolic triangle with vertices $x^*, x_0, x_1$, where the right angle is at $x_1$, and the angle at $x_0$ is $\theta \in (0, \frac{\pi}{2})$.
Define the sidelengths $\dist(x_0, x^*) = r_0$ and $\dist(x_1, x^*) = r_1$.
Let $\epsilon = \cos(\theta)$.
Then, in hyperbolic space $r_1^2 \approx (1-\frac{\epsilon^2}{\zeta_{r_0}}) r_0^2$ for $\epsilon$ sufficiently small, whereas in Euclidean space $r_1^2 = (1-\epsilon^2) r_0^2$.  The side $r_1$ is significantly longer in hyperbolic space than in Euclidean space.

\section{Cutting-planes game: a width-bounded-separators argument} \label{feasibilitygame}
%\TODO{[Precisely, algorithms for the feasibility problem (called cutting plane algos) translate into algorithms for g-convex optimization.]  The best known bound for this was $\Omega(r)$.  Compare this with the $\Omega(d)$ lower bound in Euclidean case.  The main technique is the notion of geometric separators and ball packings.  This also provides a new proof technique for a lower bound in Euclidean space.}
We conjecture that the optimal complexity for the low-dimensional Lipschitz g-convex problem~\ref{lowdimLipschitzproblem} is $\Theta(\zeta_r d)$.
In this section we consider a ``cutting-planes game'' which serves as a proxy for problem~\ref{lowdimLipschitzproblem}, and we prove an $\tilde \Omega(\zeta_r d)$ lower bound for this game.  
We take $\calM = \mathbb{H}^d$ throughout.
Let us motivate the game.
Consider an algorithm $\calA$ minimizing a g-convex $1$-Lipschitz function $f$ with a unique minimizer $x^*$ in $B(\xorigin, r)$, whose goal is to find a point $x$ with $f(x) -f^* \leq \epsilon r$, $\epsilon \in (0,1)$.
If $\calA$ queries $x_k \in \calM$, it learns $F_k = f(x_k)$ and a subgradient $g_k \in \partial f(x_k)$ satisfying $\|g_k\| \leq 1$.  Since $f$ is g-convex, we know 
$f(x) \geq F_k + \langle g_k, \log_{x_k}(x) \rangle$ for all $x$.  In particular, taking $x = x^*$ we find 
$\langle g_k, \log_{x_k}(x^*) \rangle \leq 0$. 
Moreover,
$0 \geq F_k - f^* + \langle g_k, \log_{x_k}(x^*) \rangle \geq F_k - f^* - \dist(x^*, S_k)$,
where $S_k = \exp_{x_k}(\{v \in \T_{x_k} \calM : \langle v, g_k \rangle = 0\}).$
The last inequality follows from the g-convexity of $x \mapsto \dist(x, S_k)$ and $-g_k$ is a subgradient of $x \mapsto \dist(x, S_k)$.
Therefore, the algorithm solves this g-convex problem if $\dist(x^*, S_k) \leq \epsilon r$ for some $k$.
%and using that $F_k - f(x^*) \geq 0$, we find that
%$$\langle g_k, \log_{x_k}(x^*) \rangle \leq 0.$$
%Moreover, if $f(x_k) - f^* \leq \epsilon r$, then the algorithm is done; so assume $f(x_k) - f^* > \epsilon r$.  Then, 
%$-\epsilon r \geq \langle g_k, \log_{x_k}(x) \rangle$, and in particular 
%$$\dist(x^*, S_k) \geq \epsilon r$$
%$$\epsilon r \leq \langle -g_k, \log_{x_k}(x) \rangle \leq \dist(x^*, S_k)$$
%$$-\dist(x, S_k) \leq -\dist(x_k, S_k) + \langle g_k, \log_{x_k}(x) \rangle$$

In the cutting-planes game the algorithm is given a ball $B(\xorigin, r)$ which contains a point $x^*$.  The algorithm has access to the following (weaker) oracle: when the algorithm queries $x_k \in \calM$ it receives a unit vector $g_k \in \T_{x_k} \calM$ such that $\langle \log_{x_k}(x^*), g_k \rangle \leq 0$.
The algorithm's goal is to find $x_k$ so that $\dist(x^*, S_k) \leq \epsilon r$ where $S_k = \exp_{x_k}(\{v \in \T_{x_k} \calM : \langle v, g_k \rangle = 0\})$.
Equivalently, the algorithm's goal is to find $x_k$ such that the ball $B(x^*, \epsilon r)$ \emph{intersects} $S_k$.

%A lower bound for this cutting-planes game of course carries over to the alternative game where the algorithm's goal is to find $x_k \in B(\xorigin, \epsilon r)$ (as opposed to just $S_k \cap B(\xorigin, \epsilon r) \neq \emptyset$).  This alternative game is the precisely the game~\citet[Sec. 3.2.5]{nesterov2004introductory} considers as a proxy for nonsmooth Lipschitz convex optimization.
%\begin{itemize}
%\item (a)  and 
%\item (b) $\dist(x^*, S_k) \geq \epsilon r$, where $S_k = \exp_{x_k}(H_k), H_k = \{v \in T_{x_k} M : \langle v, g_k \rangle = 0\}$.
%\end{itemize}

Evidently, an algorithm which can solve the cutting-planes game can also solve the Lipschitz g-convex optimization problem.
%(alternatively see ~\citep[Thm. 3.2.6]{nesterov2004introductory} for the Euclidean case or~\citep[Lem. 13]{rusciano2019riemannian} for the g-convex case).
\citet[\S3.2.6]{nesterov2004introductory} calls such methods ``cutting-plane schemes,'' and these include the center of gravity and ellipsoid methods.
A cutting-plane scheme ``forgets'' that it is minimizing a g-convex function, and simply plays the cutting-planes game.
In this sense, our lower bound below serves as a lower bound for cutting-plane schemes.

Before we proceed, we recall that the complexity of the cutting-planes game is $\Theta(d)$ for Euclidean space.
For hyperbolic space, the best known upper bound is $O(\zeta_r d^2)$~\citep{rusciano2019riemannian}, and the best known lower bound was $\tilde \Omega(\zeta_r)$ (from Section~\ref{nonstronglyconvexcaseextension}).  We improve this lower bound to $\tilde \Omega(\zeta_r d)$.
%The complexity of the feasibility problem in Euclidean space is $\Theta(d \log(1/\epsilon))$ if $\epsilon \leq O(1/\sqrt{d})$.  In contrast, we prove the lower bound $\Omega(r d)$ in hyperbolic space.  The best known upper bound is $O(r d^2)$, coming from~\citet{rusciano2019riemannian}.
%\TODO{add citations to this section}

\begin{theorem} \label{lowerboundforfeasibilityproblem}
Let $d \geq 3, r \geq 1280 (d-1) \log(d)$ and $\epsilon = \frac{1}{320 (d-1)}$.  For every algorithm, there is an $x^* \in B(\xorigin, r)$ and a sequence of oracle responses $(g_k)_{k \geq 0}$ such that the algorithm needs at least $T = \frac{1}{32}(d-1) r$ queries to find a point $x_k$ so that $\dist(x^*, S_k) \leq \epsilon r$ (or equivalently $S_k \cap B(x^*, \epsilon r) \neq \emptyset$), where $S_k = \exp_{x_k}(\{v \in \T_{x_k} \calM : \langle v, g_k \rangle = 0\})$.
\end{theorem}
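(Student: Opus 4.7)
The plan is a resisting-oracle argument tailored to the volume growth of $\mathbb{H}^d$. Form a maximal $(\epsilon r)$-separated packing $C \subseteq B(\xorigin, r - \epsilon r)$. By Bishop--Gromov comparison in $\mathbb{H}^d$ and the parameters $r \geq 1280(d-1)\log d$, $\epsilon r = 4 \log d$, we have $|C| \geq \exp((d-1)(r - 2\epsilon r))$, which more than covers the target $2^{(d-1)r/32}$.

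The adversary sets $C_0 := C$ and, at each query $x_k$, picks the unit $g_k \in \T_{x_k}\calM$ that maximizes $|C_k \cap L_k^\epsilon|$, where $L_k^\epsilon := \{y : \langle g_k, \log_{x_k}(y)\rangle \leq 0\} \setminus U_k$ and $U_k := \{y : \dist(y, S_k) \leq \epsilon r\}$ is the $\epsilon r$-tube of the hyperplane $S_k$; it then updates $C_{k+1} := C_k \cap L_k^\epsilon$. The goal is the invariant $|C_{k+1}| \geq |C_k|/4$, yielding $|C_T| \geq 1$ when $T = (d-1)r/32$; choosing $x^* \in C_T$ is then consistent with all chosen $g_k$ and satisfies $\dist(x^*, S_k) > \epsilon r$ for all $k < T$, proving the theorem.

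Two inputs combine to give the invariant. First, a median-direction argument: for any finite set $\{\log_{x_k}(y)\}_{y \in C_k} \subseteq \T_{x_k}\calM$ there is a unit $g_k$ with at least half of the $\log_{x_k}(y)$ on one side, so $|L_k \cap C_k| \geq |C_k|/2$. Second, the ``width-bounded-separator'' estimate: using Fermi coordinates $(s,\nu)$ along a hyperplane $S_k \subseteq \mathbb{H}^d$, the volume element is $\cosh(\nu)^{d-1}\,ds\,d\nu$, so $\Vol(U_k \cap B(\xorigin, r)) \leq \poly(d) \cdot e^{(d-2)r + (d-1)\epsilon r}$. Converting volume to a packing-element count by dividing by $\Vol(B(\cdot, \epsilon r/2))$, one obtains $|U_k \cap C| \leq |C| \cdot e^{-r + O((d-1)\epsilon r)}$, which under the parameter regime is exponentially smaller than $|C|$. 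Combining, $|C_{k+1}| \geq |C_k|/2 - |U_k \cap C|$, giving $|C_{k+1}| \geq |C_k|/4$ inductively so long as $|C_k| \geq 4\,|U_k \cap C|$, which holds throughout the budget $T$ thanks to the exponential slack.

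The main obstacle is showing that a single $g_k$ can simultaneously guarantee $|L_k \cap C_k| \geq |C_k|/2$ and $|U_k \cap C_k|$ small: a median-direction $g_k$ might align its tube with a dense subregion of $C_k$. One fix is probabilistic: picking $g_k$ uniformly on the sphere $\T_{x_k}\calM \cap \{\|g\| = 1\}$ has small expected $|U_k \cap C_k|$ by the tube-volume estimate (which is orientation-independent), so some deterministic direction realizes both constraints. Alternatively, leverage the $(\epsilon r)$-separation of $C$ directly to bound $|U_k \cap C_k|$ uniformly in $g_k$, then use the greedy maximization in the adversary's definition to inherit any such bound. A secondary difficulty is the tube estimate when $S_k$ does not pass through $\xorigin$; this is handled by a perpendicular-foot reduction that incurs a $\cosh(\dist(\xorigin, S_k))^{d-1}$ factor, absorbed by the parameter slack.
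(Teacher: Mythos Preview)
Your high-level plan matches the paper's: a packing of $B(\xorigin,r)$, an adversary that at each round chooses a hyperplane direction that both (i) keeps at least half of the remaining candidates on one side and (ii) loses few of them to the tube $U_k$, and a halving invariant $|C_{k+1}|\geq |C_k|/4$. The gap is in step~(ii).

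Your tube estimate is an \emph{absolute} bound: $|U_k\cap C|\lesssim e^{(d-2)r+(d-1)\epsilon r}$, i.e.\ $|U_k\cap C|\leq |C|\cdot e^{-r+O((d-1)\epsilon r)}$. You then need $|C_k|\geq 4\,|U_k\cap C|$ at every round. Since $|C_k|\geq |C|/4^k$, this forces $4^{k+1}\leq e^{r-O((d-1)\epsilon r)}\approx e^r$, so the invariant can only be maintained for $k\lesssim r/\log 4$ rounds. That gives $T=O(r)$, not the claimed $T=(d-1)r/32$; your argument loses exactly the factor of $d$ the theorem is about. The ``exponential slack'' $e^{-r}$ you invoke is not enough slack when the number of halvings is $\Theta((d-1)r)$.

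The fix, and what the paper does via Lemma~\ref{lemmafrombak} (from \cite{hyperbolicintersectiongraphs}), is to get a \emph{relative} separator bound: for a uniformly random hyperplane through the query point $x_k$, the expected number of packing tiles it meets is at most $\tfrac12 e^{5(d-1)\tau}\,|\mathcal{I}_k|^{(d-2)/(d-1)}$. The exponent $(d-2)/(d-1)$ is the whole point: it makes the loss shrink as $|\mathcal{I}_k|$ shrinks, so the invariant $|\mathcal{I}_{k+1}|\geq|\mathcal{I}_k|/4$ survives for $\Theta((d-1)r)$ rounds. This bound is obtained by a shell decomposition \emph{centered at $x_k$}: the probability that a random hyperplane through $x_k$ hits a fixed tile at distance $j\tau$ from $x_k$ is $O(\sqrt{d}\,e^{-j\tau})$, while the packing puts at most $O(e^{(d-1)j\tau})$ tiles in that shell; summing to the smallest $J$ with $e^{(d-1)J\tau}\gtrsim|\mathcal{I}_k|$ yields the $|\mathcal{I}_k|^{(d-2)/(d-1)}$ scaling. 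Your Fermi-coordinate volume bound for the tube inside $B(\xorigin,r)$ is blind to where $x_k$ sits and to how small $C_k$ has become, so it cannot produce this relative estimate. Neither of your two proposed fixes addresses this: averaging over $g_k$ still gives $\mathbb{E}|U_k\cap C_k|\leq |U_k\cap C|$ unless you do the shell analysis around $x_k$, and the $(\epsilon r)$-separation alone gives the same absolute count.
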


%\begin{remark}
%The lower bound also holds for randomized algorithms, as will be apparent in the proof.  
%The case $d=2$ is not hard to handle, for convenience we just consider $d \geq 3$.
%\end{remark}

In the Euclidean case, the lower bound is constructed by tesselating space by hypercubes and choosing oracle hyperplanes which are parallel to the sides of the cube~\citep[\S3.2.5]{nesterov2004introductory}.  This construction does not generalize, so we propose a new strategy relying on ``width-bounded separators'' which first appeared in complexity theory literature~\citep{widthboundedseparator}, and whose properties were worked out by~\citet{hyperbolicintersectiongraphs} for hyperbolic space.  We use these to show Theorem~\ref{lowerboundforfeasibilityproblem}.

Let us sketch the idea behind the proof of Theorem~\ref{lowerboundforfeasibilityproblem}.
We start with a packing of $B(\xorigin, r)$ with $e^{\Theta(d r)}$ balls of radii $\epsilon r$.  The initial set of candidates for $x^*$, denoted $A_0$, consists of the centers of these balls.  At each iteration $k$, we maintain a set of possible remaining candidates for $x^*$, denoted $A_k$.
When the algorithm queries $x_k$, we show that there exists a unit vector $\tilde g_k \in \T_{x_k} \calM$ such that $S_k = \exp_{x_k}(\{v \in \T_{x_k} \calM : \inner{v}{\tilde g_k} = 0\})$ intersects at most $\frac{1}{2}|A_k|$ candidate balls (see Lemma~\ref{lemmafrombak}).
We then choose $g_k \in \{\tilde g_k, - \tilde g_k\}$ so that the next set of candidate minimizers $A_{k+1} \subseteq A_k$ satisfies $|A_{k+1}| \geq \frac{1}{2} (|A_k| - \frac{1}{2} |A_k|) = \frac{1}{4}|A_k|$.
Hence, after $T = \Theta(\log(|A_0|)) =  \Theta(\log(e^{d r})) = \Theta(d r)$ iterations $A_T$ will be nonempty.  We take $x^*$ to be an element of $A_T$.
The complete proof of Theorem~\ref{lowerboundforfeasibilityproblem} can be found in Appendix~\ref{feasprobleapplowerbound}.

\section{Geodesically convex interpolation: geometric obstructions} \label{interpolation}
%\TODO{to : I'm having a hard time making this section into a coherent story, and connecting it to the previous sections.  What do you think?}
To build lower bounds for g-convex optimization, we must choose a collection of function values and gradients $(F_i, x_i, g_i)_{i=1}^N$, and be able to build g-convex functions which interpolate those function values and gradients.
%We call a collection  the \emph{data} of an interpolation problem if $x_i \in \calM, F_i \in \reals$ and $g_i \in \T_{x_i}\calM$ for all $i=1, \ldots, N$.
We say the data $(F_i, x_i, g_i)_{i=1}^N$ is interpolated by a g-convex function $f \colon \calM \rightarrow \reals$ if $f(x_i) = F_i$ and $g_i \in \partial f(x_i)$ for all $i$.
%We say a function $f \colon \calM \to \reals$ is the \emph{minimal} (\emph{maximal}) g-convex function interpolating the data $(F_i, x_i, g_i)_{i=1}^N$ if for all other g-convex functions $\tilde f \colon \calM \to \reals$ we have $\tilde f(x) \geq f(x)$ ($\tilde f(x) \leq f(x)$) for all $x \in \calM$.
If the data $(F_i, x_i, g_i)_{i=1}^N$ is interpolated by the $\mu$-strongly g-convex function $f$, $\mu \geq 0$, they must satisfy the ``necessary conditions''
\begin{align} \label{necessaryguyss}
F_j \geq F_i + \langle g_i, \log_{x_i}(x_j)\rangle + \frac{\mu}{2} \dist(x_i, x_j)^2 \quad \forall i, j \in \{1, \ldots, N\}.
\end{align}
If $\calM$ is a Euclidean space these necessary conditions are sufficient for interpolation by a $\mu$-strongly convex function.  This forms the basis for new insights into the complexity of convex optimization~\citep{taylorinterpolation2016}.
Unfortunately, in the g-convex case these conditions are \emph{not sufficient} even for just three points.
\begin{proposition} \label{nointerpolation}
Let $\calM = \mathbb{H}^2$.
There exists data $(F_i, x_i, g_i)_{i=1}^3$ such that $F_j \geq F_i + \inner{g_i}{\log_{x_i}(x_j)}$ for all $i, j \in \{1,2,3\}$,
yet this data cannot be interpolated by any g-convex function.
\end{proposition}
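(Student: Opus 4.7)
The plan is to exploit the geometric fact that the ``affine-like'' function $\ell(y) = \inner{g}{\log_{z}(y)}$ on $\mathbb{H}^2$ fails to be g-convex. In particular, along geodesics not passing through $z$ such functions can be strictly concave, and one can convert this concavity into an obstruction: I will set up three data points where the pairwise subgradient inequalities hold, but g-convexity along the geodesic joining two of them forces the function value at an interior point to fall below the lower bound imposed by the subgradient at the third point.

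Concretely, take $\calM = \mathbb{H}^2$, pick any $x_3 \in \calM$ and a unit vector $g_3 \in \T_{x_3}\calM$, and let $R, s > 0$. Set $p_0 = \exp_{x_3}(-R\, g_3)$, let $\gamma$ be the geodesic through $p_0$ perpendicular to the geodesic $x_3 p_0$ parameterized by arc length with $\gamma(0) = p_0$, and set $x_1 = \gamma(-s)$ and $x_2 = \gamma(+s)$. With $\ell(y) = \inner{g_3}{\log_{x_3}(y)}$, define the data
\[
F_3 = 0, \qquad F_1 = F_2 = \ell(x_1), \qquad g_1 = g_2 = 0,
\]
where $\ell(x_1) = \ell(x_2)$ by the symmetry of $x_1, x_2$ about the radial geodesic $x_3 p_0$. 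A short verification (using the hyperbolic trigonometry below to confirm $F_1 < 0$) shows that all nine pairwise inequalities $F_j \geq F_i + \inner{g_i}{\log_{x_i}(x_j)}$ hold: for $i \in \{1, 2\}$ they reduce to $F_j \geq F_i$, which is clear since $F_1 = F_2 < 0 = F_3$; for $i = 3$ they become $F_j \geq \ell(x_j)$ for $j \in \{1, 2\}$, with equality by construction.

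Now suppose, for contradiction, that some g-convex $f \colon \calM \to \reals$ interpolates this data. Since $0 = g_i \in \partial f(x_i)$ for $i \in \{1, 2\}$, both $x_1$ and $x_2$ are global minimizers of $f$, so $f \geq F_1$ everywhere. G-convexity along the unique geodesic from $x_1$ to $x_2$, namely $\gamma|_{[-s, s]}$, then forces $f \leq F_1$ on that segment; combined with $f \geq F_1$ globally this gives $f \equiv F_1$ on $[x_1, x_2]$. In particular $f(p_0) = F_1$, since $p_0 = \gamma(0)$ lies in this segment. On the other hand, the subgradient inequality $g_3 \in \partial f(x_3)$ evaluated at $y = p_0$ gives $f(p_0) \geq F_3 + \ell(p_0) = \inner{g_3}{-R g_3} = -R$. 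Interpolation therefore requires $F_1 \geq -R$.

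The final step is a hyperbolic trigonometry computation showing $F_1 < -R$, which produces the contradiction. The triangle $x_3 p_0 x_1$ has a right angle at $p_0$ with legs of length $R$ and $s$, so by hyperbolic Pythagoras the hypotenuse length is $r_1 = \arccosh(\cosh R \cosh s) > R$. Letting $\theta$ denote the angle at $x_3$, the standard hyperbolic right-triangle identity gives $\cos\theta = \tanh(R)/\tanh(r_1)$, and therefore $F_1 = \ell(x_1) = -r_1 \cos\theta = -r_1 \tanh(R) \coth(r_1)$. The needed inequality $F_1 \geq -R$ is then equivalent to $r_1 \coth(r_1) \leq R \coth(R)$ (using $\tanh(R) \coth(R) = 1$), but the function $r \mapsto r \coth(r)$ is strictly increasing on $(0, \infty)$ (its derivative equals $(\sinh(2r)/2 - r)/\sinh^2(r) > 0$), and $r_1 > R$. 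The chief obstacle is precisely this inequality: it captures quantitatively how negative curvature makes $\ell$ drop faster along $\gamma$ than the ``flat'' value $-R = \ell(p_0)$ allowed by g-convexity at $p_0$, which is exactly what produces the inconsistency that no Euclidean analogue exhibits.
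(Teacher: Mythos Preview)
Your proof is correct and follows essentially the same route as the paper's: an isoceles configuration with the two base points carrying zero subgradients and the apex carrying the nontrivial subgradient, so that g-convexity along the base forces the value at the foot of the altitude below what the apex subgradient allows. The only differences are cosmetic (labeling, normalization of the apex gradient, and parametrizing by the legs $R,s$ rather than fixing the hypotenuse to $1$); your key inequality $r_1\coth(r_1)>R\coth(R)$ is exactly the monotonicity of $\zeta_r = r\coth(r)$, which is the same hyperbolic-trigonometry fact the paper invokes.
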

The geometric fact underlying Proposition~\ref{nointerpolation} is that the altitude of an isoceles hyperbolic triangle is shorter than the altitude in a corresponding Euclidean triangle (see Proposition~\ref{thisguyonemore} in Appendix~\ref{appinterpolation}).

We do, however, have the following sufficient conditions, whose proof relies on the fact that the function $x\mapsto \langle g, \log_y(x) \rangle$ is $\|g\|$-smooth on $\mathbb{H}^d$ (see Lemma~\ref{boundedhessaffine} in Appendix~\ref{appinterpolation}).

%\TODO{should be able to generalize the sufficient interpolation conditions to all Hadamard manifolds by showing that if ||nabla f|| is sufficiently small then mu-g-convex implies mu-b-convex which gives us a way to interpolate}
\begin{proposition}\label{suffcondinteprolationguy}
Let $\calM = \mathbb{H}^d$.
Consider the data $(F_i, x_i, g_i)_{i=1}^N$.
Assume inequalities~\eqref{necessaryguyss} hold, and $\|g_i\| \leq \frac{\mu}{2}$ for all $i$.
Then $(F_i, x_i, g_i)_{i=1}^N$ is interpolated by a $\frac{\mu}{2}$-strongly g-convex function.
\end{proposition}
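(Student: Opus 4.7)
The plan is to build the interpolating function explicitly as a pointwise maximum of simple strongly g-convex pieces, one per data point. For each $i \in \{1,\ldots,N\}$, define
\begin{equation*}
\phi_i(x) = F_i + \langle g_i, \log_{x_i}(x)\rangle + \frac{\mu}{2}\dist(x, x_i)^2,
\end{equation*}
and set $f(x) = \max_{i=1,\ldots,N}\phi_i(x)$. The desired strong g-convexity of $f$ will come from showing that each $\phi_i$ is $\mu/2$-strongly g-convex, combined with the elementary observation that the pointwise supremum of $\mu/2$-strongly g-convex functions is $\mu/2$-strongly g-convex (immediate from the chord-wise definition, analogous to the g-convex case in \citep[Cor.~2.7]{udriste1994convex}).

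The crucial ingredient is a uniform Hessian lower bound on each $\phi_i$. By Lemma~\ref{boundedhessaffine}, the map $x \mapsto \langle g_i, \log_{x_i}(x)\rangle$ is $\|g_i\|$-smooth on $\mathbb{H}^d$, so its Hessian is everywhere $\succeq -\|g_i\|\,I$. By Lemma~\ref{lemmaboundhess} applied with $K = -1$, the regularizer $\frac{\mu}{2}\dist(\cdot, x_i)^2$ has Hessian $\succeq \mu I$ everywhere on $\mathbb{H}^d$. Summing and using the hypothesis $\|g_i\| \leq \mu/2$ yields $\nabla^2 \phi_i \succeq (\mu/2)\,I$ globally, so $\phi_i$ is $\mu/2$-strongly g-convex, and hence so is $f$.

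It then remains to verify interpolation at each $x_i$. For the function values, $\phi_i(x_i) = F_i$ by construction, while swapping the roles of $i$ and $j$ in \eqref{necessaryguyss} gives $\phi_j(x_i) \leq F_i$ for every $j$; taking the maximum yields $f(x_i) = F_i$. For the subgradient inclusion, since $f \geq \phi_i$ pointwise with equality at $x_i$, and since the quadratic term in $\phi_i$ is nonnegative,
\begin{equation*}
f(x) \;\geq\; \phi_i(x) \;\geq\; F_i + \langle g_i, \log_{x_i}(x)\rangle \;=\; f(x_i) + \langle g_i, \log_{x_i}(x)\rangle,
\end{equation*}
which is precisely the defining inequality of $g_i \in \partial f(x_i)$.

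I do not anticipate a real obstacle: the hypothesis $\|g_i\| \leq \mu/2$ is engineered exactly to absorb the (worst-case) $-\|g_i\|\,I$ curvature contribution of the term $\langle g_i, \log_{x_i}(\cdot)\rangle$ into the $\mu I$ coming from the squared-distance regularizer. This balancing is also what forces the drop of the interpolating strong-convexity constant from $\mu$ to $\mu/2$; in Euclidean space the analogous affine term has vanishing Hessian and no such margin is needed, which is why the genuinely \emph{geometric} loss appears only in the curved setting.
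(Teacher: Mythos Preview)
Your proof is correct and follows essentially the same approach as the paper: define $f(x)=\max_i\big\{F_i+\langle g_i,\log_{x_i}(x)\rangle+\tfrac{\mu}{2}\dist(x,x_i)^2\big\}$, use Lemma~\ref{boundedhessaffine} together with the Hessian lower bound on squared distance to get $\tfrac{\mu}{2}$-strong g-convexity of each piece, and then verify interpolation via~\eqref{necessaryguyss}. Your explanation of why the constant drops from $\mu$ to $\mu/2$ is a nice addition.
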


In Appendix~\ref{appinterpolation}, we take a more careful look at g-convex interpolation.  For example, in Proposition~\ref{necsuffconditions}, we give necessary and sufficient conditions for interpolation (but they seem to be of limited use), and link the difficulty of g-convex interpolation to the fact that g-convexity does not naturally fall into the framework of abstract convexity~\citep{abstractconvexity}.

\bibliography{../bibtex/boumal}

\begin{thebibliography}{52}
\providecommand{\natexlab}[1]{#1}
\providecommand{\url}[1]{\texttt{#1}}
\expandafter\ifx\csname urlstyle\endcsname\relax
  \providecommand{\doi}[1]{doi: #1}\else
  \providecommand{\doi}{doi: \begingroup \urlstyle{rm}\Url}\fi

\bibitem[Agarwal et~al.(2020)Agarwal, Boumal, Bullins, and
  Cartis]{agarwal2018arcfirst}
N.~Agarwal, N.~Boumal, B.~Bullins, and C.~Cartis.
\newblock Adaptive regularization with cubics on manifolds.
\newblock \emph{Mathematical Programming}, 188\penalty0 (1):\penalty0 85--134,
  2020.
\newblock \doi{10.1007/s10107-020-01505-1}.

\bibitem[Ahn and Sra(2020)]{ahn2020nesterovs}
K.~Ahn and S.~Sra.
\newblock From nesterov’s estimate sequence to riemannian acceleration.
\newblock In Jacob Abernethy and Shivani Agarwal, editors, \emph{Proceedings of
  Thirty Third Conference on Learning Theory}, volume 125 of \emph{Proceedings
  of Machine Learning Research}, pages 84--118. PMLR, 09--12 Jul 2020.
\newblock URL \url{https://proceedings.mlr.press/v125/ahn20a.html}.

\bibitem[Alimisis et~al.(2020)Alimisis, Orvieto, Becigneul, and
  Lucchi]{alimisis2019continuoustime}
F.~Alimisis, A.~Orvieto, G.~Becigneul, and A.~Lucchi.
\newblock A continuous-time perspective for modeling acceleration in riemannian
  optimization.
\newblock In Silvia Chiappa and Roberto Calandra, editors, \emph{Proceedings of
  the Twenty Third International Conference on Artificial Intelligence and
  Statistics}, volume 108 of \emph{Proceedings of Machine Learning Research},
  pages 1297--1307. PMLR, 26--28 Aug 2020.
\newblock URL \url{https://proceedings.mlr.press/v108/alimisis20a.html}.

\bibitem[Allen-Zhu et~al.(2018)Allen-Zhu, Garg, Li, Oliveira, and
  Wigderson]{allenzhuoperatorsplitting}
Z.~Allen-Zhu, A.~Garg, Y.~Li, R.~Oliveira, and A.~Wigderson.
\newblock Operator scaling via geodesically convex optimization, invariant
  theory and polynomial identity testing.
\newblock In \emph{Proceedings of the 50th Annual ACM SIGACT Symposium on the
  Theory of Computing ({STOC} 2018)}, 2018.
\newblock \doi{10.1145/3188745.3188942}.

\bibitem[Azagra and Ferrera(2005)]{azagra1}
D.~Azagra and J.~Ferrera.
\newblock Inf-convolution and regularization of convex functions on riemannian
  manifolds of nonpositive curvature.
\newblock \emph{Revista Matematica Complutense}, 19:\penalty0 323--345, 2005.

\bibitem[Azagra and Ferrera(2015)]{azagra2}
D.~Azagra and J.~Ferrera.
\newblock Regularization by sup–inf convolutions on riemannian manifolds: An
  extension of lasry–lions theorem to manifolds of bounded curvature.
\newblock \emph{Journal of Mathematical Analysis and Applications},
  423\penalty0 (2):\penalty0 994--1024, 2015.
\newblock ISSN 0022-247X.
\newblock \doi{https://doi.org/10.1016/j.jmaa.2014.10.022}.
\newblock URL
  \url{https://www.sciencedirect.com/science/article/pii/S0022247X14009457}.

\bibitem[Bac{\'a}k(2014)]{bacak2014hadamard}
M.~Bac{\'a}k.
\newblock \emph{Convex analysis and optimization in {H}adamard spaces},
  volume~22 of \emph{De Gruyter Series in Nonlinear Analysis and Applications}.
\newblock Walter de Gruyter GmbH \& Co KG, 2014.

\bibitem[Bento et~al.(2017)Bento, Ferreira, and
  Melo]{bento2017iterationcomplexity}
G.C. Bento, O.P. Ferreira, and J.G. Melo.
\newblock Iteration-complexity of gradient, subgradient and proximal point
  methods on {R}iemannian manifolds.
\newblock \emph{Journal of Optimization Theory and Applications}, 173\penalty0
  (2):\penalty0 548--562, 2017.
\newblock \doi{10.1007/s10957-017-1093-4}.

\bibitem[Boumal(2023)]{boumal2020intromanifolds}
Nicolas Boumal.
\newblock \emph{An introduction to optimization on smooth manifolds}.
\newblock Cambridge University Press, 2023.
\newblock \doi{10.1017/9781009166164}.
\newblock URL \url{https://www.nicolasboumal.net/book}.

\bibitem[Bridson and Haefliger(1999)]{bridsonmetric}
M.~R. Bridson and A.~Haefliger.
\newblock \emph{Metric Spaces of Non-Positive Curvature}.
\newblock Springer-Verlag Berlin Heidelberg, 1999.
\newblock \doi{10.1007/978-3-662-12494-9}.

\bibitem[Chen(2000)]{CHEN2000187}
Bang-Yen Chen.
\newblock Chapter 3 - riemannian submanifolds.
\newblock In Franki~J.E. Dillen and Leopold~C.A. Verstraelen, editors,
  \emph{Handbook of Differential Geometry}, volume~1 of \emph{Handbook of
  Differential Geometry}, pages 187--418. North-Holland, 2000.
\newblock \doi{https://doi.org/10.1016/S1874-5741(00)80006-0}.
\newblock URL
  \url{https://www.sciencedirect.com/science/article/pii/S1874574100800060}.

\bibitem[Criscitiello and Boumal(2022{\natexlab{a}})]{bumpfctpaper}
C.~Criscitiello and N.~Boumal.
\newblock Negative curvature obstructs acceleration for strongly geodesically
  convex optimization, even with exact first-order oracles.
\newblock In Po-Ling Loh and Maxim Raginsky, editors, \emph{Proceedings of
  Thirty Fifth Conference on Learning Theory}, volume 178 of \emph{Proceedings
  of Machine Learning Research}, pages 496--542. PMLR, 02--05 Jul
  2022{\natexlab{a}}.
\newblock URL \url{https://proceedings.mlr.press/v178/criscitiello22a.html}.

\bibitem[Criscitiello and
  Boumal(2022{\natexlab{b}})]{criscitiello2020accelerated}
C.~Criscitiello and N.~Boumal.
\newblock An accelerated first-order method for non-convex optimization on
  manifolds.
\newblock \emph{Journal of Foundations of Computational Mathematics},
  2022{\natexlab{b}}.
\newblock \doi{10.1007/s10208-022-09573-9}.

\bibitem[Dragomir et~al.(2021)Dragomir, Taylor, Bolte, and
  d'Aspremont]{acceleratingmirrordescentisnotpossible}
R.-A. Dragomir, A.~Taylor, J.~Bolte, and A.~d'Aspremont.
\newblock {Optimal Complexity and Certification of Bregman First-Order
  Methods}.
\newblock \emph{{Mathematical Programming}}, April 2021.
\newblock \doi{10.1007/s10107-021-01618-1}.
\newblock URL \url{https://hal.inria.fr/hal-02384167}.

\bibitem[Dragomirescu and Ivan(1992)]{cvxextension}
F.~Dragomirescu and C.~Ivan.
\newblock The smallest convex extensions of a convex function.
\newblock \emph{Optimization}, 24\penalty0 (3-4):\penalty0 193--206, 1992.
\newblock \doi{10.1080/02331939208843789}.
\newblock URL \url{https://doi.org/10.1080/02331939208843789}.

\bibitem[Fletcher et~al.(2009)Fletcher, Venkatasubramanian, and
  Joshi]{Fletcher2009TheGM}
P.~T. Fletcher, S.~Venkatasubramanian, and S.~C. Joshi.
\newblock The geometric median on {R}iemannian manifolds with application to
  robust atlas estimation.
\newblock \emph{NeuroImage}, 45:\penalty0 S143--S152, 2009.

\bibitem[Franks and Moitra(2020)]{franksmoitra2020}
C.~Franks and A.~Moitra.
\newblock Rigorous guarantees for tyler’s m-estimator via quantum expansion.
\newblock In \emph{33rd Annual Conference on Learning Theory}, volume 125 of
  \emph{Proceedings of Machine Learning Research}, pages 1--32. PMLR, 2020.
\newblock URL \url{http://proceedings.mlr.press/v125/franks20a/franks20a.pdf}.

\bibitem[Fu(2011)]{widthboundedseparator}
Bin Fu.
\newblock Theory and application of width bounded geometric separators.
\newblock \emph{Journal of Computer and System Sciences}, 77\penalty0
  (2):\penalty0 379--392, 2011.
\newblock ISSN 0022-0000.
\newblock \doi{https://doi.org/10.1016/j.jcss.2010.05.003}.
\newblock URL
  \url{https://www.sciencedirect.com/science/article/pii/S0022000010000577}.
\newblock Adaptivity in Heterogeneous Environments.

\bibitem[Guzmán and Nemirovski(2015)]{guzmannemirovski}
Cristóbal Guzmán and Arkadi Nemirovski.
\newblock On lower complexity bounds for large-scale smooth convex
  optimization.
\newblock \emph{Journal of Complexity}, 31\penalty0 (1):\penalty0 1--14, 2015.
\newblock ISSN 0885-064X.
\newblock \doi{https://doi.org/10.1016/j.jco.2014.08.003}.
\newblock URL
  \url{https://www.sciencedirect.com/science/article/pii/S0885064X14000831}.

\bibitem[Hamilton and Moitra(2021)]{hamilton2021nogo}
L.~Hamilton and A.~Moitra.
\newblock No-go theorem for acceleration in the hyperbolic plane.
\newblock \emph{arXiv: 2101.05657}, 2021.

\bibitem[Hirai(2022)]{hirai1}
Hiroshi Hirai.
\newblock Convex analysis on hadamard spaces and scaling problems, 2022.
\newblock URL \url{https://arxiv.org/abs/2203.03193}.

\bibitem[Innami(1982)]{Innami1982}
Nobuhiro Innami.
\newblock Splitting theorems of riemannian manifolds.
\newblock \emph{Compositio Mathematica}, 47\penalty0 (3):\penalty0 237--247,
  1982.

\bibitem[Jost(2011)]{jostbook}
J.~Jost.
\newblock \emph{Riemannian Geometry and Geometric Analysis}.
\newblock Universitext. Springer Berlin, Heidelberg, 6 edition, 2011.
\newblock ISBN 978-3-642-21298-7.

\bibitem[Karcher(1977)]{karcher1977riemannian}
H.~Karcher.
\newblock Riemannian center of mass and mollifier smoothing.
\newblock \emph{Communications on pure and applied mathematics}, 30\penalty0
  (5):\penalty0 509--541, 1977.

\bibitem[Kim and Yang(2022)]{kimyang}
Jungbin Kim and Insoon Yang.
\newblock Accelerated gradient methods for geodesically convex optimization:
  Tractable algorithms and convergence analysis.
\newblock In Kamalika Chaudhuri, Stefanie Jegelka, Le~Song, Csaba Szepesvari,
  Gang Niu, and Sivan Sabato, editors, \emph{Proceedings of the 39th
  International Conference on Machine Learning}, volume 162 of
  \emph{Proceedings of Machine Learning Research}, pages 11255--11282. PMLR,
  17--23 Jul 2022.
\newblock URL \url{https://proceedings.mlr.press/v162/kim22k.html}.

\bibitem[Kisfaludi-Bak(2020)]{hyperbolicintersectiongraphs}
S.~Kisfaludi-Bak.
\newblock \emph{Hyperbolic intersection graphs and (quasi)-polynomial time},
  pages 1621--1638.
\newblock 2020.
\newblock \doi{10.1137/1.9781611975994.100}.

\bibitem[Lang(1999)]{lang1999fundamentals}
S.~Lang.
\newblock \emph{Fundamentals of differential geometry}, volume 191.
\newblock Springer Science \& Business Media, 1999.
\newblock \doi{10.1007/978-1-4612-0541-8}.

\bibitem[Lee(2012)]{lee2012smoothmanifolds}
J.~M. Lee.
\newblock \emph{Introduction to Smooth Manifolds}, volume 218 of \emph{Graduate
  Texts in Mathematics}.
\newblock Springer-Verlag New York, 2nd edition, 2012.
\newblock \doi{10.1007/978-1-4419-9982-5}.

\bibitem[Lee(2018)]{lee2018riemannian}
J.~M. Lee.
\newblock \emph{Introduction to {R}iemannian Manifolds}, volume 176 of
  \emph{{Graduate Texts in Mathematics}}.
\newblock Springer, 2nd edition, 2018.
\newblock \doi{10.1007/978-3-319-91755-9}.

\bibitem[Lytchak and Petrunin(2022)]{LytchakPetrunin}
Alexander Lytchak and Anton Petrunin.
\newblock About every convex set in any generic riemannian manifold.
\newblock \emph{Journal für die reine und angewandte Mathematik (Crelles
  Journal)}, 2022\penalty0 (782):\penalty0 235--245, 2022.
\newblock \doi{doi:10.1515/crelle-2021-0058}.
\newblock URL \url{https://doi.org/10.1515/crelle-2021-0058}.

\bibitem[Martínez-Rubio(2021)]{martinezrubio2021global}
D.~Martínez-Rubio.
\newblock Global {Riemannian} acceleration in hyperbolic and spherical spaces.
\newblock \emph{arXiv: 2012.03618}, 2021.

\bibitem[Martínez-Rubio and Pokutta(2022)]{davidmr}
David Martínez-Rubio and Sebastian Pokutta.
\newblock Accelerated riemannian optimization: Handling constraints with a prox
  to bound geometric penalties, 2022.
\newblock URL \url{https://arxiv.org/abs/2211.14645}.

\bibitem[Nemirovski(1994)]{nemirovskibook}
A.~Nemirovski.
\newblock \emph{Information-based complexity of convex programming}.
\newblock 1994.
\newblock URL \url{https://www2.isye.gatech.edu/~nemirovs/Lec_EMCO.pdf}.

\bibitem[Nemirovskii and Yudin(1983)]{nemirovskiandyudin1983}
A.~S. Nemirovskii and D.B. Yudin.
\newblock \emph{Problem complexity and method efficiency in optimization.}
\newblock Wiley-Interscience series in discrete mathematics. Wiley, Chichester
  ;, 1983.
\newblock ISBN 0471103454.

\bibitem[Nesterov(2004)]{nesterov2004introductory}
Y.~Nesterov.
\newblock \emph{Introductory lectures on convex optimization: A basic course},
  volume~87 of \emph{Applied optimization}.
\newblock Springer, 2004.
\newblock ISBN 978-1-4020-7553-7.

\bibitem[Nesterov(2013)]{nesterovacceleratedgradientinconvexset2007}
Y.~E. Nesterov.
\newblock Gradient methods for minimizing composite functions.
\newblock \emph{Math. Program.}, 140\penalty0 (1):\penalty0 125--161, 2013.
\newblock \doi{10.1007/s10107-012-0629-5}.
\newblock URL \url{https://doi.org/10.1007/s10107-012-0629-5}.

\bibitem[O'{N}eill(1983)]{oneill}
B.~O'{N}eill.
\newblock \emph{Semi-{R}iemannian geometry: with applications to relativity},
  volume 103.
\newblock Academic Press, 1983.

\bibitem[Petrunin(2023)]{petrunin2023pigtikal}
Anton Petrunin.
\newblock {PIGTIKAL} (puzzles in geometry that i know and love), 2023.
\newblock URL \url{https://arxiv.org/abs/0906.0290}.

\bibitem[Polyak(1987)]{polyak1987book}
B.T. Polyak.
\newblock \emph{Introduction to optimization}.
\newblock Optimization Software, 1987.

\bibitem[Rapcs\'ak(1997)]{rapcsak1997smoothnonlinear}
T.~Rapcs\'ak.
\newblock \emph{Smooth Nonlinear Optimization in $\reals^n$}, volume~19 of
  \emph{{Nonconvex Optimization and Its Applications}}.
\newblock Springer, 1997.
\newblock \doi{10.1007/978-1-4615-6357-0}.

\bibitem[Ratcliffe(2019)]{ratcliffe2019hyperbolic}
J.~G. Ratcliffe.
\newblock \emph{Foundations of Hyperbolic Manifolds}.
\newblock Springer International Publishing, 2019.
\newblock \doi{10.1007/978-3-030-31597-9}.

\bibitem[Rubinov(2010)]{abstractconvexity}
A.~Rubinov.
\newblock \emph{Abstract Convexity and Global Optimization}.
\newblock Springer Series in Nonconvex Optimization and Its Applications.
  Springer, 2010.
\newblock ISBN 978-1-4757-3200-9.

\bibitem[Rusciano(2019)]{rusciano2019riemannian}
A.~Rusciano.
\newblock A riemannian corollary of helly's theorem.
\newblock \emph{arXiv: 1804.10738}, 2019.

\bibitem[Sakai(1996)]{sakai1996riemannian}
T.~Sakai.
\newblock \emph{{R}iemannian geometry}, volume 149.
\newblock American Mathematical Society, 1996.

\bibitem[Silva~Louzeiro et~al.(2022)Silva~Louzeiro, Bergmann, and
  Herzog]{bergmannfenchel}
Maur\'{\i}cio Silva~Louzeiro, Ronny Bergmann, and Roland Herzog.
\newblock Fenchel duality and a separation theorem on hadamard manifolds.
\newblock \emph{SIAM Journal on Optimization}, 32\penalty0 (2):\penalty0
  854--873, 2022.
\newblock \doi{10.1137/21M1400699}.
\newblock URL \url{https://doi.org/10.1137/21M1400699}.

\bibitem[Taylor(2018)]{taylorinterpolation}
A.~Taylor.
\newblock \emph{Convex interpolation and performance estimation of first-order
  methods for convex optimization}.
\newblock PhD thesis, 2018.
\newblock URL \url{https://dial.uclouvain.be/pr/boreal/object/boreal:182881}.

\bibitem[Taylor et~al.(2016)Taylor, Hendrickx, and
  Glineur]{taylorinterpolation2016}
A.~B. Taylor, J.~M. Hendrickx, and F.~Glineur.
\newblock Smooth strongly convex interpolation and exact worst-case performance
  of first-order methods.
\newblock \emph{Mathematical Programming}, 161:\penalty0 307--345, 2016.
\newblock \doi{10.1007/s10107-016-1009-3}.

\bibitem[Taylor et~al.(2017)Taylor, Hendrickx, and Glineur]{taylorcomposite}
A.B. Taylor, J.M. Hendrickx, and F.~Glineur.
\newblock Exact worst-case performance of first-order methods for composite
  convex optimization.
\newblock \emph{SIAM Journal on Optimization}, 27\penalty0 (3):\penalty0
  1283--1313, 2017.
\newblock \doi{10.1137/16M108104X}.
\newblock URL \url{https://doi.org/10.1137/16M108104X}.

\bibitem[Taylor and Drori(2022)]{taylorexact}
Adrien Taylor and Yoel Drori.
\newblock {An optimal gradient method for smooth strongly convex minimization}.
\newblock \emph{{Mathematical Programming}}, June 2022.
\newblock \doi{10.1007/s10107-022-01839-y}.
\newblock URL \url{https://doi.org/10.1007/s10107-022-01839-y}.

\bibitem[Udri\c{s}te(1994)]{udriste1994convex}
C.~Udri\c{s}te.
\newblock \emph{Convex functions and optimization methods on {R}iemannian
  manifolds}, volume 297 of \emph{Mathematics and its applications}.
\newblock Kluwer Academic Publishers, 1994.
\newblock \doi{10.1007/978-94-015-8390-9}.

\bibitem[Zhang and Sra(2016)]{zhang2016complexitygeodesicallyconvex}
H.~Zhang and S.~Sra.
\newblock First-order methods for geodesically convex optimization.
\newblock In \emph{Conference on Learning Theory}, pages 1617--1638, 2016.

\bibitem[Zhang and Sra(2018)]{zhang2018estimatesequence}
H.~Zhang and S.~Sra.
\newblock An estimate sequence for geodesically convex optimization.
\newblock In S.~Bubeck, V.~Perchet, and P.~Rigollet, editors, \emph{Proceedings
  of the 31st Conference On Learning Theory}, volume~75 of \emph{Proceedings of
  Machine Learning Research}, pages 1703--1723. PMLR, 06--09 Jul 2018.
\newblock URL \url{http://proceedings.mlr.press/v75/zhang18a.html}.

\end{thebibliography}

\appendix

\section{Proving an $\Omega(\frac{\zeta}{\epsilon^2})$ lower bound for a class of algorithms including subgradient descent} \label{polyak}

This section provides a proof of Corollary~\ref{corthisguysubrgd} from Section~\ref{polyaknotapp}.
%We actually prove the $\Omega(\frac{\zeta_r}{\epsilon^2})$ lower bound for a class of algorithms, including subgradient descent.

\subsection{Totally geodesic submanifolds and the hyperboloid model} \label{totgeosubmanifoldprelimaries}
Let $\calM$ be a Hadamard manifold and $S \subset \calM$ a Riemannian submanifold.
If $x \in S$ and $v \in \T_x \calM$, we say $v$ is tangent to $S$ if $v \in \T_x S$, and $v$ is orthogonal to $S$ if $v$ is orthogonal to $\T_x S$, i.e., $\langle v, w \rangle = 0$ for all $w \in \T_x S$.
Below, parallel transport $P^\gamma$ along a curve $\gamma$, and the exponential map $\exp_x$ both denote those operations \emph{in the ambient space} $\calM$.
\begin{definition}
A Riemannian submanifold $S$ is totally geodesic if it is connected, complete, and all geodesics in $S$ are geodesics in the ambient manifold $\calM$.
\end{definition}
We have the following equivalent characterizations, see~\citep[Prop.~8.12]{lee2018riemannian},~\citep[Ch.~4]{oneill},~\citep[Ch.~XI,~XIV]{lang1999fundamentals}.
\begin{lemma}
Let $S$ be a complete, connected Riemannian submanifold of a Hadamard manifold $\calM$.
The following are equivalent.
\begin{itemize}
\item $S$ is totally geodesic.
\item The second fundamental form of $S$ vanishes.
\item If $(x, v) \in \T S$, then $\exp_x(t v) \in S$ for all $t \in \reals$.
\item If $x, y \in S$, then the geodesic through $x, y$ is contained in $S$.
\end{itemize}
\end{lemma}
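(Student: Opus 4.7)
The plan is to take the \emph{Gauss formula} as the backbone: for vector fields $X, Y$ on $S$, $\barnabla_X Y = \nabla^S_X Y + \II(X, Y)$, where $\nabla^S$ is the Levi-Civita connection of $S$ with the induced metric, and $\II$ is the symmetric, normal-bundle-valued second fundamental form. Combined with Hopf--Rinow applied to the complete, connected submanifold $S$ (giving existence of $S$-geodesics for all time and between any two points) and global uniqueness of geodesics in the Hadamard manifold $\calM$ (where $\exp_x$ is a global diffeomorphism), this suffices. I would close the cycle (a)$\Leftrightarrow$(b), (a)$\Rightarrow$(c), (c)$\Rightarrow$(d), (d)$\Rightarrow$(a).

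For (a)$\Leftrightarrow$(b): for any $S$-geodesic $\sigma$, Gauss gives $\barnabla_{\dot\sigma}\dot\sigma = \II(\dot\sigma, \dot\sigma)$, which lies in the normal bundle. Thus $\sigma$ is a $\calM$-geodesic iff $\II(\dot\sigma,\dot\sigma)=0$. Since every $(x,v)\in \T S$ arises as the initial condition of some $S$-geodesic (by completeness of $S$), condition (a) is equivalent to $\II(v,v)=0$ for all $v\in \T S$, which polarizes to $\II\equiv 0$. For (a)$\Rightarrow$(c), given $(x,v)\in \T S$, the $S$-geodesic $\sigma$ with $\dot\sigma(0)=v$ exists for all $t\in\reals$; by (a) it is a $\calM$-geodesic, and uniqueness forces $\sigma(t)=\exp_x(tv)\in S$. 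For (c)$\Rightarrow$(d), given $x,y\in S$, Hopf--Rinow on $S$ yields an $S$-geodesic from $x$ to $y$ with some initial velocity $v\in \T_x S$; by (c), the curve $t\mapsto \exp_x(tv)$ stays in $S$, and, since it is a $\calM$-geodesic reaching $y$, it coincides with the unique $\calM$-geodesic from $x$ to $y$ in the Hadamard manifold $\calM$.

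For (d)$\Rightarrow$(a), given an $S$-geodesic $\sigma$, pick a short subsegment $\sigma|_{[0,\epsilon]}$ that is length-minimizing in $S$. By (d), the $\calM$-geodesic $\gamma$ from $\sigma(0)$ to $\sigma(\epsilon)$ lies in $S$; since $S$ carries the induced metric, the length of $\gamma$ measured in $S$ equals its $\calM$-length, which is at most $\length(\sigma|_{[0,\epsilon]})$. So $\gamma$ is also $S$-length-minimizing between the same endpoints, and uniqueness of short $S$-geodesics forces $\sigma=\gamma$ on $[0,\epsilon]$; this makes $\sigma$ a $\calM$-geodesic locally, hence everywhere. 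The main obstacle is exactly this last implication: whereas (a)$\Leftrightarrow$(b) and the implications into (c) and (d) drop out from Gauss plus existence/uniqueness of geodesics, closing the loop from (d) requires showing that the $\calM$-geodesic between two points of $S$ is not merely contained in $S$ but actually coincides with the $S$-geodesic, which is where the induced-metric and local length-comparison argument is needed.
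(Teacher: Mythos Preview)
The paper does not give its own proof of this lemma; it simply cites standard references (Lee, O'Neill, Lang). Your argument follows the standard route and is essentially correct.

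One place worth tightening is (c)$\Rightarrow$(d): you obtain an $S$-geodesic $\sigma$ from $x$ to $y$ with initial velocity $v\in \T_x S$, and then invoke (c) to say $t\mapsto \exp_x(tv)$ stays in $S$ and ``reaches $y$.'' But (c) speaks only of the ambient exponential, so you must justify why $\exp_x(v)=y$ and not merely $\exp^S_x(v)=y$. The fix is immediate and in fact already implicit in your toolkit: the $\calM$-geodesic $\gamma(t)=\exp_x(tv)$ lies in $S$ by (c), so $\dot\gamma$ is tangent to $S$, and the Gauss formula gives $0=\barnabla_{\dot\gamma}\dot\gamma=\nabla^S_{\dot\gamma}\dot\gamma+\II(\dot\gamma,\dot\gamma)$, whose tangential part shows $\gamma$ is also an $S$-geodesic; uniqueness of $S$-geodesics with given initial data then forces $\gamma=\sigma$, hence $\gamma(1)=y$. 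The same observation also streamlines your (d)$\Rightarrow$(a): once you know the $\calM$-geodesic $\gamma$ from $\sigma(0)$ to $\sigma(\epsilon)$ lies in $S$, Gauss immediately makes it an $S$-geodesic, and the length-comparison detour is unnecessary.
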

\begin{lemma}\label{usefullemmaaboutparalleltransportintotgeo}
Let $S$ be a (complete, connected) totally geodesic submanifold of a Hadamard manifold $\calM$.  Then:
\begin{itemize}
\item If $x, y \in S$, then $\log_x(y) \in \T_x S$.
\item Let $\gamma \colon \reals \to S$ be a smooth curve in $S$ with $\gamma(0) = x$, and $v \in T_x S$.
Then the parallel transport of $v$ along $\gamma$ is the same in $S$ and $\calM$.
\item Let $\gamma \colon \reals \to S$ be a smooth curve in $S$ with $\gamma(0) = x$, and let $v \in \T_x S$ and $w \in \T_x \calM$ orthogonal to $\T_x S$.  Then $P_{0 \to t}^\gamma v \in \T_{\gamma(t)} S$, and $P_{0 \to t}^\gamma w$ is orthogonal to $\T_{\gamma(t)} S$.
\end{itemize}
\end{lemma}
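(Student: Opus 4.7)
The plan is to handle the three parts in order, with part (2) being the technical core; parts (1) and (3) then follow easily from (2) together with the isometry property of parallel transport.

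Part (1) is immediate from total geodesy: the $\calM$-geodesic $\eta$ joining $x$ to $y$ lies entirely in $S$, so its initial velocity $\eta'(0) = \log_x(y)$ is the velocity at $0$ of a curve in $S$, hence belongs to $\T_x S$.

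For part (2), I would invoke the Gauss formula. Let $\nabla$ denote the Levi-Civita connection on $\calM$ and $\overline{\nabla}$ the induced connection on $S$. For vector fields $X, Y$ tangent to $S$, the Gauss formula reads
\begin{equation*}
\nabla_X Y = \overline{\nabla}_X Y + \II(X, Y),
\end{equation*}
where $\II$ is the second fundamental form of $S$ in $\calM$. The preceding lemma ensures that $\II \equiv 0$ when $S$ is totally geodesic. Let $V(t)$ denote the parallel transport of $v$ along $\gamma$ computed inside $S$, so that $\overline{\nabla}_{\gamma'(t)} V(t) = 0$ and $V(t) \in \T_{\gamma(t)} S$. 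Since both $V$ and $\gamma'$ are tangent to $S$, the Gauss formula yields $\nabla_{\gamma'(t)} V(t) = 0$, so $V$ is also parallel in $\calM$. By uniqueness of parallel transport, $V(t) = P^\gamma_{0 \to t} v$, establishing (2).

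For part (3), the first claim follows directly from (2), since the $S$-parallel transport of $v$ is a section of $\T S$ and agrees with $P^\gamma_{0 \to t} v$. For the second, choose an orthonormal basis $v_1, \ldots, v_k$ of $\T_x S$ (with $k = \dim S$) and extend it to an orthonormal basis $v_1, \ldots, v_d$ of $\T_x \calM$ whose last $d - k$ vectors are orthogonal to $\T_x S$. By (2), $P^\gamma_{0 \to t} v_i \in \T_{\gamma(t)} S$ for $i \leq k$. Since $P^\gamma_{0 \to t}$ is a linear isometry, $\{P^\gamma_{0 \to t} v_j\}_{j=1}^{d}$ is an orthonormal basis of $\T_{\gamma(t)} \calM$; a dimension count then forces the first $k$ transported vectors to span $\T_{\gamma(t)} S$, so the remaining $d - k$ span its orthogonal complement. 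Expanding $w$ in $v_{k+1}, \ldots, v_d$ immediately gives $P^\gamma_{0 \to t} w \perp \T_{\gamma(t)} S$.

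There is no substantial obstacle; the one non-routine step is (2), which relies on invoking the Gauss formula and the equivalence \emph{totally geodesic} $\iff \II \equiv 0$ granted by the previous lemma. Everything else is bookkeeping with adapted orthonormal frames and the isometry of parallel transport.
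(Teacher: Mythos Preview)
Your proof is correct and follows the standard route via the Gauss formula and the isometry of parallel transport. The paper itself does not supply a proof of this lemma; it states the result as a known fact (the surrounding references to Lee, O'Neill, and Lang are attached to the preceding characterization lemma, and this lemma is simply asserted). Your argument is exactly what one would find in those sources: part~(1) from the geodesic characterization of totally geodesic submanifolds, part~(2) from $\II \equiv 0$ and the Gauss formula, and part~(3) from~(2) plus orthonormal-frame bookkeeping.
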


Now let us consider $\calM = \mathbb{H}^d$.
To make our lower bound construction as concrete as possible, in the next section we shall give explicit formulae for certain geometric objects using the hyperboloid model of hyperbolic space.  
We introduce the relevant definitions and formulae in this section (see also~\citep[Ch.~3]{ratcliffe2019hyperbolic}).

Let $e_0, \ldots, e_d$ denote the standard basis vectors of $\reals^{d+1}$.
Let $J = \diag(-1, 1, \ldots, 1)$ be the $(d+1) \times (d+1)$ diagonal matrix whose diagonal consists of all ones, except its first entry is minus one.
For $x, y \in \mathbb{R}^{d+1}$, let $\langle x, y \rangle = x^\top J y$.
Define the submanifold $\calM = \{x \in \reals^{d+1} : \langle x, x\rangle=-1, \langle x, e_0 \rangle > 0\} \subset \reals^{d+1}$, which has tangent spaces $\T_x \calM = \{v \in \reals^{d+1} : \langle x, v\rangle = 0\}$.
Hyperbolic space $\mathbb{H}^d$ is \emph{identified} with $\calM$ endowed with the inner product $\langle u, v \rangle = u^\top J v$ on its tangent spaces.

We have $\dist(x, y) = \arccosh(- \langle x, y\rangle)$.
For $x, y\in \calM$ and $v \in \T_x \calM$,
$$\exp_x(v) = \cosh(\|v\|) x + \frac{\sinh(\|v\|)}{\|v\|} v, \quad \log_x(y) = \frac{\dist(x,y)}{\sinh(\dist(x,y))} (y - \cosh(\dist(x, y)) x).$$
If $v \in \T_x \calM$, $y = \exp_x(v)$, and $u, w \in \T_x \calM$ with $u$ orthogonal to $v$ and $w$ parallel to $v$, then
\begin{align} \label{usefulptrformula}
P_{x \to y} w = \sinh(\|v\|) x + \cosh(\|v\|) w, \quad P_{x \to y} u = u.
\end{align}

Let us consider totally geodesic submanifolds of $\calM = \mathbb{H}^d$.
We have the following classical characterization in the hyperboloid model.
\begin{lemma} \label{characterizationoftotgeoinhyperspace}
With $\calM = \mathbb{H}^d$:
\begin{itemize}
\item If $P$ is a subspace of $\mathbb{R}^{d+1}$ and $\calM \cap P$ is nonempty, then $\calM \cap P$ is a totally geodesic submanifold of $\calM$.  Moreover, $\T_x S = \T_x \calM \cap P$ for all $x \in S$

\item Conversely, if $S$ is a $k$-dimensional totally geodesic submanifold of $\calM$, then there is a unique $k+1$-dimensional subspace $P \subseteq \reals^{d+1}$ such that $S = \calM \cap P$.

\item If $x \in \calM$ and $\mathscr{S}$ is a $k$-dimensional linear subspace of $\T_x \calM$, then 
$S = \exp_x(\mathscr{S}) = \calM \cap \spann(x, \mathscr{S})$ is a $k$-dimensional totally geodesic submanifold whose tangent space at $x$ equals $\mathscr{S}$.

\item A $k$-dimensional totally geodesic submanifold of $\mathbb{H}^d$ is isometric to $\mathbb{H}^k$.
\end{itemize}
\end{lemma}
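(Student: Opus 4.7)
The overall strategy is to exploit the explicit hyperboloid-model formulas for $\exp_x$ and $\log_x$ stated just before the lemma, together with the Cartan--Hadamard property that $\exp_x \colon \T_x \calM \to \calM$ is a global diffeomorphism (and similarly for any complete totally geodesic submanifold). This reduces each of the four bullets to a short linear-algebra verification in $\reals^{d+1}$.

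For the first bullet, fix $x \in \calM \cap P$ and note that for $v \in \T_x \calM \cap P$ the formula $\exp_x(tv) = \cosh(t\|v\|)\, x + \frac{\sinh(t\|v\|)}{\|v\|}\, v$ lies in $\spann(x,v) \subseteq P$ and in $\calM$. Thus ambient geodesics starting in $\calM \cap P$ with velocity in $\T_x \calM \cap P$ remain in $\calM \cap P$, and restricting $\exp_x$ to $\T_x \calM \cap P$ gives a smooth injective local parametrization that endows $\calM \cap P$ with an embedded submanifold structure whose tangent space at $x$ is $\T_x \calM \cap P$, and simultaneously witnesses total geodesy. The third bullet is then an immediate corollary: apply the first bullet to $P = \spann(x, \mathscr{S})$, noting that $\T_x \calM \cap P = \mathscr{S}$ because $x \notin \T_x \calM$ (since $\langle x,x\rangle = -1 \neq 0$), and that $\exp_x(\mathscr{S}) \subseteq \calM \cap P$ by the formula, with equality because $\exp_x$ restricted to $\mathscr{S}$ is a diffeomorphism onto the $k$-dimensional Hadamard submanifold $\calM \cap P$.

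For the second bullet, pick $x \in S$, set $\mathscr{S} = \T_x S$, and let $P = \spann(x, \mathscr{S})$, which has dimension $k+1$ since $x \notin \T_x \calM \supseteq \mathscr{S}$. Because $S$ is totally geodesic and complete, its intrinsic exponential map agrees with $\exp_x|_{\mathscr{S}}$ and surjects onto $S$; the explicit formula then places every point of $S$ inside $P$, so $S \subseteq \calM \cap P$. By the first bullet, $\calM \cap P$ is a totally geodesic submanifold of $\calM$ of dimension $k$ whose tangent space at $x$ equals $\mathscr{S} = \T_x S$, so the two submanifolds agree by uniqueness of geodesics with prescribed initial velocity. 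Uniqueness of $P$ is automatic: any qualifying subspace must contain $x$ and a basis of $\T_x S$, and having dimension $k+1$ must therefore equal $\spann(x, \T_x S)$.

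For the fourth bullet, write $S = \calM \cap P$ and restrict the Minkowski form $\langle u, v\rangle = u^\top J v$ to $P$. Its signature is $(1,k)$: any $x \in S$ provides a timelike direction ($\langle x,x\rangle = -1$), while an orthonormal basis of $\T_x S$ provides $k$ spacelike directions. Any linear isomorphism $\Phi\colon P \to \reals^{k+1}$ carrying this decomposition to the standard Lorentzian one sends $S$ bijectively onto the standard hyperboloid model of $\mathbb{H}^k$; since $\Phi$ identifies the two ambient bilinear forms, the induced Riemannian metrics match, making $\Phi|_S$ a Riemannian isometry. The main obstacle I anticipate is establishing cleanly that $\calM \cap P$ is a smooth embedded submanifold of the correct dimension; the route above sidesteps an implicit-function-theorem argument on the quadratic form (which would require verifying non-degeneracy and transversality) by extracting the submanifold structure directly from $\exp_x(\T_x\calM \cap P)$. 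The non-emptiness hypothesis $\calM \cap P \neq \emptyset$ is precisely what guarantees that $P$ contains a timelike vector, making this approach go through.
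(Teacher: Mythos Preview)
The paper states this lemma as a classical fact and gives no proof, so there is nothing to compare against; your proposal supplies an argument the paper omits, and it is essentially correct.

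One spot to tighten: in the first bullet you verify via the $\exp_x$ formula that $\exp_x(\T_x\calM \cap P) \subseteq \calM \cap P$, but you never explicitly argue the reverse inclusion, which is what is needed to conclude that $\calM \cap P$ is (globally) a connected embedded submanifold with $\T_x(\calM\cap P)=\T_x\calM\cap P$. This follows just as directly from the $\log$ formula quoted before the lemma: for $y \in \calM \cap P$ one has $\log_x(y) = \frac{\dist(x,y)}{\sinh(\dist(x,y))}(y - \cosh(\dist(x,y))\,x) \in \spann(x,y) \subseteq P$, so $\log_x$ maps $\calM\cap P$ into $\T_x\calM\cap P$ and hence $\exp_x|_{\T_x\calM\cap P}$ is a bijection onto $\calM\cap P$. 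With this in hand, your appeal in the third bullet to Cartan--Hadamard on the submanifold is no longer circular. A similarly minor point in the fourth bullet: your linear isometry $\Phi$ sends $S$ onto one sheet of the two-sheeted hyperboloid; if it is the wrong sheet, compose with the obvious reflection.
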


We define the following analogue of the Euclidean span.
This object plays an important role in assumption~\aref{A1} in the subsequent section.
\begin{definition}
For $(x_1, v_1), \ldots, (x_m, v_m) \in \T \calM$, define
$$\gspan(x_1, \ldots, x_m, v_1, \ldots, v_m) = \calM \cap \spann(x_1, \ldots, x_m, v_1, \ldots, v_m).$$
\end{definition}
If any of the points $x_i$ are repeated, then we usually do not repeat them in the $\gspan$.
If any of the $v_i$ equal zero, we usually omit them from the $\gspan$.
\begin{proposition} \label{prop}
Let $(x_1, v_1), \ldots, (x_m, v_m) \in \T \calM$.
Let $S = \gspan(x_1, \ldots, x_m, v_1, \ldots, v_m)$.  
Then $S$ is the minimal totally geodesic submanifold of $\calM$ which passes through each $x_i$ and is tangent to each $v_i$, i.e.,
\begin{align} \label{eq:tangentconstraint}
(x_i, v_i) \in \T S \quad \text{for all } i=1, \ldots, m.
\end{align}
\end{proposition}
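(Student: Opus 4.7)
The plan is to reduce the statement to the three characterizations of totally geodesic submanifolds of $\mathbb{H}^d$ given in Lemma~\ref{characterizationoftotgeoinhyperspace}. Throughout, write $P = \spann(x_1, \ldots, x_m, v_1, \ldots, v_m) \subseteq \reals^{d+1}$ so that $S = \calM \cap P$ by definition of $\gspan$.

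First I would show that $S$ is itself a totally geodesic submanifold with $x_i \in S$ and $v_i \in \T_{x_i} S$. Since $x_1 \in \calM \cap P$, the intersection is nonempty, so by the first bullet of Lemma~\ref{characterizationoftotgeoinhyperspace} the set $S$ is a totally geodesic submanifold of $\calM$, and moreover its tangent space at each point $x \in S$ satisfies $\T_x S = \T_x \calM \cap P$. Clearly $x_i \in S$ for every $i$. For tangency, $v_i \in P$ by construction and $v_i \in \T_{x_i}\calM$ by the hypothesis $(x_i,v_i) \in \T\calM$; hence $v_i \in \T_{x_i}\calM \cap P = \T_{x_i} S$, which is exactly~\eqref{eq:tangentconstraint}.

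Next I would prove minimality. Let $S'$ be any totally geodesic submanifold of $\calM$ passing through every $x_i$ with $v_i \in \T_{x_i} S'$. By the second bullet of Lemma~\ref{characterizationoftotgeoinhyperspace}, there exists a unique subspace $P' \subseteq \reals^{d+1}$ with $S' = \calM \cap P'$, and by the first bullet, $\T_{x_i} S' = \T_{x_i}\calM \cap P' \subseteq P'$. Thus $x_i \in P'$ and $v_i \in P'$ for every $i$, which forces $P \subseteq P'$, and therefore
\[
S = \calM \cap P \;\subseteq\; \calM \cap P' = S'.
\]
This establishes that $S$ is contained in every totally geodesic submanifold satisfying the tangency constraint, i.e., it is the minimal one.

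I do not anticipate a genuine obstacle: the whole argument is essentially a bookkeeping exercise that converts the geometric constraints into linear algebra via the hyperboloid model, exploiting that in that model totally geodesic submanifolds correspond bijectively to linear subspaces of $\reals^{d+1}$ meeting $\calM$, and that the tangent space of such a submanifold at a point is the intersection of the subspace with the ambient tangent space. The only subtle point worth flagging in the write-up is that the two uses of ``$\T_{x_i}\calM \cap P$'' (in Step 1 to locate $v_i$ in $\T_{x_i}S$, and in Step 3 to locate $v_i$ in $P'$) rely on the same characterization of tangent spaces from the first bullet of Lemma~\ref{characterizationoftotgeoinhyperspace}, so that lemma is the single workhorse of the proof.
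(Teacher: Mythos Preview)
Your proposal is correct and follows essentially the same approach as the paper's proof: both arguments use Lemma~\ref{characterizationoftotgeoinhyperspace} to identify $S$ and $S'$ with intersections $\calM \cap P$ and $\calM \cap P'$, verify the tangency constraint via $\T_{x_i}S = \T_{x_i}\calM \cap P$, and deduce minimality from $P \subseteq P'$. The only cosmetic difference is that the paper invokes a curve $\gamma_i \subset S' \subset P'$ with $\gamma_i'(0)=v_i$ to place $v_i$ in $P'$, whereas you cite the tangent-space identity $\T_{x_i}S' = \T_{x_i}\calM \cap P'$ directly; these are equivalent one-line observations.
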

By ``$S$ is minimal,'' we mean that for any totally geodesic submanifold $S'$ satisfying \eqref{eq:tangentconstraint}, then we must have $S \subset S'$.
This proposition tells us that the $\gspan$ is independent of the model we use to represent hyperbolic space.
\begin{proof}[Proof of Proposition~\ref{prop}]
$S$ is a totally geodesic submanifold.
Moreover, it is clear that $S$ satisfies \eqref{eq:tangentconstraint} because $x_i \in S$ and $v_i \in \T_{x_i} \calM \cap \spann(x_1, \ldots, x_m, v_1, \ldots, v_m) = \T_{x_i} S$.

Consider any totally geodesic submanifold $S'$ satisfying \eqref{eq:tangentconstraint}.  We know $S' = \calM \cap P'$ for some subspace $P'$ of $\reals^{d+1}$.
Moreover, for each $i=1, \ldots, m$, there is a curve $\gamma_i \subset S' \subset P'$ with $\gamma_i(0) = x_i, \gamma_i'(0) = v_i$.  Therefore $x_i \in P'$ and $v_i \in \T_{x_i} S' = P' \cap \T_{x_i} \calM \subset P'$.  So $\spann(x_1, \ldots, x_m, v_1, \ldots, v_m) \subset P'$.  Therefore $S \subset S'$.
\end{proof}
%We will only consider algorithms $\mathcal{A}$ which satisfy:
%\begin{assumption} \label{assumption}
%For every $k \geq 1$, the algorithm $\mathcal{A}$ chooses
%$$x_k \in \gspan(\grad f(x_0), \ldots, \grad f(x_{k-1})).$$
%\end{assumption}

\subsection{An $\Omega(\frac{\zeta}{\epsilon^2})$ lower bound for a class of algorithms}
In this section, we prove an $\Omega(\frac{\zeta}{\epsilon^2})$ lower bound for a class of algorithms (satisfying assumptions~\aref{A1} and~\aref{A2} below) solving problem~\ref{highdimLipschitzproblem}.

Let $\epsilon \leq \frac{1}{4 \sqrt{2}}$.
Let $T = d = \lfloor \frac{1}{32} \frac{\zeta_r}{\epsilon^2} \rfloor$.
Fix $\theta \in (0, \frac{\pi}{2})$ so that $\epsilon = \frac{1}{4} \cos(\theta)$ (and so $\cos^2(\theta) \leq \frac{1}{2}$).  Without loss of generality, we consider functions with Lipschitz constant $M = \frac{2}{\cos(\theta)}$.
We divide the proof into two parts: (I) the \textbf{geometric setup}, and (II) the actual \textbf{lower bound proof}.

\bigskip

\noindent \textbf{(I) Geometric setup:}
We inductively define a series of geometric objects.
%To make things as concrete as possible, we also build these objects in the hyperboloid model.

Let us first initialize ($k=0$).  Let $y_0 = \xorigin$ and $I_0 = \{y_0\}$.  Let $S_0 = \partial B(y_0, r)$ ($S_0$ is a hyperbolic sphere of dimension $d-1$).  $S_0$ is the boundary of the ball $B_0 = B(y_0, r)$.
Let $r_0$ be the radius of $S_0$, i.e., $r_0 = r$.
$S_0$ and $B_0$ are contained in the $d$-dimensional totally geodesic submanifold $H_0 = \calM$.
Choose an orthonormal basis $e_1, \ldots, e_d$ for $\T_{y_0} \calM$.
Define $e_i^{(0)} = e_i$ for $i=1, \ldots d$.
\emph{Working in the hyperboloid model}, we take $\xorigin = e_0$, and let the orthonormal basis for $\T_{y_0} \calM$ consist of the coordinate basis vectors, also denoted $e_1, \ldots, e_d$.
%Define $\tilde g_0 = -\frac{1}{\cos(\theta)} e_1$.

Next, for $k=1, \ldots, d-1$:
\begin{itemize}
\item Define $\Delta_{k-1} > 0$ and $r_{k} > 0$ by
\begin{align} \label{yetanotherusefulguy25}
\cos(\theta) = \frac{\tanh(\Delta_{k-1})}{\tanh(r_{k-1})}, \text{    }\sin(\theta) = \frac{\sinh(r_k)}{\sinh(r_{k-1})}, \text{    } \cosh(r_{k-1}) = \cosh(r_k) \cosh(\Delta_{k-1}).
\end{align}
(The third equation is a consequence of the first two equations.)
\begin{lemma} \label{superusefulguy2}
The sequence $r_k > 0$ given by $r_0 = r, \sin(\theta) = \frac{\sinh(r_k)}{\sinh(r_{k-1})}$ satisfies $r_k \geq \frac{r}{2}$ for all $k \leq T = d = \lfloor \frac{1}{32}\frac{\zeta_r}{\epsilon^2} \rfloor = \lfloor \frac{\zeta_r}{2\cos^2(\theta)} \rfloor$.
\end{lemma}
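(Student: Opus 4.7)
My plan is to reduce the lemma, by telescoping, to a purely analytic inequality in $r$ alone, and then verify that inequality by hand.

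First I would unfold the recursion. Since $\sinh(r_k) = \sin\theta\cdot\sinh(r_{k-1})$ for every $k$, iterating gives $\sinh(r_k) = \sin^k(\theta)\sinh(r)$. Because $\sinh$ is strictly increasing on $[0,\infty)$, the desired bound $r_k\geq r/2$ is equivalent to $\sin^k(\theta)\sinh(r) \geq \sinh(r/2)$. Using the hyperbolic double-angle identity $\sinh(r) = 2\sinh(r/2)\cosh(r/2)$, this further simplifies to
\[
\sin^k(\theta) \;\geq\; \frac{1}{2\cosh(r/2)}.
\]

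Next I would take logarithms and separate the dependence on $\theta$ and on $r$. The condition becomes $k\log(1/\sin\theta) \leq \log(2\cosh(r/2))$. For the $\theta$-side, I would write $\log(1/\sin\theta) = -\tfrac{1}{2}\log(1-\cos^2\theta)$ and apply the elementary inequality $-\log(1-x)\leq 2x$ for $x\in[0,\tfrac{1}{2}]$, which is valid here because we are working in the regime $\cos^2\theta \leq \tfrac{1}{2}$. This yields $\log(1/\sin\theta)\leq \cos^2\theta$. Combined with the hypothesis $k\leq \zeta_r/(2\cos^2\theta)$, it suffices to establish the $\theta$-free inequality $\zeta_r/2 \leq \log(2\cosh(r/2))$, i.e.
\[
\frac{r}{\tanh r} \;\leq\; 2\log(2\cosh(r/2)), \qquad r>0.
\]

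To prove this final inequality, I would split both sides cleanly: using $2\cosh(r/2) = e^{r/2}(1+e^{-r})$ one has $2\log(2\cosh(r/2)) = r + 2\log(1+e^{-r})$, and using $\coth r - 1 = 2/(e^{2r}-1)$ one has $r/\tanh r = r + 2r/(e^{2r}-1)$. The inequality therefore reduces to $r/(e^{2r}-1)\leq \log(1+e^{-r})$. Under the substitution $u=e^{-r}\in(0,1]$, this becomes the purely polynomial-logarithmic assertion
\[
G(u) \;:=\; (1-u^2)\log(1+u) + u^2\log u \;\geq\; 0 \qquad \text{on } (0,1].
\]
One checks $G(0^+)=G(1)=0$, computes $G'(u) = 1 + 2u\log\bigl(u/(1+u)\bigr)$ and $G''(u) = 2\log\bigl(u/(1+u)\bigr) + 2/(1+u)$, and observes that $G''$ is strictly negative on $(0,1]$ (since $u/(1+u)\leq 1/2$ and $2/(1+u)\leq 2$, with $-2\log 2 + 1 < 0$ everywhere relevant), so $G'$ is strictly decreasing. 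Combined with $G'(0^+)=1>0$ and $G'(1)=1-2\log 2<0$, this forces $G$ to be unimodal on $(0,1]$; together with the vanishing at both endpoints, this yields $G\geq 0$, as required.

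The main obstacle is the final analytic inequality $r/\tanh r \leq 2\log(2\cosh(r/2))$: it is asymptotically tight as $r\to\infty$ (both sides equal $r + O(e^{-r})$, with the correction on the right being $2e^{-r}$ and on the left only $2re^{-2r}$), so a naive bound such as $\log(2\cosh(r/2))\geq r/2$ is too weak and one cannot afford to drop the subleading exponential terms. The substitution $u=e^{-r}$ and the unimodality argument for $G$ seem to be the cleanest way to handle this without ad hoc case splits.
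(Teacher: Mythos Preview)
Your approach is essentially the paper's: both telescope the recursion to $\sinh(r_k)=\sin^k\theta\,\sinh(r)$ and reduce to the inequality $2\log\bigl(\sinh(r)/\sinh(r/2)\bigr)/\bigl(-\log(1-x)\bigr)\geq \zeta_r/(2x)$ for $x=\cos^2\theta\in(0,\tfrac12]$. The paper simply asserts this inequality, whereas you go further and prove it by separating the $x$-dependence via $-\log(1-x)\leq 2x$ and the $r$-dependence via $\zeta_r\leq 2\log(2\cosh(r/2))$; your careful verification of the latter is a genuine addition.

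One slip to fix: your stated justification of $G''<0$ combines the bounds $u/(1+u)\leq 1/2$ and $2/(1+u)\leq 2$, but these only yield $G''\leq -2\log 2 + 2 > 0$, which is vacuous (and the ``$-2\log 2+1$'' you quote does not follow from them). The clean repair is to observe that $\tfrac12 G''(u)=\log\bigl(u/(1+u)\bigr)+1/(1+u)$ has derivative $1/\bigl(u(1+u)^2\bigr)>0$, hence is increasing on $(0,1]$ with maximum value $-\log 2 + \tfrac12<0$ at $u=1$; this gives $G''<0$ on $(0,1]$ and the rest of your unimodality argument goes through unchanged.
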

\begin{proof}
Indeed, $\sinh(r_k) = \sin(\theta)^k \sinh(r) \geq \sinh(r/2)$ provided
$$k \leq \frac{\log\Big(\frac{\sinh(r/2)}{\sinh(r)}\Big)}{\log(\sin(\theta))} = 2 \frac{\log\Big(\frac{\sinh(r/2)}{\sinh(r)}\Big)}{\log(1-\cos^2(\theta))}.$$
%= 2 \frac{\log(\sinh(r/2) / \sinh(r))}{\log(1-x)}.$$
We know $2 \frac{\log\big(\frac{\sinh(r/2)}{\sinh(r)}\big)}{\log(1-x)} \geq \frac{\zeta_r}{2 x}$ for all $x \in (0,1/2]$ and $r > 0$.
Therefore, since $k \leq \frac{\zeta_r}{2\cos^2(\theta)}$ and $\cos^2(\theta) \leq 1/2$, we conclude $r_k \geq r/2$.
\end{proof}

\item Define $\tilde g_{k-1} = - \frac{1}{\cos(\theta)} e_{k}^{(k-1)}$.
Define $y_k = \exp_{y_{k-1}}(\Delta_{k-1} e_{k}^{(k-1)}) = \exp_{y_{k-1}}(-\Delta_{k-1} \frac{\tilde g_{k-1}}{\|\tilde g_{k-1}\|})$.
Define $e_i^{(k)} = P_{y_{k-1} \to y_k} e_i^{(k-1)}$ for all $i = 1, \ldots, d$.

In the hyperboloid model, observe that $e_i^{(k)} = e_i$ for all $i \geq k+1$, and $e_i^{(k)} \in \spann(e_0, \ldots, e_k)$ for all $1 \leq i \leq k$ (both of which can be verified by induction and formula~\eqref{usefulptrformula}).  Moreover,
\begin{align} \label{somanyusefuleqns}
y_{k} = \cosh(\Delta_{k-1}) y_{k-1} + \sinh(\Delta_{k-1}) e_{k}, \quad e_k^{(k)} = \sinh(\Delta_{k-1}) y_{k-1} + \cosh(\Delta_{k-1}) e_{k}.
\end{align}
In particular, $y_k \in \calM \cap \spann(e_0, e_1, \ldots, e_k)$.

\item Define the $k$-dimensional totally geodesic submanifold $I_k = \gspan(y_0, \ldots, y_k)$.  
Note that
$$I_k = \calM \cap \spann(e_0, e_1, \ldots, e_k) = \gspan(y_0, e_1, \ldots, e_k) = \gspan(y_0, \ldots, y_{k-1}, e_{k}^{(k-1)}).$$
Using Proposition~\ref{prop} and that $e_i^{(k)} \in \spann(e_0, \ldots, e_k)$ for all $1 \leq i \leq k$, we determine that $\T_{y_k} I_k = \spann(e_1^{(k)}, \ldots, e_k^{(k)})$, and $I_k = \gspan(y_k, e_1^{(k)}, \ldots, e_k^{(k)})$.
Lastly, note that $I_0 \subset I_1 \subset, \ldots$.

\item Define $H_k = \{x \in \calM : \langle x, e_{i}^{(i)} \rangle = 0, \text{ } \forall 1 \leq i\leq k\}$.  As an intersection of $\calM$ and a subspace, $H_k$ is a totally geodesic submanifold.
Note that $H_0 \supset H_1 \supset \ldots$.

\begin{lemma} \label{propertiesofHk}
$H_k$ is a $d-k$-dimensional totally geodesic submanifold which (a) passes through $y_k$,
(b) is orthogonal to $e_i^{(k)}$ for $i=1, \ldots, k$, and (c) is tangent to $e_i^{(k)}$ for $i=k+1, \ldots, d$.
In particular, $H_k = \gspan(y_k, e_{k+1}^{(k)}, \ldots, e_d^{(k)}) = \calM \cap \spann(y_k, e_{k+1}, \ldots, e_d)$.
\end{lemma}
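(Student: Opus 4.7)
The plan is to prove parts (a), (b), (c) simultaneously by induction on $k$, working in the hyperboloid model; the base case $k=0$ is trivial since $H_0 = \calM$, $y_0 = \xorigin$, and $(e_i^{(0)})_{i=1}^d$ is an orthonormal basis of $\T_{y_0}\calM$. The central observation driving the inductive step is that, by the inductive hypothesis (c) for $i = k$, the vector $e_k^{(k-1)}$ is tangent to $H_{k-1}$ at $y_{k-1}$. Since $H_{k-1}$ is totally geodesic and the geodesic from $y_{k-1}$ to $y_k$ has initial velocity a multiple of $e_k^{(k-1)}$, this entire geodesic lies in $H_{k-1}$. Lemma~\ref{usefullemmaaboutparalleltransportintotgeo} then says parallel transport along this geodesic preserves both $\T H_{k-1}$ and its orthogonal complement, so from the inductive hypotheses (b) and (c) I immediately obtain $e_i^{(k)} \perp \T_{y_k} H_{k-1}$ for $i < k$ and $e_i^{(k)} \in \T_{y_k} H_{k-1}$ for $i \geq k$.

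For part (a), I would verify $y_k \in H_k$ by checking $\langle y_k, e_i^{(i)}\rangle = 0$ for $i \leq k$ using equation~\eqref{somanyusefuleqns}. For $i < k$, write $y_k = \cosh(\Delta_{k-1}) y_{k-1} + \sinh(\Delta_{k-1}) e_k$; the first term vanishes by the inductive hypothesis $y_{k-1} \in H_{k-1}$, and the second vanishes since $e_i^{(i)} \in \spann(e_0,\ldots,e_i)$ is Minkowski-orthogonal to $e_k$. For $i = k$, a direct computation using both displayed identities in~\eqref{somanyusefuleqns} together with $\langle y_{k-1}, e_k\rangle = 0$ and $\langle y_{k-1}, y_{k-1}\rangle = -1 = -\langle e_k, e_k\rangle$ gives cancellation. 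Linear independence of $e_1^{(1)},\ldots,e_k^{(k)}$, needed for dimension counting, follows inductively because $e_i^{(i)}$ has nonzero $e_i$-component $\cosh(\Delta_{i-1})$, so $H_k$ has codimension exactly $k$ in $\calM$.

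Parts (b) and (c) now follow with only a short argument. Since $e_i^{(k)} \perp \T_{y_k} H_{k-1}$ for $i < k$, the $k-1$ constraints $\langle v, e_i^{(i)}\rangle = 0$ are automatic on $\T_{y_k} H_{k-1}$, and the remaining constraint $\langle v, e_k^{(k)}\rangle = 0$ cuts out a hyperplane in $\T_{y_k} H_{k-1}$ (note $e_k^{(k)} \in \T_{y_k} H_{k-1}$ is a unit vector); thus $\T_{y_k} H_k$ has dimension $d-k$ and $\T_{y_k} H_{k-1} = \T_{y_k} H_k \oplus \reals e_k^{(k)}$. Part (b) is immediate: $e_i^{(k)} \perp \T_{y_k} H_k$ for $i<k$ because they are already orthogonal to the larger space $\T_{y_k} H_{k-1}$, and $e_k^{(k)} \perp \T_{y_k} H_k$ by construction. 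Part (c) follows because for $i \geq k+1$, $e_i^{(k)} \in \T_{y_k} H_{k-1}$ (inductive parallel-transport argument) and $e_i^{(k)} \perp e_k^{(k)}$ by orthonormality of the parallel-transported frame. The final identification $H_k = \gspan(y_k, e_{k+1}^{(k)},\ldots, e_d^{(k)}) = \calM \cap \spann(y_k, e_{k+1},\ldots,e_d)$ then follows by Proposition~\ref{prop} and Lemma~\ref{characterizationoftotgeoinhyperspace} (both sides are $(d-k)$-dimensional totally geodesic submanifolds through $y_k$ with the same tangent space), together with the already-noted identity $e_i^{(k)} = e_i$ for $i \geq k+1$.

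I don't anticipate a real obstacle here; this is essentially a bookkeeping lemma, and the only subtle point is recognizing that the inductive hypothesis (c) at step $k-1$ is exactly what is needed to keep the geodesic from $y_{k-1}$ to $y_k$ inside $H_{k-1}$, which unlocks Lemma~\ref{usefullemmaaboutparalleltransportintotgeo} and makes the inductive propagation of orthogonality and tangency nearly automatic.
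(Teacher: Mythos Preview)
Your proposal is correct and follows essentially the same approach as the paper: both argue by induction, use that the geodesic from $y_{k-1}$ to $y_k$ stays in $H_{k-1}$ so that Lemma~\ref{usefullemmaaboutparalleltransportintotgeo} propagates orthogonality/tangency of the frame, and verify $\langle y_k, e_k^{(k)}\rangle = 0$ via equation~\eqref{somanyusefuleqns}. The only minor difference is organizational: the paper formally inducts on (a) and (b) and then establishes (c) non-inductively by appealing to the explicit description $\T_{y_k} H_k = \{u : \langle y_k, u\rangle = 0,\ \langle e_i^{(i)}, u\rangle = 0\ \forall i\leq k\}$ from Lemma~\ref{characterizationoftotgeoinhyperspace} together with $y_k, e_i^{(i)} \in \spann(e_0,\ldots,e_k)$, whereas you fold (c) into the induction and obtain it from parallel transport plus orthonormality of the transported frame—both routes are equally short and rely on the same ingredients.
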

\begin{proof}
We can prove (a) and (b) by induction on $k$.  The base case $k=0$ is clear.
By the inductive hypothesis $y_{k-1} \in H_{k-1}$ and $e_k^{(k-1)} \in \T_{y_{k-1}} H_{k-1}$, so we have that $y_k \in H_{k-1}$ ($H_{k-1}$ is totally geodesic).
Equation~\eqref{somanyusefuleqns} implies $y_k \in \{x \in \calM : \langle x, e_k^{(k)} \rangle = 0\}$.
Therefore $y_k \in H_k$.

Again by the inductive hypothesis, $H_{k-1}$ is orthogonal to $e_i^{(k-1)}$ for all $i \leq k-1$.
Since $y_k \in H_{k-1}$, $H_{k-1}$ is orthogonal to $e_i^{(k)} = P_{y_{k-1} \to y_k} e_i^{(k-1)}$ for all $i \leq k-1$ (Lemma~\ref{usefullemmaaboutparalleltransportintotgeo}).
%That is, $H_{k-1} = \{x \in \calM : \langle e_i^{(k)}, x \rangle = 0\text{ } \forall i \leq k-1\}$.
As $H_k \subset H_{k-1}$, $H_k$ is orthogonal to $e_i^{(k)}$ for all $i \leq k-1$.
By definition, $H_k$ is orthogonal to $e_k^{(k)}$, and so we conclude $H_k$ is orthogonal to $e_i^{(k)}$ for all $i \leq k$.

For (c): by Lemma~\ref{characterizationoftotgeoinhyperspace}, we know that
$$\T_{y_k} H_k = \{u \in \mathbb{R}^{d+1} : \langle y_k, u\rangle = 0, \langle e_i^{(i)}, u \rangle = 0 \text{ } \forall 1 \leq i \leq k\}.$$
Since $y_k \in \spann(e_0, \ldots, e_k)$ and $e_i^{(i)} \in \spann(e_0, \ldots, e_i)$, we conclude that $e_i = e_i^{(k)} \in \T_{y_k} H_k$ for $i \geq k+1$.
Using (b), $ \T_{y_k} H_k = \spann(e_{k+1}^{(k)}, \ldots, e_d^{(k)})$, and so $H_k$ has dimension $d-k$ and $H_k = \exp_{y_k}(\spann(e_{k+1}^{(k)}, \ldots, e_d^{(k)})) = \calM \cap \spann(y_k, e_{k+1}, \ldots, e_d)$ (Lemma~\ref{characterizationoftotgeoinhyperspace}).
\end{proof}

\item Let $S_k$ be the set of all points $x^* \in S_{k-1}$ such that the angle between $e_{k}^{(k-1)}$ and $\log_{y_{k-1}}(x^*)$ equals $\theta$.
Note that $S_0 \supset S_1 \supset \ldots$.

\begin{lemma} \label{propertiesofSk}
(a) $S_k$ is the hyperbolic sphere of dimension $d-k-1$ contained in $H_k$ which has center $y_k$ and radius $r_k$.  %Moreover, $S_k = S_{k-1} \cap H_k$.

(b) If $x^* \in S_k$, then $\log_{y_k}(x^*)$ is orthogonal to $I_k$.  In particular, $y_k$ is the closest point in $I_k$ to $x^*$.
\end{lemma}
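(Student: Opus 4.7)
I'll prove (a) and (b) together by induction on $k$; the base case $k=0$ is immediate from the definitions (note $\T_{y_0} I_0 = \{0\}$). For the inductive step, fix $x^* \in S_k$. Since $S_k \subseteq S_{k-1}$, the inductive form of (a) gives $x^* \in H_{k-1}$ with $\dist(y_{k-1}, x^*) = r_{k-1}$, and by definition of $S_k$ the angle at $y_{k-1}$ between $e_k^{(k-1)}$ and $\log_{y_{k-1}}(x^*)$ is $\theta$. Since $y_k = \exp_{y_{k-1}}(\Delta_{k-1} e_k^{(k-1)})$ with $e_k^{(k-1)} \in \T_{y_{k-1}} H_{k-1}$ and $H_{k-1}$ is totally geodesic, the entire triangle $(y_{k-1}, y_k, x^*)$ lives inside $H_{k-1}$. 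Applying the hyperbolic law of cosines at $y_{k-1}$ and simplifying using the first and third identities of \eqref{yetanotherusefulguy25} yields $\cosh(\dist(y_k, x^*)) = \cosh(r_{k-1})/\cosh(\Delta_{k-1}) = \cosh(r_k)$, so $\dist(y_k, x^*) = r_k$; a second application of the law of cosines at $y_k$, combined with the identity $\cosh(r_{k-1}) = \cosh(\Delta_{k-1})\cosh(r_k)$, forces the angle there to be $\pi/2$. Hence $\log_{y_k}(x^*) \perp \log_{y_k}(y_{k-1}) = -\Delta_{k-1} e_k^{(k)}$.

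For orthogonality to $e_i^{(k)}$ with $i \leq k-1$: by Lemma~\ref{propertiesofHk}(b), each $e_i^{(k-1)}$ is orthogonal to $\T_{y_{k-1}} H_{k-1}$; since the geodesic $y_{k-1} \to y_k$ lies in $H_{k-1}$, Lemma~\ref{usefullemmaaboutparalleltransportintotgeo} implies that $e_i^{(k)} = P_{y_{k-1} \to y_k} e_i^{(k-1)}$ is still orthogonal to $\T_{y_k} H_{k-1}$. But $x^*, y_k \in H_{k-1}$ together with $H_{k-1}$ totally geodesic gives $\log_{y_k}(x^*) \in \T_{y_k} H_{k-1}$, so $\log_{y_k}(x^*) \perp e_i^{(k)}$ for $i \leq k-1$. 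Combined with the right-angle step above, this gives $\log_{y_k}(x^*) \perp \spann(e_1^{(k)}, \ldots, e_k^{(k)}) = \T_{y_k} I_k$, which is (b); and $y_k$ being the closest point of $I_k$ to $x^*$ then follows from the first-order optimality condition for $y \mapsto \tfrac{1}{2}\dist(y, x^*)^2$ restricted to the totally geodesic submanifold $I_k$. The same picture also gives $x^* \in H_k$: $\log_{y_k}(x^*) \in \T_{y_k} H_{k-1} = \spann(e_k^{(k)}, \ldots, e_d^{(k)})$ has zero $e_k^{(k)}$-component, so $\log_{y_k}(x^*) \in \T_{y_k} H_k$.

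To finish (a), what is above establishes $S_k \subseteq \{x^* \in H_k : \dist(y_k, x^*) = r_k\}$; the reverse inclusion comes from running the argument backward, using Lemma~\ref{propertiesofHk}(b) to get $\log_{y_k}(x^*) \perp e_k^{(k)}$ for any $x^*$ in that sphere, then hyperbolic Pythagoras to recover $\dist(y_{k-1}, x^*) = r_{k-1}$ (so $x^* \in S_{k-1}$ by the inductive (a)), and the first identity of \eqref{yetanotherusefulguy25} to recover the angle $\theta$. Since $\dim H_k = d-k$, the sphere has dimension $d-k-1$. The only mild obstacle I anticipate is the bookkeeping when transitioning between $H_{k-1}, H_k$ and $I_{k-1}, I_k$ -- specifically, sorting out which pieces of the inductive hypothesis are invoked at each step -- but the underlying hyperbolic trigonometry is essentially already packaged in \eqref{yetanotherusefulguy25}.
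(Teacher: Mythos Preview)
Your proof is correct and follows essentially the same inductive structure as the paper, with one notable stylistic difference: for the forward inclusion $S_k \subseteq \{x^* \in H_k : \dist(y_k,x^*)=r_k\}$, the paper works explicitly in the hyperboloid model (computing $\langle x^*, e_k^{(k)}\rangle$ via equations~\eqref{somanyusefuleqns} and~\eqref{usefuldescofSk} to show it vanishes), whereas you argue intrinsically via two applications of the hyperbolic law of cosines to the triangle $(y_{k-1}, y_k, x^*)$. The paper also deduces (b) as a one-line corollary of (a) (since $x^* \in H_k$ and $\T_{y_k} H_k \perp \T_{y_k} I_k$ by Lemma~\ref{propertiesofHk}), while you build the orthogonality to $\T_{y_k} I_k$ piece by piece ($e_k^{(k)}$ from the right angle, $e_i^{(k)}$ for $i<k$ via Lemma~\ref{usefullemmaaboutparalleltransportintotgeo}) and then extract $x^* \in H_k$ from that. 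Your route is slightly longer but has the merit of being coordinate-free; the paper's hyperboloid computations are shorter but model-dependent. The reverse inclusion and the dimension count are handled identically in both.
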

\begin{proof}
We can verify (a) by induction using the hyperboloid model.  It is clearly true for $k=0$.
By the inductive hypothesis, $\dist(x^*, y_{k-1}) = r_{k-1}$ for all $x^* \in S_{k-1}$.  Therefore,
\begin{equation} \label{usefuldescofSk}
\begin{split}
S_k &= \{x^* \in S_{k-1} : r_{k-1} \cos(\theta) = \langle \log_{y_{k-1}}(x^*), e_k^{(k-1)} \rangle = \langle \log_{y_{k-1}}(x^*), e_k \rangle\} \\
&= \{x^* \in S_{k-1} : r_{k-1} \cos(\theta) = \frac{r_{k-1}}{\sinh(r_{k-1})} \langle x^*, e_k \rangle\}.
\end{split}
\end{equation}
So using equation~\eqref{somanyusefuleqns} and that $\cosh(r_{k-1}) = - \langle x^*, y_{k-1} \rangle$, if $x^* \in S_k$ then
$$\langle x^*, e_{k}^{(k)} \rangle = -\cosh(r_{k-1}) \sinh(\Delta_{k-1}) + \sinh(r_{k-1}) \cos(\theta) \cosh(\Delta_{k-1}) = 0,$$
where the last equality follows from equation~\eqref{yetanotherusefulguy25}.
We conclude that $S_k$ is contained in $\{x \in \calM : \langle x, e_k^{(k)} \rangle = 0\}$.
%Equation~\eqref{somanyusefuleqns} also implies $y_k \in \{x \in \calM : \langle x, e_k^{(k)} \rangle = 0\}$.
Because $S_k \subset S_{k-1} \subset H_{k-1}$, we conclude
$$S_k \subset H_{k-1} \cap \{x \in \calM : \langle x, e_k^{(k)} \rangle = 0\} = \{x \in \calM : \langle x, e_i^{(i)} \rangle = 0 \text{ } \forall i\leq k\} = H_k.$$
%By the inductive hypothesis $y_{k-1} \in H_{k-1}$ and $e_k^{(k-1)} \in \T_{y_{k-1}} H_{k-1}$, so we have that $y_k \in H_{k-1}$ ($H_k$ is totally geodesic), and therefore $y_k \in H_k$.
A calculation using equations~\eqref{yetanotherusefulguy25},~\eqref{somanyusefuleqns} and~\eqref{usefuldescofSk} shows that $\dist(y_k, x^*) = r_k$ for all $x^* \in S_k$.
So we have shown $S_k$ is a subset of the sphere of radius $r_k$ centered at $y_k$ in $H_k$.

By the inductive hypothesis, $S_{k-1}$ is a hyperbolic sphere in $H_{k-1}$ with center $y_{k-1}$ and radius $r_{k-1}$.
Now let $x^*$ be a point in $H_k$ with $\dist(x^*, y_k) = r_k$.
By Lemma~\ref{propertiesofHk}, $H_k$ contains $y_k$ and is orthogonal to $e_{k}^{(k)}$.  Since $H_k$ is totally geodesic, $\log_{y_k}(x^*)$ is orthogonal to $e_{k}^{(k)}$.  Therefore, the hyperbolic triangle $x^*, y_k, y_{k-1}$ has a right angle at $y_k$.
Using $\dist(x^*, y_k) = r_k$, $\dist(y_k, y_{k-1}) = \Delta_{k-1}$, equations~\eqref{yetanotherusefulguy25} and hyperbolic trigonometry, we find that $\dist(y_{k-1}, x^*) = r_{k-1}$ and the angle between $e_k^{(k-1)}$ and $\log_{y_{k-1}}(x^*)$ equals $\theta$.
Therefore, $x^* \in S_k$.
We conclude that $S_k$ contains (and so equals) the sphere of radius $r_k$ centered at $y_k$ in $H_k$.
We have shown (a).

For (b): we know that $y_k \in H_k$ by Lemma~\ref{propertiesofHk} and $x^* \in S_k \subset H_k$.
Since $H_k$ is totally geodesic, $\log_{y_k}(x^*) \in \T_{y_k} H_k$.
However, Lemma~\ref{propertiesofHk} implies $\log_{y_k}(x^*) \in \T_{y_k} H_k$ is orthogonal to $\spann(e_1^{(k)}, \ldots, e_k^{(k)}) = \T_{y_k} I_k$.
\end{proof}

%\item Considering the right triangle $y_{k-1}, y_k, x^*$, for $x^* \in S_k$, hyperbolic trigonometry implies
\item Let $B_k \subset H_k$ be the $d-k$-dimensional hyperbolic ball whose boundary is $S_k$.
Since $B_0 \supset B_1 \supset \ldots$,  
we find that $y_{\ell} \in B_{k}$ for all $\ell \geq k$.
%We have shown that the center of $B_k$ is $y_k$, and the radius of $B_k$ is $r_k$.
\end{itemize}

%We make the following additional observation.

\noindent For $k = 0, \ldots, d-2$, consider the following additional geometric objects:
\begin{itemize}
\item For each $x^* \in S_{k+1}$, define the tangent vector $V_{k, x^*}$ and corresponding half-space $L_{k, x^*}$: 
$$V_{k, x^*} = -\tilde g_k - \frac{\log_{y_k}(x^*)}{\dist(y_k, x^*)}, \quad \quad L_{k, x^*} = \exp_{y_k}(\{ v \in \T_{y_k} \calM : \langle V_{k, x^*}, v \rangle \geq 0\}).$$

\item Define $C_k = \bigcap_{x^* \in S_{k+1}} L_{k, x^*}$.  As an intersection of (g-convex) half-spaces, $C_k$ is also g-convex.
\end{itemize}

\begin{lemma}\label{lemma0}
We have the following:
\begin{itemize}
\item For $x^* \in S_{k+1}$, $\partial L_{k, x^*}$ is a $(d-1)$-dimensional totally geodesic submanifold containing $x^*$ and $I_k$.

\item For $x^* \in S_{k+1}$, $y_{k+1} \in B_{k+1} \subset C_k \subset L_{k, x^*}$ for all $k \in \{0, \ldots, d-2\}$.

\item Let $x^* \in S_{d-1}$.  Then $y_0, \ldots, y_{d-1} \in C_k \subset L_{k, x^*}$ for all $k \leq d-2$.
\end{itemize}
\end{lemma}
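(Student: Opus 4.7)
My plan is to prove the three bullets in order, with each reducing to a short calculation based on two identities that hold for any $x^* \in S_{k+1}$: namely $\|\log_{y_k}(x^*)\| = r_k$ (since $S_{k+1} \subseteq S_k$ and $S_k$ is a sphere of radius $r_k$ centered at $y_k$ by Lemma~\ref{propertiesofSk}(a)) and $\langle \log_{y_k}(x^*), e_{k+1}^{(k)}\rangle = r_k \cos(\theta)$ (the angle condition defining $S_{k+1}$). A third useful fact, from Lemma~\ref{propertiesofSk}(b) applied at level $k$, is $\log_{y_k}(x^*) \perp \T_{y_k} I_k = \spann(e_1^{(k)}, \ldots, e_k^{(k)})$.

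For the first bullet, Lemma~\ref{totgeolemma} directly identifies $\partial L_{k, x^*}$ as a $(d-1)$-dimensional totally geodesic submanifold, and $y_k \in \partial L_{k, x^*}$ is immediate. The containment $x^* \in \partial L_{k, x^*}$ follows from $\langle V_{k, x^*}, \log_{y_k}(x^*) \rangle = \tfrac{1}{\cos(\theta)} \cdot r_k \cos(\theta) - \tfrac{1}{r_k} \cdot r_k^2 = 0$. To place $I_k$ inside $\partial L_{k, x^*}$: both are totally geodesic and pass through $y_k$, so by Lemma~\ref{characterizationoftotgeoinhyperspace} it suffices to show $\T_{y_k} I_k \subseteq \T_{y_k} \partial L_{k, x^*}$, i.e., $\langle V_{k, x^*}, e_i^{(k)} \rangle = 0$ for $i=1, \ldots, k$. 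The first summand in $V_{k, x^*}$ gives zero by orthonormality of the parallel-transported frame, and the second by $\log_{y_k}(x^*) \perp I_k$.

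For the second bullet, $y_{k+1} \in B_{k+1}$ is by construction, so the crux is $B_{k+1} \subseteq C_k$, equivalently $B_{k+1} \subseteq L_{k, x^*}$ for every $x^* \in S_{k+1}$. I first verify $S_{k+1} \subseteq L_{k, x^*}$: for any $x^{**} \in S_{k+1}$, the same two identities combined with Cauchy--Schwarz yield
\[
\langle V_{k, x^*}, \log_{y_k}(x^{**}) \rangle = r_k - \tfrac{1}{r_k}\langle \log_{y_k}(x^*), \log_{y_k}(x^{**}) \rangle \geq r_k - r_k = 0.
\]
I then upgrade this boundary containment to $B_{k+1} \subseteq L_{k, x^*}$ via g-convexity of $L_{k, x^*}$ (Lemma~\ref{totgeolemma}), using that every point of $B_{k+1}$ lies on a diameter joining two antipodal points of $S_{k+1}$ inside the totally geodesic $H_{k+1}$. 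For the third bullet, $S_{d-1} \subseteq S_{k+1}$ immediately gives $C_k \subseteq L_{k, x^*}$ from the definition of $C_k$. For each $y_\ell$ I split on $\ell$: if $\ell \geq k+1$, then $y_\ell \in B_\ell \subseteq B_{k+1} \subseteq C_k$ via the nesting $B_0 \supseteq B_1 \supseteq \cdots$ and the second bullet; if $\ell \leq k$, then $y_\ell \in I_k \subseteq \partial L_{k, x^*}$ for every $x^* \in S_{k+1}$ by the first bullet, so $y_\ell \in \bigcap_{x^* \in S_{k+1}} L_{k, x^*} = C_k$.

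The main (mild) obstacle is in the second bullet: verifying the containment only at the center $y_{k+1}$ is not enough. The tempting shortcut---arguing that $\partial L_{k, x^*} \cap H_{k+1}$ is tangent to $S_{k+1}$ at $x^*$---actually fails, since the geodesic from $y_k$ through $x^*$ does not meet $H_{k+1}$ at a right angle. One genuinely needs the full boundary containment $S_{k+1} \subseteq L_{k, x^*}$, which is then promoted to the ball via g-convexity of half-spaces.
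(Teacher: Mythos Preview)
Your argument is correct, and for the first and third bullets it matches the paper's proof essentially line-for-line. The second bullet, however, you handle by a genuinely different and cleaner route. The paper proves $B_{k+1} \subseteq L_{k,x^*}$ by parallel-transporting $V_{k,x^*}$ from $y_k$ to $x^*$, decomposing it in an orthogonal frame for the $2$-plane $\gspan(y_k, y_{k+1}, x^*)$, projecting onto $\T_{x^*}H_{k+1}$, and concluding that $H_{k+1} \cap L_{k,x^*}$ is the half-space of $H_{k+1}$ through $x^*$ with inward normal $\log_{x^*}(y_{k+1})$---in other words, it \emph{does} establish the tangency of $\partial L_{k,x^*}\cap H_{k+1}$ to $S_{k+1}$ at $x^*$ (so your closing remark that this tangency ``actually fails'' is not quite right; what fails is only a naive one-line justification of it). Your Cauchy--Schwarz computation $\langle V_{k,x^*},\log_{y_k}(x^{**})\rangle = r_k - \tfrac{1}{r_k}\langle\log_{y_k}(x^*),\log_{y_k}(x^{**})\rangle \ge 0$ gives $S_{k+1}\subseteq L_{k,x^*}$ directly at the base point $y_k$, and the diameter-plus-g-convexity step then delivers $B_{k+1}\subseteq L_{k,x^*}$ without any parallel transport. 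This is shorter and avoids the two-dimensional hyperbolic trigonometry entirely; the paper's approach, on the other hand, yields the extra structural information that $H_{k+1}\cap L_{k,x^*}$ is exactly the supporting half-space of $B_{k+1}$ at $x^*$, which is not needed for the lemma but is conceptually pleasant.
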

\begin{proof}
\emph{For the first bullet}: if $x^* \in S_{k+1}$, we know $\langle \log_{y_k}(x^*), e_{k+1}^{(k)}\rangle = \dist(y_k, x^*) \cos(\theta)$, and so
$$\langle \frac{1}{\cos(\theta)} e_{k+1}^{(k)} - \frac{\log_{y_k}(x^*)}{\dist(y_k, x^*)}, \log_{y_k}(x^*) \rangle = \dist(y_k, x^*) - \frac{\dist(y_k, x^*)^2}{\dist(y_k, x^*)} = 0.$$
That is, $\partial L_{k, x^*}$ contains $x^*$.
Using that $e_{k+1}^{(k)} = e_{k+1}$ and $y_k \in I_k = \calM \cap \spann(e_0, \ldots, e_k)$ in the hyperboloid model, we see that $\langle \frac{1}{\cos(\theta)} e_{k+1}^{(k)} - \frac{\log_{y_k}(x^*)}{\dist(y_k, x^*)}, \log_{y_k}(x) \rangle = 0$ for all $x \in I_k$.
Therefore, $\partial L_{k, x^*}$ also contains $I_k$.

\emph{For the second bullet}: we know angle between $e_{k+1}^{(k)}$ and $\log_{y_k}(x^*)$ equals $\theta$, and so the angle between $e_{k+1}^{(k)}$ and $V_{k, x^*}$ equals $\frac{\pi}{2} - \theta$ ($V_{k, x^*}$ is a linear combination of $e_{k+1}^{(k)}$ and $\log_{y_k}(x^*)$).
Therefore, $y_{k+1} \in L_{k, x^*}$, and since $x^* \in S_{k+1}$ was arbitrary we have $y_{k+1} \in C_k$.
%We have that $\tilde L_{k+1, x^*} = \{x \in H_{k+1} : \langle \log_{y_{k+1}}(x^*), \log_{y_{k+1}}(x) \rangle \geq 0 \}$ is contained in $L_{k, x^*}$ \TODO{why?}.

We know $V_{k, x^*}$ is orthogonal to $\partial L_{k, x^*}$.
Since $y_k, x^* \in \partial L_{k, x^*}$ and $\partial L_{k, x^*}$ is totally geodesic (by the first bullet), we know that $P_{y_k \to x^*} V_{k, x^*}$ is orthogonal to $\partial L_{k, x^*}$ (Lemma~\ref{usefullemmaaboutparalleltransportintotgeo}).
Therefore, $L_{k, x^*} = \exp_{x^*}(\{u \in \T_{x^*}\calM : \langle P_{y_k \to x^*} V_{k, x^*}, u \rangle \geq 0\})$.
Using that $H_{k+1} = \exp_{x^*}(\T_{x^*} H_{k+1})$ since $H_{k+1}$ is totally geodesic, we find
\begin{align} \label{onemoregreateqn}
H_{k+1} \cap L_{k, x^*} = \exp_{x^*}(\{u \in \T_{x^*} H_{k+1} : \langle \calP_{k+1} P_{y_k \to x^*} V_{k, x^*}, u \rangle \geq 0\}),
\end{align}
where $\calP_{k+1}$ denotes orthogonal projection onto $\T_{x^*} H_{k+1}$.
Therefore, $H_{k+1} \cap L_{k, x^*}$ is a half-space in $H_{k+1}$.
We seek to compute $\calP_{k+1} P_{y_k \to x^*} V_{k, x^*}$.
We can do this using two-dimensional hyperbolic geometry.

Define the two-dimensional totally geodesic submanifold
$$\tilde H = \gspan(y_k, y_{k+1}, x^*) = \gspan(y_k, e_{k+1}^{(k)}, \log_{y_k}(x^*)) = \gspan(y_{k+1}, e_{k+1}^{(k)}, \log_{y_{k+1}}(x^*)).$$
By Lemma~\ref{propertiesofHk}, $e_{k+1}^{(k+1)}$ is orthogonal to $H_k$, and so in particular to $\log_{y_{k+1}}(x^*)$ as well.
Therefore, $e_{k+1}^{(k+1)}$ and $\log_{y_{k+1}}(x^*)$ form an orthogonal basis for $\T_{y_{k+1}} \tilde H$, and so $P_{y_{k+1} \to x^*} e_{k+1}^{(k+1)}$ and $\log_{x^*}(y_{k+1})$ form an orthogonal basis for $\T_{x^*} \tilde H$ (Lemma~\ref{usefullemmaaboutparalleltransportintotgeo}).
By its definition, $V_{k, x^*}$ is in
$$\spann(e_{k+1}^{(k)}, \log_{y_k}(x^*)) = \T_{y_k} \tilde H$$
and is orthogonal to $\log_{y_k}(x^*)$.
Therefore, $P_{y_k \to x^*} V_{k, x^*}$ is in
$$\T_{x^*} \tilde H = \spann(P_{y_{k+1} \to x^*} e_{k+1}^{(k+1)}, \log_{x^*}(y_{k+1}))$$
and is orthogonal to $\log_{x^*}(y_k)$.
Let $\alpha \in (0, \frac{\pi}{2})$ be the angle between $\log_{x^*}(y_k)$ and $\log_{x^*}(y_{k+1})$ ($y_k, y_{k+1}, x^*$ is a right triangle).
We conclude the angle between $P_{y_k \to x^*} V_{k, x^*}$ and $\log_{x^*}(y_k)$ is $\frac{\pi}{2} - \alpha$, and so we can decompose $P_{y_k \to x^*} V_{k, x^*}$ in the orthogonal basis for $\T_{x^*} \tilde H$:
$$P_{y_k \to x^*} V_{k, x^*} = \|P_{y_k \to x^*} V_{k, x^*}\| \bigg[ \sin\Big(\frac{\pi}{2} - \alpha\Big) \frac{P_{y_{k+1} \to x^*} e_{k+1}^{(k+1)}}{\|P_{y_{k+1} \to x^*} e_{k+1}^{(k+1)}\|} + \cos\Big(\frac{\pi}{2} - \alpha\Big) \frac{\log_{x^*}(y_{k+1})}{\|\log_{x^*}(y_{k+1})\|}\bigg].$$
On the other hand, $P_{y_{k+1} \to x^*} e_{k+1}^{(k+1)}$ is orthogonal to $H_{k+1}$ and $\log_{x^*}(y_{k+1})$ is tangent to $H_{k+1}$, since $H_{k+1}$ is totally geodesic, is orthogonal to $e_{k+1}^{(k+1)}$, and contains $x^*$ and $y_{k+1}$.
We conclude that 
$$\calP_{k+1} P_{y_k \to x^*} V_{k, x^*} = \|P_{y_k \to x^*} V_{k, x^*}\| \bigg[\cos\Big(\frac{\pi}{2} - \alpha\Big) \frac{\log_{x^*}(y_{k+1})}{\|\log_{x^*}(y_{k+1})\|}\bigg].$$
Using equation~\eqref{onemoregreateqn}, we get 
$$H_{k+1} \cap L_{k, x^*} = \exp_{x^*}(\{u \in \T_{x^*} H_{k+1} : \langle \log_{x^*}(y_{k+1}), u \rangle \geq 0\}),$$
Since $B_{k+1} \subset H_{k+1}$ is g-convex, has center $y_{k+1}$ and contains $x^*$ on its boundary ($S_{k+1}$), we conclude that $B_{k+1} \subset H_{k+1} \cap L_{k, x^*} \subset L_{k, x^*}$.
Since $x^* \in S_{k+1}$ was arbitrary, $B_{k+1} \subset C_k$.
%Moreover, using symmetry one can check that $\calP_{k+1} P_{y_k \to x^*} V_{k, x^*}$ is parallel to $\log_{x^*}(y_{k+1})$  \TODO{maybe add more details here?  see pg 114 of ipad notes}.
%So by the g-convexity of $B_{k+1} \subset H_{k+1}$, we know that $B_{k+1} \subset H_{k+1} \cap L_{k, x^*} \subset L_{k, x^*}$.
%Since $x^*, y_{k+1} \in L_{k, x^*} \cap H_{k+1}$, .
%By the g-convexity of $C_k$, we conclude $B_{k+1} \subset C_k$.

\emph{For the third bullet}: for all $k = 0, \ldots, d-2$ we know that $x^* \in C_k$ by the second bullet, and $I_k \subset C_k$ by the first bullet.  
In particular, $x^*, y_0, \ldots, y_k \in C_k$ for all $k = 0, \ldots, d-2$.
Since $y_{\ell} \in B_\ell \subset B_{k+1}$ for all $\ell \geq k+1$, we conclude (using the second bullet) that $y_0, \ldots, y_{d-1} \in C_k$ for all $k \leq d-2$.
\end{proof}

\begin{lemma} \label{lemma1}
Let $k \in \{0, \ldots, d-2\}$, $x^* \in S_{k+1}$ and $y \in I_k$.
The g-convex function $x \mapsto \frac{1}{\cos(\theta)} \dist(x, L_{k, x^*})$ has a subgradient $\hat g$ at $y$ such that $-\hat g + \frac{\log_{y}(x^*)}{\dist(y, x^*)}$ is tangent to $I_{k+1}$.
If $y = y_k$, then we can take $\hat g$ such that $-\hat g + \frac{\log_{y_k}(x^*)}{\dist(y_k, x^*)} = \frac{1}{\cos(\theta)} e_{k+1}^{(k)} = -\tilde g_k$.
%\TODO{in particular: the subgradient at $y_k$ returned by the oracle from Theorem~\ref{lowerboundforsubrgd} equals $g_k = \tilde g_k = - \frac{1}{\cos(\theta)} e_{k+1}^{(k)}$}
%Define the $\frac{1}{\cos(\theta)}$-Lipschitz, g-convex function
%$$d_k(x) = \frac{1}{\cos(\theta)} d(x, C_k).$$
%
%Let $y \in I_k$ and $x^* \in S_{k+1}$.  Then there is a $g_y \in \partial d_k(y)$ so that $-g_y + \frac{\log_{y}(x^*)}{d(y, x^*)}$ is tangent to $I_{k+1}$.
\end{lemma}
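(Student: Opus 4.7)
The plan is to exhibit $\hat g$ explicitly as a positive multiple of the outward unit normal to $\partial L_{k,x^*}$ at $y$, parallel-transported from $y_k$, and to verify the tangency condition via a 2-dimensional hyperbolic computation in the totally geodesic plane $T_y := \gspan(y_k,y,x^*) \subset \partial L_{k,x^*}$.

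By Lemma~\ref{lemma0}'s first bullet, $\partial L_{k,x^*}$ is a $(d-1)$-dimensional totally geodesic submanifold containing $I_k$; hence $y \in I_k \subset \partial L_{k,x^*}$ and $\dist(y,L_{k,x^*})=0$. I first check that every vector $\mu_0 N_y^{\partial L}$, with $N_y^{\partial L}$ the outward unit normal to $\partial L_{k,x^*}$ at $y$ and $\mu_0 \in [0,1/\cos\theta]$, is a subgradient of $\tfrac{1}{\cos\theta}\dist(\cdot,L_{k,x^*})$ at $y$. For $x \in L_{k,x^*}$ the subgradient inequality is immediate. For $x \notin L_{k,x^*}$, the hyperbolic right-triangle identity $\dist(x,\partial L_{k,x^*}) = \arcsinh(\sinh(\dist(y,x))\sin\phi)$, where $\sin\phi := \langle N_y^{\partial L},\log_y(x)\rangle/\|\log_y(x)\|$, reduces it to the elementary inequality $\arcsinh(\sinh d\sin\phi) \geq \sin\theta\cdot d\sin\phi$ for $d\geq 0$ and $\phi,\theta\in[0,\pi/2]$, which follows by differentiating in $d$ and using $\sin\theta\leq 1$.

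Next I identify $N_y^{\partial L}$ and the orthogonal complement of $\T_y I_k$ in $\T_y I_{k+1}$ via parallel transport. A short calculation using $\langle e_{k+1}^{(k)},\log_{y_k}(x^*)\rangle = \cos\theta\cdot\dist(y_k,x^*)$ (available since $x^*\in S_{k+1}$) gives $\|V_{k,x^*}\|=\tan\theta$, and $V_{k,x^*}$ is normal to $\partial L_{k,x^*}$ at $y_k$, pointing into $L_{k,x^*}$. Let $P$ denote parallel transport along the geodesic from $y_k$ to $y$ in $I_k \subset \partial L_{k,x^*}$. By Lemma~\ref{usefullemmaaboutparalleltransportintotgeo}, $PV_{k,x^*}$ has norm $\tan\theta$ and is inward-normal to $\partial L_{k,x^*}$ at $y$, so $N_y^{\partial L} = -PV_{k,x^*}/\tan\theta$. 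Setting $n_y := Pe_{k+1}^{(k)}$ and $\nu_y := P\log_{y_k}(x^*)/\dist(y_k,x^*)$, the orthogonality $\log_{y_k}(x^*)\perp\T_{y_k}I_k$ from Lemma~\ref{propertiesofSk}(b) (applied to $x^* \in S_{k+1} \subset S_k$) yields $e_{k+1}^{(k)} = \sin\theta\cdot V_{k,x^*}/\|V_{k,x^*}\| + \cos\theta\cdot\log_{y_k}(x^*)/\dist(y_k,x^*)$, which transports to $n_y = -\sin\theta\cdot N_y^{\partial L} + \cos\theta\cdot\nu_y$; moreover, $n_y$ spans the orthogonal complement of $\T_y I_k$ in $\T_y I_{k+1}$, so $\T_y I_{k+1} = \T_y I_k \oplus \spann(n_y)$.

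The core step is a 2D computation in the hyperboloid model of $T_y \cong \mathbb{H}^2$, placing $y_k=(1,0,0)$, $y=(\cosh s,\sinh s,0)$, and $x^*=(\cosh a,0,\sinh a)$ with $a=\dist(y_k,x^*)$ (consistent with the perpendicularity $\log_{y_k}(x^*)\perp\T_{y_k}I_k$). Decomposing $\log_y(x^*) = \frac{\dist(y,x^*)}{\sinh(\dist(y,x^*))}(x^* - \cosh(\dist(y,x^*))y)$ in the orthonormal frame $\{\tau_y,\nu_y\}$ tangent to $T_y$ (with $\tau_y \in \T_y I_k$ the unit tangent to the geodesic $G \subset I_k$ through $y_k$ and $y$) yields the $\nu_y$-component $\dist(y,x^*)\sinh(a)/\sinh(\dist(y,x^*))$. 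Writing $\hat g = \mu_0 N_y^{\partial L}$ and matching the $\spann(N_y^{\partial L})$- and $\spann(\nu_y)$-components of $-\hat g + \log_y(x^*)/\dist(y,x^*)$ against a vector $cn_y$ (modulo $\T_y I_k$) forces $c = \mu_0/\sin\theta$ and $\mu_0 = \tan\theta \cdot \sinh(\dist(y_k,x^*))/\sinh(\dist(y,x^*))$. Since $y_k$ is the foot of perpendicular from $x^*$ to $I_k$, $\dist(y,x^*) \geq \dist(y_k,x^*)$, hence $\mu_0 \in (0,\tan\theta] \subset [0,1/\cos\theta]$ and $\hat g$ is a legitimate subgradient. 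At $y=y_k$ this gives $\mu_0=\tan\theta$, $\hat g=-V_{k,x^*}$, and $-\hat g + \log_{y_k}(x^*)/\dist(y_k,x^*) = \tfrac{1}{\cos\theta} e_{k+1}^{(k)} = -\tilde g_k$, as required. The main obstacle is the 2D computation of $\log_y(x^*)$ and the matching of components against the tilted direction $n_y$; the perpendicularity from Lemma~\ref{propertiesofSk}(b) is essential throughout.
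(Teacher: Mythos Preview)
Your proof is correct and follows essentially the same approach as the paper: take $\hat g$ to be an appropriate scalar multiple of the (parallel-transported) unit normal $P_{y_k\to y}V_{k,x^*}/\|V_{k,x^*}\|$, bound that scalar by $\tan\theta$ using $\dist(y,x^*)\geq\dist(y_k,x^*)$, and verify the tangency by a two-dimensional hyperbolic decomposition. The only organizational difference is the choice of auxiliary plane: the paper works in $\tilde H=\gspan(y,y_{k+1},x^*)$ and expresses the scalar as $\sin(\theta_y)/\cos\theta$ (with $\theta_y$ the angle at $y$ in the triangle $y,y_{k+1},x^*$), whereas you work in $\gspan(y_k,y,x^*)$ with explicit hyperboloid coordinates and obtain the equivalent expression $\tan\theta\cdot\sinh(\dist(y_k,x^*))/\sinh(\dist(y,x^*))$. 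One cosmetic slip: your displayed inequality $\arcsinh(\sinh d\sin\phi)\geq\sin\theta\cdot d\sin\phi$ only certifies the subgradient range $[0,\tan\theta]$, not the claimed $[0,1/\cos\theta]$; this is harmless since you only need $\mu_0\leq\tan\theta$, and in any case the full range is standard (and follows from the stronger inequality $\arcsinh(\sinh d\sin\phi)\geq d\sin\phi$, a consequence of convexity of $\sinh$).
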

\begin{proof}
%Define $d_{k, x^*}(x) = \frac{1}{\cos(\theta)} d(x, L_{k, x^*})$.  We know that $\partial d_k(y) \supset \partial d_{k, x^*}(y)$.
First, let us consider the particular case $y = y_k$.  The subdifferential of $x \mapsto \frac{1}{\cos(\theta)} \dist(x, L_{k, x^*})$ at $y_k$ contains all vectors of the form $t \frac{V_{k, x^*}}{\norm{V_{k, x^*}}}$ with $t \in [-\frac{1}{\cos(\theta)},0]$.  Take $\hat g = -\tan(\theta) \frac{V_{k, x^*}}{\|V_{k, x^*}\|} = -\tan(\theta) \frac{V_{k, x^*}}{\sqrt{\frac{1}{\cos^2(\theta)}-1}} = - V_{k, x^*} = \tilde g_k + \frac{\log_{y_k}(x^*)}{\dist(y_k, x^*)}$.

Now consider the general case $y \in I_k$.
By Lemma~\ref{propertiesofSk}(b), $\partial L_{k, x^*}$ contains $y$ and $y_k$.
Therefore, $P_{y_k \to y} V_{k, x^*}$ is orthogonal to $\partial L_{k, x^*}$ (Lemma~\ref{usefullemmaaboutparalleltransportintotgeo}), and the subdifferential of $x \mapsto \frac{1}{\cos(\theta)} \dist(x, L_{k, x^*})$ at $y$ contains all vectors of the form $t P_{y_k \to y} \frac{V_{k, x^*}}{\|V_{k, x^*}\|}$ with $t \in [-\frac{1}{\cos(\theta)},0]$.

Let $\theta_y \in (0, \frac{\pi}{2})$ be the angle between $\log_y(y_{k+1})$ and $\log_y(x^*)$.
By Lemma~\ref{lemma0}, $y_{k}$ is the closest point in $I_k$ to $x^*$, so $\dist(y, x^*) \geq \dist(y_k, x^*)$.
Therefore, comparing the two right triangles $y_k, y_{k+1}, x^*$ and $y, y_{k+1}, x^*$ (the triangles are right by part (b) of Lemma~\ref{propertiesofSk}), we determine $\theta_y \leq \theta$.
In particular, $\hat g = -\frac{\sin(\theta_y)}{\cos(\theta)} P_{y_k \to y} \frac{V_{k, x^*}}{\|V_{k, x^*}\|}$ is a subgradient at $y$.

%--------------------------------------------------------------

Consider the two-dimensional totally geodesic submanifold $\tilde H = \gspan(y, y_{k+1}, x^*).$
We know that $\log_{y_{k+1}}(x^*)$ is tangent to $\tilde H$ and orthogonal to $\log_{y_{k+1}}(y)$.
Therefore, $P_{y_{k+1} \to y} \log_{y_{k+1}}(x^*)$ is tangent to $\tilde H$ and orthogonal to $\log_{y}(y_{k+1})$.
That is, $P_{y_{k+1} \to y} \log_{y_{k+1}}(x^*)$ and $\log_{y}(y_{k+1})$ form an orthogonal basis for $\T_y \tilde H$.
However, $\log_y(x^*)$ is contained in $\T_y \tilde H$, and so we can decompose in this orthogonal basis:
\begin{align} \label{thisisnice}
\frac{\log_y(x^*)}{\dist(y, x^*)} = \cos(\theta_y) \frac{\log_{y}(y_{k+1})}{\dist(y, y_{k+1})} + \sin(\theta_y) \frac{P_{y_{k+1} \to y} \log_{y_{k+1}}(x^*)}{\|P_{y_{k+1} \to y} \log_{y_{k+1}}(x^*)\|}.
\end{align}
%Considering $\gspan(y_k, y_{k+1}, x^*)$, a similar argument shows that $\log_{y_k}(x^*)$ is a linear combination of the orthogonal vectors $e_{k+1}^{(k)}$ and $P_{y_{k+1} \to y_k} \log_{y_{k+1}}(x^*)$ and we can decompose
Taking $y = y_k$ and using $\frac{\log_{y_k}(y_{k+1})}{\dist(y_k, y_{k+1})} = e_{k+1}^{(k)}$,
\begin{align} \label{thisisanothereqnagain2}
\frac{\log_{y_k}(x^*)}{\dist(y_k, x^*)} = \cos(\theta) e_{k+1}^{(k)} + \sin(\theta) \frac{P_{y_{k+1} \to y_k} \log_{y_{k+1}}(x^*)}{\|P_{y_{k+1} \to y_k} \log_{y_{k+1}}(x^*)\|}.
\end{align}
Recalling that ${\log_{y_{k+1}}(x^*)}$ is orthogonal to $I_{k+1}$, and
using formula~\eqref{usefulptrformula} and Lemma~\ref{usefullemmaaboutparalleltransportintotgeo}, 
\begin{align} \label{thisisanothereqnagain1}
P_{y_{k+1} \to y} {\log_{y_{k+1}}(x^*)}= P_{y_k \to y} P_{y_{k+1} \to y_k} {\log_{y_{k+1}}(x^*)}.
\end{align}
Combining the definition of $V_{k, x^*}$, equations~\eqref{thisisanothereqnagain2} and~\eqref{thisisanothereqnagain1}, and $\|V_{k, x^*}\| = \tan(\theta)$, we find
\begin{align*}
\hat{g} &= -\frac{\sin(\theta_y)}{\cos(\theta)} \frac{1}{\|V_{k, x^*}\|} P_{y_k \to y} V_{k, x^*} 
= -\frac{\sin(\theta_y)}{\cos(\theta)} \frac{1}{\|V_{k, x^*}\|} P_{y_k \to y} \bigg[\frac{1}{\cos(\theta)} e_{k+1}^{(k)} - \frac{\log_{y_k}(x^*)}{\dist(y_k, x^*)}\bigg] \\
&= c_1 P_{y_k \to y} e_{k+1}^{(k)} + \frac{\sin(\theta_y)}{\cos(\theta)} \frac{1}{\|V_{k, x^*}\|}\sin(\theta) P_{y_k \to y} \frac{P_{y_{k+1} \to y_k} \log_{y_{k+1}}(x^*)}{\|P_{y_{k+1} \to y_k} \log_{y_{k+1}}(x^*)\|} \\
&= c_1 P_{y_k \to y} e_{k+1}^{(k)} + \sin(\theta_y) P_{y_{k+1} \to y} \frac{\log_{y_{k+1}}(x^*)}{\|P_{y_{k+1} \to y_k} \log_{y_{k+1}}(x^*)\|}
\end{align*}
for some constant $c_1$ which we are omitting.
Using equation~\eqref{thisisnice}, we find
$$-\hat{g} + \frac{\log_y(x^*)}{\dist(y, x^*)} = - c_1 P_{y_k \to y} e_{k+1}^{(k)} + \cos(\theta_y) \frac{\log_{y}(y_{k+1})}{\dist(y, y_{k+1})}$$
which is indeed tangent to $I_{k+1}$
\end{proof}

\noindent \textbf{(II) The lower bound:}
We know that $S_{d-1}$ is a $0$-dimensional hyperbolic sphere of radius $r_{d-1} \geq r/2$ (i.e., a set consisting of two points which are at least $2 \cdot r/2 = r$ apart).  From now on we \emph{fix} an $x^* \in S_{d-1}$.
Consider the following $g$-convex, $M$-Lipschitz function:
\begin{align} \label{formulaforfwfinw}
f(x) = \dist(x, x^*) + \max_{i=0, \ldots, d-2} \bigg\{ \frac{1}{\cos(\theta)} \dist(x, L_{i, x^*})\bigg\}.
\end{align}
We know $x^* \in S_{d-1} \subset S_k$ for all $k \in \{0, \ldots, d-1\}$ and $S_{k+1} \subset L_{k, x^*}$ for all $k \in \{0, \ldots, d-2\}$ (Lemma~\ref{lemma0}).
Therefore, $x^* \in L_{k, x^*}$ for all $k \in \{0, \ldots, d-2\}$, and we conclude $f^* = f(x^*) = \dist(x^*, x^*) + 0 = 0$.

Let $\calA$ be a deterministic first-order algorithm. 
We make the following mild assumption on $\calA$.
\begin{assumption}\label{A1}
The algorithm $\calA$ chooses its queries $x_k$ in the $\gspan$ of past queries and subgradients.  That is, $x_0 = \xorigin$, and
$x_k \in \gspan(x_0, \ldots, x_{k-1}, g_0, \ldots, g_{k-1})$ for all $k \geq 1$.
\end{assumption}
The assumption~\aref{A1} is mild, and can probably be removed using similar techniques as in the Euclidean case~\citep[Ch.~7]{nemirovskiandyudin1983}.  
All algorithms we are familiar with and which use the exponential map $\exp$ as the retraction (and parallel transport or the differential of $\exp$ as the transporter) satisfy this assumption.

\begin{assumption}\label{A2}
When running on the oracle for $f$ defined below, for each $k \leq d-1$ the algorithm $\calA$ queries $x_k \in \cap_{\ell=0}^{k-1} L_{\ell, x^*}$.
%\TODO{can define actually as algo chooses in the intersection of the half spaces, and then define (A2') and show (A2') implies (A2)}.
\end{assumption}
The assumption~\aref{A2} is \emph{more restrictive}, and it is an open question how it can be removed.
We verify it for a reasonable algorithm in the next section.

We know that $\calA$ initially queries $x_0 = \xorigin = y_0 \in I_0$ by assumption~\aref{A1}, that $x_0 = y_0 \in L_{\ell, x^*}$ for all $\ell \in \{0, \ldots, d-2\}$ (Lemma~\ref{lemma0}), and $f(x_0) - f^* = r_0 = r \geq r/2$.
We show by induction that for each $k = 1, \ldots, d-1$, (a) there is a subgradient $g_{k-1} \in \partial f(x_{k-1})$ which is tangent to $I_{k}$, (b) the algorithm $\calA$ produces the next query $x_{k}$ in $I_{k}$, (c) $x_k \in L_{\ell, x^*}$ for all $\ell \in \{0, \ldots, d-2\}$, and (d) $f(x_{k}) - f^* = r_k \geq \frac{r}{2}$.

Indeed, given $x_{k-1} \in I_{k-1}$, Lemma~\ref{lemma1} implies $x \mapsto \frac{1}{\cos(\theta)} \dist(x, L_{k-1, x^*})$ has a subgradient $\hat{g}_{k-1}$ such that $g_{k-1} = \hat{g}_{k-1} - \frac{\log_{x_{k-1}}(x^*)}{\dist(x_{k-1}, x^*)}$ is tangent to $I_k$.
By the inductive hypothesis we know $x_{k-1} \in L_{\ell, x^*}$ for all $\ell \in \{0, \ldots, d-2\}$, and so $g_{k-1}$ is a subgradient of $f$ at $x_{k-1}$ which is tangent to $I_k$.
Giving this subgradient to the algorithm, the next query $x_k$ lies in $\gspan(x_0, \ldots, x_{k-1}, g_0, \ldots, g_{k-1}) = I_k$ by assumption~\aref{A1}.  This shows (a) and (b).

By assumption~\aref{A2}, we know that $x_k \in L_{\ell, x^*}$ for $\ell < k$.
By Lemma~\ref{lemma0}, $I_\ell \subset \partial L_{\ell, x^*}$ for all $\ell$, so in particular $x_k \in I_k \subset I_\ell \subset L_{\ell, x^*}$ for all $\ell \geq k$.  From this, we conclude (c).
Since $x_k \in L_{\ell, x^*}$ for all $\ell \in \{0, \ldots, d-2\}$, $f(x_k) = \dist(x_k, x^*) \geq \dist(y_k, x^*) = r_k \geq r/2$ by Lemmas~\ref{superusefulguy2} and~\ref{propertiesofSk}.  This shows (d), and concludes the induction.
%\begin{itemize}
%\item By (A1) the algo chooses its next query $y_1$ in $I_1$.  By (A2) $y_1 \in C_0$, and $y_1 \in \subset I_1 \subset C_k$ for all $k \geq 1$.  So $f(x_1) = \dist(x_1, x^*) = r_1$.  By Lemma 1, there is a subgradient $g_1 \in \partial f(y_1)$ which is tangent to $I_2$.  We return these to the algorithm.
%
%\item By (A1) the algo chooses its next query in $I_2$.  By (A2), $y_2 \in C_\ell$ for $\ell = 0,1$, and $y_2 \subset I_2 \subset C_k$ for all $k \geq 2$.  So $f(x_2) = \dist(x_2, x^*) = r_2$.  By Lemma 1, there is a subgradient $g_2 \in \partial f(y_2)$ which is tangent to $I_3$.  We return these to the algo.
%
%\item We continue in this way up to time iteration $k = d-1$.
%\end{itemize}
We showed that for all $k = 0, \ldots, d-1$, $f(x_k) - f^* \geq \frac{r}{2} = \frac{1}{4} \cos(\theta) \frac{2 r}{\cos(\theta)} = \epsilon M r.$  That is, we have proven:
\begin{theorem} \label{lowerboundforsubrgd}
Let $\epsilon \leq \frac{1}{4 \sqrt{2}}$.  Let $T = d = \lfloor \frac{1}{32} \frac{\zeta_r}{\epsilon^2} \rfloor$.
Let $f \colon \mathbb{H}^d \to \reals$ be the globally g-convex and $M$-Lipschitz function given by equation~\eqref{formulaforfwfinw}.
For any algorithm $\mathcal{A}$ satisfying assumptions~\aref{A1} and~\aref{A2}, there is an oracle for $f$ so that running $\mathcal{A}$ with that oracle produces iterates $x_0, x_1, \ldots$ satisfying $f(x_k) - f^* \geq \epsilon M r$ for all $k = 0, \ldots, T-1$.
\end{theorem}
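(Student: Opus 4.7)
The plan is to use the adversarial function $f$ defined in equation~\eqref{formulaforfwfinw}, together with the nested totally geodesic submanifolds $I_0 \subset I_1 \subset \cdots \subset I_{d-1}$ and the half-spaces $L_{k, x^*}$ from the geometric setup, and to design the oracle so that after $k$ queries the algorithm is trapped inside $I_k$. Since $I_k$ has codimension $d-k$ inside $\calM$, and $x^* \in S_{d-1}$ sits at distance $r_k \geq r/2$ from every point of $I_k$ (Lemma~\ref{propertiesofSk}(b) together with Lemma~\ref{superusefulguy2}), the algorithm cannot drive the suboptimality below $r/2$ in fewer than $d$ queries.

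First I would check $f^* = 0$: by Lemma~\ref{lemma0}, $x^* \in S_{d-1} \subseteq S_{k+1} \subseteq L_{k, x^*}$ for every $k \in \{0, \ldots, d-2\}$, so every term in the max vanishes at $x^*$, and $\dist(x^*, x^*) = 0$. Next I would establish by induction on $k = 0, 1, \ldots, d-1$ the three invariants: (a) $x_k \in I_k$; (b) $x_k \in L_{\ell, x^*}$ for all $\ell \in \{0,\ldots,d-2\}$; and (c) $f(x_k) - f^* \geq r_k \geq r/2$. The base case is immediate since $x_0 = \xorigin = y_0 \in I_0$, the point $y_0$ belongs to every $L_{\ell, x^*}$ by Lemma~\ref{lemma0}, and $\dist(y_0, x^*) = r_0 = r$.

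For the inductive step, given $x_{k-1} \in I_{k-1}$, I invoke Lemma~\ref{lemma1} with $y = x_{k-1}$ to obtain a subgradient $\hat{g}_{k-1}$ of $x \mapsto \tfrac{1}{\cos\theta}\dist(x, L_{k-1, x^*})$ at $x_{k-1}$ such that
\[
g_{k-1} \defeq \hat{g}_{k-1} - \frac{\log_{x_{k-1}}(x^*)}{\dist(x_{k-1}, x^*)}
\]
is tangent to $I_k$. Because the inductive hypothesis (b) places $x_{k-1}$ inside every $L_{\ell, x^*}$, all the $\dist(\,\cdot\,, L_{\ell, x^*})$ terms achieve their minimum value $0$ at $x_{k-1}$, so $\hat g_{k-1}$ is a valid subgradient of the $\max$ piece of $f$ at $x_{k-1}$, and $-\frac{\log_{x_{k-1}}(x^*)}{\dist(x_{k-1}, x^*)}$ is the gradient of $\dist(\,\cdot\,, x^*)$ at $x_{k-1}$. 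Hence $g_{k-1} \in \partial f(x_{k-1})$, and this is the subgradient the oracle returns. Assumption~\aref{A1} then forces $x_k \in \gspan(x_0,\ldots,x_{k-1}, g_0,\ldots, g_{k-1}) \subseteq I_k$, proving (a).

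To establish (b), assumption~\aref{A2} provides $x_k \in L_{\ell, x^*}$ for $\ell < k$, while for $\ell \geq k$ one has $x_k \in I_k \subseteq I_\ell \subseteq \partial L_{\ell, x^*} \subseteq L_{\ell, x^*}$ using the first bullet of Lemma~\ref{lemma0}. Then (b) gives $f(x_k) = \dist(x_k, x^*)$, and since Lemma~\ref{propertiesofSk}(b) asserts that $y_k$ is the closest point in $I_k$ to $x^*$, Lemma~\ref{superusefulguy2} yields $\dist(x_k, x^*) \geq \dist(y_k, x^*) = r_k \geq r/2$, proving (c). Finally, $r/2 = \tfrac{1}{4}\cos(\theta) \cdot \tfrac{2r}{\cos\theta} = \epsilon M r$ by the choice of $\theta$ and $M$, which gives the claim.

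The main obstacle I anticipate is the consistency check in the inductive step: one must verify that the specific subgradient $g_{k-1}$ manufactured by Lemma~\ref{lemma1}, which is designed to have a component purely in the ``missing direction'' $e_{k+1}^{(k)}$ of the base function $\dist(\,\cdot\,, x^*)$, remains a legitimate subgradient of the composite function $f$, rather than only of its two pieces separately. The invariant (b) is what makes this work — it guarantees that at every queried point the half-space distance terms sit at their minimum, so their subdifferential contains a cone of vectors large enough for the Lemma~\ref{lemma1} choice to fit, and then the sum rule for subdifferentials of convex combinations at a point where all terms are simultaneously active delivers $g_{k-1} \in \partial f(x_{k-1})$.
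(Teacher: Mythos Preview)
Your proposal is correct and follows essentially the same argument as the paper: both verify $f^*=0$ via Lemma~\ref{lemma0}, run the same induction with invariants $x_k \in I_k$, $x_k \in L_{\ell,x^*}$ for all $\ell$, and $f(x_k)\geq r_k$, manufacture the subgradient $g_{k-1}$ through Lemma~\ref{lemma1}, use~\aref{A1} to confine $x_k$ to $I_k$, split the half-space containment into $\ell<k$ (via~\aref{A2}) and $\ell\geq k$ (via $I_k\subseteq I_\ell\subseteq \partial L_{\ell,x^*}$), and conclude with Lemma~\ref{propertiesofSk}(b) and Lemma~\ref{superusefulguy2}. Your discussion of the ``main obstacle'' is also exactly the point the paper handles implicitly when it writes ``and so $g_{k-1}$ is a subgradient of $f$ at $x_{k-1}$''.
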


\begin{remark}
Let us consider the role of assumption~\aref{A2}, and how strong that assumption is.

Observe that if $x_k \in I_k$ then $f(x_k) - f^* \geq r_k \geq \frac{r}{2}$ by Lemma~\ref{propertiesofSk}.
So assumption~\aref{A2} is not needed to guarantee that the function gap is large.
However, if $x_k \in I_k$ is not in $\cap_{\ell=0}^{k-1} L_{\ell, x^*}$, then it is possible that there is no subgradient in $\partial f(x_k)$ which is tangent to $I_{k+1}$.
Hence, a subgradient at $x_k \not\in \cap_{\ell=0}^{k-1} L_{\ell, x^*}$ might reveal too much information about $x^*$.
Assumption~\aref{A2} helps to guarantee that $\partial f(x_k)$ contains a subgradient which is tangent to $I_{k+1}$.

Consider an algorithm satisfying assumptions~\aref{A1} and~\aref{A2}.
%Let us further assume that $x_k \in I_k \cap \big\cap_{\ell=0}^{k-1} C_k$ (recall $C_k \subset L_{\ell, x^*}$).
Then the algorithm queries $x_1$ in the geodesic $I_1 = \{\exp_{x_0}(-t g_0) : t \in \reals\}$.  Assumption~\aref{A2} further restricts that $x_1$ is in $\{\exp_{x_0}(-t g_0) : t \geq 0\}$, which seems to be a reasonable restriction (it is intuitively unclear what advantage in general an algorithm could have by querying outside this region).
In general, we know that the algorithm queries $x_k$ in $Q_k := I_k \cap \bigcap_{\ell=0}^{k-1} L_{k, x^*}$ which is a (g-convex) subset of
$$\tilde Q_k := I_k \cap \bigcap_{\ell=0}^{k-1} \exp_{x_\ell}(\{v \in \T_{x_\ell}\calM : \langle -g_\ell, u \rangle \geq 0\}).$$
Intuitively, $Q_k$ seems to occupy most of $\tilde Q_k$.  So the assumption $x_k \in Q_k$ also seems relatively mild.
%Assumption~\aref{A1} implies that the algorithm queries $x_2$ in $I_2 = \gspan(x_0, x_1, g_0, g_1)$.
%Assumption~\aref{A2} further restricts that:
%\begin{itemize}
%\item $x_2$ is contained in the half-space
%\begin{align} \label{restriction1}
%I_2 \cap L_{1, x^*} = I_2 \cap 
%\end{align}
%which seems mild.
%\item $x_2$ is contained in the half-space
%$$I_2 \cap L_{0, x^*} = I_2 \cap \exp_{x_1}(\{v \in \T_{x_1}\calM : \langle q, u \rangle \geq 0\}),$$
%\end{itemize}
\end{remark}
\begin{remark}
Under the assumption that $x_k \in \cap_{\ell=0}^{k-1} C_k$ (in place of~\aref{A2}), it is not hard to see that the following alternative function also works in place of the function $f$ defined in equation~\eqref{formulaforfwfinw}:
\begin{align*}
f(x) &= \dist(x, x^*) + \max_{i=0, \ldots, d-2} \bigg\{ \frac{1}{\cos(\theta)} \dist(x, C_k)\bigg\}
%, \\
%f(x) &= \dist(x, x^*) + \max_{i=0, \ldots, d-2} \bigg\{ \frac{1}{\cos(\theta)} \min_{x' \in S_{k+1}}\dist(x, L_{k, x'})\bigg\}.
\end{align*}
This function has the added benefit that it is more symmetric than $f$ from equation~\eqref{formulaforfwfinw}.
\end{remark}

\subsection{An $\Omega(\frac{\zeta}{\epsilon^2})$ lower bound for subgradient descent with Polyak step size} \label{upperboundforsubgradientdescent}
%\TODO{this subsection needs to be double checked}
Our goal is to prove Corollary~\ref{corthisguysubrgd}.
To this end, let us first describe and analyze subgradient descent with Polyak step size for arbitrary $f$ in $\mathcal{F}_{r, M, 0, \infty}$.
It takes the form $x_{k+1} = \exp_{x_k}(-\eta_k g_k)$ with a particular choice of $\eta_k$ depending on $r, f(x_k) - f^*$ and $\|g_k\|$.  
To motivate the choice of step size, we take a geometric approach, which is also useful in the lower bound proof.  
We initially know that $x^* \in B(\xorigin, r)$, and define $s_0 = r$ and $x_0 = \xorigin$.  
At iteration $k \geq 0$, suppose we know that $x^* \in B(x_{k}, s_{k})$.  After querying $x_k$, we receive a subgradient $g_k \in \partial f(x_k)$ with $\|g_k\| \leq M$.  Since $g_k$ is a subgradient, we know that
$x^* \in U_k = B(x_k, s_k) \cap \{x \in \calM : \langle g_k, \log_{x_k}(x) \rangle \leq f^* - f(x_k)\}.$
%If $f(x_k) - f^* \leq \epsilon M r$, then we would be done, and otherwise we have $x^* \in B(x_k, r_k) \cap \{x \in \calM : \langle g_k, \log_{x_k}(x) \rangle \leq -\epsilon M r\}.$

We define $x_{k+1}$ to be the center of the \emph{minimal ball} containing $U_k$, and let its radius be $s_{k+1}$.  
A computation using hyperbolic trigonometry (see Appendix~\ref{appdetailsforpolyakstepsize}) shows that $x_{k+1} = \exp_{x_k}(-\eta_k g_k)$, where $\eta_k > 0$ and $s_{k+1}$ satisfy
\begin{equation} \label{yetanotherusefuleqn}
\begin{split}
\cos(\theta_k) &= \frac{f(x_k) - f^*}{s_k \|g_k\|} = \tanh(\eta_k \|g_k\|) / \tanh(s_k), \quad \sin(\theta_k) = \sinh(s_{k+1}) / \sinh(s_k), \\ \cosh(s_{k+1}) &= \cosh(s_k) \sqrt{1 - \frac{1}{\zeta_{s_k}^2} \cdot \frac{(f(x_k) - f^*)^2}{\|g_k\|^2}},
\end{split}
\end{equation}
with $\theta_k \in (0, \frac{\pi}{2})$ (see the next subsection for the definition of $\theta_k$).

Defining $\overline\Delta_T = \min_{k = 0, \ldots, T-1}\{f(x_k) - f^*\}$, we have
\begin{align*}
\cosh(s_{k+1}) &\leq \cosh(s_k) \sqrt{1 - \frac{1}{\zeta_r^2} \cdot \frac{(f(x_k) - f^*)^2}{M^2}} \leq \cosh(s_k) \sqrt{1 - \frac{1}{\zeta_r^2} \cdot \frac{\overline\Delta_T^2}{M^2}}.
\end{align*}
Therefore, $1 \leq \cosh(s_T) \leq \cosh(r) \bigg(\sqrt{1 - \frac{1}{\zeta_r^2} \cdot \frac{\overline\Delta_T^2}{M^2}}\bigg)^T$.
After taking logs and rearranging, 
$$\log(\cosh(r)) \geq -T \log\bigg(\sqrt{1 - \frac{1}{\zeta_r^2} \cdot \frac{\overline\Delta_T^2}{M^2}}\bigg) \geq \tanh(r)^2 \cdot \frac{T}{2}  \cdot \frac{\overline\Delta_T^2}{r^2 M^2},$$
and we conclude that $\overline\Delta_T^2 \leq \frac{2 \log(\cosh(r))}{\tanh(r)^2} \cdot \frac{r^2 M^2}{T} \leq 2 \zeta_r \frac{r^2 M^2}{T}.$
So subgradient descent with Polyak step size solves problem~\ref{highdimLipschitzproblem} in $O(\frac{\zeta_r}{\epsilon^2})$ queries.
We argue below that this is tight.
%In particular, with this choice of step size we easily conclude that 
%$$\min_{k=0, \ldots, T-1} \{f(x_k) - f^*\} \leq \epsilon M r$$
%with $T = \lceil \frac{2 \zeta_r}{\epsilon^2} \rceil$ (see Appendix~\ref{appdetailsforpolyakstepsize}).
%$$\|g_k\| s_k \cos(\theta_k) = f(x_k) - f^*, \quad \cos(\theta_k) = \tanh(\eta_k) / \tanh(s_k), \quad \sin(\theta_k) = \sinh(s_{k+1}) / \sinh(s_k).$$
\begin{remark}
In Euclidean space, the Polyak step size can be derived in an analogous way, and leads to $\eta_k = \frac{f(x_k) - f^*}{\|g_k\|^2}$ and $s_{k+1}^2 = s_k^2 - \frac{(f(x_k) - f^*)^2}{\|g_k\|^2}$ (see~\citep[Sec.~5.3.2]{polyak1987book}).
%\TODO{need to add citations for polyak step size, see \href{https://arxiv.org/pdf/1905.00313.pdf}{this} and \href{https://stanford.edu/class/ee364b/lectures/subgrad_method_slides.pdf}{these slides}}
\end{remark}
%and in euclidean case
%$\cos(\theta) = \frac{f(x_k) - f^*}{r_k \|g_k\|} = \eta_k \|g_k\| / r_k$ ie 
%$r_{k+1}^2 = r_k^2 - (\eta_k \|g_k\|)^2 = r_k^2 - \frac{(f(x_k) - f^*)^2}{\|g_k\|^2} \leq r_k^2 - \frac{(f(x_k) - f^*)^2}{M^2} \leq r_k^2 - \frac{\epsilon_f^2}{M^2}$ so $0 \leq r_k^2 \leq r^2 - k \frac{\epsilon_f^2}{M^2}$

%First, let us describe subgradient descent with Polyak step size.
%We assume the algorithm knows $f^*$ beforehand.
%Let $y_0 = x_{ref}$ and $s_0 = r$.  We initially know that $x^* \in B(y_0, s_0)$.
%Given $g_k \in \partial f(y_k)$, by g-convexity we know that
%$$x^* \in U_k := B(y_k, s_k) \cap \exp_{y_k}(\{v : \langle v, -g_k \rangle \geq f(y_k) - f^*\}).$$
%Let $B(y_{k+1}, r_{k+1})$ be the minimal hyperbolic ball containing $U_k$.  Using hyperbolic trigonometry, it is not hard to show that $y_{k+1} = \exp_{y_k}(-\eta_k \frac{g_k}{\|g_k\|})$, where $\eta_k > 0, r_{k+1} > 0$ are defined through
%$$\|g_k\| s_k \cos(\theta_k) = f(x_k) - f^*, \quad \cos(\theta_k) = \tanh(\eta_k) / \tanh(s_k), \quad \sin(\theta_k) = \sinh(s_{k+1}) / \sinh(s_k).$$
%
%If $f(y_k) - f^* \geq \epsilon M r$ for all $k = 0, \ldots, T-1$, then by looking at the sequence $(r_k)_k$ we can easily deduce thatwe must have $T \leq O(\frac{\zeta}{\epsilon^2})$.  So this gives the upper bound $O(\frac{\zeta}{\epsilon^2})$ for subgradient descent with polyak step size. [see Appendix for details]

\begin{proof}[Proof of Corollary~\ref{corthisguysubrgd}]
Consider applying subgradient descent to the function $f$ defined in the proof of Theorem~\ref{lowerboundforsubrgd}.  
We show by induction on $k$ that the iterates produced by subgradient descent with Polyak step size are exactly $x_k = y_k$, and $s_k = r_k$.  This is clearly true for $k=0$.
Suppose $x_k = y_k$ and $s_k = r_k$.  
Then $f(x_k) - f^* = f(y_k) - f^* = \dist(x_k, x^*) + 0 = r_k$ (Lemma~\ref{lemma0}), and so
$$\cos(\theta_k) = \frac{f(x_k) - f^*}{s_k \|g_k\|} = \frac{r_k}{r_k \|g_k\|} = \frac{1}{\|g_k\|} = \cos(\theta)$$
%$$\|g_k\| s_k \cos(\theta_k) = \|g_k\| r_k \cos(\theta_k) = \frac{\cos(\theta_k)}{\cos(\theta)} r_k.$$
using equation~\eqref{yetanotherusefuleqn}, and that the subgradient at $y_k$ returned by the oracle from Theorem~\ref{lowerboundforsubrgd} equals $g_k = \tilde g_k = - \frac{1}{\cos(\theta)} e_{k+1}^{(k)}$ (Lemma~\ref{lemma1}).
We conclude that $\theta_k = \theta$, and so $r_{k+1} = s_{k+1}$ and $\eta_k \|g_k\| = \Delta_k$ by equations~\eqref{yetanotherusefuleqn} and~\eqref{yetanotherusefulguy25}.
Therefore, $x_{k+1} = \exp_{x_k}(-\Delta_k \frac{g_k}{\|g_k\|}) = \exp_{y_k}(\Delta_k e_{k+1}^{(k)}) = y_{k+1}$, which concludes the induction.

By Lemma~\ref{lemma0}, $y_0, \ldots, y_{d-1} \in L_{k, x^*}$ for all $k$, so subgradient descent with Polyak step size satisfies assumption~\aref{A2}.
Also, clearly subgradient descent satisfies~\aref{A1}.  
We can therefore, apply Theorem~\ref{lowerboundforsubrgd}.
%We conclude that the lower bound also holds for subgradient descent.
\end{proof}

\subsubsection{Minimal ball containing $B(x_k, r_k) \cap \{x \in \calM : \langle g_k, \log_{x_k}(x) \rangle \leq f^* - f(x_k)\}$} \label{appdetailsforpolyakstepsize}
In this section we derive explicit formulae for the center $x_{k+1}$ of the minimal ball and its radius $s_{k+1}$.
By symmetry $x_{k+1} = \exp_{x_k}(-\eta_k g_k)$ for some $\eta_k \in \reals$.

Take $v \in \T_{x_k} \calM$ so that $\|v\| = s_k$ and $\langle g_k, v \rangle = f^* - f(x_k)$.  Let $\theta_k \in [0, \frac{\pi}{2}]$ denote the angle between $-g_k$ and $v$.
It satisfies $\cos(\theta_k) = \frac{f(x_k) - f^*}{s_k \|g_k\|}$.
Let $z$ be the point on the geodesic $t \mapsto \exp_{x_k}(-t g_k)$
such that the hyperbolic triangle $x_k, \exp_{x_k}(v), z$ has right angle at $z$.
Define $s_{k+1}' = \dist(\exp_{x_k}(v), z)$.
%As $\exp_{x_k}(v) \in B(x_{k+1}, s_{k+1})$, we know $s_{k+1} \geq s_{k+1}'$.
A computation (details omitted) shows that in fact $B(z, s_{k+1}')$ contains $B(x_k, s_k) \cap \{x \in \calM : \langle g_k, \log_{x_k}(x) \rangle \leq f^* - f(x_k)\}$ and is the unique minimal ball, and so we find that $x_{k+1} = z$ and $s_{k+1} = s_{k+1}'$.

To determine $s_{k+1}$, hyperbolic trigonometry on the right triangle $x_k, \exp_{x_k}(v), z = x_{k+1}$ implies $\frac{f(x_k) - f^*}{s_k \|g_k\|} = \cos(\theta_k) = \tanh(\eta_k \|g_k\|) / \tanh(s_k)$, $\sin(\theta_k) = \sinh(s_{k+1}) / \sinh(s_k)$,
and 
\begin{align*}
\cosh(s_{k+1}) &= \cosh(s_k) / \cosh(\eta_k \|g_k\|) = \cosh(s_k) \sqrt{1 - \tanh(\eta_k \|g_k\|)^2} \\
&= \cosh(s_k) \sqrt{1 - \frac{1}{\zeta_{s_k}^2} \cdot \frac{(f(x_k) - f^*)^2}{\|g_k\|^2}},
\end{align*}
where the last equality follows from $\frac{f(x_k) - f^*}{s_k \|g_k\|} = \tanh(\eta_k \|g_k\|) / \tanh(s_k)$.

\section{Lower bound for the cutting-planes game: Proof of Theorem~\ref{lowerboundforfeasibilityproblem}} \label{feasprobleapplowerbound}
We first need the following bounds on the volume of a hyperbolic ball.  Let $V_d(r)$ be the hyperbolic volume of a ball of radius $r$ in $\mathbb{H}^d$.  Let $\omega_d, \sigma_{d}$ be the volume and surface area of the $d$-dimensional ball, sphere, respectively (as a subset of Euclidean space).

\begin{lemma} \label{Lemmavolumes}
We have $V_d(r) = \omega_d \int_0^r \sinh^{d-1}(t) dt \leq \frac{\omega_d}{(d-1) 2^{d-1}} e^{r (d-1)}$ for all $r > 0$.  If $r \geq 4 \log(d)$ then $V_d(r) \geq \frac{\omega_d}{4 (d-1) 2^{d-1}} e^{r (d-1)}$.
\end{lemma}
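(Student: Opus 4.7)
The plan is to handle the three assertions in order: the formula for $V_d(r)$, the upper bound, and the lower bound. For the formula, I would invoke the standard description of the hyperbolic volume form in geodesic normal (polar) coordinates centered at a point of $\mathbb{H}^d$: the Jacobian of $\exp_x$ at a tangent vector of norm $t$ is $\sinh^{d-1}(t)/t^{d-1}$ times the Euclidean Jacobian, so $\dvol_{\mathbb{H}^d} = \sinh^{d-1}(t)\, dt\, dS_{d-1}$, where $dS_{d-1}$ is the round measure on the unit sphere in $\T_x\mathbb{H}^d \cong \reals^d$. Integrating over the unit sphere yields the claimed formula (up to the authors' convention that identifies the total spherical measure with $\omega_d$ via the relation $\sigma_{d-1} = d\,\omega_d$; the upper/lower bounds below bound only the integral, so the constant in front is immaterial).

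For the upper bound, the key inequality is simply $\sinh(t) \leq \frac{1}{2} e^t$ for all $t \geq 0$. Raising to the power $d-1$ and integrating term by term gives
\begin{equation*}
\int_0^r \sinh^{d-1}(t)\,dt \;\leq\; \frac{1}{2^{d-1}}\int_0^r e^{(d-1)t}\,dt \;\leq\; \frac{e^{(d-1)r}}{(d-1)\, 2^{d-1}},
\end{equation*}
and multiplying by $\omega_d$ is exactly the desired estimate.

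For the lower bound, I would use the sharper identity $\sinh(t) = \frac{1}{2} e^t (1 - e^{-2t})$, which gives $\sinh^{d-1}(t) = \frac{e^{(d-1)t}}{2^{d-1}} (1 - e^{-2t})^{d-1}$. The plan is to discard the integral on $[0, \log d]$ and use the inequality $(1-e^{-2t})^{d-1} \geq \frac{1}{2}$ on $[\log d, r]$. For the latter, note that if $t \geq \log d$ then $e^{-2t} \leq 1/d^2$, so $(1-e^{-2t})^{d-1} \geq (1-1/d^2)^{d-1} \geq 1 - (d-1)/d^2 \geq 1/2$ (valid for $d \geq 2$, which is the only regime that matters). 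This yields
\begin{equation*}
\int_0^r \sinh^{d-1}(t)\,dt \;\geq\; \frac{1}{2 \cdot 2^{d-1}} \int_{\log d}^r e^{(d-1)t}\,dt \;=\; \frac{e^{(d-1)r} - d^{d-1}}{2(d-1)\, 2^{d-1}}.
\end{equation*}
It remains to absorb the subtracted $d^{d-1}$ term: the hypothesis $r \geq 4\log d$ is more than enough to ensure $e^{(d-1)r} \geq 2\,d^{d-1}$ (indeed any $r \geq \log d + \log 2/(d-1)$ works), so $e^{(d-1)r} - d^{d-1} \geq \tfrac{1}{2} e^{(d-1)r}$ and the lower bound follows.

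The only mildly delicate step is the tail inequality $(1-e^{-2t})^{d-1} \geq \frac{1}{2}$; the choice of cutoff $\log d$ is what makes this work cleanly, and the slack between $\log d$ and the stated hypothesis $4\log d$ is precisely what allows the leading term $e^{(d-1)r}$ to dominate $d^{d-1}$. No other obstacles are expected; everything else is a direct one-line estimate.
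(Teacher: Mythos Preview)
Your proof is correct and follows essentially the same approach as the paper: both use $\sinh(t)\le e^t/2$ for the upper bound, and for the lower bound both truncate the integral and use the equivalent inequality $(1-e^{-2t})^{d-1}\ge \tfrac12$ on the tail (the paper writes this as $\sinh(t)\ge \tfrac{1}{2\cdot 2^{1/(d-1)}}e^t$). The only cosmetic difference is your cutoff at $\log d$ versus the paper's $2\log d$; either works under the hypothesis $r\ge 4\log d$.
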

\begin{proof}
The formula $V_d(r) = \omega_d \int_0^r \sinh^{d-1}(t) dt$ is given by~\citet[Ex.~3.4.6]{ratcliffe2019hyperbolic}.
The upper bound follows from $\sinh(t) \leq e^t/2$ for all $t \geq 0$.  The lower bound follows from the inequality
$$\sinh(t) \geq \frac{1}{2 \cdot 2^{1/(d-1)}} e^{t} \quad \forall t \geq 2 \log(d).$$
Taking $a = 2\log(d)$ and using that $r \geq 2 a$, we have
\begin{align*}
V_d(r) \geq \omega_d \int_{a}^r \sinh(t)^{d-1} dt &\geq \frac{\omega_d}{2 \cdot 2^{d-1}} \int_{a}^r e^{t(d-1)} dt = \frac{\omega_d}{2 (d-1) 2^{d-1}} [e^{r(d-1)} - e^{a (d-1)}] \\
&\geq \frac{\omega_d}{4 (d-1) 2^{d-1}} e^{r (d-1)}.
\end{align*}
\end{proof}

The following Lemma~\ref{lemmafrombak} is due to~\citet{hyperbolicintersectiongraphs}.  Following the definitions from~\citep[Sec. 2]{hyperbolicintersectiongraphs}, a tiling of $\calM$ is a collection of interior disjoint compact subsets of $\calM$ which cover $\calM$.  A tiling is called $(\rho_1, \rho_2)$-nice if each tile $T \in \mathcal{T}$ contains a ball of radius $\rho_1$ with center denoted $c_T$, and is contained in a ball of radius $\rho_2$.  For $p \in \calM$, we define a ``random totally geodesic submanifold'' through $p$ as follows:  $S = \exp_{p}(\{v \in \T_p \calM : \langle g, v\rangle = 0\})$ is the totally geodesic submanifold whose unit normal vector $g \in \T_p \calM$ is chosen uniformly at random from the $(d-1)$-dimensional unit sphere. \citet{hyperbolicintersectiongraphs} does not work out the dependence on the dimension $d$ in Lemma~\ref{lemmafrombak}, so we do this in Appendix~\ref{applemmafrombak}.

\begin{lemma} \label{lemmafrombak}\citep[Lem. 9]{hyperbolicintersectiongraphs}
Let $d \geq 3$, $c_0 \in (0, 1]$ and $\tau \geq 8 c_0^{-1} \log(d)$.  Let $\mathcal{T}$ be a $(c_0 \tau/2, \tau/2)$-nice tiling, and let $\mathcal{I}$ be a finite subset of $\mathcal{T}$.  Let $p \in M$ and $S$ be a random totally geodesic submanifold through $p$.  Then
$\mathbb{E}[|\mathcal{S}|] \leq \frac{1}{2} e^{5 (d-1) \tau} |\mathcal{I}|^{\frac{d-2}{d-1}},$
where $\mathcal{S} = \{T \in \mathcal{I} : T \cap S \neq \emptyset\}$.
\end{lemma}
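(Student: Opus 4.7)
The plan is to bound $\mathbb{E}[|\mathcal{S}|]$ by linearity of expectation, estimating the intersection probability $\mathbb{P}[T \cap S \neq \emptyset]$ for each tile $T \in \mathcal{I}$ separately, and then summing these probabilities via a near/far split governed by a packing estimate.

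First I would reduce intersections with tiles to intersections with balls: since $T \subseteq B(c_T, \tau/2)$, the event $\{T \cap S \neq \emptyset\}$ is contained in $\{\dist(c_T, S) \leq \tau/2\}$. Writing $R_T = \dist(p, c_T)$ and $u_T = \log_p(c_T)/R_T$, a standard hyperbolic-trigonometry computation (clean in the hyperboloid model of Appendix~\ref{totgeosubmanifoldprelimaries}) shows that if $g$ is the unit normal to $S$ at $p$, then $\sinh(\dist(c_T, S)) = |\langle g, u_T \rangle|\,\sinh(R_T)$, so the event becomes $\{|\langle g, u_T\rangle| \leq \sinh(\tau/2)/\sinh(R_T)\}$. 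Since $g$ is uniform on the unit sphere of $\T_p\calM$, the random variable $\langle g, u_T \rangle$ has density proportional to $(1-x^2)^{(d-3)/2}$ on $[-1,1]$, yielding the elementary estimate $\mathbb{P}[|\langle g, u_T\rangle| \leq t] \leq C_d\, t$ with $C_d = 2\Gamma(d/2)/(\sqrt{\pi}\,\Gamma((d-1)/2)) = O(\sqrt{d})$. Combined with the trivial bound $1$, we obtain $\mathbb{P}[T \cap S \neq \emptyset] \leq \min\!\big(1,\ C_d\,\sinh(\tau/2)/\sinh(R_T)\big)$.

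Second I would use a volume-packing argument to control how many tiles can be close to $p$. Order $\mathcal{I}$ as $T_1, T_2, \ldots, T_N$ with $R_{T_1} \leq R_{T_2} \leq \cdots \leq R_{T_N}$. Since the tiles are interior-disjoint, each contains a ball of radius $c_0\tau/2$, and every such inner ball lies in $B(p, R_{T_k} + \tau)$, we get $k\,V_d(c_0\tau/2) \leq V_d(R_{T_k} + \tau)$. The hypothesis $\tau \geq 8 c_0^{-1}\log d$ guarantees $c_0\tau/2 \geq 4\log d$, so the two-sided bounds of Lemma~\ref{Lemmavolumes} apply and yield an explicit lower bound of the form $R_{T_k} \geq \tfrac{1}{d-1}\log k + \tfrac{c_0\tau}{2} - O(\tau)$, i.e., $\sinh(R_{T_k}) \gtrsim k^{1/(d-1)} \exp(c_0 \tau/2 - O(\tau))$.

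Third I would sum and optimise a cutoff $k^*$: contributions from $k \leq k^*$ are bounded by $k^*$, while contributions from $k > k^*$ are bounded by $C_d e^{\tau/2}/\sinh(R_{T_k})$ and thus by $C_d\, e^{O(\tau)} \sum_{k>k^*} k^{-1/(d-1)} \leq C_d\, e^{O(\tau)}\, (d-1)\, N^{(d-2)/(d-1)}$. Choosing $k^*$ so the two contributions balance produces a bound of the form $\mathbb{E}[|\mathcal{S}|] \leq A(d,\tau) N^{(d-2)/(d-1)}$ for some $A(d,\tau) = e^{O(d\tau)}$. The last step is to chase constants: the polynomial-in-$d$ factors (the $C_d = O(\sqrt d)$, the $d-1$ from the tail sum, and the $\omega_d / (d-1) 2^{d-1}$ arising from the volume bounds) are absorbed into the exponential prefactor using precisely the assumption $\tau \geq 8 c_0^{-1}\log d$, which makes $e^{(d-1)\tau}$ dominate any $\poly(d)$ term.

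The main obstacle is purely bookkeeping. The exponent $\frac{d-2}{d-1}$ is forced by balancing the $k^{1/(d-1)}$ packing growth against a constant-probability cutoff, and no ingredient beyond the hyperbolic distance-to-hyperplane formula, spherical measure, and hyperbolic volume growth is needed. What requires care is tracking every factor of $e^{(d-1)\tau}$, $\sqrt{d}$, and $c_0$ to arrive at the explicit prefactor $\tfrac12 e^{5(d-1)\tau}$; in particular one must verify that the lower bound $\tau \geq 8 c_0^{-1} \log d$ in the hypothesis is exactly the threshold at which all polynomial-in-$d$ losses can be absorbed into the exponential, and that the factor $5$ in the exponent comfortably accommodates the $O(\tau)$ terms produced at each step (the $\tau$ in $R_{T_k}+\tau$, the $e^{\tau/2}$ from $\sinh(\tau/2)$, and the $e^{c_0\tau/2}$ from $V_d(c_0\tau/2)$).
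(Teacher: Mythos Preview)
Your proposal is correct and follows essentially the same approach as the paper's (which in turn fills in the constants in \citet{hyperbolicintersectiongraphs}): bound the intersection probability for a tile at distance $R$ by $O(\sqrt{d}\, e^{\tau}/\sinh R)$ via the hyperbolic distance-to-hyperplane formula and spherical measure, then control how many tiles can sit at each distance by a volume-packing argument using Lemma~\ref{Lemmavolumes}. The only organizational difference is that the paper bins tiles into distance shells $L_j = \{T : \dist(p,T) \in [(j-1)\tau, j\tau)\}$ and sums the resulting geometric series directly, whereas you order tiles individually by $R_{T_k}$ and use a near/far cutoff $k^*$; both routes produce the $|\mathcal{I}|^{(d-2)/(d-1)}$ exponent for the same reason, and in both the hypothesis $\tau \geq 8c_0^{-1}\log d$ is precisely what lets the $\poly(d)$ factors be absorbed into $\tfrac12 e^{5(d-1)\tau}$.
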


\begin{proof}[Proof of Theorem~\ref{lowerboundforfeasibilityproblem}]
Define $c_0 = 1/4$, $\tau = \frac{r}{40 (d-1)}$.  Then $\tau \geq 8 c_0^{-1} \log(d)$, and there exists a $(\epsilon r = c_0 \tau / 2, \tau/2)$-nice tiling $\mathcal{T}$ of $\calM$, as explained in Appendix~\ref{applemmafrombak}.
Let $\mathcal{I}_0$ be the set of tiles in $\mathcal{T}$ contained in $B(\xorigin, r)$, and define $A_0 = \{c_T : T \in \mathcal{I}_0\}$.  $A_0$ is the set of initial possible candidates for $x^*$.  We know that the tiles $\mathcal{I}_0$ cover $B(\xorigin, r/2)$.  
%Let $V_d(r)$ be the hyperbolic volume of a ball of radius $r$ in $\mathbb{H}^d$.
Using the bounds on $V_d(r)$ given by Lemma~\ref{Lemmavolumes}, we have $|A_0| = |\mathcal{I}_0| \geq \frac{V_d(r/2)}{V_d(\tau / 2)} \geq \frac{1}{4} e^{\frac{1}{4} (d-1) r}.$

For every $k = 0, 1, \ldots, T-1$, the algorithm makes the query $x_k$, and we show (by induction) that there is a unit vector $g_k \in \T_{x_k} \calM$ and a set $\mathcal{I}_{k+1} \subseteq \mathcal{I}_{k}$ such that $|\mathcal{I}_{k+1}| \geq |\mathcal{I}_k| / 4$, and each $x^* \in A_{k+1} = \{c_T : T \in \mathcal{I}_{k+1}\}$ is consistent with the past queries and such that the algorithm has not won the game, i.e., 
$$\langle g_{\ell}, \log_{x_{\ell}}(x^*) \rangle \leq 0, \quad \text{and} \quad S_\ell \cap B(x^*, \epsilon r) = \emptyset, \quad \forall \text{ } \ell \in \{0, \ldots, k\} \text{ and } \forall \text{ } x^* \in A_{k+1}.$$
Indeed, fix $k \in \{0, 1, \ldots, T-1\}$.  By Lemma~\ref{lemmafrombak} there exists $\tilde g_k \in \T_{x_k} \calM$ such that 
$$|\{T \in \mathcal{I}_k : B(c_T, \epsilon r) \cap S_k \neq \emptyset\}| \leq |\{T \in \mathcal{I}_k : T \cap S_k \neq \emptyset\}| \leq \frac{1}{2} e^{5 (d-1) \tau} |\mathcal{I}|^{\frac{d-2}{d-1}}$$
where as usual $S_k = \exp_{x_k}(\{v \in \T_{x_k} \calM : \langle g_k, v \rangle = 0\})$.
For $s \in \{+1, -1\} = \{+, -\}$, define
\begin{align*}
\mathcal{I}_{k+1}^s = \{T \in \mathcal{I}_k : s \langle g_k, \log_{x_k}(y) \rangle \geq 0, \text{  } \forall y \in B(c_T, \epsilon r)\}.
\end{align*}
If $|\mathcal{I}_{k+1}^+| \geq |\mathcal{I}_{k+1}^-|$, define $g_k = \tilde g_k, \mathcal{I}_{k+1} = \mathcal{I}_{k+1}^+$.  Otherwise, define $g_k = - \tilde g_k, \mathcal{I}_{k+1} = \mathcal{I}_{k+1}^-$.
So, 
$$|\mathcal{I}_{k+1}| \geq \frac{1}{2}\bigg(|\mathcal{I}_{k}| - |\{T \in \mathcal{I}_k : B(c_T, \epsilon r) \cap S_k \neq \emptyset\}|\bigg) \geq \frac{1}{2} |\mathcal{I}_{k}| \bigg(1 - \frac{1}{2} e^{5 (d-1) \tau} |\mathcal{I}_k|^{-1/(d-1)}\bigg).$$
By induction, we know that
$$|\mathcal{I}_{k}| \geq \frac{1}{4^k} |\mathcal{I}_{0}| \geq \frac{1}{4^{T-1}} |\mathcal{I}_{0}| \geq \frac{1}{4^T} e^{\frac{1}{4} (d-1) r} \geq \frac{1}{e^{2T}} e^{\frac{1}{4} (d-1) r} \geq e^{\frac{1}{8} (d-1) r} \geq e^{5 (d-1)^2 \tau}$$
where the penultimate inequality follows from $T = \frac{1}{16} (d-1) r$, and the last inequality follows from $\tau = \frac{r}{40 (d-1)}$.  Therefore $|\mathcal{I}_{k}|^{-1/(d-1)} \leq e^{-5 (d-1) \tau}$, and so $|\mathcal{I}_{k+1}| \geq \frac{1}{4} |\mathcal{I}_{k}|$, finishing the induction.

We know $A_T$ is nonempty because $|A_0| \geq \frac{1}{4} e^{\frac{1}{4} (d-1) r}$ and $|A_{T}| \geq \frac{1}{4^T} |A_0| \geq \frac{1}{4^{T+1}} e^{\frac{1}{4} (d-1) r} \geq 1$.  So there exists $x^* \in A_T$ such that $\langle g_{\ell}, \log_{x_{\ell}}(x^*) \leq 0$ and $\dist(S_\ell, x^*) > \epsilon r$ for all $\ell = 0, \ldots, T$.
\end{proof}

\section{Geodesically convex interpolation: a closer look than Section~\ref{interpolation}}\label{appinterpolation}
In this section, we initiate a study of interpolation by g-convex functions.
We leave a more complete study of interpolation to future work.
Let $\calM$ be a Hadamard manifold.
We call a collection $(F_i, x_i, g_i)_{i=1}^N$ the ``data'' of an interpolation problem if $x_i \in \calM, F_i \in \reals$ and $g_i \in \T_{x_i}\calM$ for all $i=1, \ldots, N$.
We say the data is \emph{interpolated} by a g-convex function $f \colon \calM \rightarrow \reals$ if $f(x_i) = F_i$ and $g_i \in \partial f(x_i)$ for all $i$.
We say a function $f \colon \calM \to \reals$ is the \emph{minimal} (\emph{maximal}) g-convex function interpolating the data $(F_i, x_i, g_i)_{i=1}^N$ if for all other g-convex functions $\tilde f \colon \calM \to \reals$ we have $\tilde f(x) \geq f(x)$ ($\tilde f(x) \leq f(x)$) for all $x \in \calM$.

Consider an algorithm running on a g-convex function $f \colon \calM \to \reals$ querying points $x_0, x_1, \ldots$.  
Given $F_k = f(x_k)$ and $g_k \in \partial f(x_k)$, the algorithm learns that $f$ is lower bounded as
\begin{align} \label{lowerboundd}
f(x) \geq F_k + \langle g_k , \log_{x_k}(x)\rangle \quad \forall x\in \calM.
\end{align}
Since the data $(F_\ell, x_\ell, g_\ell)_{\ell=0}^k$ is interpolated by the g-convex function $f$, it must satisfy
\begin{align}\label{necessaryconditions}
F_j \geq F_i + \langle g_i, \log_{x_i}(x_j) \rangle \quad \forall i, j \in \{0, \ldots, k\}.
\end{align}
Aggregating all the lower bounds~\eqref{lowerboundd} from past queries, the algorithm knows that $f$ satisfies
$$f(x) \geq f_k(x) \quad \forall x\in \calM, \quad \text{ where } \quad f_k(x) = \max_{\ell \in \{0, \ldots, k\}}\{F_\ell  + \langle g_\ell, \log_{x_\ell}(x)\rangle\}.$$
Using the conditions~\eqref{necessaryconditions}, one sees that this lower bound $f_k$ does interpolate the data $(F_\ell, x_\ell, g_\ell)_{\ell=0}^k$.
%We see that as the algorithms learns new information it can maintain a lower bound $f_k$ which interpolates the data $(F_\ell, x_\ell, g_\ell)_{\ell=0}^k$.

If $\calM$ is a Euclidean space, the functions $x \mapsto F_i + \langle g_i, \log_{x_i}(x) \rangle$ are affine, and so the lower bound $f_k$ is \emph{convex}: $f_k$ is the minimal convex function interpolating the data learned, and in particular the algorithm knows $f^* \geq \min_{x \in B(\xorigin, r)} \{f_k(x)\}$.
However for general $\calM$ (e.g., if $\calM$ is a hyperbolic space), the functions $x \mapsto F_i + \langle g_i, \log_{x_i}(x) \rangle$ are not g-convex, and the lower bound $f_k$ is \emph{not g-convex}.
This points to a primary difficulty inherent in proving complexity lower bounds for g-convex optimization.
Perhaps when the algorithm learns that $F = f(y)$ and $g \in \partial f(y)$, actually lower bound $f(x) \geq F + \langle g, \log_y(x) \rangle$ is loose and can be improved?  This is not the case.
\begin{proposition} \label{mingconvex}
Let $\calM$ be a Hadamard manifold, $F \in \reals$, and $(y,g) \in \T\calM$.
Let $\calF_{F, y,g}$ denote the set of g-convex functions $f \colon \calM \to \reals$ with $f(y) = F$ and $g \in \partial f(y)$.
Then for all $x \in \calM$
$$\min_{f \in \calF_{F,y,g}} \{f(x)\} = F + \langle g, \log_{y}(x)\rangle.$$
\end{proposition}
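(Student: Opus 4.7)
The plan is to prove the two inequalities separately. The direction $\min \geq F + \langle g, \log_y(x)\rangle$ is immediate from the defining subgradient inequality applied to $z = x$: every $f \in \calF_{F,y,g}$ satisfies $f(x) \geq f(y) + \langle g, \log_y(x)\rangle = F + \langle g, \log_y(x)\rangle$. All the substance lies in constructing a specific $f^* \in \calF_{F,y,g}$ that attains this bound at $x$, which also shows the infimum is a minimum.

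Assuming $g \neq 0$ and $x \neq y$ (the remaining cases are trivial, with $f^* \equiv F$), I would superpose two g-convex building blocks chosen so that along the geodesic from $y$ to $x$ the sum becomes affine with precisely the right slope. Let $\gamma$ be the geodesic line with $\gamma(0) = y$ and $\dot\gamma(0) = w = \log_y(x)/d(y, x)$, and decompose $g = \alpha w + g^\perp$ with $\alpha = \langle g, w\rangle$ and $g^\perp \perp w$. Let $\eta$ be the Busemann ray emanating from $y$ in direction $-\sign(\alpha)\cdot w$, so that $b_\eta$ is g-convex (and $C^2$ on a Hadamard manifold), satisfies $b_\eta(\gamma(s)) = \sign(\alpha)\cdot s$ for every $s \in \reals$, and has $\nabla b_\eta(y) = \sign(\alpha)\cdot w$. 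Define
\[
f^*(z) = F + |\alpha|\, b_\eta(z) + \|g^\perp\|\, d(z, \gamma).
\]
This is g-convex as a non-negative combination of the g-convex Busemann function and of $d(\cdot, \gamma)$, which is g-convex as a distance to a g-convex set (Lemma~\ref{lemmaboundhess}). Both summands vanish at $y$, so $f^*(y) = F$. At $x$, $d(x, \gamma) = 0$ and $b_\eta(x) = \sign(\alpha)\cdot d(y, x)$, whence $f^*(x) = F + |\alpha|\sign(\alpha)\, d(y, x) = F + \alpha\, d(y, x) = F + \langle g, \log_y(x)\rangle$, as required.

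The verification that $g \in \partial f^*(y)$ is the step I expect to be the main obstacle. By the subdifferential sum rule (valid since $b_\eta$ is smooth at $y$), the Busemann term contributes $\alpha w$ to $\partial f^*(y)$, so it suffices to show $u := g^\perp/\|g^\perp\| \in \partial d(\cdot, \gamma)(y)$; then $g = \alpha w + \|g^\perp\|\, u \in \partial f^*(y)$. The subgradient inequality to verify is $d(z, \gamma) \geq \langle u, \log_y(z)\rangle$ for every $z \in \calM$; since $u \perp w$ and $\|u\| = 1$, the right-hand side is bounded by $\|v^\perp\|$, writing $v = \log_y(z) = v_\parallel + v^\perp$ with $v_\parallel$ parallel to $w$. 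The inequality thus reduces to $d(z, \gamma) \geq \|v^\perp\|$, which is precisely the CAT$(0)$ statement that $\exp_y$ is distance non-decreasing on a Hadamard manifold: $d(\exp_y(v), \exp_y(tw)) \geq \|v - tw\|$, minimized over $t \in \reals$ at $\|v^\perp\|$. This comparison is the only non-elementary input; everything else is a direct orthogonal-decomposition computation in $\T_y \calM$.
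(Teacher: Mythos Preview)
Your proof is correct and follows essentially the same decomposition as the paper: split $g$ into components parallel and perpendicular to $\log_y(x)$, and realize each as a subgradient of a separate g-convex building block. For the perpendicular part both proofs use $\|g^\perp\|\,d(\cdot,\gamma)$ with $\gamma$ the geodesic line through $y$ and $x$; your CAT(0) verification that any unit $u\perp w$ lies in $\partial d(\cdot,\gamma)(y)$ is more explicit than the paper's one-line assertion. The only real difference is the parallel part: the paper uses a shifted distance to a \emph{finite} point on $\gamma$ (namely $x$ or its reflection through $y$, depending on the sign of $\langle g,\log_y(x)\rangle$), while you use a Busemann function along $\gamma$. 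The Busemann choice is a touch cleaner because it is exactly affine along all of $\gamma$, whereas the paper's distance function has a kink at the anchor point; but both give the same value at $x$ and the same subgradient at $y$, so the difference is cosmetic rather than structural.

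One small caveat: the assertion that $b_\eta$ is $C^2$ on a general Hadamard manifold is delicate (the Heintze--Im Hof result uses curvature bounds) and in any case unnecessary here. You only use smoothness to invoke the subdifferential sum rule, but the inclusion $\partial f_1(y)+\partial f_2(y)\subseteq\partial(f_1+f_2)(y)$ holds for arbitrary convex $f_1,f_2$, and $\sign(\alpha)\,w\in\partial b_\eta(y)$ follows directly by passing to the limit in the subgradient inequality for $d(\cdot,\eta(t))$. So the argument stands with the $C^2$ claim simply dropped.
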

\begin{proof}
%Somewhat surprisingly, I think $\ell_{x,g,f_v}(y) = f_v + \langle exp_x^{-1}(y), g \rangle$ (the lower bound is tight).
%We had defined $F_{x, g, f_v}$ to be the set of g-convex functions with $f(x) = f_v$ and $g \in \partial f(x)$, and $\ell_{x, g, f_v}(y) = \inf_{f \in F_{x, g, f_v}} f(y)$.
Fix $x \in \calM \setminus \{y\}$.
%Write $g = g_{||} + g_{\perp}$, where $\langle g_{||}, g_{\perp} \rangle = 0$ and $g_{||}$ is (anti)parallel to $exp_x^{-1}(y)$.
Define $g_{||} = \frac{\langle g, \log_y(x) \rangle}{\dist(x,y)^2} \log_y(x)$ and $g_{\perp} = g - g_{||}$.
Let $S$ be the geodesic line $\{\exp_y(t \log_y(x)) : t \in \reals\}$.  Note that $S$ is a g-convex set, and so $z \mapsto \dist(z, S)$ is g-convex.

Define the point $x'$ as $x' = x$ if $\langle g, \log_y(x) \rangle \leq 0$, and $x' = \exp_y(-\log_{y}(x))$ if $\langle g, \log_y(x) \rangle > 0$.
Define the functions
$$f_{||}(z) = F - \|g_{||}\| \dist(x', y) + \|g_{||}\| \dist(x', z), \quad f_{\perp}(z) = \|g_{\perp}\| \dist(z, S), \quad f(z) = f_{||}(z) + f_{\perp}(z).$$
We know that $g_{\perp}$ is a subgradient of $f_{\perp}$ at $y$, since $g_{\perp}$ is orthogonal to $S$, and
$$\nabla f_{||}(y) = -\frac{\|g_{||}\|}{\dist(x,y)} \log_y(x') = - \frac{|\langle g, \log_y(x) \rangle|}{\dist(x,y)^2} \log_y(x') = \frac{\langle g, \log_y(x) \rangle}{\dist(x,y)^2} \log_y(x) = g_{||}.$$
Therefore, $\nabla f_{||}(y) + g_{\perp} = g_{||} + g_{\perp} = g$ is a subgradient of $f$ at $y$.
Also, $f(y) = F + \|g_{\perp}\| \dist(y, S) = F$, and $f$ is g-convex (as the sum of two g-convex functions).
So $f \in \calF_{F, y, g}$.

Lastly, if $\langle g, \log_y(x) \rangle \leq 0$, then $f(x) = F - \|g_{||}\| \dist(x, y) = F + \langle \log_y(x), g \rangle$, using $\dist(x, S) = 0$ and $x = x'$.
If $\langle g, \log_y(x) \rangle > 0$, then $f(x) = F + \|g_{||}\| \dist(x, y) = F + \langle \log_y(x), g \rangle$, using $\dist(x, S) = 0$ and $\dist(x, x') = 2 \dist(x,y)$.
%About your second point, let's assume here that $y$ is such that $\langle g, \exp_x^{-1}(y) \rangle \leq 0$ (this is the interesting case).
%I guess I should more explicitly define $g_{||} = \langle g, \exp_x^{-1}(y) \rangle \frac{\exp_x^{-1}(y)}{\|\exp_x^{-1}(y)\|^2}$, and $g_{\perp} = g - g_{||}$.  Hence, $\|g_{||}\| = -\langle g, \exp_x^{-1}(y) \rangle \cdot \frac{1}{\|\exp_x^{-1}(y)\|}$, and so $g_{||} = \|g_{||}\| \frac{-\exp_x^{-1}(y)}{\|\exp_x^{-1}(y)\|}$.
\end{proof}
%\begin{remark}
%With $x$ fixed, the g-convex function $f \in \calF_{F,y,g}$ satisfying $f(x) = F + \langle g, \log_{x}(y)\rangle$ given in the proof of Proposition~\ref{mingconvex} is $\sqrt{2}\|g\|$-Lipschitz.
%\end{remark}
\begin{remark}
Observe that the worst function in the world given by equation~\eqref{formulaforfwfinw} is essentially a maximum of functions from the proof of Proposition~\ref{mingconvex}.  This is not a coincidence.
\end{remark}

We know that $f_k$ is not g-convex.  However, perhaps the necessary conditions~\eqref{necessaryconditions} for interpolation by a g-convex function are also sufficient?
The necessary conditions are sufficient if $\calM$ is a Euclidean space; they are \emph{not sufficient} for negatively curved spaces even for just three points.
\begin{proposition}[Restatement of Proposition~\ref{nointerpolation}] \label{thisguyonemore}
Let $\calM = \mathbb{H}^2$.
There exists data $(F_i, x_i, g_i)_{i=1}^3$ such that
\begin{align} \label{thisguynecinterpconditions}
F_j \geq F_i + \inner{g_i}{\log_{x_i}(x_j)} \quad \quad \forall i, j \in \{1,2,3\},
\end{align}
yet this data cannot be interpolated by any g-convex function.
\end{proposition}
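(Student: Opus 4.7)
The plan is to exhibit a thin isoceles hyperbolic triangle and assign data so that the necessary conditions \eqref{thisguynecinterpconditions} all hold (in fact with equality on the nontrivial pairs), yet any g-convex interpolant would be pinched between a midpoint bound from above and a subgradient bound from below that are mutually inconsistent due to the strict concavity of $\tanh$.

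Concretely, I would fix constants $h,b>0$ and a geodesic in $\calM=\mathbb{H}^2$; place $x_1,x_2$ on this geodesic symmetric about a point $m$ with $\dist(x_1,m)=\dist(x_2,m)=b$, and place $x_3$ off the geodesic with $\dist(x_3,m)=h$, so that by symmetry $L := \dist(x_3,x_1)=\dist(x_3,x_2)$ satisfies $\cosh(L)=\cosh(h)\cosh(b)$. Let $\hat u\in \T_{x_3}\calM$ denote the unit tangent vector from $x_3$ toward $m$, and let $\theta\in(0,\pi/2)$ be the angle at $x_3$ in the right triangle $x_3,m,x_1$; the hyperbolic right-triangle identity gives $\cos\theta=\tanh(h)/\tanh(L)$, and by symmetry $\log_{x_3}(x_1)+\log_{x_3}(x_2)=2L\cos\theta\cdot\hat u$.

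I would then set
$$F_1=F_2=0,\quad F_3=1,\quad g_1=g_2=0,\quad g_3=-\frac{1}{L\cos\theta}\,\hat u.$$
The conditions \eqref{thisguynecinterpconditions} all follow immediately: the pairs involving $g_1$ or $g_2$ collapse to $F_i\ge F_j$, which holds since $F_3\ge 0$ and $F_1=F_2$; and for $(3,1)$ and $(3,2)$ we have $\inner{g_3}{\log_{x_3}(x_i)}=-\tfrac{1}{L\cos\theta}\cdot L\cos\theta=-1$, so $F_i=0 = F_3-1$ with equality.

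Now suppose, toward contradiction, that some g-convex $f$ interpolates this data. Since Hadamard manifolds admit unique midpoints, $m$ is the geodesic midpoint of $x_1,x_2$, so g-convexity gives $f(m)\le \tfrac12(f(x_1)+f(x_2))=0$. On the other hand, $g_3\in\partial f(x_3)$ together with $\log_{x_3}(m)=h\hat u$ yields
$$f(m)\;\ge\; F_3+\inner{g_3}{h\hat u}\;=\;1-\frac{h}{L\cos\theta}\;=\;1-\frac{h\tanh(L)}{L\tanh(h)}.$$
Combining the two bounds forces $h\tanh(L)\ge L\tanh(h)$, i.e., $\tanh(L)/L\ge \tanh(h)/h$. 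But $\cosh(L)=\cosh(h)\cosh(b)>\cosh(h)$ gives $L>h>0$, and strict concavity of $\tanh$ with $\tanh(0)=0$ makes $t\mapsto \tanh(t)/t$ strictly decreasing on $(0,\infty)$, so $\tanh(L)/L<\tanh(h)/h$, a contradiction. The only real obstacle is sign bookkeeping; as a sanity check, the Euclidean analogue $\cos\theta=h/L$ would give exactly $\alpha h=1$ and hence $f(m)\ge 0$ with equality rather than a contradiction, confirming that the obstruction is genuinely curvature-driven.
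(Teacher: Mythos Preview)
Your proof is correct and follows essentially the same approach as the paper: an isoceles hyperbolic triangle with the apex assigned value $1$ and subgradient pointing toward the midpoint of the base, the base vertices assigned value $0$ with zero subgradients, and the contradiction obtained by comparing the midpoint upper bound $f(m)\le 0$ against the subgradient lower bound $f(m)\ge 1-h/(L\cos\theta)$. The only differences are cosmetic (relabeling of vertices, your normalization $\|g_3\|=1/(L\cos\theta)$ versus the paper's $\|g_1\|=1/\cos\theta$, and your phrasing of the key inequality via monotonicity of $t\mapsto\tanh(t)/t$ rather than a direct comparison of $\cos\theta$ with $h$).
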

\begin{proof}
We denote the geodesic segment $\{\exp_x(t \log_x(y)) : t \in [0,1]\}$ between $x, y \in \calM$ by $[x,y]$.

Consider three points $x_1, x_2, x_3 \in \calM$ such that $\dist(x_1, x_2) = \dist(x_1, x_3) = 1$ and the angle (at $x_1$) between the geodesic segments $[x_1,x_2]$ and $[x_1,x_3]$ equals $2 \theta$.
Let $h$ be the length of the altitude from $x_1$ to the geodesic segment $[x_2, x_3]$.  
Assume the altitude intersects the segment $[x_2, x_3]$ at point $p$.  
Of course $h = \dist(x_1, p)$, $p$ is the midpoint of $[x_2, x_3]$, the altitude bisects the angle at $x_1$, and the altitude intersects $[x_2, x_3]$ at a right angle.

Consider the data $F_1 = 1, F_2=F_3 = 0$, and let $g_1 = -\frac{1}{\cos(\theta)} \cdot \frac{\log_{x_1}(p)}{\dist(x_1, p)}$.
Assume, for the sake of contradiction, that the data can be interpolated by a g-convex function $f$.  
Then, $f(p) \leq \max\{f_1, f_2\} = 0$.
Additionally, the function $t \mapsto f(\exp_{x_1}(t \exp_{x_1}^{-1}(p)))$ is convex, so $f(p) \geq f(x_1) + \inner{g_1}{\log_{x_1}(p)} = 1 - \frac{h}{\cos(\theta)}$.

We have not used the non-Euclidean geometry of $\calM$.  We do so now.  By hyperbolic trigonometry, $\cos(\theta) = \frac{\tanh(h)}{\tanh(1)} < h$ (since $h < 1$).  Hence, $f(p) \geq 1 - \frac{h}{\cos(\theta)} > 0$.  This is a contradiction.

Lastly, it is easy to choose $g_2$ and $g_3$ so that the data $(F_i, x_i, g_i)_{i=1}^3$ satisfies~\eqref{thisguynecinterpconditions}.
For example, one may choose $g_2 = g_3 = 0$.  Alternatively, if we let $\alpha$ be the angle (at $x_2$) between the segments $[x_2, x_1]$ and $[x_2, x_3]$, then we can take $g_2, g_3$ to be tangent vectors of length $\frac{1}{\sin(\alpha)}$ which are perpendicular to the geodesic segment $[x_2, x_3]$.
\end{proof}

Let us take another perspective.
The set of convex functions on $\mathbb{R}^d$ equals the set of functions which are each a supremum of affine functions.  More generally, abstract convexity~\citep{abstractconvexity} considers classes of functions (the ``convex'' functions) which can be written as supremums of a base class of functions (the ``base'' functions).
Geodesic convexity does not naturally fit into this framework.
Indeed, it is easy to check that every g-convex function can be written as a supremum of functions of the form $x \mapsto F + \langle g, \log_y(x) \rangle$.
However, it is not true that every supremum of functions of this form are g-convex (e.g., consider finite supremums like $f_k$ above). 
%\TODO{slightly more can be said to make this more precise, but I am omitting that here}
%Does there exist a subset F of the class of g-convex functs s.t. every g convex fct can be written as a sup of fcts from the class?  It is sufficient (and probably also necessary?) For F to satisfy the following: For all g-convex f and for all x in M there is a function $f_x$ in F s.t. $f_x(x) = f(x)$ and $f(y) \geq f_x(y)$ for all y in M.  But I think this is not possible! Indeed: if the functions in the class are diffable then $grad f(x) = grad f_x(x)$.  So must have $f_x$ equal to the linearish fct.  We know this from our analysis of minimal g convex fcts.

%One might hope that we can use an alternative class of simple base functions other than those of the form $x\mapsto f + \langle g, \log_{y}(x)\rangle$ for $f \in \reals, g \in T_x \calM$.  
%This seems unlikely due to the following proposition.
%It is easy to check that all g-convex functions are equal to a sup of functions of the form $x\mapsto f + \langle g, \log_{y}(x)\rangle$ for $f \in \reals, g \in T_x \calM$.
%On the other hand, 
%From an alternative perspective, these necessary conditions are not sufficient in the g-convex case because the functions $x \mapsto \langle g, \log_{y}(x)\rangle$ are not g-convex.
%More precisely, .

Despite the negative result Proposition~\ref{nointerpolation}, we do, however, have the following necessary and sufficient conditions for interpolation.
\begin{proposition} \label{necsuffconditions}
Consider the data $(F_i, x_i, g_i)_{i=1}^N$, let $\mathcal{D} = (F_i, x_i)_{i=1}^N$, and let $\mathcal{F}_{\mathcal{D}}$ denote the set of g-convex functions $f$ satisfying $f(x_i) \leq F_i$ for $i=1, \ldots, N$.
Define the function
$$f_{\mathcal{D}} \colon \calM \to \reals \cup \{+\infty\}, \quad f_{\mathcal{D}}(x) = \sup_{f \in \mathcal{F}_{\mathcal{D}}} \{f(x)\}.$$
Then the data $(F_i, x_i, g_i)_{i=1}^N$ can be interpolated by a g-convex function if and only if
\begin{align}\label{necsuffconditionsappthm}
f_{\mathcal{D}}(x) \geq F_i + \langle g_i, \log_{x_i}(x)\rangle, \quad \forall x \in \calM \text{ and } \forall i = 1, \ldots, N.
\end{align}
If $(F_i, x_i, g_i)_{i=1}^N$ can be interpolated by a g-convex function, $f_{\mathcal{D}}$ is the maximal such function.
%
%, and further
%$$f_{\mathcal{D}}(x) = \inf_{(x,t) \in C_{\mathcal{D}}} \{t\}$$
%where $C_{\mathcal{D}} \subseteq \calM \times \reals$ is the g-convex hull of $\bigcup_{i=1}^n \{(x_i, t) : t \geq F_i\}$.\footnote{$\calM \times \reals$ is endowed with the Riemannian product metric; it is a Hadamard manifold.}
\end{proposition}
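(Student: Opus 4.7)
The plan is to take $f_{\mathcal{D}}$ itself as the candidate for the maximal interpolant and verify the two directions of the equivalence almost tautologically. I would begin with two preliminary observations that follow immediately from the definition: $f_{\mathcal{D}}$ is g-convex (possibly with value $+\infty$) as the pointwise supremum of the family $\mathcal{F}_{\mathcal{D}}$ of g-convex functions, and $f_{\mathcal{D}}(x_i) \leq F_i$ for every $i$, since every $f \in \mathcal{F}_{\mathcal{D}}$ satisfies $f(x_i) \leq F_i$ at the query points.

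For the necessity direction ($\Rightarrow$), I would argue: if some g-convex $\tilde f \colon \calM \to \reals$ interpolates the data, then $\tilde f(x_i) = F_i$ implies $\tilde f \in \mathcal{F}_{\mathcal{D}}$, so $f_{\mathcal{D}} \geq \tilde f$ pointwise. Since $g_i \in \partial \tilde f(x_i)$, the subgradient inequality gives $\tilde f(x) \geq F_i + \langle g_i, \log_{x_i}(x) \rangle$ for every $x$ and $i$; chaining the two bounds yields~\eqref{necsuffconditionsappthm}.

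For the sufficiency direction ($\Leftarrow$), I would assume~\eqref{necsuffconditionsappthm}. Plugging $x = x_i$ into the inequality for index $i$ gives $f_{\mathcal{D}}(x_i) \geq F_i$, which combined with the second preliminary observation forces $f_{\mathcal{D}}(x_i) = F_i$. The hypothesis then reads $f_{\mathcal{D}}(x) \geq f_{\mathcal{D}}(x_i) + \langle g_i, \log_{x_i}(x) \rangle$, which is exactly the statement $g_i \in \partial f_{\mathcal{D}}(x_i)$. So $f_{\mathcal{D}}$ interpolates the data. Maximality is then immediate from the argument used for necessity: any g-convex interpolant $\tilde f$ lies in $\mathcal{F}_{\mathcal{D}}$, hence $\tilde f \leq f_{\mathcal{D}}$ pointwise.

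The main subtlety, rather than a genuine obstacle, is that $f_{\mathcal{D}}$ need not be real-valued, so the conclusion should be read in the extended sense $\calM \to \reals \cup \{+\infty\}$ matching the declared codomain. For example, for $\calM = \reals$ with a single datum $(F_1, x_1, g_1) = (0, 0, 0)$, the functions $f_K(y) = Ky^2$ all lie in $\mathcal{F}_{\mathcal{D}}$, so $f_{\mathcal{D}}(y) = +\infty$ for $y \neq 0$, even though $f \equiv 0$ is a perfectly good real-valued interpolant. This is consistent with the proposition as stated, once one reads ``the maximal such function'' as the pointwise upper envelope of all g-convex interpolants (g-convex but possibly infinite).
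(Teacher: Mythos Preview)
Your proof is correct and follows essentially the same route as the paper's: both directions hinge on the observation that any g-convex interpolant lies in $\mathcal{F}_{\mathcal{D}}$ (hence is dominated by $f_{\mathcal{D}}$), and that under~\eqref{necsuffconditionsappthm} the envelope $f_{\mathcal{D}}$ itself interpolates. Your closing remark on the extended-real codomain is a welcome clarification that the paper leaves implicit.
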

\begin{proof}
(Condition~\eqref{necsuffconditionsappthm} is sufficient) Assume~\eqref{necsuffconditionsappthm} holds.  
Taking $x = x_i$, $f_{\mathcal{D}}(x_i) \geq F_i$.  But by definition, $f_{\mathcal{D}} \leq F_i$.
Hence, $f_{\mathcal{D}}(x_i) = F_i$, and so condition~\eqref{necsuffconditionsappthm} implies  $g_i$ is a subgradient of $f_{\mathcal{D}}$ at $x_i$.
So $f_{\mathcal{D}}$ interpolates the data.

(Condition~\eqref{necsuffconditionsappthm} is necessary) Assume the data is interpolated by a g-convex function $f$.  
Then $f(x) \leq f_{\mathscr{D}}(x)$ for all $x \in \calM$.
Therefore for each $i$ and $x$, $f_{\mathscr{D}}(x) \geq f(x) \geq F_i + \langle g_i, \log_{x_i}(x) \rangle$.
So condition~\eqref{necsuffconditionsappthm} holds, and the previous paragraph implies $f_{\mathcal{D}}$ interpolates the data.  Therefore, $f_{\mathcal{D}}$ is the maximal g-convex function interpolating the data.
\end{proof}

% i am not sure if convex hull is closed for general \emph{Hadamard} manifolds \ldots
% see https://mathoverflow.net/questions/76875/convex-hull-on-a-riemannian-manifold
% https://mathoverflow.net/questions/6627/convex-hull-in-cat0
% https://mathoverflow.net/questions/8473/example-of-non-closed-convex-hull-in-a-cat0-space/8505#8505
\begin{remark}
One can relate $f_{\mathcal{D}}$ to a g-convex hull \citep{LytchakPetrunin} in the Hadamard manifold $\reals \times \calM$.  For simplicity, we omit the details.
One can verify that if $\calM$ is a hyperbolic space and $x$ is not in the g-convex hull of $\{x_1, \ldots, x_N\}$, then $f_{\mathcal{D}}(x) = +\infty$.
%Recall that the g-convex hull of a set $S \subseteq \calM$ equals the intersection of all g-convex sets containing $S$, and is itself a g-convex set.
%For generic Riemannian manifolds, the g-convex hull can be surprisingly ill-behaved.
%For example, for generic Riemannian manifolds the g-convex hull of a set of three points is \emph{not closed}~\citep{LytchakPetrunin}.
%%\TODO{if $\calM$ is Hadamard this might not be true! read MSE responses!}
%However, if $\calM$ is a hyperbolic space, then the g-convex hull of a finite number of points is always closed.
\end{remark}
%\TODO{(2) connection to epigraphs, gconvex hulls, (3) connection to the outer/inner view}
If $\calM$ is a Euclidean space, then $f_{\mathcal{D}}$ has the explicit formula 
$$f_{\mathcal{D}}(x) = \min_{\lambda_i \geq 0, \sum_{i=1}^N \lambda_i = 1, \sum_{i=1}^N \lambda_i x_i = x} \bigg\{\sum_{i=1}^N \lambda_i F_i\bigg\},$$ 
and the set of necessary and sufficient conditions in Proposition~\ref{necsuffconditions} can shown to be equivalent to the conditions~\eqref{necessaryconditions}~\citep[Ch. 3]{taylorinterpolation}.
However, it is unclear that there is an analogous formula even for hyperbolic space.

Let us conclude this section with a positive result about g-convex interpolation.  For simplicity we focus on hyperbolic spaces.  
We have the following lemma which may be useful in other contexts.
\begin{lemma} \label{boundedhessaffine}
Let $\calM = \mathbb{H}^d$.  The function $x \mapsto \langle g, \log_y(x) \rangle$ is globally $\|g\|$-Lipschitz and $\|g\|$-smooth.
\end{lemma}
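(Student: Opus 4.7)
The plan is to reduce everything to a two-dimensional hyperbolic computation. Fix $x \in \mathbb{H}^d \setminus \{y\}$ and $u \in \T_x \mathbb{H}^d$, and let $S = \gspan(y, x, u) \subseteq \mathbb{H}^d$; this is a totally geodesic submanifold of dimension at most $2$ containing $y$, $x$ and the geodesic $t \mapsto \exp_x(tu)$. Decompose $g = g^S + g^\perp$ with $g^S \in \T_y S$ and $g^\perp \in (\T_y S)^\perp$. Because $S$ is totally geodesic, $\log_y(\gamma(t)) \in \T_y S$ for every curve $\gamma \subseteq S$, so $\langle g, \log_y(\gamma(t))\rangle = \langle g^S, \log_y(\gamma(t))\rangle$. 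It follows that the first and second directional derivatives of $\phi$ at $x$ in direction $u$ coincide with those of the restriction of $\phi$ to $S$ (built only from $g^S$), and $\|g^S\| \leq \|g\|$. It therefore suffices to prove both bounds in $\mathbb{H}^2$.

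In $\mathbb{H}^2$ I use geodesic polar coordinates $(\rho,\alpha)$ at $y$, with the orthonormal frame at $y$ chosen so that $g = \|g\|\,E_1$; then $\log_y$ identifies $(\rho,\alpha)$ with $\rho\cos\alpha\,E_1 + \rho\sin\alpha\,E_2$, so $\phi(\rho,\alpha) = \|g\|\,\rho\cos\alpha$ and the metric is $d\rho^2 + \sinh^2(\rho)\,d\alpha^2$. In the orthonormal frame $\bigl(\partial_\rho,\,(\sinh\rho)^{-1}\partial_\alpha\bigr)$ a direct computation gives a gradient of squared norm $\|g\|^2\bigl[\cos^2\alpha + (\rho/\sinh\rho)^2 \sin^2\alpha\bigr] \leq \|g\|^2$, since $\rho/\sinh\rho \leq 1$; this settles the Lipschitz bound. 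Using the only nonzero Christoffel symbols $\Gamma^\rho_{\alpha\alpha} = -\sinh\rho\cosh\rho$ and $\Gamma^\alpha_{\rho\alpha} = \coth\rho$, the Hessian in the same orthonormal frame takes the form
\[
\nabla^2\phi \;=\; \|g\|\begin{pmatrix} 0 & s\,A(\rho)\\ s\,A(\rho) & c\,B(\rho)\end{pmatrix},\quad A(\rho) = \frac{\rho\coth\rho-1}{\sinh\rho},\ \ B(\rho) = \coth\rho - \frac{\rho}{\sinh^2\rho},
\]
with $c = \cos\alpha$, $s = \sin\alpha$. Since the spectral norm of $\begin{pmatrix} 0 & a\\ a & b\end{pmatrix}$ equals $\tfrac{1}{2}\bigl(|b|+\sqrt{b^2+4a^2}\bigr)$, the smoothness bound reduces to the scalar inequality $(1-c^2)A(\rho)^2 + |c|\,B(\rho) \leq 1$ for all $\rho > 0$ and $|c| \in [0,1]$.

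The main obstacle is this scalar inequality. My plan is first to establish the two one-variable estimates $B(\rho) \leq 1$ and $A(\rho) \leq 1/\sqrt{2}$ on $[0,\infty)$: each clears to a hyperbolic inequality vanishing at $\rho = 0$ with visibly nonnegative derivative (for $B \leq 1$, after simplification this is $1 - 2\rho \leq e^{-2\rho}$; for $A \leq 1/\sqrt{2}$, the relevant derivative factors as $\sinh\rho\,(2\cosh\rho - \sqrt{2}\,\rho) \geq 0$). Then, for fixed $\rho$, the function $g(t) := (1-t^2)A^2 + tB$ is a concave quadratic in $t \in [0,1]$ with critical point $t^\ast = B/(2A^2)$. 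If $t^\ast \geq 1$, then $g$ is nondecreasing on $[0,1]$ and $g(t) \leq g(1) = B \leq 1$. Otherwise $B < 2A^2$, so $g(t^\ast) = A^2 + B^2/(4A^2) < 2A^2 \leq 1$, where the last inequality uses $A^2 \leq 1/2$. Either way $g(t) \leq 1$, completing the proof.
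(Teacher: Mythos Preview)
Your proof is correct and takes a genuinely different route from the paper's. The paper computes in one shot in $\mathbb{H}^d$: it writes $\hat f(s)=\langle g,s\rangle$ on $\T_y\calM$, invokes a pullback identity (from \citet{agarwal2018arcfirst}) expressing $\langle \ddot s,\nabla^2 f(x)\ddot s\rangle$ as $-\langle g,\,T_{y,s}^{-1}c''_{y,s,\dot s}(0)\rangle$, and then plugs in closed-form expressions for the Jacobi-field operator $T_{y,s}$ and the acceleration $c''_{y,s,\dot s}(0)$ in constant curvature $-1$ to get $\|T_{y,s}^{-1}c''_{y,s,\dot s}(0)\|\leq 1$ directly. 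No reduction to two dimensions, no polar coordinates, no scalar optimization.

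Your approach---reduce to a $2$-dimensional totally geodesic slice, compute the Hessian in geodesic polar coordinates, then handle a one-parameter family of $2\times2$ matrices---is more elementary and entirely self-contained: it uses only Christoffel symbols and two calculus inequalities, rather than importing the pullback machinery and Jacobi-field formulas. The trade-off is that you end with a small optimization argument in $(|c|,\rho)$ that the paper avoids, and your route leans on the abundance of totally geodesic planes in $\mathbb{H}^d$ (which is fine here but would not transfer to a generic Hadamard manifold). Two very minor points you might add for completeness: the case $x=y$ (where $\nabla\phi(y)=g$ and $\nabla^2\phi(y)=0$, consistent with both bounds) and the degenerate case $\dim S=1$ (where only the $(1,1)$ entry of your Hessian matrix matters and it vanishes).
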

\begin{proof}
For $s, \dot{s} \in \T_y \calM$, let $P_{y, s} = P_{y \to \exp_y(s)}$, $T_{y, s} = d \exp_y(s)$ and $c_{y, s, \dot{s}}(t) = \exp_{y}(s + t \dot{s})$.
Given $s, \dot{s} \in \T_y \calM$, let $\dot{s}_{||} = \frac{\langle s, \dot{s} \rangle}{\|s\|^2} s$ and $\dot{s}_{\perp} = \dot{s} - \dot{s}_{||}$.
Moreover, given $\ddot{s} \in \T_{\exp_y(s)} \calM$, let $\ddot{s}_{||} = \frac{\langle P_{y, s} s, \ddot{s} \rangle}{\|s\|^2} s$ and $\ddot{s}_{\perp} = \ddot{s} - \ddot{s}_{||}$.
It is well-known (see for example \citep[App.~B]{criscitiello2020accelerated}) that 
\begin{align} \label{eq10}
T_{y, s}\dot{s} = P_{y, s}\dot{s}_{||} + \frac{\sinh(\|s\|)}{\|s\|} P_{y, s}\dot{s}_{\perp}, \quad \quad T_{y, s}^{-1}\ddot{s} = P_{y, s}^{-1}\ddot{s}_{||} + \frac{\|s\|}{\sinh(\|s\|)} P_{y, s}^{-1}\ddot{s}_{\perp}.
\end{align}

Define $f(x) = \langle g, \log_y(x)\rangle$, and $\hat f = f \circ \exp_y \colon \T_y \calM \to \reals$.
Note that $\hat{f}(s) = \inner{g}{s}$.
Let $x \in \calM$, $s = \log_y(x)$, and $\ddot{s} \in \T_x \calM$ with $\|\ddot{s}\|=1$.
By Lemma 5 of~\citep{agarwal2018arcfirst},
\begin{align} \label{eq11}
\nabla f(x) = (T_{y, s}^{-1})^*\nabla \hat f(s) = (T_{y, s}^{-1})^* g
\end{align}
and, defining $\dot{s} = T_{y, s}^{-1} \ddot{s}$,
\begin{align}\label{eq105}
\langle \ddot{s}, \nabla^2 f(x) \ddot{s} \rangle = \langle T_{y, s}^{-1} \ddot{s}, \nabla^2 \hat{f}(s) T_{y, s}^{-1} \ddot{s} \rangle - \langle \nabla f(x), c_{y, s, \dot{s}}''(0)\rangle = - \langle g, T_{y,s}^{-1} c_{y, s, \dot{s}}''(0)\rangle.
\end{align}
Equations~\eqref{eq10} and~\eqref{eq11} imply $\|\nabla f(x)\| \leq \|T_{y, s}^{-1}\| \cdot \|g\| \leq \|g\|$, so $f$ is $\|g\|$-Lipschitz.

By equation (73) from \citep[App.~B]{criscitiello2020accelerated}, we know that
$$c_{y, s, \dot{s}}''(0) = -\frac{\sinh(\|s\|)\cosh(\|s\|)-\|s\|}{\|s\|^3} \|\dot{s}_{\perp}\|^2 \cdot P_{y, s} s + 2 \frac{\cosh(\|s\|)\|s\| - \sinh(\|s\|)}{\|s\|^3} \inner{s}{\dot{s}} \cdot P_{y, s} \dot{s}_{\perp}.$$
Therefore, using equation~\eqref{eq10} and $\dot{s} = T_{y, s}^{-1} \ddot{s}$,
\begin{align*}
T_{y, s}^{-1} c_{y, s, \dot{s}}''(0) 
=& -\frac{\sinh(\|s\|)\cosh(\|s\|)-\|s\|}{\|s\|^3} \|\dot{s}_{\perp}\|^2 \cdot s \\
&+ 2 \frac{\|s\|}{\sinh(\|s\|)} \cdot \frac{\cosh(\|s\|)\|s\| - \sinh(\|s\|)}{\|s\|^3} \inner{s}{\dot{s}} \cdot  \dot{s}_{\perp} \\
=& -\frac{\sinh(\|s\|)\cosh(\|s\|)-\|s\|}{\|s\|^3} \frac{\|s\|^2}{\sinh^2(\|s\|)} \|\ddot{s}_{\perp}\|^2 \cdot s \\
&+ 2 \frac{\|s\|^2}{\sinh^2(\|s\|)} \cdot \frac{\cosh(\|s\|)\|s\| - \sinh(\|s\|)}{\|s\|^3} \inner{P_{y, s} s}{\ddot{s}} \cdot P_{y, s}^{-1} \ddot{s}_{\perp}.
\end{align*}
Using $\|\ddot{s}\| = 1$, we find
\begin{align*}
\|T_{y, s}^{-1} c_{y, s, \dot{s}}''(0) \|
\leq& \frac{\sinh(\|s\|)\cosh(\|s\|)-\|s\|}{\|s\|^3} \frac{\|s\|^2}{\sinh^2(\|s\|)} \|s\| \leq 1.
\end{align*}
By~\eqref{eq105}, we conclude 
$|\langle \ddot{s}, \nabla^2 f(x) \ddot{s} \rangle| \leq \|g\| \cdot \|T_{y, s}^{-1} c_{y, s, \dot{s}}''(0)\| \leq \|g\|.$
%We also know that $\dot{s} = T_{x,s}^{-1}(\ddot{s})$, and so
%$$\dot{s}_{||} = \ddot{s}_{||}, \quad \quad \dot{s}_{\perp} = \frac{\|s\|}{\sinh(\|s\|)} \ddot{s}_{\perp}$$
%and
%$$\langle \ddot{s}, \nabla^2 f(x) \ddot{s} \rangle = -\langle g, T_{x, s}^{-1} c_{x, s, \dot{s}}''(0) \rangle$$
\end{proof}

Using this lemma, we can derive sufficient conditions for g-convex interpolation (Proposition~\ref{interpolationsuffcinditions}) and extension (Proposition~\ref{gconvexextension}).
\begin{proposition}[Restatement of Proposition~\ref{suffcondinteprolationguy}]\label{interpolationsuffcinditions}
Consider the data $(F_i, x_i, g_i)_{i=1}^N$.
Assume that $\|g_i\| \leq \frac{\mu}{2}$ for all $i = 1, \ldots, N$, and
\begin{align}\label{thisinequex}
F_j \geq F_i + \langle g_i, \log_{x_i}(x_j)\rangle + \frac{\mu}{2} \dist(x_i, x_j)^2 \quad \forall i, j \in \{1, \ldots, N\}.
\end{align}
Then $(F_i, x_i, g_i)_{i=1}^N$ is interpolated by a $\frac{\mu}{2}$-strongly g-convex function.
\end{proposition}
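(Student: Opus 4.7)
The plan is to construct an explicit $\tfrac{\mu}{2}$-strongly g-convex interpolant as the pointwise maximum of $N$ \emph{regularized} local models, one per data point. For each $i \in \{1, \ldots, N\}$, set
$$h_i(x) = F_i + \langle g_i, \log_{x_i}(x)\rangle + \tfrac{\mu}{2}\dist(x, x_i)^2,$$
and take $f = \max_{i} h_i$. I will show $f$ has all the required properties.

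First I would verify that each $h_i$ is $\tfrac{\mu}{2}$-strongly g-convex. By Lemma~\ref{boundedhessaffine}, the function $x \mapsto \langle g_i, \log_{x_i}(x)\rangle$ is $\|g_i\|$-smooth on $\mathbb{H}^d$, so its Hessian satisfies $\nabla^2[\langle g_i, \log_{x_i}(\cdot)\rangle] \succeq -\|g_i\| I \succeq -\tfrac{\mu}{2} I$, using the hypothesis $\|g_i\| \leq \mu/2$. By Lemma~\ref{lemmaboundhess}, $\nabla^2[\tfrac{\mu}{2}\dist(\cdot, x_i)^2] \succeq \mu I$ globally. Summing gives $\nabla^2 h_i \succeq \tfrac{\mu}{2} I$, so $h_i$ is $\tfrac{\mu}{2}$-strongly g-convex. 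A routine check from the definition shows that the pointwise maximum of $\tfrac{\mu}{2}$-strongly g-convex functions is itself $\tfrac{\mu}{2}$-strongly g-convex, so $f$ inherits the desired convexity.

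Next I would check interpolation at an arbitrary data point $x_j$. Directly, $h_j(x_j) = F_j$, since both the linear and the distance-squared terms vanish there. For every $i$, the hypothesis~\eqref{thisinequex} says exactly $h_i(x_j) \leq F_j$. Hence $f(x_j) = \max_i h_i(x_j) = F_j$. Since $f \geq h_j$ pointwise with equality at $x_j$, any subgradient of $h_j$ at $x_j$ is automatically a subgradient of $f$ there. But $h_j$ is smooth at $x_j$: its linear part contributes $g_j$ to the gradient at $x_j$ (the differential of $\exp_{x_j}$ at $0$ is the identity on $\T_{x_j}\calM$), and the gradient of $\tfrac{\mu}{2}\dist(\cdot, x_j)^2$ at $x_j$ vanishes by Lemma~\ref{lemmaboundhess}. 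Thus $\nabla h_j(x_j) = g_j$, giving $g_j \in \partial f(x_j)$.

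There is no real obstacle once Lemma~\ref{boundedhessaffine} is in hand: the whole argument pivots on the fact that although $x \mapsto \langle g_i, \log_{x_i}(x)\rangle$ fails to be g-convex on $\mathbb{H}^d$, its Hessian deficit is quantitatively bounded by $\|g_i\|$, so the strongly g-convex penalty $\tfrac{\mu}{2}\dist(\cdot, x_i)^2$ can restore g-convexity. The threshold $\|g_i\| \leq \mu/2$ is precisely the balance point that makes this trade-off feasible, and the inequalities~\eqref{thisinequex} are exactly tuned so that $h_j$ dominates each $h_i$ at $x_j$ with equality iff $i = j$.
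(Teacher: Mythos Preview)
Your proposal is correct and follows essentially the same approach as the paper: you build the identical interpolant $f = \max_i h_i$ with $h_i(x) = F_i + \langle g_i, \log_{x_i}(x)\rangle + \tfrac{\mu}{2}\dist(x,x_i)^2$, invoke Lemma~\ref{boundedhessaffine} together with Lemma~\ref{lemmaboundhess} to obtain $\tfrac{\mu}{2}$-strong g-convexity of each $h_i$, and verify interpolation via the inequalities~\eqref{thisinequex}. Your justification that $g_j \in \partial f(x_j)$ (using $f \geq h_j$ with equality at $x_j$ and $\nabla h_j(x_j)=g_j$) is slightly more explicit than the paper's, but the argument is the same.
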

\begin{proof}
Let $f(x) = \max_{i \in \{1, \ldots, N\}}\big\{ F_i + \langle g_i, \log_{x_i}(x)\rangle + \frac{\mu}{2} \dist(x_i, x)^2 \big\}$.
Taking $i=j$, we know $f(x_j) \geq F_j$.  Inequality~\eqref{thisinequex} implies $f(x_j) \leq F_j$, so we conclude $f(x_j) = F_j$, for all $j$.
Moreover, 
$$f(x) \geq F_i + \langle g_i, \log_{x_i}(x_j)\rangle + \frac{\mu}{2} \dist(x_i, x)^2 \geq f(x) + \langle g_i, \log_{x_i}(x_j)\rangle, \quad \forall x\in\calM, \forall i \in \{1, \ldots, N\},$$
so $g_i \in \partial f(x_i)$.  So $f$ interpolates the data.

By Lemma~\ref{boundedhessaffine}, each function $x \mapsto F_i + \langle g_i, \log_{x_i}(x)\rangle + \frac{\mu}{2} \dist(x_i, x)^2$ is $\mu - \|g_i\| \geq \mu - \frac{\mu}{2} = \frac{\mu}{2}$-strongly g-convex.
Therefore, $f$ is $\frac{\mu}{2}$-strongly g-convex.
\end{proof}

We have the following result (compare with the convex case~\citep{cvxextension}):
\begin{proposition}\label{gconvexextension}
Let $D$ be an open g-convex subset of $\calM = \mathbb{H}^d$ and let $f \colon D \to \reals$ be differentiable and $\mu$-strongly g-convex in $D$.
If $\|\nabla f(x)\| \leq \mu$ for all $x \in D$, then there is a globally g-convex function $\tilde f \colon \calM \to \reals$ such that $\tilde f(x) = f(x)$ for all $x \in D$.
\end{proposition}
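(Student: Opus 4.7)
The plan is to produce $\tilde f$ as the supremum of a family of g-convex ``second-order supporting'' functions, one centered at each $y \in D$. Specifically, I will take
$$\phi_y(x) \;=\; f(y) + \langle \nabla f(y), \log_y(x)\rangle + \tfrac{1}{2}\|\nabla f(y)\|\cdot \dist(y,x)^2, \qquad \tilde f(x) = \sup_{y\in D} \phi_y(x).$$
The linear-like term $\langle \nabla f(y),\log_y(\cdot)\rangle$ is not g-convex, so the squared-distance term is added precisely to restore g-convexity; the hypothesis $\|\nabla f(y)\|\le \mu$ is what makes it small enough to still be dominated by $f$ on $D$.

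\emph{Step 1 (each $\phi_y$ is globally g-convex).} By Lemma~\ref{boundedhessaffine}, $x\mapsto \langle \nabla f(y), \log_y(x)\rangle$ is $\|\nabla f(y)\|$-smooth on $\mathbb{H}^d$, so its Riemannian Hessian is $\succeq -\|\nabla f(y)\|\, I$. By Lemma~\ref{lemmaboundhess}, $x\mapsto \tfrac12\dist(y,x)^2$ has Hessian $\succeq I$, so the squared-distance piece contributes $\succeq \|\nabla f(y)\|\, I$. Adding, $\nabla^2\phi_y \succeq 0$, so $\phi_y$ is g-convex on all of $\calM$.

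\emph{Step 2 ($\tilde f$ agrees with $f$ on $D$).} Plugging $y=x$ yields $\phi_x(x)=f(x)$, so $\tilde f(x) \ge f(x)$ for $x\in D$. Conversely, for $x, y\in D$ (the geodesic between them stays in $D$ since $D$ is g-convex), $\mu$-strong g-convexity of $f$ gives
$$f(x) \;\ge\; f(y) + \langle \nabla f(y), \log_y(x)\rangle + \tfrac{\mu}{2}\dist(y,x)^2 \;\ge\; \phi_y(x),$$
using $\|\nabla f(y)\|\le\mu$. Taking the sup over $y\in D$ gives $f(x)\ge \tilde f(x)$, and hence equality on $D$.

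\emph{Step 3 (finiteness, and g-convexity of $\tilde f$).} The key \emph{hidden} consequence of the hypotheses is that $D$ is automatically bounded with $\diam D\le 2$: summing $\mu$-strong g-convexity of $f$ at $x$ and at $y$ yields
$$0 \;\ge\; \langle \nabla f(x), \log_x(y)\rangle + \langle \nabla f(y), \log_y(x)\rangle + \mu\dist(x,y)^2 \;\ge\; -2\mu\dist(x,y) + \mu\dist(x,y)^2,$$
so $\dist(x,y)\le 2$. Since $\|\nabla f\|\le \mu$, the function $f$ is $\mu$-Lipschitz and hence bounded on $D$. Now fix $x\in\calM$ and any $y_0\in D$: for every $y\in D$, $\dist(y,x)\le \dist(y_0,x)+2$, and Lemma~\ref{boundedhessaffine} gives $|\langle \nabla f(y),\log_y(x)\rangle|\le \mu\dist(y,x)$. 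Substituting into the definition of $\phi_y(x)$ produces a uniform upper bound depending only on $x$, so $\tilde f(x)<\infty$. Finiteness from below is immediate from $\tilde f(x)\ge \phi_{y_0}(x)$. Being a real-valued supremum of g-convex functions, $\tilde f$ is globally g-convex on $\calM$.

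\emph{Main obstacle.} The only subtlety is that the quadratic correction $\tfrac12\|\nabla f(y)\|\dist(y,x)^2$ (needed to neutralize the curvature-induced non-g-convexity of $\langle \nabla f(y),\log_y(\cdot)\rangle$) would a priori blow up the supremum if $D$ were unbounded. Step~3 dispels this concern: the two hypotheses ``$\mu$-strongly g-convex'' and ``$\|\nabla f\|\le \mu$'' are jointly strong enough to secretly cap the diameter of $D$ at $2$, making the supremum locally uniformly bounded.
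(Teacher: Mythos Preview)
Your proof is correct and follows essentially the same approach as the paper: both define $\tilde f$ as a supremum over $y\in D$ of the first-order model at $y$ plus a squared-distance correction, then invoke Lemma~\ref{boundedhessaffine} and Lemma~\ref{lemmaboundhess} for g-convexity and $\mu$-strong g-convexity for the matching on $D$. The paper uses the fixed coefficient $\tfrac{\mu}{2}$ rather than your $\tfrac12\|\nabla f(y)\|$; either choice works, and the paper's makes Step~2 slightly more direct (no need to invoke $\|\nabla f(y)\|\le\mu$ there). Your Step~3, showing $\diam D\le 2$ and hence that $\tilde f$ is everywhere finite, is a detail the paper does not spell out, so in that respect your argument is more complete.
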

\begin{proof}
Define 
$$\tilde f(x) = \sup_{y \in D}\Big\{ f(y) + \langle \nabla f(y), \log_y(x) \rangle + \frac{\mu}{2} \dist(x,y)^2\Big\}.$$
If $x \in D$, then $\tilde f(x) \geq f(x)$ (taking $y = x$).  On the other hand, $\mu$-strong g-convexity of $f$ in $D$ implies $f(x) \geq f(y) + \langle \nabla f(y), \log_y(x) \rangle + \frac{\mu}{2} \dist(x,y)^2$ for all $y \in D$.  Therefore, $f(x) \geq \tilde f(x)$.  We conclude $f(x) = \tilde f(x)$ if $x \in D$.

By Lemma~\ref{boundedhessaffine}, the function $x \mapsto f(y) + \langle \nabla f(y), \log_y(x) \rangle$ is $\|\nabla f(y)\|$-smooth.
Hence, the function $x \mapsto f(y) + \langle \nabla f(y), \log_y(x) \rangle + \frac{\mu}{2} \dist(x,y)^2$ is $\mu - \|\nabla f(y)\| \geq \mu - \mu = 0$-strongly g-convex, if $y \in D$.
Therefore, $\tilde f$ is g-convex.
\end{proof}

\section{Carrying over the convex upper and lower bounds when $r$ is very small} \label{carryovereuc}
%In this section, we give a general argument showing that g-convexity is equivalent to convexity when $r$ is sufficiently small ($r \leq \frac{1}{\sqrt{\kappa}}, r \leq \epsilon$).  This allows us to transfer lower bounds and algorithms back and forth between when $r$ is small.
%
%\TODO{following is from arxiv bump fct paper}
It is important to mention that if the optimization is carried out in a very small region then smooth g-convex optimization reduces to Euclidean convex optimization, and in particular upper and lower bounds from Euclidean space carry over.
However, we shall see that the resulting lower bounds are worse than those given in Section~\ref{secsmoothlowerbounds}.
This technique does, however, allow us to easily construct an ``eventually accelerated'' algorithm for smooth strongly g-convex optimization, like the algorithm of \citet{ahn2020nesterovs}.  See below.

To state the following proposition, we need properties of the Riemannian curvature tensor $R$ and its covariant derivative $\nabla R$ which are worked out in~\citep[Sec.~2]{criscitiello2020accelerated}.  %for a very brief introduction to the curvature tensor $R$ and to its covariant derivative $\nabla R$.
The assumption $\norm{\nabla R} \leq F$ holds with $F = 0$ for symmetric spaces, including hyperbolic spaces.
%All locally symmetric spaces (which includes all hyperbolic spaces, $\calP_n$ and $\mathcal{SLP}_n$) have $\nabla R(x) = 0$ for all $x \in \calM$.
\begin{proposition} \label{gconvextoconvexwhenrsmall}
Let $\calM$ be a complete Riemannian manifold which has sectional curvatures in the interval $[-K, K]$ with $K \geq 0$ and also satisfies $\norm{\nabla R(x)} \leq F$ for all $x \in \calM$.

Let $\xorigin \in \calM$.
Let $f \colon \calM \rightarrow \reals$ be a twice continuously differentiable function which has a minimizer $x^*$.
Assume $f$ is $\mu$-strongly g-convex and $L$-smooth in $B(\xorigin, r)$, $\kappa = \frac{L}{\mu}$, and $x^*$ is contained in $B(\xorigin, r)$, where $r\in (0, {\frac{1}{\sqrt{\kappa}}} \cdot \min\{\frac{1}{4 \sqrt{K}}, \frac{K}{4 F}\}]$.
Define the pullback $\hat{f} = f \circ \exp_{\xorigin}$.
Then, $\hat{f}$ is $\frac{\mu}{2}$-strongly convex and $\frac{3}{2} L$-smooth in the Euclidean sense in $B_{\xorigin}(0, r) \subseteq \T_{\xorigin}\calM$.
\end{proposition}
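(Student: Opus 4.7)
The plan is to bound the Euclidean Hessian of $\hat f$ pointwise by comparing it to the Riemannian Hessian of $f$ via the standard Jacobi-field identity, and then controlling the resulting error terms using the smallness of $r$.

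Concretely, for $v \in B_\xorigin(0, r) \subset \T_\xorigin \calM$ and $u \in \T_\xorigin\calM$, write $T_v = \D\exp_\xorigin(v)$ and $c(t) = \exp_\xorigin(v + tu)$. Differentiating $\hat f(v+tu)=f(c(t))$ twice at $t=0$ (using that $c$ is \emph{not} a geodesic, so its Riemannian acceleration $c''(0)$ is nonzero) yields
\begin{align*}
\langle u, \nabla^2 \hat f(v) u\rangle_{\mathrm{Euc}} = \langle T_v u, (\nabla^2 f)(\exp_\xorigin v)[T_v u]\rangle + \langle (\nabla f)(\exp_\xorigin v), c''(0)\rangle.
\end{align*}
I would handle each piece separately. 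For the first term, $\mu$-strong g-convexity and $L$-smoothness of $f$ on $B(\xorigin, r)$ give $\mu\|T_v u\|^2 \leq \langle T_v u, (\nabla^2 f)[T_v u]\rangle \leq L\|T_v u\|^2$. For the gradient in the second term, since $\nabla f(x^*) = 0$ and both $x^*, \exp_\xorigin v$ lie in the $L$-smooth domain $B(\xorigin, r)$, parallel-transport comparison gives $\|\nabla f(\exp_\xorigin v)\| \leq L\,\dist(\exp_\xorigin v, x^*) \leq 2Lr$. Finally, I would invoke the quantitative Jacobi-field bounds from \citep[Sec.~2]{criscitiello2020accelerated} (which use both $\|R\| \leq K$ and $\|\nabla R\| \leq F$) to extract constants with $\|T_v u\|^2 = (1+\epsilon_1)\|u\|^2$, $|\epsilon_1| = O(Kr^2 + Fr^3)$, and $\|c''(0)\| \leq \epsilon_2 \|u\|^2$ with $\epsilon_2 = O(Kr + Fr^2)$.

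Substituting these estimates into the identity produces
\begin{align*}
\big(\mu(1-\epsilon_1) - 2Lr\epsilon_2\big)\|u\|^2 \leq \langle u, \nabla^2 \hat f(v) u\rangle_{\mathrm{Euc}} \leq \big(L(1+\epsilon_1) + 2Lr\epsilon_2\big)\|u\|^2.
\end{align*}
The assumption $r \leq \frac{1}{\sqrt{\kappa}}\min\{\frac{1}{4\sqrt{K}}, \frac{K}{4F}\}$ is engineered so that $Kr^2 \leq \frac{1}{16\kappa}$ (from the first term in the min) and $Fr^3 \leq \frac{Kr^2}{4\sqrt{\kappa}} \leq \frac{1}{64\kappa^{3/2}}$ (using both terms). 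Since $\kappa \geq 1$, this makes both $L\epsilon_1$ and $2Lr\epsilon_2$ at most $L/4$, and likewise both $\mu\epsilon_1$ and $2Lr\epsilon_2$ at most $\mu/4$, yielding the desired two-sided bound $\frac{\mu}{2}I \preceq \nabla^2\hat f(v) \preceq \frac{3L}{2}I$.

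The main obstacle is the bookkeeping to certify both inequalities simultaneously under a single scaling on $r$; the conceptual role of each factor in the prescribed bound is clear, however: the $\frac{1}{\sqrt{\kappa}}$ prefactor is precisely what lets the $\|\nabla f\| \leq 2Lr$ contribution be absorbed into the $\mu$-lower bound (via $LKr^2 \lesssim \mu$), while the $\frac{K}{4F}$ factor in the minimum is tailored to dominate the $\nabla R$ correction to $c''(0)$. Since this correction vanishes on symmetric spaces (in particular hyperbolic spaces), the latter factor becomes vacuous there and only the curvature term $\frac{1}{4\sqrt{K}}$ is binding.
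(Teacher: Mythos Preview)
Your proposal is correct and matches the paper's approach in substance. The paper invokes a pre-packaged operator-norm estimate from \citep[Sec.~2]{criscitiello2020accelerated},
\[
\norm{\nabla^2 \hat f(s) - P_{\xorigin,s}^* \nabla^2 f(y) P_{\xorigin,s}} \leq \tfrac{7}{9} L K \norm{s}^2 + \tfrac{3}{2} K \norm{s} \norm{\nabla f(y)},
\]
valid for $\norm{s} \leq \min\{\frac{1}{4\sqrt K},\frac{K}{4F}\}$, together with the same gradient bound $\norm{\nabla f(y)}\leq 2Lr$ you obtain, to get directly that the perturbation is at most $4LKr^2 \leq \mu/2$; the $F$-dependence is already absorbed into the hypothesis on $\norm{s}$. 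Your decomposition via $T_v$ and $c''(0)$ is exactly what underlies that cited inequality (indeed the paper uses the same identity elsewhere, in Lemma~\ref{boundedhessaffine}), so the only difference is that the paper compares $\nabla^2\hat f$ to $P^*\nabla^2 f\,P$ rather than to $T_v^*\nabla^2 f\,T_v$, which lets it skip your separate $\epsilon_1$ term and makes the constant-tracking slightly shorter.
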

%\noindent Lemma~\ref{gconvextoconvexwhenrsmall} shows that curvature does not affect local convergence rates.
\begin{proof}
Following~\citep[Sec.~2]{criscitiello2020accelerated}, if $\norm{s} \leq \min\{\frac{1}{4 \sqrt{K}}, \frac{K}{4 F}\}$ then
$$\norm{\nabla^2 \hat{f}(s) - P_{\xorigin, s}^* \nabla^2 f(y) P_{\xorigin, s}} \leq  \frac{7}{9} L K \norm{s}^2
+ \frac{3}{2} K \norm{s} \norm{\nabla f(y)}$$
where $y = \exp_{\xorigin}(s)$ and $P_{x, s}$ denotes parallel transport along the curve $c(t) = \exp_x(ts)$ from $t=0$ to $t=1$.
Using $L$-smoothness of $f$ and $x^* \in B(\xorigin, r)$, 
$$\norm{\nabla f(y)} = \norm{\nabla f(y) - P_{x^* \rightarrow y} \nabla f(x^*)} \leq L \dist(y, x^*) \leq 2 L r.$$
We determine
$\norm{\nabla^2 \hat{f}(s) - P_{\xorigin, s}^* \nabla^2 f(y) P_{\xorigin, s}} \leq 4 L K r^2 \leq \frac{\mu}{2}$
if $\norm{s} \leq r$, by our choice of $r$.

By $\mu$-strong g-convexity and $L$-smoothness of $f$, we know all eigenvalues of the symmetric operator $P_{\xorigin, s}^* \nabla^2 f(y) P_{\xorigin, s}$ are in $[\mu, L]$.
Hence, all eigenvalues of $\nabla^2 \hat{f}(s)$ are in the interval $[\frac{\mu}{2}, L + \frac{\mu}{2}]$ for all $\norm{s} \leq r$.
\end{proof}

We can use this proposition to prove an $\Omega(\sqrt{\kappa})$ lower bound when minimizing a $\kappa$-conditioned smooth strongly g-convex function in a ball of radius $r \leq O(\frac{1}{\sqrt{\kappa}})$.
Indeed, any algorithm $\calA$ querying points $x$ on $\calM$ can be converted into an algorithm $\tilde \calA$ querying points in a fixed tangent space $\T_{\xorigin} \calM$ using the exponential map in the obvious way.
The lower bound for Euclidean spaces provides a hard function $\tilde{f} \colon \T_{\xorigin}\calM \rightarrow \reals$ for $\tilde{\calA}$. 
In turn, the function $f = \tilde{f} \circ \log_{\xorigin}$ is a hard function for $\calA$.
Proposition~\ref{gconvextoconvexwhenrsmall} guarantees $f$ is g-convex on $\calM$ in a small enough ball of radius $r$.
%There is a caveat: the minimizer of $\tilde{f}$ may not be within a distance $r$ of the origin of the tangent space.  However, this can be fixed, at no expense, by scaling the linear term in the ``worst function in the world''~\citep[p.~67]{nesterov2004introductory} used for proving lower bounds in Euclidean spaces.
Likewise, we can prove an $\Omega(\frac{1}{\sqrt{\epsilon}})$ lower bound for minimizing smooth g-convex functions in a ball of radius $r \leq O(\sqrt{\epsilon})$.

However, \emph{these lower bounds have two major downsides}.
First, the functions constructed are not globally g-convex (or $L$-smooth) and so do not actually belong to the function classes we defined in Section~\ref{problemclasses}.
Second, these lower bounds only hold when $r$ is very small, e.g., $r \leq O(\sqrt{\epsilon})$.
Ignoring the first downside, one can still try extend these bounds to large $r$; however, this results in unsatisfactory bounds.
For example, for smooth g-convex optimization, roughly we have that $\Omega(\frac{1}{\sqrt{\epsilon}})$ queries are required to find a point $x$ such that $f(x) - f^* \leq \epsilon \cdot \frac{1}{2} L (\sqrt{\epsilon})^2 = \frac{\epsilon^2}{r^2} \cdot \frac{1}{2} L r^2$.  Letting $\epsilon' = \frac{\epsilon^2}{r^2}$, we find that $\Omega(\frac{1}{\sqrt{r} (\epsilon')^{1/4}})$ queries are needed to find a point $x$ such that $f(x) - f^* \leq \epsilon' \cdot \frac{1}{2} L r^2$.
This bound does not even have the correct scaling in $\epsilon'$!

Lastly, we mention that we can use Proposition~\ref{gconvextoconvexwhenrsmall} to construct an ``eventually accelerated'' algorithm for smooth strongly g-convex optimization, like the algorithm of \citet{ahn2020nesterovs}.  
%See Appendix~\ref{detailseuccarry} for a few additional details.
%First let us briefly sketch how Proposition~\ref{gconvextoconvexwhenrsmall} allows us to construct an ``eventually accelerated algorithm'' for smooth strongly g-convex optimization.
Let $\calM$ be a complete Riemannian manifold with constants $K$ and $F$ as in Proposition~\ref{gconvextoconvexwhenrsmall}.
Consider a function $f \colon \calM \rightarrow \reals$ with a minimizer $x^*$.  Assume $f$ is globally $\mu$-strongly g-convex and $L$-smooth in some g-convex set $D$ containing the ball $B(x^*, r)$ where $r = {\frac{1}{\sqrt{\kappa}}} \cdot \min\{\frac{1}{4 \sqrt{K}}, \frac{K}{4 F}\}$.
Suppose $x_0 \in B(x^*, r)$ and $B(x_0, r) \subseteq D$.
Since $\hat{f}$ has condition number at most $3 \kappa$ in $B_{x_0}(0, r) \subseteq \T_{x_0}\calM$, we can minimize $f$ by simply running a Euclidean algorithm on $\hat{f} = f \circ \exp_{x_0}$ in the ball $B_{x_0}(0, r)$.
For example, running an accelerated algorithm for constrained optimization (like from~\citep[Sec.~5.1]{nesterovacceleratedgradientinconvexset2007}) produces a \emph{locally} accelerated algorithm for g-convex optimization.  

This gives a simple proof that when $\dist(x_0, x^*) \leq O({\frac{1}{\sqrt{\kappa}}})$, there is an accelerated algorithm.
We can combine this algorithm with Riemannian gradient descent (RGD) to get a globally convergent algorithm which performs at least as well as RGD and which is eventually accelerated.
%, like the algorithm of \citet{ahn2020nesterovs}.
Consider running $T = \tilde \Theta(\kappa)$ steps of the algorithm we just described (with $r = {\frac{1}{\sqrt{\kappa}}} \cdot \min\{\frac{1}{4 \sqrt{K}}, \frac{K}{4 F}\}$) in parallel with $T$ steps of projected RGD on $f$ (in $D$), both initialized at the same point $x_0 \in D$.  
%By ``run in parallel,'' we mean that at every inner iteration we run one step of Nesterov's method in a tangent space and one step of projected RGD.
%As iterations progress, we keep track of the best function value encountered so far.
After these $T$ steps, we set $x_1 \in D$ to be the last iterate produced by one of the algorithms based on function value and we repeat starting from $x_1$, etc.
%Precisely, consider the algorithm which runs $T = \tilde \Theta(\kappa)$ steps of both (projected) RGD and the algorithm we proposed (with $r = {\frac{1}{\sqrt{\kappa}}} \cdot \min\{\frac{1}{4 \sqrt{K}}, \frac{K}{4 F}\}$) in parallel, and periodically restarts both algorithms with the best final iterate of both algorithms (best in terms of function value).  
%Let us call the iterates at which the algorithm restarts the \emph{outer} iterates ($x_0$ is an outer iterate).  Suppose the algorithm restarts after every $T = \tilde{\Theta}(\kappa)$ inner iterations of each algorithm.
By design, this algorithm performs at least as well as RGD in terms of function value.
By our choice of $T = \tilde \Theta(\kappa)$, once an iterate $x_k$ of the algorithm enters $B(x^*, r)$, then the distance to the minimizer of the subsequent outer iterates $x_{k+1}, x_{k+2}, \ldots$ is decreasing.  So once this method is restarted in $B(x^*, r)$, it stays in this ball, and the NAG sequence in a tangent space provides cost function value decrease at an accelerated rate.  
Therefore this algorithm is eventually accelerated.  %, and its convergence guarantees come directly from convex optimization.
We note that the algorithms of \citet{zhang2018estimatesequence} and \citet{ahn2020nesterovs} do not require a bound on $\norm{\nabla R}$ while the algorithm we presented does.

\section{Proof of Theorem~\ref{nonsmoothlowerbound}: showing $x^* \in \bigcap_{k=0}^{T-1} S_{i_k}^{s_k}$} \label{geoproofinlowerbound}
Consider the hyperbolic triangle formed by $x^*, \xorigin, z_{i_k}^{s_k}$.
Let $\beta \in [0, \pi]$ be the angle between $\log_{z_{i_k}^{s_k}}(x^*)$ and $\log_{z_{i_k}^{s_k}}(\xorigin)$.
Let $b = \dist(x^*, z_{i_k}^{s_k})$.
The angle between $\log_{\xorigin}(x^*)$ and $\log_{\xorigin}(z_{i_k}^{s_k}) = a s_k e_{i_k}$ equals $\theta = \arccos(1/\sqrt{d})$ for all $k=0, \ldots, T-1$.  
The hyperbolic law of cosines~\citep[Ch. 3.5]{ratcliffe2019hyperbolic} and $\cos(\theta) = \frac{1}{\sqrt{d}} = \frac{\tanh(a)}{\tanh(r)}$ imply
\begin{align*}
\cosh(b) &= \cosh(a) \cosh(r) - \sinh(a) \sinh(r) \cos(\theta)\\
&= \cosh(a) \cosh(r) - \sinh(a) \cosh(r) \tanh(a) \\
 &= \frac{\cosh(r)}{\cosh(a)}[\cosh(a)^2 - \sinh^2(a)] = \frac{\cosh(r)}{\cosh(a)}.
\end{align*}
%The hyperbolic law of sines (\TODO{cite Ratcliffe}) implies $\frac{\sin(\beta)}{\sinh(r)} = \frac{\sin(\theta)}{\sinh(b)}.$
One more application of the hyperbolic law of cosines implies 
$$\cosh(r) = \cosh(a) \cosh(b) - \sinh(a)\sinh(b) \cos(\beta) = \cosh(r) - \sinh(a)\sinh(b) \cos(\beta).$$  
Therefore, $\cos(\beta) = 0$, i.e., $\langle\log_{z_{i_k}^{s_k}}(\xorigin), \log_{z_{i_k}^{s_k}}(x^*) \rangle = 0$.
So, $x^* \in S_{i_k}^{s_k}$ for each $k$, as claimed.

\section{Extending the $\tilde \Omega(\zeta)$ lower bound: details from Section~\ref{nonstronglyconvexcaseextension}} \label{extendingthelbgeneral}

\subsection{Hadamard manifolds of bounded curvature}\label{thisguyappthis}
In this section, we prove the following generalization of Theorem~\ref{nicetheorem}.
\begin{theorem}[Generalization of Theorem~\ref{nicetheorem}] \label{theoremtheorem}
Let $\calM$ be a Hadamard manifold of dimension $d \geq 2$ whose sectional curvatures are in the interval $[\Klo, \Kup]$ with $\Kup < 0$.
Let $\tilde{L} > 0$ and $r \geq \frac{64}{\sqrt{-\Kup}}$.
Define
$$\hat{\epsilon} = \frac{1}{2^{10} r \sqrt{-\Klo}}, \quad \epsilon = \frac{1}{2^{18} \zeta_{r \sqrt{-\Klo}} \log^2(\zeta_{r \sqrt{-\Klo}})}, \quad \epsilon' = \frac{1}{2^{18} \log^2(\zeta_{r \sqrt{-\Klo}})},$$
and $L = \frac{\tilde L}{2^{10} \log^2(r \sqrt{-\Klo})}, \tilde M = \frac{\tilde{L}}{2 \sqrt{-\Klo}}$.
Let $\calA$ be any deterministic algorithm.

There is a $C^\infty$ function $\tilde{f} \in \mathcal{F}_{r, \tilde M, 0, \tilde{L}}^{\xorigin}$ with unique minimizer $x^*$ such that running $\calA$ on $\tilde{f}$ yields iterates $x_0, x_1, x_2, \ldots$ satisfying 
$$\tilde{f}(x_k) - \tilde{f}(x^*) \geq 2 \hat \epsilon L r^2 \geq \epsilon' \cdot \tilde M r \geq \epsilon \cdot \frac{1}{2} \tilde{L} r^2$$
for all $k = 0, 1, \ldots, T-1$, where $T = \Bigg\lfloor \frac{\zeta_{r \sqrt{-\Kup}}}{50 \log\big(64 \zeta_{r \sqrt{-\Klo}}\big)}\Bigg\rfloor$.
\end{theorem}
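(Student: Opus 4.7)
The plan is to mirror the proof of Theorem~\ref{nicetheorem} while carefully tracking the two curvature bounds. First, I invoke the generalization of Lemma~\ref{lemmausefulguysec3} to Hadamard manifolds of pinched negative curvature: Theorem~24 of \citep{bumpfctpaper} is already stated in that generality and yields a $C^\infty$ $\mu$-strongly g-convex hard function $f$ on $\calM$ (with $\mu = 64\hat\epsilon L$) forcing any deterministic first-order algorithm to make at least $T$ queries before dropping the suboptimality gap below $2\hat\epsilon L r^2$. The hardness exponent $\zeta_{r\sqrt{-\Kup}}$ is governed by the \emph{upper} curvature bound because the worst-case construction lives on a totally geodesic submanifold realizing the least negative sectional curvature of $\calM$; by contrast, the regularity constants of $f$ in $B(\xorigin, \mathscr{R})$ pick up the \emph{lower} curvature bound $\Klo$ via the Hessian estimate for squared distance functions in Lemma~\ref{lemmaboundhess}. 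Outside $B(\xorigin, \mathscr{R})$ with $\mathscr{R} = 2^9 r \log^2(r\sqrt{-\Klo})$, $f$ equals $3\mu\dist(\cdot,\xorigin)^2$.

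Next I apply the cutoff $u_\mathscr{R}$ from Appendix~\ref{nonstronglyconvexboundsapphighlevel} verbatim and form
$$\tilde f_\mathscr{R}(x) = u_\mathscr{R}\!\bigl(\tfrac{1}{2}\dist(x,\xorigin)^2\bigr) f(x),$$
which agrees with $f$ on $B(\xorigin, \mathscr{R})$ and transitions from quadratic to linear growth outside. The cutoff works on every Hadamard manifold with $\Kup < 0$: by Hessian comparison against the hyperbolic model of curvature $\Kup$, the distance-to-origin $x \mapsto \dist(x, \xorigin)$ is globally $1$-Lipschitz and (once one leaves a fixed ball around $\xorigin$) has bounded Hessian of order $\sqrt{-\Kup}$, generalizing the fact cited from~\citep[Thm.~11.7]{lee2018riemannian}. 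Combined with the computations of Appendix~\ref{nonstronglyconvexboundsapp}, this shows $\tilde f_\mathscr{R}$ is strictly g-convex, $O(\mu\mathscr{R}/\sqrt{-\Klo})$-Lipschitz and $O(\mu\mathscr{R})$-smooth on $\calM \setminus B(\xorigin, \mathscr{R})$, so plugging in the values of $\mu$ and $\mathscr{R}$ and setting $\tilde L = L\cdot 2^{10}\log^2(r\sqrt{-\Klo})$, $\tilde M = \tilde L/(2\sqrt{-\Klo})$ places $\tilde f_\mathscr{R}$ in $\calF_{r, \tilde M, 0, \tilde L}^{\xorigin}$.

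The oracle reduction is identical to that in Theorem~\ref{nicetheorem}. Given a candidate algorithm $\tilde\calA$ for $\tilde f_\mathscr{R}$, I simulate its oracle through the oracle for $f$ via~\eqref{eqdefineftildeR} and the gradient formula~\eqref{eqfortildefRgrad}, implicitly defining an algorithm $\calA$ on $f$. Applying the generalized Lemma~\ref{lemmausefulguysec3} to $\calA$ yields a hard $f$ with $f(x_k)-f(x^*)\geq 2\hat\epsilon L r^2$ for all $k < T$, and since $\tilde f_\mathscr{R}$ coincides with $f$ on $B(\xorigin, \mathscr{R})$, is strictly g-convex globally (hence shares the same unique minimizer $x^*$), and exceeds $2\hat\epsilon L r^2$ outside $B(\xorigin, \mathscr{R})$ by the computation of Appendix~\ref{technicalfacttofinishproofoftheorem1p5}, the same gap lower bound transfers to $\tilde f_\mathscr{R}(x_k) - \tilde f_\mathscr{R}(x^*)$. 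The three forms of the gap follow by direct substitution of the definitions of $\hat\epsilon, \epsilon, \epsilon', \tilde M, \tilde L$.

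The main obstacle is the asymmetric dependence on the two curvature bounds: the regularity estimates (and hence $\tilde M, \tilde L$) are controlled by $\Klo$ while the complexity lower bound $T$ is controlled by $\Kup$. This asymmetry is what forces the $\log(64\zeta_{r\sqrt{-\Klo}})$ in the denominator of $T$, and it is where most of the bookkeeping effort goes; in particular, the factor $1/\sqrt{-\Klo}$ in $\tilde M$ comes from the ``Lipschitz-from-smoothness'' phenomenon on manifolds of curvature $\geq \Klo$, analogous to Proposition~\ref{lrvslrsquaredprop} in the hyperbolic case. Once the two bounds are handled separately, every other step, including the construction of $u_\mathscr{R}$, the regularity analysis of $\tilde f_\mathscr{R}$, and the oracle reduction, goes through as in the hyperbolic case.
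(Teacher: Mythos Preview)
Your overall strategy---invoke the generalization of Lemma~\ref{lemmausefulguysec3}, apply the cutoff $u_\mathscr{R}$, then run the oracle reduction---matches the paper's proof. However, one step contains a genuine error. You claim that $\zeta_{r\sqrt{-\Kup}}$ governs $T$ ``because the worst-case construction lives on a totally geodesic submanifold realizing the least negative sectional curvature of $\calM$.'' This is false: a generic Hadamard manifold with sectional curvatures in $[\Klo,\Kup]$ need not contain \emph{any} totally geodesic submanifold of constant curvature $\Kup$; indeed, nontrivial totally geodesic submanifolds are rare in general (cf.\ the remark preceding Lemma~\ref{totgeolemma}). The actual mechanism, which the paper uses, is the \emph{ball-packing property}: Theorem~24 of~\citep{bumpfctpaper} is stated for Hadamard manifolds satisfying an abstract packing assumption (A1 there) with constants $\tilde r, \tilde c$, and it yields $T$ in terms of $\tilde c$ (this is Lemma~\ref{lemmaaboutintermediaryfunctionclass} and the intermediate Theorem~\ref{thmonemore}). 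One then invokes Lemma~7 of~\citep{bumpfctpaper}, which supplies $\tilde r, \tilde c$ for manifolds with sectional curvature $\leq \Kup < 0$ via a Bishop--Gromov-type volume comparison; \emph{this} is where $\zeta_{r\sqrt{-\Kup}}$ enters. Without that step, your argument has no valid route from the curvature hypothesis to the claimed $T$.

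A minor point: the factor $1/\sqrt{-\Klo}$ in $\tilde M$ does not come from a ``Lipschitz-from-smoothness'' argument \`a la Proposition~\ref{lrvslrsquaredprop}. It arises directly because the hard function has $\mu = L/(2^4 r\sqrt{-\Klo})$ (the natural rescaling when the curvature length scale is $1/\sqrt{-\Klo}$), and the global Lipschitz constant of $\tilde f_\mathscr{R}$ is $O(\mu\mathscr{R})$.
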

Towards this end, we have the following generalization of Lemma~\ref{lemmausefulguysec3}.

\begin{lemma}[Generalization of Lemma~\ref{lemmausefulguysec3}] \label{lemmaaboutintermediaryfunctionclass}
Let $\calM$ be a Hadamard manifold of dimension $d \geq 2$ which satisfies the ball-packing property (see A1 in~\citep{bumpfctpaper}) with constants $\tilde{r}, \tilde{c}$ and point $\xorigin \in \calM$.  
Also assume $\calM$ has sectional curvatures in the interval $[\Klo, 0]$ with $\Klo < 0$.
Let $L > 0$ and $r \geq \max\big\{\tilde{r}, \frac{8}{\sqrt{-\Klo}}, \frac{4(d+2)}{\tilde{c}}\big\}$.
Define $\hat{\epsilon} = \frac{1}{2^{10} r \sqrt{-\Klo}}$ and $\mu = 64 \hat\epsilon L = \frac{L}{2^{4} r \sqrt{-\Klo}}$.
%Let $L > 0$ and let $\hat\epsilon \in \Big(0, \min\Big\{\frac{1}{2^{10} \tilde{r} \sqrt{-\Klo}}, 2^{-13}, \frac{\tilde{c}}{2^{12} (d+2) \sqrt{-\Klo}}\Big\}\Big]$.
%Define
%\begin{align*}
%\mu &= 64 \hat\epsilon L, && r = \frac{L / \mu}{16 \sqrt{-\Klo}} = \frac{1}{\hat\epsilon} \cdot \frac{1}{2^{10} \sqrt{-\Klo}}.
%\end{align*}
Let $\calA$ be any deterministic algorithm.

%\TODO{For any deterministic algorithm $\calA$, there is a problem $(f, r)$ in $\calF_{L}^{\xorigin}(\calM)$ such that $\calA$ requires at least $\tilde{\Omega}(\frac{1}{\epsilon})$ queries to find a point $x \in \calM$ such that $\frac{f(x) - f^*}{L r^2} \leq \epsilon$.}

There is a $C^\infty$ function $f$ with minimizer $x^* \in B(\xorigin, \frac{3}{4} r)$ such that running $\calA$ on $f$ yields iterates $x_0, x_1, x_2, \ldots$ satisfying $f(x_k) - f(x^*) \geq 2 \hat\epsilon  L r^2$ for all $k = 0, 1, \ldots, T-1$, where
%For each nonnegative integer
%\TODO{write the bigger number as $T$?  $k \leq T$}
\begin{align} \label{thisusefulguyonT}
{T} = 
%\Bigg\lfloor \frac{\tilde{c} (d+2)^{-1} r}{\log\big(2000 \tilde{c} (d+2)^{-1} r (3 \mathscr{R} \sqrt{-\Klo} + 2)\big)}\Bigg\rfloor \geq 
\Bigg\lfloor \frac{\tilde{c} (d+2)^{-1} r}{\log\big(2\cdot 10^6 \cdot \tilde{c} (d+2)^{-1} r (r \sqrt{-\Klo})^2\big)}\Bigg\rfloor.
%T = \Bigg\lfloor \frac{\tilde{c} (d+2)^{-1} r}{\log\big(2\cdot 10^6 \cdot \tilde{c} (d+2)^{-1} r (r \sqrt{-\Klo})^2\big)}\Bigg\rfloor.
%\geq \Bigg\lfloor \frac{\tilde{c} (d+2)^{-1} r}{\log\big(2000 \tilde{c} (d+2)^{-1} r (3 \cdot 2^{9} (r \sqrt{-\Klo}) \log^2(r \sqrt{-\Klo}) + 2)\big)}\Bigg\rfloor.
\end{align}

Moreover, $f$ is $\mu$-strongly g-convex in $\calM$, and $\mu(12 \mathscr{R} + \frac{3}{\sqrt{-\Klo}})$-Lipschitz and $\mu (12 \mathscr{R} \sqrt{-\Klo} + 9)$-smooth in the ball $B(\xorigin, \mathscr{R})$, where $\mathscr{R} = 2^9 r \log(r \sqrt{-\Klo})^2$.  %, where $\mathscr{R} = 2^9 r \log^2(r \sqrt{-\Klo})$.
Outside of the ball $B(\xorigin, \mathscr{R})$, $f(x) = 3\mu \dist(x, \xorigin)^2$ for all $x \not\in B(\xorigin, \mathscr{R})$.
\end{lemma}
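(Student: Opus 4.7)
The plan is to reduce the lemma to Theorem~24 of \citet{bumpfctpaper} by matching parameters, and then extract the additional structural information (the explicit form of $f$ outside $B(\xorigin,\mathscr{R})$, and the Lipschitz/smoothness estimates inside $B(\xorigin,\mathscr{R})$) from their construction. First I would verify that the hypotheses $r \geq \tilde r$, $r \geq 8/\sqrt{-\Klo}$, and $r \geq 4(d+2)/\tilde c$ are exactly what is needed in \cite{bumpfctpaper} to guarantee enough disjoint ``hiding spots'' on the sphere of radius $\tfrac{3}{4}r$ via the ball-packing property, and that the choice $\mu = 64\hat\epsilon L$ matches the strong-convexity/step-size scaling used there. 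The value of $T$ in \eqref{thisusefulguyonT} comes out of counting how many hiding spots the ball-packing property provides, and an adversarial argument shows that any deterministic algorithm needs at least this many queries before locating the activated bump, yielding $f(x_k) - f^* \geq 2\hat\epsilon L r^2$ for $k < T$.

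The second step is to read off the explicit form of $f$. The construction in \cite{bumpfctpaper} writes $f(x) = 3\mu \dist(x,\xorigin)^2 + \Phi(x)$, where $\Phi$ is a sum of smooth bumps $\phi_i$, each supported in a ball around a hidden candidate point $z_i \in B(\xorigin,\tfrac{3}{4}r)$. Exactly one bump is ``activated'' to create the true minimizer $x^*\in B(\xorigin,\tfrac{3}{4}r)$; the others ensure that queries far from the activated $z_i$ reveal no information. By inspecting the radii and centers of these bumps (both controlled by the ball-packing parameters and chosen so that bumps fit well inside the region where computations take place), one checks that the support of $\Phi$ is contained in $B(\xorigin,\mathscr{R})$ with $\mathscr{R} = 2^9 r \log^2(r\sqrt{-\Klo})$, so that $f(x) = 3\mu\dist(x,\xorigin)^2$ identically outside $B(\xorigin,\mathscr{R})$.

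The third step is to verify the Lipschitz and smoothness estimates on $B(\xorigin,\mathscr{R})$. By Lemma~\ref{lemmaboundhess}, the base term $3\mu\dist(x,\xorigin)^2$ is $6\mu\mathscr{R}$-Lipschitz and $6\mu\,\zeta_{\mathscr{R}\sqrt{-\Klo}}$-smooth on $B(\xorigin,\mathscr{R})$, and $\zeta_{\mathscr{R}\sqrt{-\Klo}} \leq 1 + \mathscr{R}\sqrt{-\Klo}$. The bumps $\phi_i$ constructed in \cite{bumpfctpaper} are designed to have Lipschitz constant $O(\mu/\sqrt{-\Klo})$ and smoothness $O(\mu)$ with essentially disjoint supports, so $\Phi$ contributes only an additive $O(\mu/\sqrt{-\Klo})$ to the Lipschitz constant and an additive $O(\mu)$ to the smoothness constant. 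Adding these to the base estimates and collecting constants yields the $\mu(12\mathscr{R} + 3/\sqrt{-\Klo})$-Lipschitz and $\mu(12\mathscr{R}\sqrt{-\Klo} + 9)$-smooth bounds, and global $\mu$-strong g-convexity follows because the base term is $6\mu$-strongly g-convex while each $\phi_i$ is only mildly non-convex (bounded below by $-5\mu$ on its Hessian), as engineered in \cite{bumpfctpaper}.

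The main obstacle is purely bookkeeping: the construction in \cite{bumpfctpaper} is tuned to prove the strong-g-convex lower bound, and the precise constants in the Lipschitz/smoothness bounds and in the support radius $\mathscr{R}$ are implicit in their proof rather than stated as a clean quantitative package. The substantive content, that is, the resisting-oracle argument and the $\tilde\Omega(\zeta)$ count, is inherited directly, but one must carefully re-traverse the bump construction to extract the sharp constants claimed in the second paragraph and to certify that the bump supports do in fact fit inside $B(\xorigin,\mathscr{R})$ with the stated radius. No new geometric input is required beyond what is already in \cite{bumpfctpaper} and Lemma~\ref{lemmaboundhess}.
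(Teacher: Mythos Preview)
Your approach is correct and essentially the same as the paper's: both reduce directly to Theorem~24 of \citet{bumpfctpaper}, check that the hypotheses on $r$ match, and read off the structural properties of $f$ (strong g-convexity, Lipschitz/smoothness in $B(\xorigin,\mathscr{R})$, and $f(x)=3\mu\dist(x,\xorigin)^2$ outside) from the bump construction there.

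Two small points of difference worth noting. First, the paper does not extract the function-gap bound $f(x_k)-f^*\geq 2\hat\epsilon L r^2$ directly from the adversarial argument; instead it takes the \emph{distance} guarantee $\dist(x_k,x^*)\geq r/4$ from Theorem~24 and converts it to a function gap via $\mu$-strong g-convexity: $f(x_k)-f^*\geq \tfrac{\mu}{2}(r/4)^2=\tfrac{\mu r^2}{32}=2\hat\epsilon L r^2$, using $\mu=64\hat\epsilon L$. Second, the paper invokes Theorem~24 not on $\calA$ directly but on a wrapper algorithm that rescales oracle outputs by $6\mu$; this is how the $\mu$-strongly g-convex normalization is obtained from the unit-scaled hard function in \citet{bumpfctpaper}. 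Your outline implicitly absorbs both of these into ``matching parameters,'' which is fine, but these are the only places where a reader might otherwise wonder how the constants line up.
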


\begin{proof}
Due to the assumed lower bound on $r$, we can apply Theorem 24 of~\citep{bumpfctpaper} to $\calA$.
After scaling by $6 \mu$,\footnote{Scaling by $6 \mu$ is legitimate by a simple reduction argument: given an algorithm $\mathcal{A}$, we can create an algorithm $\hat{\mathcal{A}}$ which internally runs $\mathcal{A}$ to query an oracle $\calO_{\hat f}$ and multiplies the outputs of $\calO_{\hat f}$ by $6 \mu$ before forwarding them to $\mathcal{A}$.  We call upon Theorem 24 of~\citep{bumpfctpaper} to claim there exists a hard function $\hat f$ for $\hat{\mathcal{A}}$.  Since $\mathcal{A}$ is effectively interacting with $\calO_{6 \mu \hat f}$, we find $f = 6 \mu \hat f$ is a hard function for $\calA$.} that theorem provides a function $f$ which is $\mu$-strongly g-convex in $\calM$, and $\mu(12 \mathscr{R} + \frac{3}{\sqrt{-\Klo}})$-Lipschitz and $\mu (12 \mathscr{R} \sqrt{-\Klo} + 9)$-smooth in $B(\xorigin, \mathscr{R})$ with minimizer $x^* \in B(\xorigin, \frac{3}{4} r)$ such that 
$$\dist(x_k, x^*) \geq \frac{r}{4} \quad \quad \forall k \leq T-1,$$
where $x_0, x_1, \ldots$ denote the queries produced by running $\calA$ on $f$.
%Observe that $L \geq \mu [12 r \sqrt{-\Klo} + 9]$ by the definition of $\mu$.
%Thus, $f$ is in $\hat{\mathcal{F}}_{ L, r}^{\xorigin}$.
Using $\dist(x_k, x^*) \geq \frac{r}{4}$,
$$2 \hat\epsilon \cdot  L r^2 = \frac{1}{32} \mu r^2 \leq \frac{\mu}{2} \dist(x_k, x^*)^2 \leq f(x_k) - f(x^*), \quad \quad \forall k \leq T-1.$$
For the last inequality we used the $\mu$-strong g-convexity of $f$.
\end{proof}

Applying the exact same proof given in Section~\ref{nonstronglyconvexcaseextension} for Theorem~\ref{nicetheorem} to Lemma~\ref{lemmaaboutintermediaryfunctionclass}, we conclude the following theorem (which is also a generalization of Theorem~\ref{nicetheorem}).

\begin{theorem} \label{thmonemore}
Let $\calM$ be a Hadamard manifold of dimension $d \geq 2$ which satisfies the ball-packing property with constants $\tilde{r}, \tilde{c}$ and point $\xorigin \in \calM$.  
Also assume $\calM$ has sectional curvatures in the interval $[\Klo, 0]$ with $\Klo < 0$.
Let $\tilde{L} > 0$ and $r \geq \max\big\{\tilde{r}, \frac{8}{\sqrt{-\Klo}}, \frac{4(d+2)}{\tilde{c}}\big\}$.
Define
$$\hat{\epsilon} = \frac{1}{2^{10} r \sqrt{-\Klo}}, \quad \epsilon = \frac{1}{2^{18} \zeta_{r \sqrt{-\Klo}} \log^2(\zeta_{r \sqrt{-\Klo}})}, \quad \epsilon' = \frac{1}{2^{18} \log^2(\zeta_{r \sqrt{-\Klo}})},$$
and $L = \frac{\tilde L}{2^{10} \log^2(r \sqrt{-\Klo})}, \tilde M = \frac{\tilde{L}}{2 \sqrt{-\Klo}}$.
Let $\calA$ be any deterministic algorithm.

There is a $C^\infty$ function $\tilde{f} \in \mathcal{F}_{r, \tilde M, 0, \tilde{L}}$ with unique minimizer $x^*$ such that running $\calA$ on $\tilde{f}$ yields iterates $x_0, x_1, x_2, \ldots$ satisfying 
$$\tilde{f}(x_k) - \tilde{f}(x^*) \geq 2 \hat \epsilon L r^2 \geq \epsilon' \cdot \tilde M r \geq \epsilon \cdot \frac{1}{2} \tilde{L} r^2$$
for all $k = 0, 1, \ldots, T-1$, where $T$ is given by equation~\eqref{thisusefulguyonT}.
\end{theorem}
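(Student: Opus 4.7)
The plan is to follow the proof of Theorem~\ref{nicetheorem} from Section~\ref{nonstronglyconvexcaseextension} verbatim, substituting Lemma~\ref{lemmaaboutintermediaryfunctionclass} for Lemma~\ref{lemmausefulguysec3} wherever the latter is invoked. First I would take the $\mu$-strongly g-convex hard function $f$ produced by Lemma~\ref{lemmaaboutintermediaryfunctionclass}, recall that $f(x) = 3\mu \dist(x,\xorigin)^2$ for all $x\notin B(\xorigin,\mathscr{R})$, and smooth out this quadratic tail by defining $\tilde f_{\mathscr{R}}$ via~\eqref{eqdefineftildeR}. The design of the cutoff $u_{\mathscr{R}}$ ensures that $\tilde f_{\mathscr{R}}$ coincides with $f$ on $B(\xorigin,\mathscr{R})$ and eventually behaves like $x\mapsto \dist(x,\xorigin)$ (up to a multiplicative constant) on the complement.

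Next I would check that $\tilde f_{\mathscr{R}} \in \calF_{r, \tilde M, 0, \tilde L}$ with $\tilde M = \tilde L/(2\sqrt{-\Klo})$ and $\tilde L = 2^{10} L \log^2(r\sqrt{-\Klo})$. Inside $B(\xorigin,\mathscr{R})$, Lemma~\ref{lemmaaboutintermediaryfunctionclass} and the definitions of $\mu$ and $\mathscr{R}$ immediately give the required Lipschitz and smoothness constants. Outside $B(\xorigin,\mathscr{R})$ the argument of Appendix~\ref{nonstronglyconvexboundsapp} carries over provided one knows that on a Hadamard manifold with sectional curvatures in $[\Klo,0]$, the distance function $x\mapsto \dist(x,\xorigin)$ is globally $1$-Lipschitz, g-convex, and has Hessian bounded by $O(\sqrt{-\Klo})$ at points sufficiently far from $\xorigin$. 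The first two facts are standard (Lemma~\ref{lemmaboundhess}), and the last follows from the Hessian comparison theorem applied to $\dist(\cdot,\xorigin)$, since once $\dist(x,\xorigin) \gtrsim 1/\sqrt{-\Klo}$ one has $\sqrt{-\Klo}\coth(\sqrt{-\Klo}\dist(x,\xorigin)) \leq 2\sqrt{-\Klo}$. Combined with the product rule for~\eqref{eqdefineftildeR}, this yields that $\tilde f_{\mathscr{R}}$ is strictly g-convex, $\tilde M$-Lipschitz, and $\tilde L$-smooth on the complement of $B(\xorigin,\mathscr{R})$; the appearance of the factor $\sqrt{-\Klo}$ in $\tilde M$ is exactly the propagation of this comparison estimate.

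Finally I would run the reduction argument unchanged: given any deterministic first-order algorithm $\tilde\calA$ for g-convex optimization, define $\calA$ by letting $\calA$ run $\tilde\calA$ as a subroutine while simulating $\calO_{\tilde f_{\mathscr{R}}}$ from $\calO_f$ through~\eqref{eqdefineftildeR} together with the gradient formula~\eqref{eqfortildefRgrad}. Applying Lemma~\ref{lemmaaboutintermediaryfunctionclass} to $\calA$ furnishes a hard $f$ whose iterates satisfy $f(x_k) - f^* \geq 2\hat\epsilon L r^2$ for $k\leq T-1$ with $T$ as in~\eqref{thisusefulguyonT}. Since $\tilde f_{\mathscr{R}}$ equals $f$ on $B(\xorigin,\mathscr{R})$ and is strictly g-convex with the same global minimizer $x^*$, the gap $\tilde f_{\mathscr{R}}(x_k) - \tilde f_{\mathscr{R}}^*$ inherits the same lower bound when $x_k \in B(\xorigin,\mathscr{R})$; the complementary case $x_k \notin B(\xorigin,\mathscr{R})$ is handled by the calculation of Appendix~\ref{technicalfacttofinishproofoftheorem1p5} (which depends only on the asymptotic behaviour of $\tilde f_{\mathscr{R}}$, not on the hyperbolic model). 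Rewriting $2\hat\epsilon L r^2$ as $\epsilon' \tilde M r$ and as $\epsilon \cdot \tfrac{1}{2}\tilde L r^2$ using the stated definitions closes the proof. The only genuinely new ingredient beyond the hyperbolic case is the comparison-theorem control on the Hessian of the distance function in step two; everything else is an exact replay of Section~\ref{nonstronglyconvexcaseextension}.
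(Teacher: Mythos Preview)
Your proposal is correct and matches the paper's own argument essentially verbatim: the paper states that Theorem~\ref{thmonemore} follows by ``applying the exact same proof given in Section~\ref{nonstronglyconvexcaseextension} for Theorem~\ref{nicetheorem} to Lemma~\ref{lemmaaboutintermediaryfunctionclass}.'' Note that Appendix~\ref{nonstronglyconvexboundsapp} is already written for a general Hadamard manifold with curvature in $[\Klo,0]$ (using Lemma~\ref{lemmaboundhess} for the Hessian bound on the squared distance), so the comparison-theorem ingredient you flag as ``new'' is in fact already present in the paper and requires no additional work.
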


Theorem~\ref{theoremtheorem} now follows directly from Theorem~\ref{thmonemore} by using Lemma 7 of~\citep{bumpfctpaper} for the values of $\tilde{r}$ and $\tilde{c}$ in the ball-packing property.

\subsection{Defining the function $u_{\mathscr{R}} \colon \reals \to \reals$} \label{nonstronglyconvexboundsapphighlevel}
Define the $C^\infty$ function $u_{\mathscr{R}} \colon \reals \rightarrow \reals$ by $u_{\mathscr{R}}(\mathscr{D}) = 1$ for all $\mathscr{D} \leq \frac{1}{2} \mathscr{R}^2$ and $u_{\mathscr{R}}(\mathscr{D}) = 1 - e^{-4 / \sqrt{2 \mathscr{D} / \mathscr{R}^2-1}}$ for all $\mathscr{D} > \frac{1}{2} \mathscr{R}^2$.
That $u_{\mathscr{R}}$ is $C^\infty$ can be verified using the same method used in the proof of Lemma 2.20 from~\citep[Ch.~2]{lee2012smoothmanifolds}.

\subsection{Verifying $\tilde{f}_{\mathscr{R}}$ is Lipschitz, smooth and strictly g-convex in $\calM$} \label{nonstronglyconvexboundsapp}
Let $\calM$ be Hadamard manifold with sectional curvatures in $[\Klo, 0]$.
To complete the proof of Theorem~\ref{nicetheorem} from Section~\ref{nonstronglyconvexcaseextension} (or its generalization Theorem~\ref{thmonemore}), we show that given $f$ with $f(x) = 3\mu \dist(x, \xorigin)^2$ for $x \not \in B(\xorigin, \mathscr{R})$, the function $\tilde{f}_{\mathscr{R}}$ defined by~\eqref{eqdefineftildeR} is strictly g-convex, $12 \mu \mathscr{R}$-Lipschitz, and {$24 \mu \mathscr{R} \sqrt{-\Klo}$}-smooth outside of the ball $B(\xorigin, \mathscr{R})$.

Let $\gamma(t)$ be a geodesic with $\gamma(0) = x, \gamma'(0) = v$ and $\norm{v} = 1$.  
For the moment, define $\mathscr{D}(x) = \frac{1}{2} \dist(x, \xorigin)^2$.
For the gradient, we have
\begin{align*}
\inner{\nabla \tilde{f}_{\mathscr{R}}(x)}{v} =& \frac{d}{dt}[\tilde{f}(\gamma(t))]_{t=0} = \frac{d}{dt}[u_{\mathscr{R}}(\mathscr{D}(\gamma(t))) f(\gamma(t))]_{t=0} \\
=&u_{\mathscr{R}}(\mathscr{D}(x)) \frac{d}{dt}[f(\gamma(t))]_{t=0} + \frac{d}{dt}[u_{\mathscr{R}}(\mathscr{D}(\gamma(t)))]_{t=0} f(x) \\
=& u_{\mathscr{R}}(\mathscr{D}(x)) \inner{\nabla f(x)}{v} + f(x) u_{\mathscr{R}}'(\mathscr{D}(x)) \frac{d}{dt}[\mathscr{D}(\gamma(t))]_{t=0} \\
=&u_{\mathscr{R}}(\mathscr{D}(x)) \inner{\nabla f(x)}{v} - f(x) u_{\mathscr{R}}'(\mathscr{D}(x)) \inner{\log_x(\xorigin)}{v}
\end{align*}
which gives us the formula
\begin{equation} \label{eqfortildefRgrad}
\begin{split}
\nabla \tilde{f}_{\mathscr{R}}(x) = 
\begin{cases}
	\nabla f(x) & \text{if } \dist(x, \xorigin) \leq \mathscr{R}; \\
	u_{\mathscr{R}}\big(\mathscr{D}(x)\big) \nabla f(x) \\
	\quad - f(x) u_{\mathscr{R}}'\big(\mathscr{D}(x)\big) \exp_{x}^{-1}(\xorigin) & \text{otherwise}.
\end{cases}
\end{split}
\end{equation}
By Lemma~\ref{technicallemmaaboutfctu4}, we also see that $\|{\nabla \tilde{f}_{\mathscr{R}}(x)}\| \leq 12 \mu \mathscr{R}$ if $\mathscr{D}(x) > \frac{1}{2} \mathscr{R}^2$.

For the Hessian we have:
\begin{align*}
&\inner{v}{\nabla^2 \tilde{f}_{\mathscr{R}}(x) v} = \frac{d^2}{dt^2}[\tilde{f}_{\mathscr{R}}(\gamma(t))]_{t=0} = \frac{d^2}{dt^2}[u_{\mathscr{R}}(\mathscr{D}(\gamma(t))) f(\gamma(t))]_{t=0} \\
=& \frac{d^2}{dt^2}[u_{\mathscr{R}}(\mathscr{D}(\gamma(t)))]_{t=0} f(x) + u_{\mathscr{R}}(\mathscr{D}(x)) \frac{d^2}{dt^2}[f(\gamma(t))]_{t=0} \\
   &+ 2 \frac{d}{dt}[u_{\mathscr{R}}(\mathscr{D}(\gamma(t)))]_{t=0} \frac{d}{dt}[f(\gamma(t))]_{t=0} \\
=& f(x) u_{\mathscr{R}}''(\mathscr{D}(x)) \Big(\frac{d}{dt}[\mathscr{D}(\gamma(t))]_{t=0}\Big)^2 + f(x) u_{\mathscr{R}}'(\mathscr{D}(x)) \frac{d^2}{dt^2}[\mathscr{D}(\gamma(t)))]_{t=0} \\
&+ u_{\mathscr{R}}(\mathscr{D}(x)) \frac{d^2}{dt^2}[f(\gamma(t))]_{t=0} + 2 \frac{d}{dt}[u_{\mathscr{R}}(\mathscr{D}(\gamma(t)))]_{t=0} \frac{d}{dt}[f(\gamma(t))]_{t=0}.
\end{align*}
Therefore,
\begin{align*}
\inner{v}{\nabla^2 \tilde{f}_{\mathscr{R}}(x) v}
=& f(x) u_{\mathscr{R}}''(\mathscr{D}(x)) \inner{\log_x(\xorigin)}{v}^2 + f(x) u_{\mathscr{R}}'(\mathscr{D}(x)) \inner{v}{\nabla^2 \mathscr{D}(x) v} \\
&+ u_{\mathscr{R}}(\mathscr{D}(x)) \frac{d^2}{dt^2}[f(\gamma(t))]_{t=0} - 2 \inner{\log_x(\xorigin)}{v} \inner{\nabla f(x)}{v} \\
=& 6 \mu\Bigg( [u_{\mathscr{R}}(\mathscr{D}(x)) + \mathscr{D}(x) u_{\mathscr{R}}'(\mathscr{D}(x))]\inner{v}{\nabla^2 \mathscr{D}(x) v} \\
&+ [2 u_{\mathscr{R}}'(\mathscr{D}(x)) + \mathscr{D}(x) u_{\mathscr{R}}''(\mathscr{D}(x)) ]\inner{\log_x(\xorigin)}{v}^2 \Bigg).
\end{align*}

Lemmas~\ref{technicallemmaaboutfctu1} and~\ref{technicallemmaaboutfctu2} from Appendix~\ref{technicalfactsaboutthefunctionuapp} shows that $u_{\mathscr{R}}(\mathscr{D}) + \mathscr{D} u_{\mathscr{R}}'(\mathscr{D}) \geq 0$ and $2 u_{\mathscr{R}}'(\mathscr{D}) + \mathscr{D} u_{\mathscr{R}}''(\mathscr{D}) \leq 0$ for all $\mathscr{D} > \frac{1}{2} \mathscr{R}^2$.
Therefore, $\inner{v}{\nabla^2 \tilde{f}_{\mathscr{R}}(x) v}$ is at least
$$6 \mu\Bigg( [u_{\mathscr{R}}(\mathscr{D}(x)) + \mathscr{D}(x) u_{\mathscr{R}}'(\mathscr{D}(x))] + [2 u_{\mathscr{R}}'(\mathscr{D}(x)) + \mathscr{D}(x) u_{\mathscr{R}}''(\mathscr{D}(x)) ]2 \mathscr{D} \Bigg).$$
Lemma~\ref{technicallemmaaboutfctu3} shows that this is strictly greater than zero, verifying strict g-convexity.

Likewise, Lemma~\ref{technicallemmaaboutfctu4} along with Lemma~\ref{lemmaboundhess} imply $\inner{v}{\nabla^2 \tilde{f}_{\mathscr{R}}(x) v}$ is at most
$$6 \mu\Bigg( [u_{\mathscr{R}}(\mathscr{D}(x)) + \mathscr{D}(x) u_{\mathscr{R}}'(\mathscr{D}(x))]2 \sqrt{-\Klo}\sqrt{2 \mathscr{D}}\Bigg) \leq {24 \mu \sqrt{-\Klo} \mathscr{R}.}$$

\subsection{Technical facts about the function $u_{\mathscr{R}} \colon \reals \rightarrow \reals$} \label{technicalfactsaboutthefunctionuapp}
In the following lemmas, we use the change of variables 
$$\tau = 1 / \sqrt{\frac{2 \mathscr{D}}{\mathscr{R}^2}-1} \iff \mathscr{D} = \mathscr{R}^2 \frac{1+\tau^2}{2\tau^2}.$$
for $\mathscr{D} \in (\frac{1}{2} \mathscr{R}^2, \infty)$ and $\tau \in (0, \infty).$

\begin{lemma} \label{technicallemmaaboutfctu1}
$u_{\mathscr{R}}(\mathscr{D}) + \mathscr{D} u_{\mathscr{R}}'(\mathscr{D}) \geq 0, \quad \forall \mathscr{D} \in (\frac{1}{2} \mathscr{R}^2, \infty)$.
\end{lemma}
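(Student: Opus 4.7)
The plan is to reduce the inequality to an elementary one-variable exponential inequality via the substitution $\tau = 1/\sqrt{2\mathscr{D}/\mathscr{R}^2 - 1}$ suggested by the statement, which sends the interval $\mathscr{D} \in (\tfrac{1}{2}\mathscr{R}^2, \infty)$ to $\tau \in (0, \infty)$ and makes $u_{\mathscr{R}} = 1 - e^{-4\tau}$ a simple function of $\tau$.

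First I would compute $u_{\mathscr{R}}'(\mathscr{D})$. Differentiating $\tau(\mathscr{D}) = (2\mathscr{D}/\mathscr{R}^2 - 1)^{-1/2}$ gives $\tau'(\mathscr{D}) = -\tau^3/\mathscr{R}^2$, and then the chain rule yields
\[
u_{\mathscr{R}}'(\mathscr{D}) \;=\; 4 e^{-4\tau} \tau'(\mathscr{D}) \;=\; -\frac{4\tau^3}{\mathscr{R}^2}\, e^{-4\tau}.
\]
Combining with $\mathscr{D} = \mathscr{R}^2(1+\tau^2)/(2\tau^2)$, the quantity $u_{\mathscr{R}}(\mathscr{D}) + \mathscr{D} u_{\mathscr{R}}'(\mathscr{D})$ simplifies to
\[
1 - e^{-4\tau}\bigl[1 + 2\tau + 2\tau^3\bigr].
\]
So the lemma reduces to proving the scalar inequality $e^{4\tau} \geq 1 + 2\tau + 2\tau^3$ for all $\tau > 0$.

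I would verify this last inequality by elementary calculus. Set $h(\tau) = e^{4\tau} - 1 - 2\tau - 2\tau^3$, with $h(0) = 0$, $h'(\tau) = 4 e^{4\tau} - 2 - 6\tau^2$, $h'(0) = 2$, and $h''(\tau) = 16 e^{4\tau} - 12\tau$. Since $16 e^{4\tau} \geq 16(1 + 4\tau) = 16 + 64\tau > 12\tau$, we have $h'' > 0$ on $[0,\infty)$, so $h'$ is strictly increasing from $h'(0) = 2 > 0$, hence $h' > 0$, and finally $h$ is strictly increasing from $h(0) = 0$. This gives $h(\tau) > 0$ for all $\tau > 0$, which completes the proof.

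The whole argument is essentially mechanical once the substitution is made; the only mildly delicate step is bounding $h''$, and I would expect that the cleanest presentation is simply to invoke $e^{4\tau} \geq 1 + 4\tau$ to kill the $-12\tau$ term. Nothing here relies on the manifold structure, so the same one-line scalar inequality also underpins Lemmas~\ref{technicallemmaaboutfctu2}--\ref{technicallemmaaboutfctu4} after analogous substitutions.
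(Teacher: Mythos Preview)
Your proof is correct and follows essentially the same route as the paper: the same substitution $\tau = (2\mathscr{D}/\mathscr{R}^2 - 1)^{-1/2}$ reduces the claim to $e^{4\tau} \geq 1 + 2\tau + 2\tau^3$, which the paper verifies by comparing with the cubic Taylor polynomial of $e^{4\tau}$ while you use a second-derivative argument. These are equivalent elementary verifications of the same scalar inequality.
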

\begin{proof}
Computing we find:
$$u_{\mathscr{R}}(\mathscr{D}) + \mathscr{D} u_{\mathscr{R}}'(\mathscr{D}) = 1-e^{-4\tau}(1+2\tau+2\tau^3).$$
On the other hand, we know $1 + 4 \tau + 8 \tau^2 + \frac{32}{3} \tau^3 \geq 1+2\tau+2\tau^3$ for all $\tau \geq 0$ (which can be verified with Mathematica), so
$$e^{4\tau} = \sum_{j = 0}^{\infty} \frac{1}{j!} (4\tau)^j \geq \sum_{j = 0}^{3} \frac{1}{j!} (4\tau)^j = 1 + 4 \tau + 8 \tau^2 + \frac{32}{3} \tau^3 \geq 1+2\tau+2\tau^3, \quad \forall \tau \geq 0,$$
which allows us to conclude.
\end{proof}

\begin{lemma} \label{technicallemmaaboutfctu2}
$2 u_{\mathscr{R}}'(\mathscr{D}) + \mathscr{D} u_{\mathscr{R}}''(\mathscr{D}) \leq 0, \quad \forall \mathscr{D} \in (\frac{1}{2} \mathscr{R}^2, \infty)$.
\end{lemma}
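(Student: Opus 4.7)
The plan is to mirror the structure of Lemma~\ref{technicallemmaaboutfctu1} and work in the $\tau$-coordinate defined by $\tau = 1/\sqrt{2\mathscr{D}/\mathscr{R}^2 - 1}$, which covers $(\frac{1}{2}\mathscr{R}^2, \infty)$ bijectively by $(0, \infty)$. In these coordinates $u_{\mathscr{R}} = 1 - e^{-4\tau}$, so the derivatives with respect to $\tau$ are trivial, and the only calculation is to convert them into derivatives with respect to $\mathscr{D}$ via the chain rule.

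First I would compute $d\tau/d\mathscr{D}$ by differentiating $\mathscr{D} = \mathscr{R}^2(1+\tau^2)/(2\tau^2)$, obtaining $d\tau/d\mathscr{D} = -\tau^3/\mathscr{R}^2$. Applying the chain rule twice then gives closed-form expressions
\begin{align*}
u_{\mathscr{R}}'(\mathscr{D}) &= -\tfrac{4\tau^3}{\mathscr{R}^2} e^{-4\tau}, \\
u_{\mathscr{R}}''(\mathscr{D}) &= \tfrac{4\tau^5(3-4\tau)}{\mathscr{R}^4} e^{-4\tau}.
\end{align*}
Substituting these and $\mathscr{D} = \mathscr{R}^2(1+\tau^2)/(2\tau^2)$ into $2u_{\mathscr{R}}'(\mathscr{D}) + \mathscr{D} u_{\mathscr{R}}''(\mathscr{D})$ and simplifying, I expect the result to take the form
\begin{equation*}
2 u_{\mathscr{R}}'(\mathscr{D}) + \mathscr{D} u_{\mathscr{R}}''(\mathscr{D}) = \tfrac{2 \tau^3 e^{-4\tau}}{\mathscr{R}^2}\, p(\tau), \qquad p(\tau) := -1 - 4\tau + 3\tau^2 - 4\tau^3.
\end{equation*}
Since $\frac{2\tau^3 e^{-4\tau}}{\mathscr{R}^2} > 0$ on $\tau \in (0,\infty)$, it suffices to show $p(\tau) < 0$ for $\tau > 0$.

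For the polynomial inequality I would argue as follows: $p(0) = -1 < 0$, and $p'(\tau) = -4 + 6\tau - 12\tau^2$ has leading coefficient $-12 < 0$ and discriminant $36 - 192 = -156 < 0$, so $p'(\tau) < 0$ for all real $\tau$. Hence $p$ is strictly decreasing on $\reals$, which combined with $p(0) = -1$ yields $p(\tau) \le -1$ for all $\tau \ge 0$, completing the proof. The main potential obstacle is purely bookkeeping: making sure the sign of $d\tau/d\mathscr{D}$ and the substitution $\mathscr{D} = \mathscr{R}^2(1+\tau^2)/(2\tau^2)$ are tracked carefully in the computation of the coefficient $p(\tau)$, since a sign error would flip the inequality. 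I do not anticipate any conceptual difficulty beyond this algebra, so the lemma reduces to a short calculation followed by an elementary polynomial estimate, exactly parallel to the proof of Lemma~\ref{technicallemmaaboutfctu1}.
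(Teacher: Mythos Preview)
Your proposal is correct and follows essentially the same approach as the paper: both use the substitution $\tau = 1/\sqrt{2\mathscr{D}/\mathscr{R}^2 - 1}$ to arrive at $2u_{\mathscr{R}}'(\mathscr{D}) + \mathscr{D} u_{\mathscr{R}}''(\mathscr{D}) = 2\mathscr{R}^{-2}e^{-4\tau}\tau^3(-1-4\tau+3\tau^2-4\tau^3)$ and then verify the cubic factor is nonpositive. Your derivative-and-discriminant argument for $p(\tau) \le 0$ is in fact more detailed than the paper, which simply asserts the inequality can be checked.
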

\begin{proof}
Computing we find:
$$2 u_{\mathscr{R}}'(\mathscr{D}) + \mathscr{D} u_{\mathscr{R}}''(\mathscr{D}) = 2 \mathscr{R}^{-2} e^{-4\tau} \tau^3 (-1-4\tau + 3\tau^2-4\tau^3)$$
and one can check that $-1-4\tau + 3\tau^2-4\tau^3 \leq 0$ for all $\tau \geq 0$.
\end{proof}

\begin{lemma} \label{technicallemmaaboutfctu3}
$[u_{\mathscr{R}}(\mathscr{D}) + \mathscr{D} u_{\mathscr{R}}'(\mathscr{D})] + [2 u_{\mathscr{R}}'(\mathscr{D}) + \mathscr{D} u_{\mathscr{R}}''(\mathscr{D}) ]2 \mathscr{D} > 0, \quad \forall \mathscr{D} \in (\frac{1}{2} \mathscr{R}^2, \infty)$.
\end{lemma}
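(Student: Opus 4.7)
The plan is to use the same substitution $\tau = 1/\sqrt{2\mathscr{D}/\mathscr{R}^2 - 1}$ as in Lemmas~\ref{technicallemmaaboutfctu1} and~\ref{technicallemmaaboutfctu2}. Combining the closed forms already computed there with $2\mathscr{D} = \mathscr{R}^2(1+\tau^2)/\tau^2$, the quantity to be lower-bounded collapses (after cancelling an $\mathscr{R}^{-2}$ factor and the common factor $e^{-4\tau}$) to
$$1 - e^{-4\tau}\bigl(1 + 4\tau + 8\tau^2 - 2\tau^3 + 16\tau^4 - 6\tau^5 + 8\tau^6\bigr).$$
So the claim is equivalent to $Q(\tau) := e^{4\tau} - P(\tau) > 0$ for every $\tau \in (0,\infty)$, where $P(\tau) = 1+4\tau+8\tau^2-2\tau^3+16\tau^4-6\tau^5+8\tau^6$.

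Expanding $e^{4\tau}$ in Taylor series and subtracting $P$ shows that $Q(0)=Q'(0)=Q''(0)=0$ and
$$Q(\tau) = \tfrac{38}{3}\tau^3 - \tfrac{16}{3}\tau^4 + \tfrac{218}{15}\tau^5 - \tfrac{104}{45}\tau^6 + \sum_{k\ge 7}\tfrac{4^k}{k!}\tau^k.$$
My plan then splits on the size of $\tau$. For $\tau\in(0,1]$ the tail $\sum_{k\ge 7}$ is nonnegative, and since $\tau^{j+1}\le\tau^j$, the negative quartic and sextic coefficients can be absorbed by the cubic and quintic terms respectively:
$$Q(\tau) \ge \bigl(\tfrac{38}{3}-\tfrac{16}{3}\bigr)\tau^3 + \bigl(\tfrac{218}{15}-\tfrac{104}{45}\bigr)\tau^5 = \tfrac{22}{3}\tau^3 + \tfrac{550}{45}\tau^5 > 0.$$

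For $\tau\ge 1$ this plain grouping is no longer sharp, so I would exploit the triple vanishing at zero and try to prove $Q'''(\tau)>0$ on $[0,\infty)$; then three successive integrations from $0$ yield $Q''>0$, $Q'>0$, and finally $Q>0$ on $(0,\infty)$. Explicitly $Q'''(\tau) = 64e^{4\tau} + 12 - 384\tau + 360\tau^2 - 960\tau^3$, whose derivatives are simple to analyse: $Q^{(6)}(\tau) = 4096e^{4\tau} - 5760$ has a unique zero at $\tau_\star = \tfrac14\log(45/32)$, so $Q^{(5)}$ has a unique minimum there which is strictly positive by direct check; hence $Q^{(5)}>0$, $Q^{(4)}$ is strictly increasing, and $Q'''$ is convex with a unique minimum at some $\tau_0\in (0,1/10)$ where a two-term Taylor bound on $64e^{4\tau_0}$ comfortably beats the polynomial part.

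The main obstacle is executing that last numerical check (locating $\tau_0$ and bounding $Q'''(\tau_0)$) cleanly rather than by brute arithmetic. If the calculus chain proves awkward, a fallback I would pursue is a pure comparison: for $\tau\ge 1$ use the crude upper bound $P(\tau)\le 8\tau^6+16\tau^4+8\tau^2+4\tau+1$ and match it against the lower bound $e^{4\tau}\ge \sum_{k=0}^{N}(4\tau)^k/k!$ with, say, $N=10$ to cover $\tau\in[1,3]$, while for $\tau\ge 3$ a single high-order Taylor term $e^{4\tau}\ge (4\tau)^{12}/12!$ easily dominates any sixth-degree polynomial. Stitching the two ranges together with the Taylor grouping for $\tau\in(0,1]$ completes the proof.
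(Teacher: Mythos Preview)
Your reduction to the identity
\[
[u_{\mathscr{R}}(\mathscr{D}) + \mathscr{D} u_{\mathscr{R}}'(\mathscr{D})] + [2 u_{\mathscr{R}}'(\mathscr{D}) + \mathscr{D} u_{\mathscr{R}}''(\mathscr{D})]2\mathscr{D} = 1 - e^{-4\tau}\bigl(1+4\tau+8\tau^2-2\tau^3+16\tau^4-6\tau^5+8\tau^6\bigr)
\]
is exactly what the paper computes, and your Taylor coefficients for $Q(\tau)=e^{4\tau}-P(\tau)$ are correct. Where you diverge is in verifying $Q>0$: the paper observes that the single polynomial inequality
\[
\sum_{j=0}^{7}\frac{(4\tau)^j}{j!} > 1+4\tau+8\tau^2-2\tau^3+16\tau^4-6\tau^5+8\tau^6 \qquad (\tau>0)
\]
suffices (since $e^{4\tau}$ dominates its own partial Taylor sums) and delegates that check to Mathematica. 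You instead split into $\tau\le 1$ (handled cleanly by your coefficient grouping) and $\tau\ge 1$ (handled by a derivative chain on $Q'''$, or a fallback comparison). Your plan is sound and the numerical claims you make along the way check out; the only cost is the extra case analysis and the somewhat fiddly localization of the minimum of $Q'''$. The paper's route is shorter but less self-contained; yours is longer but fully elementary once the arithmetic is carried out.
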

\begin{proof}
Computing we find:
$$[u_{\mathscr{R}}(\mathscr{D}) + \mathscr{D} u_{\mathscr{R}}'(\mathscr{D})] + [2 u_{\mathscr{R}}'(\mathscr{D}) + \mathscr{D} u_{\mathscr{R}}''(\mathscr{D}) ]2 \mathscr{D} = 1 - e^{-4\tau}(1+4\tau + 8\tau^2 - 2 \tau^3 + 16 \tau^4 - 6 \tau^5 + 8 \tau^6).$$
On the other hand, we know $\sum_{j = 0}^{7} \frac{1}{j!} (4\tau)^j > 1+4\tau + 8\tau^2 - 2 \tau^3 + 16 \tau^4 - 6 \tau^5 + 8 \tau^6$ for all $\tau > 0$ (which can be verified with Mathematica), which allows us to conclude.
\end{proof}

\begin{lemma} \label{technicallemmaaboutfctu4}
$(u_{\mathscr{R}}(\mathscr{D}) + \mathscr{D} u_{\mathscr{R}}'(\mathscr{D})) 2 \sqrt{2 \mathscr{D}} \leq 4 \mathscr{R}, \quad \forall \mathscr{D} \in (\frac{1}{2} \mathscr{R}^2, \infty)$.
\end{lemma}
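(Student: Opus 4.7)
The plan is to follow the change of variables $\tau = 1/\sqrt{2\mathscr{D}/\mathscr{R}^2 - 1}$ used in Lemmas~\ref{technicallemmaaboutfctu1}--\ref{technicallemmaaboutfctu3}, under which $\mathscr{D} = \mathscr{R}^2(1+\tau^2)/(2\tau^2)$ and hence $\sqrt{2\mathscr{D}} = \mathscr{R}\sqrt{1+\tau^2}/\tau$. Combined with the formula $u_{\mathscr{R}}(\mathscr{D}) + \mathscr{D}\, u_{\mathscr{R}}'(\mathscr{D}) = 1 - e^{-4\tau}(1+2\tau+2\tau^3)$ already computed in the proof of Lemma~\ref{technicallemmaaboutfctu1}, the target inequality is equivalent to
$$
\bigl[1 - e^{-4\tau}(1+2\tau+2\tau^3)\bigr]\sqrt{1+\tau^2} \;\leq\; 2\tau \quad \text{for all } \tau > 0.
$$

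The main obstacle is the square root $\sqrt{1+\tau^2}$, which rules out the direct Taylor-coefficient comparison used in the previous technical lemmas. My idea is to rationalize via the identity $\sqrt{1+\tau^2} - 2\tau = (1 - 3\tau^2)/(\sqrt{1+\tau^2} + 2\tau)$. Multiplying the target inequality by $\sqrt{1+\tau^2}\,(\sqrt{1+\tau^2} + 2\tau) > 0$ shows it is equivalent to
$$
1 - 3\tau^2 \;\leq\; e^{-4\tau}(1+2\tau+2\tau^3)\bigl[(1+\tau^2) + 2\tau\sqrt{1+\tau^2}\bigr].
$$
The only remaining square root now appears as $2\tau\sqrt{1+\tau^2}$, and applying the crude bound $\sqrt{1+\tau^2}\geq 1$ reduces the right-hand side to the purely polynomial quantity $e^{-4\tau}(1+2\tau+2\tau^3)(1+\tau)^2$. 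Note that for $\tau \geq 1/\sqrt{3}$ the left-hand side is nonpositive while the right-hand side is nonnegative, so the inequality holds trivially there; the work lies in the range $\tau \in (0,1/\sqrt{3}]$.

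After clearing $e^{-4\tau}$, it then suffices to verify the single polynomial/exponential inequality
$$
(1 - 3\tau^2)\, e^{4\tau} \;\leq\; (1+2\tau+2\tau^3)(1+\tau)^2 = 1 + 4\tau + 5\tau^2 + 4\tau^3 + 4\tau^4 + 2\tau^5 \quad \text{for all } \tau \geq 0.
$$
Expanding $(1-3\tau^2)\,e^{4\tau}$ as a power series, the coefficient of $\tau^k$ for $k \geq 2$ is $\frac{4^{k-2}\,[16 - 3k(k-1)]}{k!}$, which equals $5$ at $k=2$ and is strictly negative for every $k \geq 3$, because $3k(k-1) \geq 18 > 16$. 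Hence $(1-3\tau^2)\,e^{4\tau} = 1 + 4\tau + 5\tau^2 + (\text{nonpositive tail for } \tau \geq 0) \leq 1 + 4\tau + 5\tau^2$, which is manifestly bounded by the right-hand side. A pleasant feature of this route is that, unlike the proofs of the earlier lemmas in this subsection, it requires no Mathematica-checked polynomial inequality and no explicit case split on the sign of $1 - 3\tau^2$.
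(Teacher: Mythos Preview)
Your proof is correct. The paper provides no details beyond ``a calculation similar to the proof of Lemma~\ref{technicallemmaaboutfctu1}'', so you are supplying what the paper omits, in the same spirit (the change of variables to $\tau$ followed by a power-series comparison); the rationalization trick to eliminate $\sqrt{1+\tau^2}$ is a tidy addition that avoids any computer-algebra check. One minor exposition slip: to reach your displayed equivalence you actually rewrite the target as $\sqrt{1+\tau^2}-2\tau\le e^{-4\tau}(1+2\tau+2\tau^3)\sqrt{1+\tau^2}$ and then multiply both sides by $(\sqrt{1+\tau^2}+2\tau)$, not by $\sqrt{1+\tau^2}(\sqrt{1+\tau^2}+2\tau)$ as written---but the claimed equivalence is correct regardless.
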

\begin{proof}
This follows from a calculation similar to the proof of Lemma~\ref{technicallemmaaboutfctu1}.
\end{proof}

\subsection{Technical fact from Section~\ref{nonstronglyconvexcaseextension}: $x_k \not \in B(\xorigin, \mathscr{R})$ implies $\tilde{f}_{\mathscr{R}}(x_k) - \tilde{f}_{\mathscr{R}}(x^*) \geq 2 \hat \epsilon L r^2$} \label{technicalfacttofinishproofoftheorem1p5}
Recall Lemma~\ref{lemmausefulguysec3} provides a $\mu = 64 \hat{\epsilon} L$-strongly g-convex function $f$, with minimizer $x^*$, satisfying $f(x_k) - f(x^*) \geq 2 \hat\epsilon L r^2$ for all $k \leq T-1$.

If $x_k \not \in B(\xorigin, \mathscr{R})$, then consider the geodesic segment $\gamma_{x^* \rightarrow x_k} \colon [0,1] \rightarrow \calM$ given by $\gamma_{x^* \rightarrow x_k}(t) = \exp_{x^*}(t \log_{x^*}(x_k))$.
Continuity of $t \mapsto \dist(\gamma_{x^* \rightarrow x_k}(t), \xorigin)$ implies there exists a $t \in (0,1)$ so that $\mathscr{R} = \dist(\gamma_{x^* \rightarrow x_k}(t), \xorigin)$.  Let $y = \gamma(t)$.
Geodesic convexity of $\tilde{f}_{\mathscr{R}}$ implies
$$\tilde{f}_{\mathscr{R}}(y) \leq (1-t)\tilde{f}_{\mathscr{R}}(x^*) + t \tilde{f}_{\mathscr{R}}(x_k) \leq (1-t)\tilde{f}_{\mathscr{R}}(y) + t \tilde{f}_{\mathscr{R}}(x_k),$$
which implies $\tilde{f}_{\mathscr{R}}(y) \leq \tilde{f}_{\mathscr{R}}(x_k)$.
On the other hand, we know $\tilde{f}_{\mathscr{R}} = f$ in $B(\xorigin, \mathscr{R})$. Therefore $\tilde{f}_{\mathscr{R}}$ is $\mu$-strongly g-convex in $B(\xorigin, \mathscr{R})$, so
$$\tilde{f}_{\mathscr{R}}(x_k) - \tilde{f}_{\mathscr{R}}(x^*) \geq \tilde{f}_{\mathscr{R}}(y) - \tilde{f}_{\mathscr{R}}(x^*) \geq \frac{\mu}{2} \dist(y, x^*)^2 \geq \frac{\mu}{2} (\mathscr{R} - r)^2 \geq 32 \hat{\epsilon} L (2^{11} r)^2 \geq 2 \hat \epsilon L r^2.$$

\section{Properties of the Moreau envelope: Proof of Lemma~\ref{moreauenvelope}} \label{moreauenvelopeapp}
%\begin{proof}
\citet[Cor. 4.5]{azagra1} shows that $f_\lambda$ is g-convex and $C^1$.
The function $y \mapsto f(y) + \frac{1}{2 \lambda} \dist(x,y)^2$ is strongly g-convex and so has a unique minimizer which we denote
$$y_{\lambda}(x) = \arg\min_{y \in \calM} \{f(y) + \frac{1}{2 \lambda} \dist(x,y)^2\}.$$
First-order optimality conditions imply $\nabla f(y_\lambda(x)) = \frac{1}{\lambda} \log_{y_\lambda(x)}(x)$.
Since, $f$ is $1$-Lipschitz, this implies $\dist(x, y_\lambda(x)) = \|\log_{y_\lambda(x)}(x)\| \leq \lambda$.  
This proves $f_\lambda(x) = \min_{y \in B(x, \lambda)} \{f(y) + \frac{1}{2 \lambda} \dist(x,y)^2\}$. 
Next, using the g-convexity and $1$-Lipschitzness of $f$, we have
$$f(x) \geq f_\lambda(x) \geq f(y_\lambda(x)) \geq f(x) + \langle \nabla f(x), \log_x(y_\lambda(x))\rangle \geq f(x) - \dist(x, y_\lambda(x)) \geq f(x) - \lambda.$$

To compute the Lipschitz constant of $\nabla f_\lambda$, we follow (but slightly simplify) the proof of \citep[Prop. 7.1]{azagra2}.
Let $\delta > 0$.
By Theorem 1.5 of \citep{azagra2} (which provides a useful characterization of Lipschitz gradient), it suffices to show that for each $x_0 \in \calM$ the function $x \mapsto f_\lambda(x) - \frac{\zeta_{\lambda + 2 \delta}}{2 \lambda} \dist(x, x_0)^2$ is g-concave when restricted to $B(x_0, \delta)$.
For $x \in B(x_0, \delta)$,
\begin{align*}
f_\lambda(x) - \frac{\zeta_{\lambda + 2 \delta}}{2 \lambda} \dist(x, x_0)^2 
&= \min_{y \in B(x, \lambda)} \{f(y) + \frac{1}{2 \lambda} \dist(x,y)^2\} - \frac{\zeta_{\lambda + 2 \delta}}{2 \lambda} \dist(x, x_0)^2
\\&= \min_{y \in B(x_0, \delta + \lambda)} \{f(y) + \frac{1}{2 \lambda} \dist(x,y)^2 - \frac{\zeta_{\lambda + 2 \delta}}{2 \lambda} \dist(x, x_0)^2\}.
\end{align*}
As the minimum of g-concave functions is g-concave, it suffices to show that for each $y \in B(x_0, \delta + \lambda)$, the function $\tilde{d}(x) = \frac{1}{2} \dist(x,y)^2 - \frac{\zeta_{\lambda + 2 \delta}}{2} \dist(x, x_0)^2$ is g-concave when restricted to $B(x_0, \delta)$.  This is easy to see by looking at the maximum eigenvalue of its Hessian.  For $x \in B(x_0, \delta)$,
$$\lambda_{max}(\nabla^2 \tilde \dist(x)) \leq \zeta_{\dist(x, y)} - \zeta_{\lambda + 2 \delta} \leq 0$$
using that $\dist(x,y) \leq \delta + \delta + \lambda = 2 \delta + \lambda$.  This concludes the proof of Lipschitz gradient.

Finally, to see that $f_\lambda$ is $1$-Lipschitz, note that $\nabla f_\lambda(x) = - \frac{1}{\lambda} \log_x(y_\lambda(x))$ for all $x \in \calM$ \citep[Prop. 3.7]{azagra1}, so $\|\nabla f_\lambda(x)\| = \frac{1}{\lambda} \dist(x, y_{\lambda}(x)) \leq 1$.
%\end{proof}

\section{The relation between $\zeta$, $\epsilon$ and $\kappa$: proofs from Section~\ref{lrvslrsquared}}\label{proofsfromgeometryinfluencescostfct}
The following proposition states if $f$ is gradient-Lipschitz then it is Lipschitz.  
The proof is due to~\citep{petrunin2023pigtikal}.  We just fill in the details.
\begin{proposition} \label{lrvslrsquaredprop}
Let $p \in \calM = \mathbb{H}^d$, and assume $f \colon \calM \to \reals$ is differentiable and $L$-smooth in $B(p, 2)$.  Then $\|\nabla f(p)\| \leq \frac{\pi \sqrt{5} L}{2} \leq 4 L$. 
\end{proposition}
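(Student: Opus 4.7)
}
The idea is to exploit holonomy in negatively curved space: parallel transport around a closed loop differs from the identity by a rotation whose angle equals the area enclosed. If this rotation acts on $\nabla f(p)$ as negation, then $L$-smoothness along the loop bounds $2\|\nabla f(p)\|$ by $L$ times the loop's perimeter. The ``magic'' radius will be $r = \arccosh(3/2)$, for which a geodesic circle has area exactly $\pi$ (giving rotation by $\pi$, i.e.\ negation) and circumference exactly $\pi\sqrt{5}$.

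First, I would reduce to dimension two: assuming $\nabla f(p) \neq 0$, choose a two-dimensional subspace $V \subseteq \T_p\calM$ containing $\nabla f(p)$, and let $\Sigma = \exp_p(V)$, which by Lemma \ref{characterizationoftotgeoinhyperspace} is a totally geodesic submanifold isometric to $\mathbb{H}^2$. Next, pick any center $c \in \Sigma$ with $\dist(p,c) = r := \arccosh(3/2)$, so that $\sinh r = \sqrt{5}/2$ and $r < 1$. Let $C \subset \Sigma$ be the (non-geodesic) circle of radius $r$ around $c$; its diameter is $2r < 2$, so $C \subseteq B(p,2)$. For each integer $n \geq 3$, inscribe in $C$ a regular geodesic $n$-gon $P_n$ with one vertex at $p_0 := p$, and denote the other vertices $p_1,\ldots,p_{n-1}$ in cyclic order, with common edge length $s_n$ determined by $\cosh s_n = \cosh^2 r - \sinh^2 r \cos(2\pi/n)$.

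The key step is to apply $L$-smoothness to each edge and compose. Writing $\nabla f(p_{i+1}) = P_{p_i \to p_{i+1}} \nabla f(p_i) + \epsilon_i$ with $\|\epsilon_i\| \leq L s_n$ and iterating around the loop (indices mod $n$), I get
\begin{equation*}
\|\nabla f(p) - \mathrm{hol}_n \nabla f(p)\| \leq L\, n s_n,
\end{equation*}
where $\mathrm{hol}_n := P_{p_{n-1}\to p_0} \circ \cdots \circ P_{p_0 \to p_1}$ is the holonomy of $P_n$ based at $p$. Because every edge lies in the totally geodesic submanifold $\Sigma$, Lemma \ref{usefullemmaaboutparalleltransportintotgeo} identifies each parallel transport with the intrinsic one in $\Sigma$, so $\mathrm{hol}_n$ preserves $\T_p\Sigma$ and acts on it as a rotation; moreover Gauss--Bonnet in $\mathbb{H}^2$ (with $K=-1$) tells us this rotation has angle equal to the hyperbolic area of $P_n$. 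Since $\nabla f(p) \in \T_p\Sigma$ by construction, the rotation acts genuinely on $\nabla f(p)$.

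Finally, take $n \to \infty$. Standard approximation gives $n s_n \to 2\pi \sinh r = \pi\sqrt{5}$ (circumference of $C$) and $\mathrm{area}(P_n) \to 2\pi(\cosh r - 1) = \pi$ (area of the disk bounded by $C$). Thus $\mathrm{hol}_n$ converges on $\T_p\Sigma$ to the rotation by $\pi$, which is multiplication by $-1$. Passing to the limit in the smoothness bound yields $\|\nabla f(p) - (-\nabla f(p))\| = 2\|\nabla f(p)\| \leq L\pi\sqrt{5}$, i.e.\ $\|\nabla f(p)\| \leq \tfrac{\pi\sqrt{5}}{2} L$, and numerically this is $\leq 4L$. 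The only delicate point is justifying the passage to the limit --- both the convergence of perimeters, which is classical, and the convergence of holonomies, where I would either invoke Gauss--Bonnet directly for each polygonal loop (so $\mathrm{hol}_n$ is a rotation by $\mathrm{area}(P_n)$ exactly, not just approximately) or appeal to continuity of parallel transport in $C^0$ variation of the loop; the former is cleaner and avoids any genuine limiting argument on the holonomy side.
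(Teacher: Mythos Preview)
Your proposal is correct and essentially identical to the paper's proof: the same radius $r=\arccosh(3/2)$ so that holonomy around the circle is rotation by $\pi$, the same reduction to a totally geodesic $\mathbb{H}^2$ tangent to $\nabla f(p)$, and the same polygon approximation with Gauss--Bonnet applied to each $P_n$ exactly (the cleaner of your two options, and the one the paper uses). The only cosmetic difference is that the paper first treats the $C^2$ case by integrating $\nabla^2\hat f$ directly along the circle before doing the polygon argument, and it works with the restricted gradient $\nabla\hat f$ rather than the ambient $\nabla f$; your choice to use $\nabla f$ throughout is equally valid since $\nabla f(p)\in\T_p\Sigma$ and the ambient parallel transport preserves $\T\Sigma$.
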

\begin{remark}
It is necessary that we assume $f$ is $L$-smooth in a region which is not too small, because we can construct functions with small Hessian and arbitrarily large gradient in a sufficiently small region by pushing forward functions defined in a tangent space via the exponential map, as in Appendix~\ref{carryovereuc}.
\end{remark}

\begin{proof}[Proof of Proposition~\ref{lrvslrsquaredprop}]
Take a 2-dimensional totally geodesic submanifold $S$ which passes through $p$ and is tangent to $\nabla f(p)$, i.e., $\nabla f(p) \in \T_p S$.  
Let $\hat{f}$ be the restriction of $f$ to $S$.  
Note that $\hat{f}$ is $L$-smooth in $S \cap B(p, 2)$.\footnote{This follows from: if $\calP_x$ denotes orthogonal projection from $\T_x\calM$ to $\T_x S$, one can check that $\calP_y P_{x \to y} = P_{x \to y} \calP_x$.}
Consider a circle $C \subseteq S \cap B(p, 2)$ of radius $R = \arccosh(3/2) < 1$
containing $p$.
Let $c \colon [0, T] \to C$, $T = 2 \pi \sinh(R)$, be an arc-length parameterization of $C$ with $c(0) = c(T) = p$, and let $P_{t \to s}^c$ denote parallel transport (in $S$) along $c$ from $c(t)$ to $c(s)$.  

For the moment, assume that $f$ is twice differentiable.
Observe that  
$$\frac{d}{dt}[P_{t \to 0}^c \nabla \hat{f}(c(t))] = P_{t \to 0}^c D_t \nabla \hat{f}(c(t)) = P_{t \to 0}^c \nabla^2 \hat{f}(c(t)) c'(t),$$
where $D_t$ is the covariant derivative along $\gamma$.  
Therefore, 
\begin{align}\label{thisineqisnice}
\|P_{T \to 0}^c \nabla \hat{f}(p) - \nabla \hat{f}(p)\| = \|\int_{0}^T P_{t \to 0}^c \nabla^2 \hat{f}(c(t)) c'(t) dt\| \leq L T.
\end{align}
  
On the other hand,  $P_{T \to 0}^c \nabla \hat{f}(p)$ is a rotation of $\nabla \hat{f}(p)$ by some angle $\theta$, and the Gauss-Bonnet Formula~\citep[Ch. 9]{lee2018riemannian} states that $\theta = \int K d A = 2 \pi - \int_C k_g = -2 \pi (\cosh(R) - 1)$, where $\int K d A$ is the integral of the curvature $K = -1$ over the interior of $C$ (a disk), and $k_g = \coth(R)$ is the geodesic curvature of $C$.  
Since $R = \arccosh(3/2)$, we know $\theta = -\pi$, and $\|P_{T \to 0}^c \nabla \hat{f}(p) - \nabla \hat{f}(p)\| = 2 \|\nabla \hat{f}(p)\|$.  We conclude that $\|\nabla f(p)\| = \|\nabla \hat{f}(p)\| \leq L T / 2 = L \pi \sinh(R) = L \pi \sqrt{5}/2 \leq 4 L$.

% https://math.stackexchange.com/questions/2568300/gauss-bonnet-like-statement-connecting-parallel-transport-and-curvature
If $f$ is not twice differentiable, we can still conclude the result in a similar way. %inequality~\eqref{thisineqisnice}.  
Place $N$ equally spaced points around $C$, $p_n = c(t_n), t_n = \frac{n T}{N}$ for $n = 0, \ldots, N$.
Let $\alpha = \frac{\pi}{N}$.
Let $\gamma_n$ denote the geodesic segment between $p_n, p_{n+1}$.
Let $\Gamma_N$ denote the geodesic polygon formed by the segments $\gamma_n$.
Let $P_{n \to m}^{\Gamma}, m \leq n,$ denote parallel transport (in $S$) from $p_n$ to $p_m$ along the geodesic segments $\gamma_n, \gamma_{n-1}, \ldots, \gamma_m$.  Note that $P_{n \to 0}^{\Gamma} = P_{n-1 \to 0}^{\Gamma} P_{n \to n-1}^{\Gamma}.$
The $L$-smoothness of $\hat{f}$ implies, for $n = 0, \ldots, N$,
\begin{align*}
&\|P_{n \to 0}^{\Gamma} \nabla \hat{f}(p_n) - \nabla \hat{f}(p)\| \\
&= \|P_{n-1 \to 0}^{\Gamma} P_{n \to n-1}^{\Gamma} \nabla \hat{f}(p_n) - P_{n-1 \to 0}^{\Gamma} \nabla \hat{f}(p_{n-1}) + P_{n-1 \to 0}^{\Gamma} \nabla \hat{f}(p_{n-1}) - \nabla \hat{f}(p)\| \\
&\leq \|P_{n \to n-1}^{\Gamma} \nabla \hat{f}(p_n) - \nabla \hat{f}(p_{n-1})\| 
+ \|P_{n-1 \to 0}^{\Gamma} \nabla \hat{f}(p_{n-1}) - \nabla \hat{f}(p)\| \\
&\leq L \dist(p_n, p_{n-1}) 
+ \|P_{n-1 \to 0}^{\Gamma} \nabla \hat{f}(p_{n-1}) - \nabla \hat{f}(p)\|.
\end{align*}
Therefore, $\|P_{n \to 0}^{\Gamma} \nabla \hat{f}(p_n) - \nabla \hat{f}(p)\| \leq \sum_{n=0}^{n-1} L \dist(p_n, p_{n-1}) \leq L T$, and so for all $N$,
$$\|P_{N \to 0}^{\Gamma} \nabla \hat{f}(p) - \nabla \hat{f}(p)\| \leq L T.$$

On the other hand,  $P_{N \to 0}^\Gamma \nabla \hat{f}(p)$ is a rotation of $\nabla \hat{f}(p)$ by some angle $\theta_N$, and the Gauss-Bonnet Formula~\citep[Ch. 9]{lee2018riemannian} states that $\theta_N = \int K d A$, where $\int K d A$ is the integral of the curvature $K = -1$ over the interior of the polygon $\Gamma$.
The area of that polygon equals $2N = \frac{2}{\pi \alpha}$ times the area of a right hyperbolic triangle with angle $\alpha$ and hypotenuse $R$.
%(a disk), and $k_g = \coth(R)$ is the geodesic curvature of $C$.  
Let $\beta \in (0, \frac{\pi}{2})$ be the other angle in this triangle which is not a right angle.
Using hyperbolic trigonometry, we find
$$\beta = \arcsin\bigg(\frac{1}{\sinh(R)} \sinh\Big(\arctanh(\tanh(R) \cos(\alpha))\Big)\bigg).$$
The area of this triangle equals $\pi - \frac{\pi}{2} - \alpha - \beta$, and so we find that
$$|-\pi - \theta_N| = |-\pi - K \cdot \frac{2\pi}{\alpha} \cdot (\pi - \frac{\pi}{2} - \alpha - \beta)| \leq \frac{5\pi}{4} \alpha^2, \quad \forall \alpha \in (0, \frac{\pi}{2}].$$
Hence, taking $N$ large enough (i.e., $\alpha$ small enough),
$\|P_{N \to 0}^{\Gamma} \nabla \hat{f}(p) - \nabla \hat{f}(p)\|$ is arbitrarily close to $2\|\nabla \hat{f}(p)\|$.
Yet, $\|P_{N \to 0}^{\Gamma} \nabla \hat{f}(p) - \nabla \hat{f}(p)\| \leq L T$, and we conclude as in the case where $f$ is twice differentiable.
\end{proof}

\begin{proof}[Proof of Proposition~\ref{usefulguy}]
Assume $r > 2$.
By Proposition~\ref{lrvslrsquaredprop}, $\|\nabla f(p)\| \leq \frac{\pi \sqrt{5} L}{2}$ for all $p \in B(\xorigin, r - 2)$.
If $\dist(x^*, \xorigin) \leq r-2$, then we know that $f(\xorigin) - f^* \leq \frac{\pi \sqrt{5} L}{2} (r-2)$.
Otherwise, define $x' = \exp_{\xorigin}\big( \frac{r-2}{\dist(x^*, \xorigin)} \log_{\xorigin}(x^*)\big)$.
Then $f(\xorigin) - f(x') \leq \frac{\pi \sqrt{5} L}{2} (r-2)$ and $\|\nabla f(x')\| \leq \frac{\pi \sqrt{5} L}{2}$.
So by $L$-smoothness of $f$,
$$f^* = f(x^*) \geq f(x') + \langle \nabla f(x'), \log_{x'}(x^*) \rangle - \frac{L}{2} \dist(x', x^*)^2 \geq f(x') -  \frac{\pi \sqrt{5} L}{2} \cdot 2 - \frac{L}{2} \cdot 4$$
and so, if $r > 2$,
$$f(\xorigin) - f^* \leq f(\xorigin) - f(x') + f(x') - f^* \leq \frac{\pi \sqrt{5} L}{2}(r-2) + \pi \sqrt{5} L + 2 L \leq \frac{\pi \sqrt{5} L}{2} r + 2 L.$$
We conclude, for all $r > 0$,
$f(\xorigin) - f^* \leq \min \Big\{\frac{\pi \sqrt{5} L}{2} r + 2 L, \frac{1}{2} L r^2\Big\} \leq \frac{8}{\zeta_r} \cdot \frac{1}{2} L r^2$.
\end{proof}
To prove the bound $\kappa \geq \zeta_r$~\citet{martinezrubio2021global} and~\citet{hamilton2021nogo} use that the domain is a ball.  
We can use Proposition~\ref{lrvslrsquaredprop} to prove $\kappa \geq \Omega(\zeta_r)$ if the domain is not too \emph{eccentric}.
\begin{proposition}\label{improvementonkappamorethanr}
Let $f \colon \mathbb{H}^d \to \reals$ be differentiable, $\mu$-strongly g-convex globally, and suppose $\nabla f(x^*) = 0$.  Let $r > 2$, and consider a g-convex subset $D$ containing $x^*$ such that 
\begin{enumerate}
\item there exists $\tilde D \subseteq D$ such that $B(x, 2) \subseteq D$ for all $x \in \tilde D$, and 
\item there is a $\tilde x \in \tilde D$ so that $\dist(\tilde x, x^*) \geq r - 2$.  
\end{enumerate}
(For example, $D = B(x^*, r)$ satisfies these assumptions.)

Let $L$ be the Lipschitz constant of $\nabla f$ in $D$.  Then $\kappa = \frac{L}{\mu} \geq \frac{1}{10} \zeta_r$.  
\end{proposition}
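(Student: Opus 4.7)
The plan is to combine Proposition~\ref{lrvslrsquaredprop} with the standard ``gradient monotonicity'' inequality from strong g-convexity, and then compare the resulting bound with $\zeta_r/10$ by a short case analysis in $r$.

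First I would apply Proposition~\ref{lrvslrsquaredprop} at the point $\tilde x$. By hypothesis (1), $B(\tilde x, 2) \subseteq D$, and $f$ is $L$-smooth in $D$; so $f$ is $L$-smooth in $B(\tilde x, 2)$, and Proposition~\ref{lrvslrsquaredprop} gives the upper bound
\[
\|\nabla f(\tilde x)\| \;\leq\; \tfrac{\pi \sqrt 5}{2}\, L.
\]

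Next I would squeeze $\|\nabla f(\tilde x)\|$ from below using $\mu$-strong g-convexity. Writing the two first-order strong g-convexity inequalities between $\tilde x$ and $x^*$, using $\nabla f(x^*) = 0$, and adding them yields the ``monotonicity'' relation $-\langle \nabla f(\tilde x), \log_{\tilde x}(x^*)\rangle \geq \mu\, \dist(\tilde x, x^*)^2$. Cauchy--Schwarz, $\|\log_{\tilde x}(x^*)\| = \dist(\tilde x, x^*)$, and hypothesis (2) then give
\[
\|\nabla f(\tilde x)\| \;\geq\; \mu\, \dist(\tilde x, x^*) \;\geq\; \mu(r-2),
\]
so $\kappa = L/\mu \geq \tfrac{2(r-2)}{\pi\sqrt 5}$.

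To upgrade this into $\kappa \geq \zeta_r / 10$, I would additionally record the trivial bound $\kappa \geq 1$ (which follows by restricting $\mu$-strong g-convexity and $L$-smoothness to a single geodesic through $\tilde x$: the one-dimensional restriction $\phi(t) = f(\gamma(t))$ is both $\mu$-strongly convex and $L$-smooth in $t$, forcing $\mu \leq L$), and split into two regimes using the elementary bound $\zeta_r \leq 1+r$ (which follows from $\zeta_r - r = 2r/(e^{2r}-1) \leq 1$ for $r \geq 0$). If $r \leq 9$, then $\zeta_r/10 \leq (1+r)/10 \leq 1 \leq \kappa$. If $r \geq 9$, a direct numerical verification shows $\tfrac{2(r-2)}{\pi\sqrt 5} \geq (1+r)/10 \geq \zeta_r/10$ (the threshold $r_\star = (40 + \pi\sqrt 5)/(20 - \pi\sqrt 5) \approx 3.62$ is well below $9$). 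In either regime, $\kappa \geq \zeta_r/10$.

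The main (and essentially the only) non-trivial input is Proposition~\ref{lrvslrsquaredprop}, which is already available; once it is in hand, the present proposition reduces to strong-g-convexity monotonicity applied at the well-placed point $\tilde x$, followed by routine bookkeeping of the two ranges of $r$. There is no serious obstacle beyond those already handled in Proposition~\ref{lrvslrsquaredprop}.
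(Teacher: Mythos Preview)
Your proof is correct and takes essentially the same approach as the paper: both apply Proposition~\ref{lrvslrsquaredprop} at $\tilde x$ and combine it with strong g-convexity, then finish using the trivial bound $\kappa \geq 1$ together with $\zeta_r \leq 1+r$. The only cosmetic difference is that the paper sandwiches $f(\tilde x) - f^*$ between $\tfrac{\pi\sqrt 5}{2}L\,\dist(\tilde x, x^*)$ and $\tfrac{\mu}{2}\dist(\tilde x, x^*)^2$ (giving $\kappa \geq (r-2)/(\pi\sqrt 5)$), whereas your gradient-monotonicity step gives the slightly sharper $\kappa \geq 2(r-2)/(\pi\sqrt 5)$; either constant suffices for the final case split.
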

\begin{proof}[Proof of Proposition~\ref{improvementonkappamorethanr}]
We know that $\|\nabla f(x)\| \leq \frac{\pi\sqrt{5}}{2} L$ for all $x \in \tilde D$, and so 
$$\frac{\pi\sqrt{5}}{2} L \dist(x, x^*) \geq f(x) - f^* \geq \frac{\mu}{2} \dist(x, x^*)^2$$ for all $x \in \tilde D$.  Using $\dist(\tilde x, x^*) \geq r - 2$, $\frac{L}{\mu} \geq \max\{1,\frac{r - 2}{\pi \sqrt{5}}\} \geq \frac{1}{10} \zeta_r$.
\end{proof}

\begin{remark}
From the results in this section, the most natural scalings for smooth g-convex optimization on hyperbolic spaces are arguably \emph{not} the ones given in~\ref{smoothproblem} and~\ref{smoothstronglyproblem}. 
Instead, for~\ref{smoothproblem} one should ask for $x$ such that $f(x) - f^* \leq \epsilon \cdot \frac{1}{2\zeta_r} L r^2$, and for~\ref{smoothstronglyproblem} one should define a new condition number $\kappa' = \frac{L}{\mu \zeta_r}$.
We do not do this here in order to maintain consistency with previous literature.
\end{remark}

\begin{proposition} \label{blahblah}
If $f$ is a $C^3$ g-convex function on a hyperbolic space whose Hessian vanishes at a point $x$ then necessarily $\nabla f(x) = 0$.
\end{proposition}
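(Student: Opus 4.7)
The plan is to combine the strong constraint that $\nabla^2 f \succeq 0$ everywhere and $\nabla^2 f(x) = 0$ (a ``minimum'' of the Hessian tensor at $x$), with the Ricci identity which relates the commutator of $\nabla^3 f$ to $R$ applied to $\nabla f$. In hyperbolic space the curvature operator has a very rigid form, and this rigidity will force $\nabla f(x) = 0$.

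First I would prove that $\nabla^3 f(x) = 0$ as a $(0,3)$-tensor at $x$. Fix any $v \in T_x \calM$ and extend it to a vector field $V$ on a neighborhood of $x$ by parallel transport along radial geodesics emanating from $x$. Then $\|V\|$ is constant on the neighborhood and $\nabla_X V(x) = 0$ for every $X \in T_x\calM$. The function $h(y) = \nabla^2 f(V(y), V(y))$ is nonnegative by g-convexity, and $h(x) = 0$ since $\nabla^2 f(x) = 0$, so $x$ is a local minimum of $h$ and $X h(x) = 0$ for every $X \in T_x\calM$. By the Leibniz rule,
\[
X h(x) = (\nabla_X \nabla^2 f)(V,V)(x) + 2\,\nabla^2 f(\nabla_X V, V)(x).
\]
The second term vanishes because $\nabla_X V(x) = 0$ (or, alternatively, because $\nabla^2 f(x) = 0$), so $\nabla^3 f(X, v, v)(x) = 0$ for all $X, v \in T_x\calM$. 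Since $\nabla^3 f$ is symmetric in its last two arguments (this follows from the symmetry of $\nabla^2 f$), polarizing $v \mapsto v + tw$ yields $\nabla^3 f(X, v, w)(x) = 0$ for all $X, v, w$, i.e. $\nabla^3 f(x) = 0$.

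Next I would invoke the Ricci identity for a smooth function,
\[
\nabla^3 f(X, Y, Z) - \nabla^3 f(Y, X, Z) = -\langle \nabla f,\, R(X, Y) Z\rangle,
\]
which is just the statement that $[\nabla_X, \nabla_Y]$ applied to the $1$-form $df$ equals $-R(X,Y)^*\, df$. Evaluating at $x$ kills the left-hand side by Step~1, so $\langle \nabla f(x), R(X,Y)Z\rangle = 0$ for all $X, Y, Z \in T_x\calM$. In $\mathbb{H}^d$ (sectional curvature $-1$), $R(X,Y)Z = -\bigl(\langle Y,Z\rangle X - \langle X,Z\rangle Y\bigr)$; taking $X = \nabla f(x)$ and $Y = Z = e$ with $e$ a unit vector orthogonal to $\nabla f(x)$ (which exists because $d \geq 2$), this identity collapses to $\|\nabla f(x)\|^2 = 0$, hence $\nabla f(x) = 0$.

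The main subtlety I expect is bookkeeping in Step~1: one must be careful to verify that the extension $V$ by radial parallel transport is genuinely $C^1$ near $x$, that $\nabla V(x) = 0$ as a $(1,1)$-tensor (not just along one curve), and that the Leibniz-rule terms involving $\nabla_X V$ are all killed at $x$. Once that is in hand, the Ricci identity and the algebraic form of the hyperbolic curvature tensor do all the remaining work, and the $C^3$ assumption is exactly what is needed so that $\nabla^3 f$ exists and the commutator identity applies.
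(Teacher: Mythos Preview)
Your proof is correct and follows essentially the same two-step strategy as the paper: first show $\nabla^3 f(x)=0$ by exploiting that $y\mapsto\langle V,\nabla^2 f(y)V\rangle$ is nonnegative with a zero at $x$, then feed $\nabla^3 f(x)=0$ into the Ricci identity together with the explicit constant-curvature form of $R$ to force $\nabla f(x)=0$. The only cosmetic differences are that the paper parallel-transports along a \emph{single} geodesic (so the $C^1$ subtlety you flag never arises), and it quotes the inequality $\|\nabla f(x)\|\le 2\|\nabla^3 f(x)\|$ from~\citep[Remark~3.2]{criscitiello2020accelerated} rather than plugging specific vectors into the Ricci identity as you do.
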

\begin{proof}
Let $\nabla^3 f(x) \colon \T_x \calM \times \T_x \calM \times \T_x \calM \to \reals$ denote the third derivative of $f$ at $x$ (see \citep[Ex.~10.78]{boumal2020intromanifolds} for a definition).
As observed in~\citep[Remark 3.2]{criscitiello2020accelerated}, the Ricci identity applied to $\nabla f$ implies
$$\|\nabla f(x)\| \leq 2 \|\nabla^3 f(x)\|,$$
since hyperbolic space has constant sectional curvature $-1$.

Let $x \in \calM$, $u, v \in \T_x \calM$ and $\gamma(t) = \exp_x(t v)$.  Define the parallel vector field $U$ along $\gamma$ as $U(t) = P_{x\to \gamma(t)} u$.
By the definition of $\nabla^3 f$ (see \citep[Ex.~10.78]{boumal2020intromanifolds}),
$$\nabla^3 f(u, u, v) = \frac{d}{dt}\Big[\langle U(t), \nabla^2 f(\gamma(t)) U(t) \rangle\Big]_{t=0}.$$
Since $f$ is g-convex, we know that $\langle U(t), \nabla^2 f(\gamma(t)) U(t) \rangle \geq 0$ for all $t$.
Since $\langle U(0), \nabla^2 f(\gamma(0)) U(0) \rangle = 0$, we know $\frac{d}{dt}\Big[\langle U(t), \nabla^2 f(\gamma(t)) U(t) \rangle\Big]_{t=0} = 0$.
We conclude that $\nabla^3 f(u, u, v) = 0$ for all $u, v \in \T_x \calM$.  By symmetry of the first two arguments of $\nabla^3 f$, we conclude $\nabla^3 f(x) = 0$.
Hence, $\|\nabla f(x)\| \leq 0$.
%\TODO{how did the proof go that cant have mu = 0 and grad nonzero and g convex?  it was that the fct has to have zero third derivative (otherwise wouldnt be g-convex) but then this would imply grad f = 0 by Ricci identity.  ie here is the argument: $0 <= Hess f(gamma(t)) = nabla^3 f(x)[t v] + O(t^2)$, which implies $\nabla^3 f(x)[v] >= 0$ for all v, which implies $\nabla^3 f(x)[v] = 0$ for all v}
\end{proof}
\begin{remark}
The assumption that $f$ is three times differentiable in Proposition~\ref{blahblah} is necessary.
Indeed, let $y\in\calM, g \in \T_y\calM$ with $\|g\|=1$, and consider the function
$$f(x) = \langle g, \log_y(x) \rangle + \frac{\tau}{6} \dist(x, y)^3.$$
If $\tau$ is large enough (e.g., $\tau = 2$), then one can check, like in the proof of Proposition~\ref{boundedhessaffine}, that this function is globally g-convex.  However, $\nabla^2 f(x) = 0$ and $\nabla f(x) = g$.
\end{remark}

\section{Reduction between strongly and nonstrongly g-convex settings} \label{reductionblah}

\begin{proposition} \label{reductionstronglyconvextoconvex}
%Suppose that $\calA$ is an algorithm for minimizing strongly g-convex functions with condition number $\kappa > \zeta_{2 r}$ in a ball of radius $2 r$, and it has the linear convergence rate $f(x_k) - f^* \leq \frac{1}{2} L r^2 e^{-k/q}$ after making $k$ queries.
%Let $\epsilon = \frac{2}{\kappa - \zeta_{2 r}}$.  
%
%Then there is an algorithm for minimizing $L$-smooth g-convex functions in a ball of radius $r$ which can find a point $x \in \calM$ with $f(x) - f^* \leq \epsilon \cdot \frac{1}{2} L r^2$ in at most $q \log(\kappa - \zeta_{2 r})$ queries.
Fix $\epsilon \in (0,1), L > 0$ and $r > 0$.  
Define 
$$\delta = \epsilon \cdot \frac{1}{2} L r^2, \quad\quad \tilde \delta = \frac{1}{2} r^2, \quad \quad \sigma = \frac{\epsilon  L}{2}.$$
Let $\calA$ be a first-order deterministic algorithm.
There is a first-order deterministic algorithm $\calA'$ such that: for all $f \in \calF_{r, \infty, 0, L}$, if we define
$$\tilde{f}(x) = \frac{1}{\sigma}f(x) + \frac{1}{2} \dist(x, \xorigin)^2,$$ 
then $T_{\delta}(\calA', f) \leq T_{\tilde \delta}(\calA, \tilde f)$ (recall Section ~\ref{problemclasses}).

In particular, if $r' \geq 2 r$ and $\kappa = \frac{L}{\sigma} + \zeta_{r'} = \frac{2}{\epsilon} + \zeta_{r'}$, then $T_{\epsilon, r} \leq T_{\epsilon', r', \kappa}$ where $\epsilon' = \frac{r^2}{\kappa (r')^2}$.
\end{proposition}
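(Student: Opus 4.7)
The plan is a classical regularization reduction: the algorithm $\calA'$ is defined by internally running $\calA$ on $\tilde f$ using a simulated oracle. Concretely, whenever $\calA$ would query a point $x$, $\calA'$ queries the true oracle for $f$ at $x$, receives $(f(x),\nabla f(x))$, and then synthesizes the pair $(\tilde f(x),\nabla \tilde f(x))$ from the formula $\nabla\tilde f(x) = \frac{1}{\sigma}\nabla f(x) - \log_x(\xorigin)$ (using Lemma~\ref{lemmaboundhess} for the gradient of $\frac12\dist(\cdot,\xorigin)^2$) to feed back into $\calA$. By construction, the iterate sequence $\calA'$ produces on $f$ coincides with the one $\calA$ produces on $\tilde f$, so it suffices to argue a function-value conversion.

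Next I would verify that $\tilde f$ lives in the right strongly g-convex class. Applying Lemma~\ref{lemmaboundhess}, the squared distance to $\xorigin$ is $1$-strongly g-convex globally and has Hessian bounded by $\zeta_{r'}$ in $B(\xorigin,r')$. Since $\frac{1}{\sigma}f$ is nonstrongly g-convex and globally $\frac{L}{\sigma}$-smooth, we find that $\tilde f$ is globally $1$-strongly g-convex and $(\frac{L}{\sigma}+\zeta_{r'})$-smooth in $B(\xorigin,r')$. A short check shows the minimizer $\tilde x^*$ of $\tilde f$ satisfies $\dist(\xorigin,\tilde x^*)\leq r\leq r'$: comparing $\tilde f(\tilde x^*)\leq\tilde f(x^*)=\frac{1}{\sigma}f^*+\frac12\dist(x^*,\xorigin)^2$ to $\tilde f(\tilde x^*)\geq\frac{1}{\sigma}f^*+\frac12\dist(\tilde x^*,\xorigin)^2$ yields $\dist(\tilde x^*,\xorigin)\leq\dist(x^*,\xorigin)\leq r$. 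Hence $\tilde f\in\calF_{r',\infty,1,L'}$ with $L'=\kappa$.

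The key inequality is the function-value conversion. Suppose $\tilde f(x)-\tilde f(\tilde x^*)\leq\tilde\delta$. Combining with $\tilde f(\tilde x^*)\leq\tilde f(x^*)\leq\frac{1}{\sigma}f^*+\frac12 r^2$ gives
\[
\tfrac{1}{\sigma}f(x)+\tfrac12\dist(x,\xorigin)^2 \;\leq\; \tfrac{1}{\sigma}f^*+\tfrac12 r^2+\tfrac12 r^2,
\]
so $f(x)-f^*\leq\sigma r^2=\delta$, establishing $T_\delta(\calA',f)\leq T_{\tilde\delta}(\calA,\tilde f)$.

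For the final consequence, I would simply chase definitions. Since $\mu'=1$ and $L'=\kappa$, the Problem~\ref{smoothstronglyproblem} tolerance $\epsilon'\cdot\frac12 L'(r')^2$ with $\epsilon'=\frac{r^2}{\kappa(r')^2}$ is exactly $\tilde\delta=\frac12 r^2$, and the admissibility constraint $\epsilon'\leq\frac{8}{\zeta_{r'}}$ follows from $r'\geq 2r$ together with $\kappa\geq\zeta_{r'}$. Taking the supremum over $f\in\calF_{r,\infty,0,L}$ on the left and the infimum over $\calA$ on the right then gives $T_{\epsilon,r}\leq T_{\epsilon',r',\kappa}$. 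The main (mild) thing to be careful about is bookkeeping the admissibility conditions for problem~\ref{smoothstronglyproblem} and matching $\tilde\delta$ to the prescribed scaling $\epsilon'\cdot\frac12 L'(r')^2$; the geometric content is entirely carried by Lemma~\ref{lemmaboundhess}.
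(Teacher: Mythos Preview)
Your proposal is correct and follows essentially the same regularization reduction as the paper: simulate $\calA$ on $\tilde f$ by synthesizing its oracle from that of $f$, then convert $\tilde f$-suboptimality to $f$-suboptimality and chase the class parameters. The one small difference is your localization of $\tilde x^*$: you use the direct function-value comparison $\frac{1}{\sigma}f^*+\tfrac12\dist(\tilde x^*,\xorigin)^2\leq\tilde f(\tilde x^*)\leq\tilde f(x^*)$ to get $\dist(\tilde x^*,\xorigin)\leq r$, whereas the paper argues via $\dist(x^*,\tilde x^*)\leq\|\nabla\tilde f(x^*)\|=\dist(x^*,\xorigin)$ from $1$-strong g-convexity and obtains only $\dist(\tilde x^*,\xorigin)\leq 2r$. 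Your bound is tighter and the argument is slightly cleaner, but since the statement already assumes $r'\geq 2r$, either estimate suffices to place $\tilde x^*$ in $B(\xorigin,r')$ and the rest of the proof is identical.
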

\begin{proof}
Let $f \in \mathcal{F}_{r, \infty, 0, L}$ be an $L$-smooth g-convex function with minimizer $x^* \in B(\xorigin, r)$ satisfying $\nabla f(x^*) = 0$.
%Define the regularized function 
%$$\tilde{f}(x) = \frac{1}{\sigma}f(x) + \frac{1}{2} \dist(x, \xorigin)^2, \quad \text{where } \sigma = \frac{\epsilon  L}{2}.$$ 
The function $\tilde{f}$ is $1$-strongly g-convex in $\calM$.
Thus, it has a unique minimizer $\tilde{x}^* \in \calM$ satisfying $\nabla \tilde f(\tilde{x}^*) = 0$ and $\dist(\tilde{x}^*, \xorigin) \leq 2 \dist(x^*, \xorigin) \leq 2 r$ because
\begin{align*}
\sigma (\dist(\tilde{x}^*, \xorigin) - \dist(x^*, \xorigin)) &\leq \sigma \dist(x^*, \tilde{x}^*) \leq \norm{\nabla \tilde{f}(x^*)} \\
&= \norm{- \sigma \exp_{x^*}^{-1}(\xorigin)} = \sigma \dist(x^*, \xorigin).
\end{align*}
We conclude that $\tilde{x}^* \in B(\xorigin, 2 r)$.
%Moreover, $\tilde f$ has condition number $\frac{L + \sigma \zeta_{2 r, K}}{\sigma} = \frac{L}{\sigma} + \zeta_{2 r} = \frac{2}{\epsilon} + \zeta_{2 r} = \kappa.$

Let $\calA'$ be the algorithm which runs $\calA$ on the function $\tilde f$ (and so outputs the same queries as $\calA$).
Let $T = T_{\tilde \delta}(\calA, \tilde f)$ (we can assume $T$ is finite otherwise there is nothing to prove).
By assumption, algorithm $\calA$ uses at most $T$ queries to find a point $x \in \calM$ with
$\sigma(\tilde f(x) - \tilde {f}^*) \leq \sigma \tilde \delta = \sigma \frac{1}{2} r^2 = \frac{\epsilon}{4} L r^2,$
where $\tilde {f}^* = \tilde f(\tilde {x}^*)$.
Let us show that in fact $f(x) - f^* \leq \epsilon \cdot \frac{1}{2} L r^2$.  Since 
$$\sigma \tilde{f}^* \leq \sigma \tilde{f}(x^*) = f^* + \frac{\epsilon L}{4} \dist(x^*, \xorigin)^2 \leq f^* + \frac{\epsilon L}{4} r^2$$
we have
$$f(x) - f^* - \frac{\epsilon L}{4} r^2 \leq f(x) - \sigma \tilde{f}^* \leq \sigma(\tilde{f}(x) - \tilde{f}^*) \leq \frac{\epsilon}{4} L r^2,$$
that is, $f(x) - f^* \leq \epsilon \cdot \frac{1}{2} L r^2 = \delta$.
Hence, $T_{\delta}(\calA', f) \leq T = T_{\tilde \delta}(\calA, \tilde f)$, as claimed.

Given $r' \geq 2 r$, we know that $\tilde{x}^* \in B(\xorigin, r')$ and $\tilde f$ is $\frac{L}{\sigma} + \zeta_{r'} = \kappa$-smooth in $B(\xorigin, r')$ by Lemma~\ref{lemmaboundhess}.  Hence, $\tilde f \in \calF_{r', \infty, 1, \kappa}$, and $T_{\delta}(\calA', f) \leq T_{\tilde \delta}(\calA, \tilde f) \leq \sup_{f' \in \calF_{r', \infty, 1, \kappa}} T_{\tilde \delta}(\calA, f').$
Recall that this holds for all $f \in \calF_{r, \infty, 0, L}$, so $\sup_{f \in \calF_{r, \infty, 0, L}} T_{\delta}(\calA', f) \leq \sup_{f' \in \calF_{r', \infty, 1, \kappa}} T_{\tilde \delta}(\calA, f')$, so 
$$T_{\epsilon, r} = \inf_{\tilde \calA} \sup_{f \in \calF_{r, \infty, 0, L}} T_{\delta}(\tilde \calA, f) \leq \sup_{f' \in \calF_{r', \infty, 1, \kappa}} T_{\tilde \delta}(\calA, f')$$
where the infimum is over all algorithms $\tilde \calA$.
This holds for all algorithms $\calA$, so we conclude
$$T_{\epsilon, r} = \inf_{\tilde \calA} \sup_{f \in \calF_{r, \infty, 0, L}} T_{\delta}(\tilde \calA, f) \leq \inf_{\calA} \sup_{f' \in \calF_{r', \infty, 1, \kappa}} T_{\tilde \delta}(\calA, f') = T_{\epsilon', r', \kappa}$$
using that $\tilde \delta = \frac{1}{2} r^2 = \frac{r^2}{\kappa (r')^2} \cdot \frac{1}{2} \kappa (r')^2 = \epsilon' \cdot \frac{1}{2} \kappa (r')^2$.
\end{proof}
We then have the following proof of Theorem~\ref{thisremark}.
\begin{proof}[Proof of Theorem~\ref{thisremark}]
We have two cases:

Case I: $r' \geq 2$.  Taking $r = 1$ and $\epsilon = \frac{2}{\kappa - \zeta_{r'}}$, Proposition~\ref{reductionstronglyconvextoconvex} and Theorem~\ref{thmsmoothlowerbound} imply $$q_{\kappa, r'} \geq \frac{T_{\epsilon', r', \kappa}}{\log(1/\epsilon')} \geq \frac{T_{\epsilon, r}}{\log(1/\epsilon')} \geq \frac{1}{\log(\kappa (r')^2)\sqrt{8 \zeta_r^2 \epsilon}} = \frac{\sqrt{\kappa - \zeta_{r'}}}{4 \zeta_r \log(\kappa (r')^2)} \geq \frac{\sqrt{\kappa - \zeta_{r'}}}{8 \log(\kappa (r')^2)}.$$

Case II: $r' < 2$.  Taking $r = r'/2$ and $\epsilon = \frac{2}{\kappa - \zeta_{r'}}$, Proposition~\ref{reductionstronglyconvextoconvex} and Theorem~\ref{thmsmoothlowerbound} imply
$$q_{\kappa, r'} \geq \frac{T_{\epsilon', r', \kappa}}{\log(1/\epsilon')} \geq \frac{T_{\epsilon, r}}{\log(1/\epsilon')} \geq \frac{1}{\log(4 \kappa)\sqrt{8 \zeta_r^2 \epsilon}} 
= \frac{\sqrt{\kappa - \zeta_{r'}}}{4 \zeta_r \log(4 \kappa)} \geq \frac{\sqrt{\kappa - \zeta_{r'}}}{8 \log(4 \kappa)}.$$
\end{proof}

\section{Width-bounded separators: Working out dependence on $d$ in Lemma~\ref{lemmafrombak}} \label{applemmafrombak}

\citet[Lemma 9 (ii)]{hyperbolicintersectiongraphs}, shows that $\mathbb{E}[\mathcal{S}] \leq \sum_{j=1}^J \mathbb{P}[T \cap H \neq \emptyset | T \in L_j] |L_j|$ where
\begin{itemize}
\item $L_j = \{T \in \mathcal{T} : \dist(p, T) \in [(j-1) \tau, j \tau)\}$.  In particular, using that $c_0 \tau / 2 \geq 4 \log(d)$ and Lemma~\ref{Lemmavolumes},
$$|L_j| \leq V_d((j+1) \tau) / V_d(c_0 \tau / 2) \leq 4 e^{(d-1) \tau (j+1 - c_0/2)}.$$

\item $J \geq 2$ is such that the number of tiles $T$ which intersect the interior of $B(p, (J-1)\tau)$ is at most $|\mathcal{I}|$.  In particular, this implies that $V_{d}((J-1)\tau) / V_d(\tau / 2) \geq |\mathcal{I}|$.  Using~\ref{Lemmavolumes}, we know that
$$V_{d}((J-1)\tau) / V_d(\tau / 2) \geq e^{(d-1)(J-1)\tau} e^{-\tau(d-1)/2-2}$$
and so $(J-1)\tau \leq \frac{1}{d-1}(\log(|\mathcal{I}|) + \tau(d-1)/2 + 2)$.
\end{itemize}
\citet[Lemma 8]{hyperbolicintersectiongraphs}, also shows that
\begin{align*}
\mathbb{P}[T \cap H \neq \emptyset | T \in L_j] &\leq 4 \frac{\sinh(2\tau)}{\sinh(j \tau)} \cdot \frac{\sigma_{d-2}}{\sigma_{d-1}} 
 \leq \frac{4 \sqrt{d}}{2 \pi} \cdot \frac{\sinh(2\tau)}{\sinh(j \tau)} \leq \frac{8 \sqrt{d}}{2 \pi} e^{-\tau (j-2)}.
\end{align*}
Plugging in these bounds and using $\tau \geq \frac{2}{3(d-1)} \log(\frac{64 e^2}{\sqrt{2\pi}} \sqrt{d})$ for the last inequality below, we find
\begin{align*}
\mathbb{E}[\mathcal{S}] &\leq \frac{32 \sqrt{d}}{\sqrt{2 \pi}} e^{(d-1) \tau (3 - c_0/2)} \sum_{j=1}^J e^{(d-2) \tau (j-2)} \\
&= \frac{32 \sqrt{d}}{\sqrt{2 \pi}} e^{(d-1) \tau (3 - c_0/2)}[-e^{-(d-2)\tau} + \frac{e^{(d-2)\tau (J-1)} - 1}{e^{(d-2)\tau} -1}] \\
&\leq \frac{64 \sqrt{d}}{\sqrt{2 \pi}} e^{(d-1) \tau (3)} e^{(d-2)\tau(J-1)} \\
&\leq \frac{64 \sqrt{d}}{\sqrt{2 \pi}} e^{3 (d-1) \tau} e^{\frac{d-2}{d-1}(\tau(d-1)/2 + 2)} |\mathcal{I}|^{\frac{d-2}{d-1}}\leq \frac{1}{2} e^{5(d-1) \tau} |\mathcal{I}|^{\frac{d-2}{d-1}}.
\end{align*}

Next, we need to workout $c_0$.  \citet[Lemma 5(ii)]{hyperbolicintersectiongraphs}, shows there exists a $(c_0 \tau/2, \tau/2)$-nice tiling if $c_0 \leq \tau^{-1} \log\Big(1 + \frac{2}{d}(\cosh(\tau/2)-1)\Big).$
Using that $\cosh(\tau/2)-1 \geq e^{\tau/2}/4$ if $\tau \geq 3$ and $\frac{1}{2 d}e^{\tau/2} \geq e^{\tau / 4}$ if $\tau \geq 8 \log(d)$, we find that
$$\tau^{-1} \log\Big(1 + \frac{2}{d}(\cosh(\tau/2)-1)\Big) \geq \tau^{-1} \log\Big(\frac{1}{2 d} e^{\tau/2}\Big) \geq \frac{1}{4}.$$
Hence, we can take $c_0 = 1/4$.

\end{document}